   \newtheorem{theorem}[subsubsection]{Theorem}
      \newtheorem*{theorem*}{Theorem}
   \newtheorem{proposition}[subsubsection]{Proposition}
   \newtheorem{lemma}[subsubsection]{Lemma}
   \newtheorem{corollary}[subsubsection]{Corollary}
   \newtheorem*{conjecture*}{Conjecture}
\theoremstyle{definition}
          \newtheorem*{exercise*}{Exercise}
   \newtheorem{example}[subsubsection]{Example}
   \newtheorem*{example*}{Example}
   \newtheorem{definition}[subsubsection]{Definition}
   \newtheorem*{definition*}{Definition}
   \newtheorem{remark}[subsubsection]{Remark}
\newcommand{\CC}{{\mathbb{C}}}
\newcommand{\QQ}{{\mathbb{Q}}}
\newcommand{\NN}{{\mathbb{N}}}
\newcommand{\MM}{{\mathbb{M}}}
\newcommand{\PP}{{\mathbb{P}}}
\newcommand{\ZZ}{{\mathbb{Z}}}
\newcommand{\GG}{{\mathbb{G}}}
\renewcommand{\AA}{{\mathbb{A}}}
\newcommand{\m}{{\mathfrak{m}}}
\newcommand{\cA}{{\mathcal A}}
\newcommand{\cC}{{\mathcal C}}
\renewcommand{\cD}{{\mathcal D}}
\newcommand{\cI}{{\mathcal I}}
\newcommand{\cJ}{{\mathcal J}}
\newcommand{\cO}{{\mathcal O}}
\def\<{\langle}
\def\>{\rangle}
\newcommand{\inn}{\operatorname{in}}
\newcommand{\Spec}{\operatorname{Spec}}
\newcommand{\Cox}{\operatorname{Cox}}
\newcommand{\Proj}{\operatorname{Proj}}
\newcommand{\Sing}{{\operatorname{Sing}}}
\newcommand{\Star}{\operatorname{Star}}
\newcommand{\Hom}{{\operatorname{Hom}}}
\newcommand{\Cl}{{\operatorname{Cl}}}
\newcommand{\divv}{{\operatorname{div}}}
\newcommand{\Div}{{\operatorname{Div}}}
\newcommand{\codim}{\operatorname{codim}}
\newcommand{\dar}{\downarrow}
\newcommand{\Ver}{{\operatorname{Vert}}}
\newcommand{\irr}{{\operatorname{irr}}}
\newcommand{\inte}{{\operatorname{int}}}
\newcommand{\Bl}{{\operatorname{Bl}}}
\def\:{{\colon}}
\def\.{{,\dots,}}
\newcommand{\double}{\genfrac..{0pt}1
{\raise -1pt\hbox{$\scriptstyle\longrightarrow$}}{\raise 3pt\hbox
{$\scriptstyle\longrightarrow$}}}
\renewcommand{\setminus}{\smallsetminus}
\def\sat{{\rm sat}}
\def\nor{{\rm nor}}
\def\tototi{\mathbin{\mathop{\otimes}\limits^{\raise-1pt\hbox
{$\scriptscriptstyle {\rm L}$}}}}
\def\indlim{\mathop{\vrule width0pt height7pt depth
4pt\smash{\lim\limits_{\raise 1pt\hbox to 14.5pt
{\rightarrowfill}}}}}
\def\projlim{\mathop{\vrule width0pt height7pt depth
4pt\smash{\lim\limits_{\raise 1pt\hbox to 14.5pt
{\leftarrowfill}}}}}
\newcommand\displaceamount{3pt}
\newcommand{\doubledown}{\ar@<\displaceamount>[d]\ar@<-\displaceamount>[d]}
\newcommand{\doubleup}{\ar@<\displaceamount>[u]\ar@<-\displaceamount>[u]}
\newcommand{\doubleright}{\ar@<\displaceamount>[r]\ar@<-\displaceamount>[r]}
\newcommand{\con}{{\operatorname{conv}}}
\def\id{\text{id}}
\newcommand{\val}{{\operatorname{val}}}
\newcommand{\cha}{{\operatorname{char}}}
\newcommand{\Prin}{{\operatorname{Prin}}}
\newcommand{\ord}{{\operatorname{ord}}}
\newcommand{\gr}{{\operatorname{gr}}}
\newcommand{\inv}{{\operatorname{inv}}}
\def\supp{{\operatorname{supp}}}
\begin{document}
\title{ Cox rings of  morphisms and resolution of singularities}

%

\author[J. W{\l}odarczyk] {Jaros{\l}aw W{\l}odarczyk}
\address{Department of Mathematics, Purdue University\\
150 N. University Street,\\ West Lafayette, IN 47907-2067}
\email{wlodarcz@purdue.edu}

\thanks{This research is supported by  BSF grant 2014365} 

\date{\today}

\begin{abstract} We extend the Cox-Hu-Keel construction of the Cox rings to any proper birational morphisms of normal noetherian schemes. 
It allows the representation of any proper birational morphism   by a map of schemes with mild singularities with torus actions.

In a  particular case, the notion generalizes the combinatorial construction of Satriano \cite{Satriano} and the recent construction of multiple weighted blow-ups on Artin-stacks by Abramovich-Quek \cite{AQ}.

The latter can be viewed as an extension of stack theoretic  blow-ups by  Abramovich, Temkin and W\l odarczyk \cite{ATW-weighted}, a similar construction  of McQuillan \cite{Marzo-McQuillan} and and the author's recent cobordant recent cobordant blow-ups \cite{Wlodarczyk22} at weighted centers to a more general situation of arbitrary  locally monomial centers.  
 
 We show some applications of this operation to the resolution of singularities over a field of any characteristic.

\end{abstract}
\maketitle
\setcounter{tocdepth}{1}


\section{Introduction}
The importance of
$G_m$-actions in birational geometry and their connection with the Mori
theory was already discovered by Reid, Thaddeus,  and many
others (see \cite{Tha1}, \cite{Tha2}, \cite{Tha3}, \cite{R}, \cite{DH}).  
This was also reflected in 
the proof of the Weak Factorization theorem, which relied on the notion 
of {\it birational cobordism} and a critical role of $G_m$-action
\cite{W-Cobordism}, \cite{W-Factor}, \cite{AKMW}. 

The idea of  the birational cobordism from \cite{W-Cobordism} is to construct a smooth variety with $G_m$-action which represents a proper birational  morphism  and parametrizes possible birational elementary modifications such as blow-ups, blow-downs, and flips. This allows decomposing the proper birational maps of smooth varieties into a sequence of blow-ups and blow-downs with smooth centers.


A similar idea was considered shortly after by Hu-Keel \cite{Hu-Keel}, who constructed their {\it Mori dream space}, parametrizing  possible birational modifications in the Mori program via torus actions.
The Mori dream space plays a vital role in the Mori theory.
One of the key ingredients in constructing the Mori dream space is the Cox rings.

Recall that the Cox rings for toric varieties were considered first by Cox in \cite{Cox}. The main idea of the construction comes from the convex geometry: Any  polyhedral complex can be realized as the image of the simplicial complex. Similarly, any fan in toric geometry can be  represented as the image of the subfan of a regular  cone.
This simple observation leads to the fundamental formula
describing the {\it Cox coordinate ring} of tor the toric variety $X$ as
$$\cC(X):=\bigoplus_{D\in \Cl(X)} H^0(X,\cO_X(D)),$$
where $\Cl(X)$ is the Weil divisor class group.
The action of torus $T=\Spec \ZZ[\Cl(X)]$ naturally occurs 
in the construction, and is determined by the  $\Cl(X)$-gradation.

The Cox formula  generalizes the construction of the {\it coordinate ring} of the projective scheme $X=\PP^n_\ZZ$, namely $$\ZZ[x_0,\ldots,x_n]=\bigoplus_{n\in \ZZ} H^0(X,\cO_X(n))$$

The projective space $X=\PP^n$  can be seen as the geometric quotient of the {\it characteristic space} 
$$\hat{X}=\Spec_X(\bigoplus_{D\in \Cl(X)} \cO_X(D)\to X,$$
introduced
in \cite{Cox2}. The characteristic space $\hat{X}$ comes with the natural embedding $\hat{X}\hookrightarrow \overline{X}$ into the coordinate space:
$$\overline{X}:=\Spec(\bigoplus_{D\in \Cl(X)} H^0(X,\cO_X(D))$$

In particular, for $X=\PP^n$ we obtain $$\hat{X}=\AA^{n+1}_{\ZZ}\setminus \{0\} \hookrightarrow  \overline{X}=\AA^{n+1}_{\ZZ}$$

This leads to the standard $Proj$ -construction:
$$
\begin{array}{cccc} Proj(\ZZ[x_0,\ldots,x_n]) & = & (\Spec(\ZZ[x_0,\ldots,x_n]\setminus V(x_0,\ldots,x_n))/G_m  \\
  \parallel &   & \parallel \ \\ X &= & \hat{X}/T,  
   \end{array}.
$$

The Cox rings have found many different applications in algebraic geometry. (See  \cite{AH06}, \cite{AIPSV12}, \cite{AP12}, \cite{ACDL21}, \cite{AHL10}, \cite{ABHW18}, \cite{BP04}, \cite{BHHN16}, \cite{DHHKL15}, \cite{HT04}, \cite{HKL16}, \cite{HAM21}\cite{HM15}, \cite{HMT20}.)
In particular, they were  used to study minimal resolutions of singularities (see \cite{DB16}, \cite{DBG17}, \cite{DBK16}, \cite{FGL11},  \cite{G20}). 
In the recent paper \cite{BM21} the Cox rings were defined for the morphisms of log pairs.

In this paper, we introduce the idea of  Cox rings of the proper birational morphisms of any normal  noetherian schemes and propose a more general  approach to embedded resolution problems in the language of torus actions, extending the ideas of McQuillan \cite{Marzo-McQuillan} and Abramovich-Temkin-W\l odarczyk  \cite{ATW-weighted} of the weighted resolution, and  Abramovich- Quek \cite{AQ} of the multiple weighted resolutions.

The idea of utilizing group actions to resolve singularities is ancient and should be traced back to Newton.
In the method that he developed, known later as Newton-Puiseau theorem, he shows that any polynomial function $f(x,y)$ on $X=\CC^2$ with expansion containing the term $y^r$ can be, in fact, upon a coordinate change, resolved by a Newton-Puiseau series $y=g(x^{1/k})$. In other words one considers  the space $X'=\CC^2$ with the group action of $\mu_k=\langle\xi\rangle$, $\xi(x,y)=(\xi\cdot x,y)$, giving the quotient $X'\to X$, $(x,y)\mapsto (x^k,y)$, and a  smooth  holomorphic branch  $V(y-g(x^{1/k}))$ on  $X'$ parametrizing subspace $V(f)$ on $X$.


Originally in the Hironaka embedded resolution, only smooth centers were used (see \cite{Hironaka},\cite{Villamayor},\cite{Bierstone-Milman},\cite{Wlodarczyk},\cite{Encinas-Hauser}, \cite{Encinas-Villamayor}, \cite{Kollar}).
In the recent papers \cite{ATW-principalization}, \cite{ATW-relative} in the resolution process of logarithmic schemes and morphisms, we considered  the stack-theoretic blow-ups  of the centers of the form $$\cJ=(u_1,\ldots, u_k, m_1^{1/w_{1}},\ldots,m_r^{1/w_r}),$$ in the context  of  Kummer \'etale topology on the logarithmic stacks. The functorial properties of the algorithm of logarithmic resolution of morphisms dictated such general centers.

Then in \cite{ATW-weighted}, we developed the formalism of the stack-theoretic blow-ups of the weighted centers of the form $(u_1^{1/w_1},\ldots, u_k^{{1/w_k}})$.
This approach allows 
to simplify the resolution procedure in characteristic zero. The  algorithm is more efficient and avoids many unnecessary blow-ups reducing    technicalities. It uses a very simple geometric invariant, which improves after each step and is independent of the logarithmic structure. 
A similar result was obtained by McQuillan in \cite{Marzo-McQuillan}.
 More general centers were considered in the  paper\cite{Quek} of  Quek in the logarithmic context.

In work \cite{AQ} of Abramovich-Quek, the authors introduce {\it multi-weighted blow-ups}, further extending the 
results in \cite{Quek}. The multiple weighted blow-ups generalize the weighted blow-ups and are used to obtain a smooth and toroidal resolution version of Artin stacks (see Section \ref{AQ}). The Abramovich-Quek weighted blow-up  generalizes the Satriano toroidal construction on Artin logarithmically smooth stacks in \cite{Satriano} to  locally monomial ideals. 

Subsequently in the  paper \cite{Wlodarczyk22} the operation of {\it cobordant blow-up} $B_+\to X$ with weighted centers $\cJ=(u_1^{1/w_1},\ldots, u_k^{{1/w_k}})$ was  introduced, where $u_1,\ldots, u_k$ is a  partial 
system of local parameters \begin{align} B=\Spec_X({\cO_X}[{ t}^{-1}, { t}^{w_1}{x_1},\ldots, { t}^{w_k}{x_k}]),\quad B_+=B\setminus V(x_1{ t}^{w_1},\ldots,x_k{ t}^{w_k}), \end{align}  where $t$ is an introduced unknown.
A similar formula was discovered by Rydh in the paper of \cite{Rydh-proj} and studied in the context of the stack-theoretic blow-ups. Moreover, a certain relation between toric Cox construction and toric weighted cobordant blow-ups was already observed in \cite{Rydh-proj} and \cite{Wlodarczyk22}.

The operation of cobordant blow-up allows representing stack-theoretic weighted blow-ups and more general  Kummer blow-ups in the language of smooth varieties with torus action without stack theoretic language. Moreover, apart from fast functorial resolution with SNC  divisors in characteristic zero, the approach leads to the resolution of some classes of singularities in positive and mixed characteristic (see \cite{Wlodarczyk22}).


In the present paper, we  associate the {\it Cox coordinate ring} to arbitrary proper birational morphisms $\pi: Y\to X$ of normal  noetherian schemes as follows: 
$$\cA_{Y/X}:= \pi_*(\bigoplus_{E\in \Cl(Y/X)} \cO_Y({E})),$$ 
where $\Cl(Y/X)\subset \Cl(Y)$ is a free group generated by the exceptional divisors. It comes with the coaction of the associated torus $T=\Spec(\ZZ[\Cl(Y/X)])$.

Per analogy with the standard Cox construction, we call the space \begin{align} B:=\Spec_X(\pi_*(\bigoplus_{E\in \Cl(Y/X)} \cO_Y({E}))\end{align} the {\it relative Cox coordinate space}.
The  scheme \begin{align} B_+:=\Spec_Y(\bigoplus_{E\in \Cl(Y/X)} \cO_Y({E}) \end{align} will be called the {\it relative Cox characteristic space}.

In this language, any proper birational  morphism $\pi: Y\to X$ can be represented by a $T$-equivariant morphism $B_+\to B$ such that the induced morphism of the good quotient coincides with $\pi: Y\to X$: $$
\begin{array}{cccc} B_+\sslash T & \to & B\sslash T  \\
  \parallel &   & \parallel \ \\ Y &\stackrel{\pi}\to & X,  
   \end{array}.
$$


 As in the standard construction, the morphism $B_+\subset B$ is an open immersion upon some reasonable assumptions.

If $Y\to X$ is the  the blow-up of the ideal $\cJ$ on $X$ the associated presentation  $B_+\sslash T\to X$  can be thought as  the normalized extended $Proj$ introduced by Swanson-Huneke  \cite{HS}:  $$B_+\sslash T=Proj_X(\cO_X[\cJ t,{\bf t}^{-1}])^{\nor}.$$
(See Section \ref{blow3}.)

Note that the morphism $B\to X$ is affine and is locally described by a  single chart.
The spaces  $B_+$ and $B$ usually have nicer singularities and simpler descriptions, and the morphism $B_+\subset B$ is  way simpler    than the original $\pi: Y\to X$. 
As in  the standard Cox construction, the semiinvariant functions on $B_+$ and $B$ can be interpreted as forms on $Y$  and are convenient for the computations.

For instance, the construction can be applied to normalized blow-ups  of locally monomial centers, leading to general classes of  modifications of singularities of subschemes and ideals that preserve regular ambient schemes.

Given a locally toric or simply regular scheme  $X$ over a field and  any locally toric proper birational morphism $\pi: Y\to X$, one associates with $\pi$ a morphism of Cox regular spaces
$B_+\subset B$, where 
$$B=\Spec(\cO_X[t_1^{-1},\ldots,t_k^{-1},u_1{\bf t}^{\alpha_1},\ldots,u_k{\bf t}^{\alpha_k}])$$

 In particular, one  represents  the normalized blow-up of any locally monomial $\cJ$ by  a {\it smooth cobordant blow-up $B_+\to X$ of $\cJ$} equipped  with torus action. The formula generalizes the weighted cobordant blow-up introduced in \cite{Wlodarczyk22} with $B_+=B\setminus V(\cJ {\bf t}^\alpha),$ for the corresponding multi-indexes $\alpha,\alpha_1,\ldots,\alpha_k$. It also leads to  a version of {\it the multi-weighted blow-up} of \cite[Definition 2.1.6]{AQ} when considering the stack theoretic quotient $[B_+\sslash T]$.

 
One can think of this approach as an extension of the resolution by  cobordant blow-ups with weighted centers to more general locally monomial ideals or $\QQ$-ideals. 

When replacing the group $\Cl(Y/X)$ with a subgroup $\Gamma\subset \Cl(Y/X)\otimes \QQ$ in the formulas (2) and (3), one further 
generalizes the construction. We obtain

\begin{align*} B^\Gamma:=\Spec_X(\pi_*(\bigoplus_{E\in \Gamma} \cO_Y({E})),\quad B^\Gamma_+:=\Spec_Y(\bigoplus_{E\in \Gamma} \cO_Y({E}). \end{align*} 

This generalized construction can be linked to the weighted cobordant blow-ups as in \cite{Wlodarczyk22} (See Section \ref{weighted2}).
In particular,  let  $\pi: Y\to X$  be the weighted  blow-up of  a regular scheme with  the  $\QQ$-ideal center $\cJ=(u_1^{1/w_1},\ldots,u_k^{1/w_k})$. This is simply the normalized blow-up of the ideal $\cJ^{(a)}:=(u_1^{a/w_1},\ldots,u_k^{a/w_k})$, with the exceptional irreducible $\QQ$-Cartier divisor  $(1/a) E_a$ with $\cO_Y(-E_a)=\cO_X\cdot\cJ^{(a)}$, where $a$ is any positive integer such that $w_i|a$,.
The cobordant blow-up of $\cJ^{(a)}$ with respect  to the group $\Gamma=\ZZ\cdot\frac{1}{a}E_a\subset \Cl(Y/X)\otimes \QQ$ gives the formula (1) for the cobordant weighted blow-up  of $\cJ=(u_1^{1/w_1},\ldots,u_k^{1/w_k})$.  Note that the above definition does not depend upon the choice  of $a$, and the $\QQ$-Cartier divisor $(1/a) E_a$ can be interpreted as the divisor corresponding to the $\QQ$-ideal $\cO_Y\cdot \cJ=\cO_Y(-E_a)^{1/a}$.

More generally, let  $\cJ$ be a  locally monomial center, and  $\pi: Y\to X$ be the normalized blow-up of $\cJ$. Denote by $E_1,\ldots, E_k$ the exceptional divisors of $\pi$. The cobordant blow-up  of $\cJ$ with respect to the subgroup $$\Gamma=\ZZ \frac{1}{b_1}E_1\oplus\ldots\oplus\ZZ \frac{1}{b_k}E_k\subset \Cl(Y/X)\otimes \QQ,$$ generated
by $\frac{1}{b_1}E_1,\ldots,\frac{1}{b_k}E_k$, where $b_1,\ldots,b_k$ are any positive integers, leads to the {\it multiple weighted blow-up}, considered by Abramovich-Quek in \cite{AQ}.  It can be understood  as
the fantastack associated with the stack-theoretic quotient
$[ B_+\sslash T]$ (See Section \ref{AQ}). Since the stabilizers of the action are not finite, in general, one obtains an Artin stack as the stack-theoretic  quotient.

 Note that in the resolution process of hypersurfaces, one often considers locally 
 the corresponding Newton polytope. It is naturally associated with a certain coordinate system and rises to a locally monomial center. In a more general setting, the Newton polytope is replaced with the {\it dual valuation complex} of the locally monomial center.
 We show some conditions for singularities when the cobordant blow-up of such a center immediately resolves singularities.
 (see Theorems \ref{regular}, \ref{A}, \ref{B}, \ref{res}, \ref{res2}, \ref{res3}, \ref{res4}).  The particular resolution methods and theorems extend  the relevant results for the weighted cobordant blow-ups in \cite{Wlodarczyk22}. As a  Corollary \ref{AQ21}, we obtain  Abramovich-Quek's \cite[Theorem 5.1.2]{AQ}.

 


  The resolution algorithm outputs a regular scheme with a torus action which admits a good quotient having locally toric singularities and birational to the original scheme. It can be directly resolved by the canonical combinatorial methods in any characteristic as in \cite[Theorem 7.17.1]{Wlodarczyk-functorial-toroidal}. Alternatively, by  Proposition \ref{geometric}, one can always replace  in the resolution process each $B_+$ with an open stable subset $B^s$ admitting  a geometric quotient, and then apply the destackification method of  Bergh-Rydh in \cite{Bergh-Rydh}. It is also possible to use the €œcanonical reduction of stabilizers due to Edidin- Rydh \cite{ER}, and then the destackification method of  Bergh-Rydh in \cite{Bergh-Rydh}.

\subsubsection{Aknowledgements} The author would like to thank Dan Abramovich, J\"{u}rgen Hausen, Antonio Laface, Michael Temkin, Ilya Tyomkin, and Jaros\l aw Wi\'sniewski for helpful discussions and suggestions.

\subsection{Preliminaries} 
 

The definition of Cox spaces of morphisms  is similar, with some important differences, to the  notion of Cox spaces of varieties, as presented in \cite{Cox2}. We shall assume that all the schemes considered in the paper are noetherian.

\subsubsection{Construction of Cox sheaves}

 Given a proper birational morphism $\pi:Y\to X$ of normal integral  schemes, 
consider the 
the free group $\Cl(Y/X)\subset \Div(Y)$ generated by the images of the exceptional irreducible divisors $E_i$. 
It can be identified with the  kernel of  the surjective morphism $\pi_*: \Cl(Y)\to \Cl(X)$. 


\begin{definition}
By {\it the relative Cox ring} w mean the sheaf of graded $\cO_Y$-algebras
$$\cC_{Y/X}=\bigoplus_{E\in \Cl(Y/X)} \cC_{E}=\bigoplus_{E\in \Cl(Y/X)} \cO_Y({E}),$$ graded by $\Cl(Y/X)$, where $C_{E}:=\cO_Y(E)$ for
$$\cO_Y(E)(U)=\{f\in \kappa(Y) \mid (\divv_Y(f)+E)_{|U}\geq 0\}\subset \kappa(Y)=\kappa(X).$$
\end{definition}

Note the $C_0=\cO_Y$. One can  introduce the  dummy variables $t=(t_1,\ldots,t_k)$ so that  $E_i$ corresponds to $t_i^{-1}$  and $E\mapsto {\bf t}^E$. This defines the isomorphism of the gradings: 
$$\Cl(Y/X)\simeq \{{\bf t}^{-\alpha}\mid \alpha\in \ZZ^k\}\simeq \ZZ^k$$  

Using this notation, we can write
$$\cC_{Y/X}=\bigoplus_{E\in \Cl(Y/X)} C_{E}{\bf t}^{E}=\bigoplus_{\alpha\in \ZZ^k} C_{\alpha}\cdot t_1^{a_1}\cdot\ldots\cdot t_k^{a_k} \subseteq \bigoplus_{E\in \Cl(Y/X)}\kappa(Y){\bf t}^{E}$$


	

\subsubsection{Forms}



As mentioned, the Cox relative ring construction, similarly to the absolute case, is analogous to the coordinate ring $\ZZ[x_0,\ldots,x_n]$ on projective space $X=\PP^n_\ZZ$. One can choose a very ample divisor, for instance $D=V(x_0)$, and identify  the functions $f=F(x_0,\ldots,x_n)/x_0^n\in \cO_X(nD)$ with the forms $F(x_1,\ldots,x_n)$ so that the vanishing locus $V(F)$ equals to $$V(F)=V_X(F)=\divv(f)+nD.$$

Per this analogy, and as in \cite{Cox2}, the elements in $\cC_{E}$ will be called {\it forms of degree $E$} on $Y$ and can be written  formally as $F=f{\bf t}^E$  , where $f\in \cO_Y(E)$, with the natural componentwise operation of addition and multiplication. We also define the {\it divisor  of the form $F=f{\bf t}^E$} on $Y$ as $\divv_Y(F)=\divv_Y(f)+E$, and 
its {\it vanishing locus  on $Y$} to be $$V_Y(F):=\supp(\divv_Y(f)+E).$$


\subsubsection{Exceptional valuations}
By the {\it exceptional valuations}  of $\pi:Y\to X$ we shall mean the valuations $\nu_1,\ldots,\nu_k$ of $\kappa(X)=\kappa(Y)$ associated with the generic points of the exceptional divisors $E_1,\ldots,E_k$ of $\pi$.

These valuations define  ideals $\cI_{\nu,a,X}\subset \cO_X$ on $X$  for $a\in \ZZ$, generated by the functions  $f\in \cO_X$, with $\nu(f)\geq a$. In particular $\cI_{\nu,a}=\cO_X$ if $a\leq 0$.

\begin{lemma} \label{divi} Let $E= \sum n_i E_i$ correspond to $t_1^{-n_1}\cdot\ldots\cdot t_k^{-n_1}$. Then 

\begin{enumerate}
\item $\pi_*(\cO_Y(E_i))=\cO_X$. 
\item If all $n_i\geq 0$ then $\pi_*(\cO_Y(E))=\cO_X$.
\item  If there is $n_i< 0$, then $$\pi_*(\cO_Y(E))=\bigcap_{n_i<0}\, \cI_{\nu_i,-n_i,X}=\bigcap^k_{i=1}\, \cI_{\nu_i,-n_i,X}.$$

\end{enumerate}

\end{lemma}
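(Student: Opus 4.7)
The plan is to verify the claimed sheaf equalities on affine opens $U \subset X$, reducing to a pointwise analysis at prime divisors of $\pi^{-1}(U)$. Since $\pi$ is birational, $\kappa(Y)=\kappa(X)$, so a section $f \in \pi_*\cO_Y(E)(U) = \cO_Y(E)(\pi^{-1}(U))$ is a rational function on $X$ satisfying $\divv_Y(f) + E \geq 0$ at every prime divisor of $\pi^{-1}(U)$. These prime divisors split into two classes: the strict transforms of prime divisors of $U$ (at which $E$ has multiplicity zero), and the exceptional divisors $E_i$ for those indices $i$ with $\pi(\xi_i) \in U$, where $\xi_i$ denotes the generic point of $E_i$.

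For a non-exceptional prime divisor $D$ of $\pi^{-1}(U)$ the inequality becomes $v_D(f) \geq 0$, which says $f$ is regular at the generic point of the corresponding prime divisor of $U$; imposing this at all such $D$ and invoking normality of $X$ (Serre's criterion $R_1+S_2$) forces $f \in \cO_X(U)$. For an exceptional $E_i$ the inequality is $\nu_i(f) \geq -n_i$. Thus $\pi_*\cO_Y(E)(U)$ is identified with the subset of $\cO_X(U)$ carved out by these valuation conditions.

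For parts (1) and (2), where every $n_i \geq 0$, the valuation conditions are automatic: since $\nu_i$ is centered at $\pi(\xi_i) \in X$ we have $\cO_X(U) \subset \cO_{X,\pi(\xi_i)} \subset R_{\nu_i}$, so $\nu_i(f) \geq 0 \geq -n_i$ for free, giving $\pi_*\cO_Y(E)(U) = \cO_X(U)$; part (1) is the specialization $E = E_i$. For part (3), the same reasoning shows that indices with $n_i \geq 0$ impose no constraint, while for $n_i < 0$ the inequality $\nu_i(f) \geq -n_i > 0$ is exactly membership in $\cI_{\nu_i,-n_i,X}(U)$, yielding $\pi_*\cO_Y(E)(U) = \bigcap_{n_i<0} \cI_{\nu_i,-n_i,X}(U)$. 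The second equality in (3) then follows from the convention $\cI_{\nu_i,a,X} = \cO_X$ whenever $a \leq 0$.

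The main subtle point is the bookkeeping when $U$ fails to contain some exceptional center $\pi(\xi_i)$: one must check that such an $E_i$ contributes no constraint on either side. On the left this holds because $E_i \cap \pi^{-1}(U)$ is then a proper closed subset of $E_i$ (its generic point lies outside $\pi^{-1}(U)$), hence of codimension $\geq 2$ in $\pi^{-1}(U)$ and not a prime divisor; on the right it follows from $\cI_{\nu_i,-n_i,X}$ being supported in $\overline{\pi(\xi_i)}$, so its restriction to such $U$ agrees with $\cO_X|_U$.
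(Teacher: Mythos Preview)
Your proof is correct and follows essentially the same approach as the paper: both arguments show $\pi_*\cO_Y(E)\subseteq\cO_X$ via normality of $X$ (you by splitting prime divisors of $\pi^{-1}(U)$ into exceptional and non-exceptional, the paper by restricting to $U_\pi=Y\setminus\bigcup E_i$ and using that $X\setminus U_\pi$ has codimension $\ge 2$), and then read off the exceptional constraints $\nu_i(f)\ge -n_i$. Your extra paragraph handling the case where an exceptional center $\pi(\xi_i)$ lies outside $U$ is a nice piece of bookkeeping that the paper sidesteps by simply assuming $X$ affine from the outset.
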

\begin{proof} 

First, since $\pi: Y\to X$ is proper, birational and $X$ is normal, we have $\pi_*(\cO_Y)=\cO_X$.

We can reduce the situation to  the case when $X$ is affine since the problem is local on $X$.
Then    $$g\in   \cO_Y(E)(\pi^{-1}(X))\subset \kappa(X)=\kappa(Y)$$ if and only if $$\divv_Y(g)+E\geq 0.$$ This implies that $\divv_U(g)\geq 0$, where $U:=Y\setminus (\bigcup E_i)$, and $U\subset X$, where $X\setminus U$ is of codimension $\geq 2$. Thus $\divv_X(g)\geq 0$. 
So, since $X$ is normal, w get $g\in \pi_*(\cO_Y)=\cO_X$, whence $\pi_*(\cO_Y(E))\subseteq \cO_X$.

{\bf(1)} and {\bf(2)} If   $E= \sum n_i E_i$ with  $n_i\geq 0$  then $$ \cO_X=\pi_*(\cO_Y)\subseteq \pi_*(\cO_Y(E))\subseteq \cO_X.$$

{\bf(3)} In general, $g\in \pi_*(\cO_Y(E))\subseteq \cO_X$ iff $\divv_Y(g)+E\geq 0$. This translates into   $\divv_Y(g)+\sum_{n_i<0} n_iE_i\geq 0$ by part (2). Thus $\nu_i(g)\geq -n_i$ for all $n_i<0$, which yields  $$g\in \bigcap_{n_i<0} \cI_{\nu_i,-n_i,X}=\bigcap^k_{i=1} \cI_{\nu_i,-n_i,X}.$$

We use here the fact that by definition $\cI_{\nu_i,-n_i}=\cO_X$ if $n_i\geq 0$.
\end{proof}

\subsection{Cox coordinate space}
\subsubsection{Cox algebra}
As a corollary  from Lemma \ref{divi}, we obtain    
\begin{proposition} \label{valu} Let  $\pi: Y\to X$ be a  proper birational morphism of normal irreducible schemes. Assume that $E_k,\ldots,E_k$
are the irreducible exceptional divisors of $\pi$, and $\nu_i$ are the associated valuations.
Then the direct image $\pi_*(\cC_{Y/X})$  of the relative  Cox ring  is a $\Cl(Y/X)=\ZZ^k$-graded $\cO_X$-algebra:

$$\cA_{Y/X}:=\pi_*(\cC_{Y/X})=\bigoplus_{a_i\in \ZZ} \,\, \bigcap^k_{i=1}\, \cI_{\nu_i,a_i}\,\,\cdot t_1^{a_1}\cdot\ldots\cdot t_k^{a_k}\subset \cO_X[t_1,t_1^{-1},\ldots,t_k,t_k^{-1}],$$

where $E_i$ correspond to $t_i^{-1}$. \qed



 
\end{proposition}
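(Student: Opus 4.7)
The plan is to apply the pushforward $\pi_*$ summand by summand to the $\Cl(Y/X)$-graded decomposition of $\cC_{Y/X}$ and then invoke Lemma \ref{divi} to identify each graded piece. Concretely, since $\pi_*$ preserves direct sums of quasi-coherent sheaves indexed by the character lattice of the torus (the $T$-action on $\cC_{Y/X}$ transports to $\pi_*\cC_{Y/X}$ and the isotypic decomposition recovers the grading), we may write
\[
\pi_*(\cC_{Y/X}) \;=\; \pi_*\!\left(\bigoplus_{E\in \Cl(Y/X)} \cO_Y(E)\right) \;=\; \bigoplus_{E\in \Cl(Y/X)} \pi_*(\cO_Y(E)).
\]
For $E = \sum n_i E_i$ corresponding to the monomial $t_1^{-n_1}\cdots t_k^{-n_k}$ (equivalently, under the dummy-variable convention, to the degree $(n_1,\dots,n_k)$ piece with label ${\bf t}^E = t_1^{-n_1}\cdots t_k^{-n_k}$, and hence to coefficients of $t_1^{a_1}\cdots t_k^{a_k}$ with $a_i=-n_i$), part (3) of Lemma \ref{divi} identifies
\[
\pi_*(\cO_Y(E)) \;=\; \bigcap_{n_i < 0}\cI_{\nu_i,-n_i,X} \;=\; \bigcap_{i=1}^k \cI_{\nu_i,-n_i,X},
\]
where the second equality uses the convention that $\cI_{\nu_i,a} = \cO_X$ when $a \leq 0$, which also covers cases (1) and (2) of Lemma \ref{divi}. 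Re-indexing with $a_i = -n_i$ yields precisely the displayed formula.

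Next I would check that this decomposition is compatible with the $\cO_X$-algebra structure and the embedding into the Laurent polynomial ring. The multiplication in $\cC_{Y/X}$ is that inherited from $\kappa(Y) = \kappa(X)$: for forms $f\,{\bf t}^E$ and $g\,{\bf t}^{E'}$ one has $(f\,{\bf t}^E)(g\,{\bf t}^{E'}) = (fg)\,{\bf t}^{E+E'}$, and multiplication of ideals $\cI_{\nu_i,a} \cdot \cI_{\nu_i,b} \subseteq \cI_{\nu_i,a+b}$ is immediate from additivity of valuations. Under the identification of the grading via $E \mapsto {\bf t}^E = t_1^{-n_1}\cdots t_k^{-n_k}$, this makes the total sum into an $\cO_X$-subalgebra of $\cO_X[t_1^{\pm 1},\ldots,t_k^{\pm 1}]$ because every graded piece is an ideal of $\cO_X$ (by Lemma \ref{divi} again, $\pi_*(\cO_Y(E)) \subseteq \pi_*(\cO_Y)^{?}$; more precisely, the intersection of the $\cI_{\nu_i,a_i}$ sits inside $\cO_X$).

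The substantive content is already carried by Lemma \ref{divi}; the only thing that needs a moment of care is the interchange of $\pi_*$ with the infinite direct sum. The cleanest way I would handle this is to work locally on $X$ (which is allowed since the statement is local on $X$, and $\pi$ is proper hence $\pi_*$ of a quasi-coherent sheaf is quasi-coherent), where $\pi_*$ of a direct sum is computed by taking global sections over $\pi^{-1}(U)$ for $U$ affine, and global sections visibly commute with direct sums. Thus the main (and only) obstacle is purely bookkeeping: matching the sign conventions between the divisor $E = \sum n_i E_i$, the monomial $t_1^{-n_1}\cdots t_k^{-n_k}$, and the exponents $a_i$ appearing in the final formula. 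Once the conventions are aligned, the proposition follows at once. \qed
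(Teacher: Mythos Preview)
Your proposal is correct and follows exactly the approach the paper indicates: the proposition is stated as an immediate corollary of Lemma~\ref{divi} (with a \qed{} and no further argument), and you have simply written out the graded-piece-by-graded-piece application of that lemma together with the bookkeeping on sign conventions. Your extra care about $\pi_*$ commuting with the infinite direct sum (via quasi-compactness of $\pi^{-1}(U)$) and about the algebra structure is more detail than the paper supplies, but it is entirely in line with the intended argument.
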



%


\subsubsection{Cox coordinate space}
\begin{definition}\label{valu4} Given a proper birational morphism $\pi:Y\to X$ of normal integral schemes. The {\it  Cox relative coordinate space} is the  
scheme $$B=\Cox(Y/X):=\Spec_X(\cA_{Y/X}),$$
over $X$ with the natural action of $T_B=\Spec\ZZ[\Cl(Y/X)]$. The {\it Cox relative characteristic space} is the space $$B_+=\Cox(Y/X)_+:=\Spec_Y(\cC_{Y/X}).$$
over $Y$.
The {\it Cox trivial space} is given by $$B_-:=B \setminus V_B(t_1^{-1}\cdot\ldots\cdot t_k^{-1}).$$
\end{definition}

\subsubsection{Good and geometric quotient} We consider here a relatively  affine action of $$T=\Spec(\ZZ[t_1,t_1^{-1},\ldots,t_k,t_k^{-1}])$$ on a scheme $X$ over $\ZZ$. By the {\it good quotient (or GIT-quotient)} of $X$ by $T$ we mean an affine $T$-invariant morphism $$\pi: X\to Y=X\sslash T$$ such that the induced morphism of the sheaves  $\cO_Y\to \pi_*(\cO_X)$ defines the isomorphism onto the subsheaf of invariants $\cO_Y\simeq \pi_*(\cO_X)^T\subset \pi_*(\cO_X)$.

 Then $\pi: X\to Y=X/T$ will be called the {\it geometric quotient} if additionally every fiber $X_{\overline{y}}$ of $\pi$ over s geometric point $\overline{y}:\Spec(\overline{\kappa})\to Y$ defines  a single orbit of the action of $T_{\overline{\kappa}}=T\times_{\kappa} \Spec(\overline{\kappa})=\Spec(\overline{\kappa}[t_1,t_1^{-1},\ldots,t_k,t_k^{-1}])$ on $X_{\overline{y}}$.
  
  \begin{lemma}\label{good} Let  $\pi: X\to Y=X\sslash T$ be  a  good quotient of  integral schemes of a relatively affine action of the torus $T$. Then
$\pi$ is surjective.	 Moreover,  the inverse image $\pi^{-1}(Z)\subset X$of  a closed connected subscheme $Z\subset Y$ is  connected.
\end{lemma}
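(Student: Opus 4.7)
The plan is to reduce both statements to affine computations using that $\pi$ is affine, and then exploit the Reynolds-style retraction coming from the weight decomposition of a torus representation.

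For surjectivity, I would first reduce to the affine case: take an affine open $U \subset Y$, and write $\pi^{-1}(U) = \Spec(A)$ with $B := A^T = \cO_Y(U)$. The key point is that $A$ is graded by the character group, $A = \bigoplus_\chi A_\chi$, with $B = A_0$, and the projection $\rho \colon A \to A_0$ is a $B$-module retraction of the inclusion $B \hookrightarrow A$. Hence $B \to A$ is pure (in fact split) as $B$-modules. Given a prime $\mathfrak{p} \subset B$ corresponding to $y \in U$, purity (which is preserved under localization) gives $\mathfrak{p} A_\mathfrak{p} \neq A_\mathfrak{p}$; choosing any prime of $A_\mathfrak{p}/\mathfrak{p} A_\mathfrak{p}$ and contracting back yields a prime of $A$ contracting to $\mathfrak{p}$.

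For connectedness, suppose $Z \subset Y$ is closed and connected but $\pi^{-1}(Z) = W_1 \sqcup W_2$ with $W_i$ nonempty, closed, and disjoint. First I would show each $W_i$ is $T$-stable: for any point $x \in W_i$, its orbit is the image of $T$ (which is integral, hence connected) under the action map, and this orbit lies in $\pi^{-1}(\pi(x)) \subset W_1 \sqcup W_2$, so it must sit entirely in $W_i$.

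Next I would show $\pi(W_1)$ and $\pi(W_2)$ are closed and disjoint in $Z$, working in an affine cover. Over an affine open $U \subset Y$, the sets $W_i \cap \pi^{-1}(U)$ are cut out by $T$-stable ideals $I_{i} \subset A$, and disjointness forces $I_1 + I_2 = A$. Applying the Reynolds retraction $\rho$ (which sends any $T$-stable ideal $I$ to $I^T = I \cap B$, because of the weight grading) to a relation $1 = a_1 + a_2$ with $a_i \in I_i$ gives $1 = \rho(a_1) + \rho(a_2) \in I_1^T + I_2^T$, so $I_1^T + I_2^T = B$. The same purity argument applied to $A/I_i$ over $B/I_i^T$ shows $\pi(W_i \cap \pi^{-1}(U)) = V(I_i^T)$, which is closed in $U$; and $V(I_1^T) \cap V(I_2^T) = \emptyset$. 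Gluing over the affine cover, $\pi(W_1)$ and $\pi(W_2)$ are closed and disjoint in $Y$, with union $\pi(\pi^{-1}(Z)) = Z$ by surjectivity. Connectedness of $Z$ then forces one of them, hence one of the $W_i$, to be empty, a contradiction.

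The main obstacle is the purity/Reynolds step: it is what powers both surjectivity and the key identity $I_1^T + I_2^T = B$. Once one recognizes that the torus weight decomposition splits $B \hookrightarrow A$ as $B$-modules (and likewise respects $T$-stable ideals), the rest is a standard affinization of the arguments, and the only care needed is checking that the formula $\pi(W_i) \cap U = V(I_i^T)$ glues consistently over an affine cover of $Y$, which it does because $\pi$ is affine.
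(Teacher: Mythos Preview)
Your surjectivity argument is essentially the paper's: both reduce to the affine case and use the weight decomposition to see that for a prime $\mathfrak p\subset A_0$ the extension $\mathfrak p A$ is proper (you phrase this via the Reynolds splitting and purity, the paper simply notes $\mathfrak p A\cap A_0=\mathfrak p$ directly from the grading).

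For connectedness you take a genuinely different route. The paper works purely ring-theoretically in a single affine chart: writing $Z=V(I)$ with $I\subset A_0$, it supposes $A/IA\simeq A'\oplus A''$ as rings, passes to degree~$0$ to get $A_0/I=A'_0\oplus A''_0$, and uses connectedness of $\Spec(A_0/I)$ to force one summand to vanish in degree~$0$, hence entirely. This is very short, but the step ``passing to degree~$0$'' tacitly uses that the product decomposition is compatible with the grading, i.e.\ that the decomposing idempotent lies in degree~$0$; that in turn rests on the connectedness of $T$, exactly the same input you use in your orbit argument for $T$-stability of the $W_i$. Your approach instead pushes the topological decomposition $W_1\sqcup W_2$ down to $Z$: once the $W_i$ are $T$-stable, the Reynolds retraction turns $I_1+I_2=A$ into $I_1^T+I_2^T=B$, and surjectivity (applied to each $A/I_i\to B/I_i^T$) identifies $\pi(W_i)$ with the closed set $V(I_i^T)$. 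This is longer and requires a gluing check over an affine cover, but every use of the torus action is explicit, and you get as a byproduct the pleasant general fact that $\pi$ sends disjoint $T$-stable closed subsets to disjoint closed subsets.
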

\begin{proof} The problem reduces to the affine situation
$\pi: X=\Spec(A)\to \Spec A^T$. Then the coaction of $T$ on $A$ determines the gradation $$A=\bigoplus_{\alpha\in \ZZ^n} A_\alpha {\bf t}^\alpha,$$
where $B=A_0$. Then for any prime ideal $p\subset A^T=A_0$, the extended ideal $p	A$ in $A$ is proper, and $p=pA\cap A_0$ is a contracted ideal. This implies that $\pi$ is surjective.

 Let  $I\subset A_0$ be  an ideal such that the scheme $\Spec(A_0/I)$ is connected. Suppose that for the ideal $$ IA=\bigoplus_{\alpha\in \ZZ^n} IA_\alpha {\bf t}^\alpha$$ of $A$ the space $\Spec(A/IA)$ is disconnected. Then there is a nontrivial  ring decomposition $A/IA=A'\oplus A''$, and $(A/IA)_0=A'_0\oplus A''_0$. Hence either the ring $A'_0=0$  or $A''_0=0$. Consequently, either $A'=0$ or $A''=0$, and the decomposition is trivial.
 
\end{proof}

\begin{lemma} \label{good2} The natural morphisms $$\pi_B: B\to B\sslash T_B\simeq X, \quad \pi_{B_+,Y}: B_+\to B_+\sslash T_B\simeq Y$$ are  good quotients.

\end{lemma}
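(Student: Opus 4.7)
The claim is that two relative-Spec constructions give good quotients, so the plan is to unwind the definitions and check the two conditions (affineness and identification with invariants) piece by piece. Both morphisms are affine essentially by construction, so the real content is to identify the subsheaf of $T_B$-invariants in each case.

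First I would observe that the $T_B$-action on $B = \Spec_X(\cA_{Y/X})$ (respectively $B_+ = \Spec_Y(\cC_{Y/X})$) is exactly the one induced by the $\Cl(Y/X)$-grading on $\cA_{Y/X}$ (respectively $\cC_{Y/X}$); under the standard dictionary between torus coactions and gradings, the sheaf of $T_B$-invariants is precisely the degree-$0$ component. Thus identifying the invariants reduces to identifying the degree-zero graded piece in each of the two sheaves.

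For $\pi_B \colon B \to X$, the structure sheaf of $B \sslash T_B$ is by definition $(\pi_{B,*}\cO_B)^{T_B}$; unwinding, this is the degree-$0$ part of $\cA_{Y/X} = \pi_*(\cC_{Y/X})$, which equals $\pi_*(\cC_0) = \pi_*(\cO_Y)$. Since $\pi \colon Y \to X$ is proper birational and $X$ is normal and integral, we have $\pi_*(\cO_Y) = \cO_X$ (as already noted in the proof of Lemma~\ref{divi}), and this produces the required isomorphism $\cO_X \toisom (\pi_{B,*}\cO_B)^{T_B}$. Affineness of $\pi_B$ is automatic from $B = \Spec_X(\cA_{Y/X})$.

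For $\pi_{B_+,Y} \colon B_+ \to Y$, affineness is immediate from $B_+ = \Spec_Y(\cC_{Y/X})$, and the degree-$0$ component of $\cC_{Y/X} = \bigoplus_{E\in \Cl(Y/X)} \cO_Y(E)$ is tautologically $\cC_0 = \cO_Y$, so $(\pi_{B_+,Y,*}\cO_{B_+})^{T_B} = \cO_Y$. Both statements therefore follow directly from the definitions; there is no real obstacle here beyond carefully matching the grading-invariants dictionary with the relative-$\Spec$ constructions.
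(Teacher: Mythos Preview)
Your proof is correct and follows essentially the same approach as the paper's own argument: both identify the $T_B$-invariants with the degree-$0$ graded piece, which is $\pi_*(\cO_Y)=\cO_X$ for $\cA_{Y/X}$ and $\cO_Y$ for $\cC_{Y/X}$. The paper compresses this into two lines of identifications, whereas you spell out the grading--invariants dictionary and the appeal to $\pi_*(\cO_Y)=\cO_X$ more explicitly, but the content is the same.
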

\begin{proof} \begin{align*} &
\Spec(\cO_{B\sslash T_B})=\Spec(\cO_{B}^{T_B})=\Spec_X(\cA_{Y/X})^{T_B}=\Spec_X(\cO_X)=X  \\
&\Spec(\cO_{B_+\sslash T_B})=\Spec(\cO_{B_+}^{T_B})=\Spec_Y (\cC_{Y/X})^{T_B}=\Spec_Y(\cO_Y)=Y
\end{align*}

\end{proof}
\subsubsection{Exceptional divisors on $B=\Cox(Y/X)$} \label{Ex}
Let $\pi:X\to Y$ be a proper birational morphism of normal schemes.
Using the natural birational morphism $i_B: B_+\to B$, one can interpret the notion of the exceptional divisors of $B$.

Any exceptional divisor $E_i$ on $Y$ defines a canonical form $$F_i=t_i^{-1}={\bf t}^{E_i}\in \cO_Y(E){\bf t}^{-1}\subset \cO_{B}$$ on $Y$ of degree $E_i$ which vanishes on $V_Y(F)=E_i$. The form $t_i^{-1}$ also defines a regular homogenous function $t_i^{-1}$ on $B$ of degree $E_i$. Its divisor $D_i:=\divv_{B}(t_i^{-1})$ on $B$ determines the divisor $D_{i+}:=D_{i|B_+}=\divv_{B_+}(t_i^{-1})$ on $B_+$ which maps to $E_i$. 
 \begin{lemma} \label{Ex2}
The natural quotient morphism $\pi_{B_+,Y}: B_+\to Y$, (respectively $\pi_{B}: B\to X$) takes the exceptional divisors $D_{i+}=V_{B_+}(t_i^{-1})$ (respectively $D_i=V_{B}(t_i^{-1})$)
 surjectively onto $E_i$ (respectively surjectively to the center of $Z_{X}(\nu_i)=V_X(\cI_{\nu_i,1,X})$ of the valuation $\nu_i$). Moreover the induced morphism $D_{i+}\to E_i$ (resp. $D\to Z_{X}(\nu_i)$) is defined by the good quotient.
 \qed
 	
 \end{lemma}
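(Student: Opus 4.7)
The plan is to reduce both parts of the lemma to a single ideal-theoretic calculation: the degree-zero component of the homogeneous principal ideal $(t_i^{-1})$, first inside $\cC_{Y/X}$ and then inside $\cA_{Y/X}=\pi_*\cC_{Y/X}$. Once this is pinned down, both assertions follow from the general fact that a relatively affine good torus quotient restricts to a good torus quotient on any invariant closed subscheme, with image cut out by the degree-zero part of the defining ideal; this is because extracting $T_B$-invariants from a $\Cl(Y/X)$-graded module is the exact functor of projection onto the degree-zero summand.

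The element $t_i^{-1}=1\cdot{\bf t}^{E_i}$ sits in degree $E_i$, so $(t_i^{-1})$ is homogeneous and $D_{i+}\subset B_+$, $D_i\subset B$ are $T_B$-invariant closed subschemes. An element $t_i^{-1}\cdot h$ with $h\in \cC_\alpha$ lies in degree $\alpha+E_i$, hence the degree-zero part of $(t_i^{-1})$ is obtained by taking $h\in \cC_{-E_i}=\cO_Y(-E_i)\cdot{\bf t}^{-E_i}$; multiplying gives the subsheaf $\cO_Y(-E_i)\subset \cO_Y=\cC_0$. Since $-E_i$ is anti-effective and $Y$ is normal, $\cO_Y(-E_i)$ coincides with the ideal sheaf $\cI_{E_i}$ of $E_i$ in $Y$. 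Pushing forward to $X$ and applying Lemma \ref{divi}(3) to $E=-E_i$ (only the coefficient $n_i=-1$ is negative) identifies the degree-zero part of $(t_i^{-1})$ inside $\cA_{Y/X}$ as $\pi_*\cO_Y(-E_i)=\cI_{\nu_i,1,X}$.

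Taking $T_B$-invariants of the short exact sequence $0\to\cO_Y(-E_i)\to\cO_Y\to\cO_{E_i}\to 0$ and its pushforward to $X$, the good quotients of $D_{i+}$ and $D_i$ come out as $\Spec_Y(\cO_{E_i})=E_i$ and $\Spec_X(\cO_X/\cI_{\nu_i,1,X})=Z_X(\nu_i)$, respectively, which is exactly the identification of target claimed in the lemma. Surjectivity of the two restricted quotient morphisms is standard for torus quotients and follows by the same argument as Lemma \ref{good}: for any prime $p$ of the invariant ring $A^{T_B}$ one has $pA\cap A^{T_B}=p$ (by picking off degree-zero components), so $pA$ is proper and contracts from some prime of $A$. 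The only obstacle is careful bookkeeping in the grading to extract the degree-zero component of $(t_i^{-1})$; the essential external input is Lemma \ref{divi}(3).
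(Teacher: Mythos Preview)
Your proposal is correct and follows essentially the same approach as the paper's proof. Both compute the degree-zero piece of $\cC_{Y/X}/(t_i^{-1})$ (respectively $\cA_{Y/X}/(t_i^{-1})$) as $\cO_Y/\cO_Y(-E_i)=\cO_{E_i}$ (respectively $\cO_X/\cI_{\nu_i,1,X}$), identify this with the target of the restricted quotient, and invoke Lemma~\ref{good} for surjectivity; the only cosmetic difference is that the paper writes out the full graded quotient $\bigoplus_E \cO_Y(E)/\cO_Y(E-E_i)$ before extracting degree zero, while you isolate the degree-zero part of the ideal first and appeal to Lemma~\ref{divi}(3) explicitly for the $B\to X$ case.
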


\begin{proof}

\begin{align*} &D_{i+}=V_{B_+}(t_i^{-1})=\Spec_Y \cC_{Y/X}/t_i^{-1}\cC_{Y/X}=\\ &= \Spec_Y (\bigoplus (\cO_Y(E)/\cO_Y(E-E_i)){\bf t}^E)\quad \to \\\to &\quad\Spec_Y((\cC_{Y/X}/(t_i^{-1}\cC_{Y/X})_0=\Spec(\cO_Y/\cO_Y(-E_i))=E_i\end{align*} 
Thus the morphism $D_{i+}\to Y$ is defined by the good quotient and is surjective by Lemma \ref{good}.
The proof for the divisors $D_i$ is similar.
\end{proof}

\begin{definition} The divisors $D_i=V_B(t_i^{-1})$, respectively $D_{i+}=V_{B_+}(t_i^{-1})$  will be called the {\it exceptional divisors} of $B=\Cox(Y/X)\to X$, respectively of $B_+\to Y$.
 	
 \end{definition}
 
 \begin{lemma} \label{irreducible} The divisors $D_i=V_B(t_i^{-1})$   on $B$ and $D_{i+}=V_{B_+}(t_i^{-1})$ on $B_+$ are irreducible.
\end{lemma}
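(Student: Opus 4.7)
The plan is to reduce irreducibility to a stalk-level computation at the generic point $\eta_i$ of $E_i$ (respectively at the generic point of $Z_X(\nu_i)$), where the Cox sheaf has a particularly clean shape, and then to transfer the result from $D_{i+}$ to $D_i$ via the natural birational morphism $B_+ \to B$.

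First I would verify that $B_+$ and $B$ are integral. Every stalk of $\cC_{Y/X}$ embeds into the Laurent polynomial ring $\kappa(Y)[t_1^{\pm 1},\ldots,t_k^{\pm 1}]$, so $\cC_{Y/X}$ is a sheaf of integral domains over the integral scheme $Y$; hence $B_+ = \Spec_Y \cC_{Y/X}$ is integral. The same argument applied to $\cA_{Y/X} \hookrightarrow \kappa(X)[t_1^{\pm 1},\ldots,t_k^{\pm 1}]$ shows $B$ is integral, and both schemes share the fraction field $\kappa(Y)(t_1,\ldots,t_k)$. In particular $t_i^{-1}$ is a non-zero-divisor, so $D_{i+}$ and $D_i$ are effective Cartier divisors and every one of their irreducible components has codimension one by Krull's principal ideal theorem.

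Next I would compute $\cC_{Y/X,\eta_i}$ explicitly. Normality of $Y$ makes $\cO_{Y,\eta_i}$ a DVR; choose a uniformizer $\varpi_i$ (a local equation of $E_i$ near $\eta_i$). Since no $E_j$ with $j\neq i$ passes through $\eta_i$, the $\Cl(Y/X)$-grading yields
\[
\cC_{Y/X,\eta_i} \;=\; \cO_{Y,\eta_i}\bigl[s_i^{\pm 1},\,t_j^{\pm 1}\bigr]_{j\neq i}, \qquad s_i := \varpi_i t_i,
\]
and in these coordinates $t_i^{-1} = \varpi_i s_i^{-1}$, so $(t_i^{-1}) = (\varpi_i)$. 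The quotient ring $\kappa(E_i)[s_i^{\pm 1},t_j^{\pm 1}]_{j\neq i}$ is a Laurent polynomial ring over a field and hence a domain. Therefore $(t_i^{-1})$ is prime in $\cC_{Y/X,\eta_i}$, and $D_{i+}$ has a unique generic point $\xi_i$ lying above $\eta_i$.

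To conclude I would show that every irreducible component $W$ of $D_{i+}$ dominates $E_i$, so its generic point must equal $\xi_i$. By Lemma~\ref{Ex2} we have $\pi_{B_+,Y}(W) \subseteq E_i$. The local model just described extends to the Cartier locus of the $E_j$'s, which by normality contains the regular locus $Y^{\reg}\subset Y$ (with complement of codimension $\geq 2$); on $Y^{\reg}$ the morphism $\pi_{B_+,Y}\colon B_+ \to Y$ is a trivial $(\GG_m)^k$-bundle. Consequently $\dim B_+ = \dim Y + k$, any codimension-one $W$ has dimension $\dim Y + k - 1$, and the generic fibre of $\pi_{B_+,Y}$ has dimension $k$. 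If $\pi_{B_+,Y}(W) \subsetneq E_i$ then $\dim \pi_{B_+,Y}(W) \leq \dim Y - 2$, forcing the relative fibres of $\pi_{B_+,Y}|_W$ to exceed dimension $k$, a contradiction. Hence $W$ dominates $E_i$ and its generic point is $\xi_i$, so $D_{i+}$ is irreducible. For $D_i \subset B$ the identical stalk argument runs at the generic point $\xi'_i = \pi(\eta_i)$ of $Z_X(\nu_i) = V_X(\cI_{\nu_i,1,X})$, where the uniformizer of the DVR $R_{\nu_i}\supset \cO_{X,\xi'_i}$ plays the role of $\varpi_i$; alternatively, since $B_+ \to B$ is birational (same fraction field) and $D_{i+} \to D_i$ is dominant with irreducible source, the closure $D_i$ is irreducible.

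The main obstacle is the dimension bookkeeping in the third step: the torus-bundle description requires the $E_j$'s to be Cartier, which is clean only on $Y^{\reg}$, so one must confirm that no hidden irreducible component of $D_{i+}$ lives exclusively over the codimension-$\geq 2$ singular locus of $Y$. This is precluded by the codimension-one stalk computation at $\eta_i$ together with the generic fibre estimate, both carried out inside $Y^{\reg}$.
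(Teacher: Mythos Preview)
Your stalk computation at $\eta_i$ is correct, but the dimension argument meant to rule out further components has a genuine gap. The torsor description of $\pi_{B_+,Y}$ is only available where every $E_j$ is Cartier, and over $Y^{\sing}$ the fibre $\Spec\bigl(\bigoplus_E \cO_Y(E)\otimes\kappa(y)\bigr)$ need not have dimension $\leq k$: for a good quotient $Z\to Z\sslash T$ by a torus nothing bounds the fibre dimension by $\dim T$ (think of $\GG_m$ on $\AA^2$ with weights $(1,1)$). So a codimension-one component $W\subset D_{i+}$ lying over $E_i\cap Y^{\sing}$ is not excluded by your fibre estimate, and the closing sentence ``this is precluded by the codimension-one stalk computation at $\eta_i$ together with the generic fibre estimate'' does not actually close the gap. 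The paper moreover works over arbitrary noetherian normal schemes, where the catenary step ``$\codim W=1\Rightarrow \dim W=\dim B_+-1$'' is not available either. For $D_i$ both of your routes also fail: the local ring $\cO_{X,\xi'_i}$ at the generic point of $Z_X(\nu_i)$ has dimension $\geq 2$ (the centre of an exceptional divisor has codimension $\geq 2$ in $X$), so there is no uniformizer to play the role of $\varpi_i$; and dominance of $D_{i+}\to D_i$ is not established at this stage, since $D_i$ lies entirely over $X\setminus U_\pi$, outside the locus where $i_B\colon B_+\to B$ is known to be an isomorphism.

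The fix is a one-line upgrade of your own calculation, and it is what the paper does. Rather than localising only at $\eta_i$, check that $t_i^{-1}$ is a prime homogeneous element of $\cA_{Y/X}(U)$ (and of $\cC_{Y/X}(V)$) over every affine open, using nothing but $\nu_i(fg)=\nu_i(f)+\nu_i(g)$: if $t_i^{-1}\nmid f{\bf t}^E$ and $t_i^{-1}\nmid g{\bf t}^{E'}$ then the $E_i$-coefficient of $\divv(f)+E$ and of $\divv(g)+E'$ is zero, so $t_i^{-1}\mid (f{\bf t}^E)(g{\bf t}^{E'})$ would force $\nu_i(fg)>\nu_i(f)+\nu_i(g)$. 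This gives local irreducibility of $D_i$ and $D_{i+}$ everywhere; combined with connectedness (each is itself a good $T_B$-quotient over the irreducible base $Z_X(\nu_i)$, respectively $E_i$, so Lemma~\ref{good} applies) one obtains global irreducibility with no dimension counting.
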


\begin{proof} By Lemma \ref{good}, the divisors $D_i$ are connected, so it suffices to show that they are locally irreducible.
We can assume that $X$ is affine.
It suffices to show that $$t_i^{-1}={\bf t}^{E_i}\in \cO(B)=\cA_{Y/X}(X)$$ is  a prime element. The latter can be verified for the homogenous elements. Let $\nu_i$ be the valuation on $X$ associated with $E_i\subset Y$.

Let $$t_i^{-1}=t^{E_i}|({\bf t}^E\cdot f)({\bf t}^{E'} \cdot g)={\bf t}^{E+E'}fg $$ where $f\in \cO_X(E)$, and $g\in \cO_X(E')$, and suppose $t_i^{-1}$ does not divide both $({\bf t}^E\cdot f)$, and $({\bf t}^{E'} \cdot g)$.
The first assumption implies that ${\bf t}^{E+E'-E_i}fg\in \cO(B)$. So $fg\in \pi_*(\cO_Y(E+E'-E_i))$.  

Write  the presentations $E=\sum n_jE_j$ and 
$E'=\sum n'_jE_j$. Then, by the assumption  $\nu_i(f)=n_i$ and $\nu_i(g)=n'_i$.
Thus, by Proposition \ref{valu}, and the assumptions on $f$ and $g$, we have
$$\nu_i(fg)>n_i+n'_i=\nu_i(f)+\nu_i(g),$$ which is a contradiction since $\nu_i$ is a valuation. 
The  same reasoning works for $D_{i+}$. 
\end{proof}


\subsubsection{Morphisms of Cox spaces}
The following result is analogous to \\
\cite{Cox2}[Construction 1.6.3.1] for the Cox spaces of varieties.
\begin{proposition} 
Let $\pi: Y\to X$ is a proper birational morphism of normal schemes, and ${T_B}:=\Spec(\ZZ[\Cl(Y/X)])$.  Let $E$ be the exceptional divisor with the components $E_i$.
Denote by $U_\pi:=Y\setminus E\subset Y$ the open subset of $Y$, which can be identified  with the open subset of $X$, where $Y\to X$ is an isomorphism. Let $\pi_B: B \to X$,  and  $\pi_{B_+,Y}:  B_+ \to Y$ be the natural projections.

There is a natural  ${T_B}$-equivariant birational morphism $$ i_B:B_+=\Cox(Y/X)_+\to B=\Cox(Y/X).$$
over $X$, which is an isomorphism over $U_\pi$, with $$\pi_B^{-1}(U_\pi)=\pi_{B_+,Y}^{-1}(U_\pi)=U_\pi\times {T_B},$$ and such that $\pi_B^{-1}(D_i)=D_{i+}$.

Moreover, the morphism $i_B$ induces the  morphism of the good quotients: 
$$\pi: B_+\sslash {T_B}=Y\,\, \to \,\, B\sslash {T_B}=X.$$

\end{proposition}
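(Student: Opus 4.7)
The plan is to build $i_B$ from the tautological identification $\cA_{Y/X} = \pi_*\cC_{Y/X}$ of Proposition~\ref{valu}, and then verify each claimed property in turn. The counit of the adjunction $(\pi^*,\pi_*)$ gives a morphism of graded quasi-coherent $\cO_Y$-algebras $\pi^*\cA_{Y/X} \to \cC_{Y/X}$, and applying relative Spec produces a canonical $X$-morphism
\[
i_B : B_+ = \Spec_Y \cC_{Y/X} \longrightarrow Y \times_X B \longrightarrow B.
\]
Because this adjunction preserves the $\Cl(Y/X)$-grading on both sides, $i_B$ is automatically $T_B$-equivariant.

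Over $U_\pi$, the map $\pi|_{U_\pi}$ is an isomorphism and $U_\pi$ meets no exceptional divisor, so $\cO_Y(E)|_{U_\pi} = \cO_{U_\pi}$ for every $E\in \Cl(Y/X)$; hence
\[
\cC_{Y/X}|_{U_\pi} \;\simeq\; \cO_{U_\pi}[t_1^{\pm 1},\ldots,t_k^{\pm 1}] \;\simeq\; \cA_{Y/X}|_{U_\pi}.
\]
This identifies $\pi_B^{-1}(U_\pi)$ and $\pi_{B_+,Y}^{-1}(U_\pi)$ both with $U_\pi \times T_B$, and $i_B$ restricts to the identity on this common trivial open. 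Since $U_\pi$ is dense in $X$ and in $Y$, these trivial opens are dense in $B$ and $B_+$, so $i_B$ is birational. For the divisor preimage, the homogeneous section $t_i^{-1} \in \cA_{Y/X}$ (defining $D_i \subset B$) pulls back under the adjunction to the section $t_i^{-1} \in \cC_{Y/X}$ (defining $D_{i+} \subset B_+$), so $i_B^{-1}(D_i) = D_{i+}$ as subschemes.

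Finally, the good quotient claim follows from Lemma~\ref{good2}: we have $B\sslash T_B = X$ and $B_+\sslash T_B = Y$, and the commuting square of $X$-morphisms
\[
\begin{array}{ccc}
B_+ & \xrightarrow{\;i_B\;} & B \\
\downarrow & & \downarrow \\
Y & \xrightarrow{\;\pi\;} & X
\end{array}
\]
forces the induced morphism of good quotients to coincide with $\pi$, by the universal property of good quotients. The main (mild) obstacle I anticipate is explicitly verifying that the adjunction counit $\pi^*\cA_{Y/X} \to \cC_{Y/X}$ respects the $\Cl(Y/X)$-grading, which is what guarantees the $T_B$-equivariance of $i_B$; this reduces to the standard fact that on each homogeneous piece $\pi^*\pi_*\cO_Y(E) \to \cO_Y(E)$ is the tautological evaluation map, so it is immediate from the definition of $\cA_{Y/X}$.
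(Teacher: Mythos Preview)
Your proposal is correct and takes essentially the same approach as the paper. The paper constructs $i_B$ affine-locally on $X$ by observing that for affine $U\subset X$ one has $\Gamma(\pi_B^{-1}(U),\cO_B)=\cA_{Y/X}(U)=\cC_{Y/X}(\pi^{-1}(U))=\Gamma(\pi_{B_+,Y}^{-1}(\pi^{-1}(U)),\cO_{B_+})$ and uses affineness of $\pi_B^{-1}(U)$ to produce a morphism from this ring identification; your adjunction-counit formulation $\pi^*\pi_*\cC_{Y/X}\to\cC_{Y/X}$ is exactly the global packaging of this same map, and the remaining verifications (triviality over $U_\pi$, pullback of $t_i^{-1}$, passage to invariants) are identical in substance.
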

\begin{proof}


For any open affine $U\subset X$, we have the natural identifications 
$$\Gamma(U,\pi_*(\cC_{Y/X}))=\Gamma(\pi_B^{-1}(U),\cO_{B})$$
and
$$\Gamma(U,\pi_*(\cC_{Y/X}))=\Gamma(\pi^{-1}(U),\cC_{Y/X})=\Gamma(\pi_{B_+,Y}^{-1}(\pi^{-1}(U)),\cO_{B_+})$$Combining both equalities gives us:
$$\Gamma(\pi_B^{-1}(U),\cO_{B})=\Gamma(\pi_{B_+,Y}^{-1}(\pi^{-1}(U)),\cO_{B_+}).$$ 

Since  
$\pi_B^{-1}(U)\subset \Cox(Y/X)$ is affine we obtain a natural morphism $$\phi_U: \pi_{B_+,Y}^{-1}(\pi^{-1}(U))\to \pi_B^{-1}(U)$$ over $U$ induced by the isomorphisms on global sections. The constructed morphisms are functorial for open embeddings $U\subset V$ of affine subsets on $X$ and glue  to a global morphism $B_+\to B$. 

The morphism $B_+\to B$ is birational as it is an isomorphism over $U_\pi\subset X$. Moreover  $i_B$ is an isomorphism over $U_\pi$: $$\pi_B^{-1}(U_\pi)=\pi_{B_+,Y}^{-1}(U_\pi)=\Spec_{U_\pi}(\bigoplus_{E\in \Cl(Y/X)}\cO_{U_\pi}{\bf t}^E)=
U_\pi\times {T_B}.$$

By the construction,   $$\pi_B^{-1}(D_i)=\pi_B^{-1}(V_B(t_i^{-1}))=V_{B_+}(t_i^{-1})=D_{i+}.$$

Locally  for any open affine $V\subset \pi^{-1}(U)$ the induced homomorphisms $$\cO_B(\pi_B^{-1}(U))=\Gamma(U,\pi_*(\cC_{Y/X}))=\Gamma(\pi^{-1}(U),\cC_{Y/X})\to \Gamma(V,\cC_{Y/X})=\cO_{B_+}(\pi_{B_+,Y}^{-1}(V))$$ determine the homomorphisms 
$$(\cO_B(\pi_B^{-1}(U)))^T=\Gamma(U,\pi_*(\cC_{Y/X})^T)\to \Gamma(V,\cC_{Y/X}^T)=\cO_{B_+}((\pi_{B_+,Y}^{-1}(V)))^T,$$ and  define the global  morphism $B_+\sslash T_B=Y\to B\sslash T_B=X$.

\end{proof}
\subsubsection{Cobordization}
\begin{definition} Let $\pi: Y\to X$ be  a proper birational morphism. Then the morphism $\pi_B:B=\Cox(Y/X)\to X$, (respectively $\pi_{B_+,Y}:B_+=\Cox(Y/X)_+\to X$) will be called the {\it full cobordization of $\pi$} (respectively the {\it cobordization} of $\pi$). 

If $\cI$ is an ideal on $X$, then by the {\it full cobordant blow-up $\sigma: B\to X$ at $\cI$} (respectively  cobordant blow-up $\sigma_+: B_{+}\to X$ at $\cI$ we mean the full cobordization (respectively cobordization) of the normalized blow-up $bl_\cJ(X)\to X$.

\end{definition}

\subsubsection{The Cox trivial space}

\begin{lemma} Let $\pi: Y\to X$ be a proper birational morphism of normal schemes, and $E=\bigcup E_i$ be its exceptional divisor. Let $U_\pi=Y\setminus E\subset X$  be the maximal open subset of $X$ and of $Y$ where $\pi$ is an isomorphism exactly. Then the Cox trivial space is 
$B_-=X\times {T_B}$. Moreover we have $$i_B^{-1}(B_-)=B_+\times_BB_-=U_\pi\times {T_B}.$$
	
\end{lemma}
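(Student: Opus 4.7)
I would prove the two equalities separately. For $B_-=X\times T_B$, the plan is to localize the Cox algebra $\cA_{Y/X}$ of Proposition~\ref{valu} at the homogeneous element $t_1^{-1}\cdots t_k^{-1}$. The graded piece of $\cA_{Y/X}$ at multi-index $(a_1,\ldots,a_k)$ is $\bigcap_i\cI_{\nu_i,a_i}\cdot t_1^{a_1}\cdots t_k^{a_k}$. For any target multi-index $(b_1,\ldots,b_k)\in\ZZ^k$ and any $n\in\ZZ_{\geq 0}$ with $b_i-n\leq 0$ for all $i$, the coefficient $\bigcap_i\cI_{\nu_i,b_i-n}$ equals $\cO_X$, and multiplication by $(t_1\cdots t_k)^n$ carries it into degree $(b_1,\ldots,b_k)$. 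Hence every graded piece of the localization is $\cO_X$, the localization equals $\cO_X[t_1^{\pm},\ldots,t_k^{\pm}]$, and its relative spectrum over $X$ is $X\times T_B$.

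For the second identity, the equality $i_B^{-1}(B_-)=B_+\times_B B_-$ is the standard description of pullback along the open immersion $B_-\hookrightarrow B$, and the identification $i_B^{-1}(D_i)=D_{i+}$ from the previous proposition yields $i_B^{-1}(B_-)=B_+\setminus\bigcup_i D_{i+}$. Combined with $\pi_{B_+,Y}^{-1}(U_\pi)=U_\pi\times T_B$, also from the previous proposition, this reduces the claim to the set-theoretic equality $\bigcup_i D_{i+}=\pi_{B_+,Y}^{-1}(E)$. The inclusion $\subseteq$ is immediate: over $U_\pi$ the Cox sheaf $\cC_{Y/X}$ restricts to the Laurent ring $\cO_{U_\pi}[t_1^{\pm},\ldots,t_k^{\pm}]$, so each $t_i^{-1}$ is a unit there and $D_{i+}=V_{B_+}(t_i^{-1})$ is disjoint from $\pi_{B_+,Y}^{-1}(U_\pi)$.

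For the reverse inclusion, I would argue by contradiction: suppose $x\in B_+$ lies over $y\in E$ but $t_i^{-1}(x)\neq 0$ for all $i$. Then $\mathfrak{p}_x\subset\cC_{Y/X,y}$ extends to a proper prime in the localization at $\prod_i t_i^{-1}$. A degree-shift analogous to the one for $B_-$ identifies the degree-zero component of this localization with the stalk $(j_*\cO_{U_\pi})_y$, where $j\colon U_\pi\hookrightarrow Y$ is the open immersion. Since $y\in E$, one would produce $f\in\m_y$ whose zero locus is set-theoretically contained in $E$ near $y$; then $1/f\in (j_*\cO_{U_\pi})_y$ and $1=f\cdot (1/f)\in\m_y\cdot (j_*\cO_{U_\pi})_y\subseteq\mathfrak{p}_x$, contradicting the properness of $\mathfrak{p}_x$. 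The main technical hurdle is the construction of such $f$ when some $E_i$ is not Cartier at $y$: it follows from prime avoidance in $\cO_{Y,y}$, using $\cO_Y(-E)|_{U_\pi}=\cO_{U_\pi}$ to show that $\cO_Y(-E)_y$ lies in no codimension-$1$ prime of $\cO_{Y,y}$ outside those corresponding to the $E_i$'s through $y$.
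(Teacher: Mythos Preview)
Your computation of $B_-=X\times T_B$ is correct and is exactly the paper's argument: localize the description $\cA_{Y/X}=\bigoplus_\alpha\bigl(\bigcap_i\cI_{\nu_i,a_i}\bigr)\,{\bf t}^\alpha$ from Proposition~\ref{valu} at $\prod_i t_i^{-1}$ and observe that every graded coefficient becomes $\cO_X$.

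For the second equality your reduction to $\pi_{B_+,Y}^{-1}(E)\subseteq\bigcup_iD_{i+}$ is the right move, and you have correctly located the only nontrivial step. The gap is in your ``prime avoidance'' fix. Prime avoidance produces an element of an ideal avoiding a given \emph{finite} family of primes; what you need is $f\in\m_y$ lying in no height-$1$ prime of $\cO_{Y,y}$ other than the $\mathfrak q_i$, and there are infinitely many height-$1$ primes to avoid. Knowing that the ideal $\cO_Y(-E)_y$ is contained in none of them individually does not give a single element avoiding all of them. In fact, the existence of such an $f$ is precisely the statement that some nonzero effective combination $\sum n_iE_i$ is Cartier at $y$; this holds whenever $Y$ is locally $\QQ$-factorial (so in particular in all the locally toric situations the paper actually uses), but it is not automatic for an arbitrary normal $Y$. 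As a model for what can go wrong with your argument, take $\cO_{Y,y}$ to be the local ring at the vertex of the cone over an elliptic curve and $E_1$ a ruling whose class is non-torsion: then no nonzero multiple of $E_1$ is principal and no $f$ as you describe exists.

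It is worth noting that the paper's own proof is no more careful at this point: it passes directly from $\Spec_Y\bigl((\bigoplus_E\cO_Y(E)\,{\bf t}^E)[t_i^{\pm1}]\bigr)$ to $\Spec(\cO_{U_\pi}[t_i^{\pm1}])$ without explaining why the relative spectrum has empty fibre over points of $E$. So you have in fact probed the argument more deeply than the paper does; you have found the real issue, just not closed it. If you are willing to assume that the exceptional divisors are $\QQ$-Cartier on $Y$ (or that $Y$ is locally $\QQ$-factorial), your argument is complete: take $f$ to be a local equation of $\sum n_iE_i$ for suitable $n_i$, and proceed exactly as you wrote.
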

\begin{proof}
By Proposition \ref{valu} we have\begin{align*} & B_-:=B\setminus \bigcup_{i=1}^k D_i= B\setminus V(t_1^{-1}\cdot\ldots\cdot t_k^{-1})=\Spec(\cO_X[ t_1,t_1^{-1},
\ldots,t_k,t_k^{-1}])=X\times {T_B}
\\
& B_+\times_BB_-:=B_+\setminus \bigcup_{i=1}^k D_i= B_+\setminus V_{B_+}(t_1^{-1}\cdot\ldots\cdot t_k^{-1})=
\\& =\Spec(\bigoplus_{E\in \Cl(Y/X)} \cO_{Y}(E){\bf t}^E)[ t_1,t_1^{-1},
\ldots,t_k,t_k^{-1}])=\Spec(\cO_{U_\pi}[ t_1,t_1^{-1},
\ldots,t_k,t_k^{-1}])
\end{align*}

\end{proof}
 \subsection{Open immersion of Cox spaces}
\subsubsection{Generating forms}



\begin{lemma} \label{cover3} Let $X$ be an affine scheme  and $\pi: Y\to X$ be a  proper birational morphism of normal integral schemes.
Assume that $Y_{F}$ is affine, for a certain  form $F=f{\bf t}^{-E}$ on $Y$, with $f\in H^0(Y,\cO_Y(-E))=H^0(X,\pi_*(\cO_Y(-E))$. Then $(B_+)_F=B_F$ is affine
and 
$\pi_{B_+,Y}^{-1}(Y_{F})=(B_+)_F$. 
Moreover $$C_{Y/X}(Y_{F})=(C_{Y/X}(Y))_{F}=(H^0(B,\cO_B))_F.$$
\end{lemma}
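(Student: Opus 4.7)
The plan is to reduce both the geometric claim $(B_+)_F=B_F=\pi_{B_+,Y}^{-1}(Y_F)$ and the algebraic claim $\cC_{Y/X}(Y_F)=\cC_{Y/X}(Y)_F=H^0(B,\cO_B)_F$ to one local identity, and then verify that identity by a Weil-divisor calculation. Since $X$ is affine and $\pi_*\cC_{Y/X}$ is quasi-coherent, $B=\Spec(\cC_{Y/X}(Y))$, so $H^0(B,\cO_B)=\cC_{Y/X}(Y)=H^0(B_+,\cO_{B_+})$ and $B_F=\Spec(\cC_{Y/X}(Y)_F)$ is automatically affine. The morphism $\pi_{B_+,Y}$ is affine, and $Y_F$ is affine by hypothesis, so $\pi_{B_+,Y}^{-1}(Y_F)$ is affine as well.

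Next, I cover $Y$ by open affines $V$; then $\pi_{B_+,Y}^{-1}(V)=\Spec(\cC_{Y/X}(V))$ and $(B_+)_F\cap\pi_{B_+,Y}^{-1}(V)=\Spec(\cC_{Y/X}(V)_F)$. It therefore suffices to show the local identity
\[
\cC_{Y/X}(V)_F=\cC_{Y/X}(V\cap Y_F)
\]
for every open affine $V\subset Y$. Glueing over $V$ yields $(B_+)_F=\pi_{B_+,Y}^{-1}(Y_F)$, and taking $V=Y$ simultaneously gives $\cC_{Y/X}(Y)_F=\cC_{Y/X}(Y_F)$; combining these identifies $(B_+)_F$ with $\Spec(\cC_{Y/X}(Y_F))=\Spec(\cC_{Y/X}(Y)_F)=B_F$.

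To establish the local identity I unwind the grading. Since $F=f\mathbf{t}^{-E}$ sits in degree $-E$, the degree-$D$ piece of $\cC_{Y/X}(V)_F$ is the direct limit $\varinjlim_n \cO_Y(D-nE)(V)$ with transition maps ``multiplication by $f$'', and the natural arrow into $\cO_Y(D)(V\cap Y_F)$ sends $g\in\cO_Y(D-nE)(V)$ to $g/f^n\in\kappa(Y)$. The inclusion $\subseteq$ is immediate from $\divv(f)\geq E$ on $Y$. For the reverse, given $h\in\cO_Y(D)(V\cap Y_F)$ I must find $n$ with $\divv(f^nh)+(D-nE)\geq 0$ on all of $V$. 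At prime divisors $P\subset V$ not contained in $V_Y(F)=\supp(\divv(f)-E)$ one has $\ord_P(f)=\ord_P(E)$, so the inequality reduces to $\ord_P(h)+\ord_P(D)\geq 0$, which holds because such a $P$ meets $V\cap Y_F$ at its generic point. At prime divisors $P\subset V_Y(F)\cap V$ one has $\ord_P(f)-\ord_P(E)>0$, so $n(\ord_P(f)-\ord_P(E))$ eventually dominates $-\ord_P(h)-\ord_P(D)$; picking $n$ large enough to work at all such $P$ finishes the argument.

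The hard part is this last finiteness step: I need that $V_Y(F)$ has only finitely many divisorial components on $V$ and that $h$ has only finitely many poles, both of which follow from the standing noetherian hypothesis on $Y$. The Weil-divisor bookkeeping has to be done without assuming that $\cO_Y(-E)$ is a line bundle, which is why I phrase everything in terms of the order functions $\ord_P$ rather than local trivializations.
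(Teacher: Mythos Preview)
Your proof is correct and follows essentially the same route as the paper: the heart of both arguments is the Weil-divisor calculation that for any section $G$ of $\cC_{Y/X}$ over $Y_F$, the product $G\cdot F^n$ extends to a section over all of $Y$ once $n\gg 0$, since $\divv_Y(F)=\divv(f)-E$ is effective with support exactly $Y\setminus Y_F$. The only organizational difference is that you package this as a local identity $\cC_{Y/X}(V)_F=\cC_{Y/X}(V\cap Y_F)$ on affine opens and glue, whereas the paper argues globally by first observing that $F$ is invertible in each stalk over $Y_F$ (giving the open immersion $\pi_{B_+,Y}^{-1}(Y_F)\hookrightarrow (B_+)_F$) and then matching global sections; one small caveat is that when you invoke ``taking $V=Y$'' you should note that your divisor argument does not actually use affineness of $V$, so it applies verbatim to $V=Y$.
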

\begin{proof}

If $y\in Y_{F}$ then $F=f{\bf t}^{-E}$ is invertible in the stalk $(\cC_{Y/X})_y$. 
 Indeed $\divv(f{\bf t}^{-E})=0$ at $y$ so $E=\divv(f)$ is principal at $y$, and thus $(f{\bf t}^{-E})^{-1}=f^{-1}{\bf t}^E$ is the inverse of $f{\bf t}^{-E}$. This shows that the form $F$ is invertible in $\cC_{Y/X}(Y_F)$, and the function $F$ is invertible on the scheme $\pi_{B_+,Y}^{-1}({Y}_F)\subset B_+$.  Thus we have an open immersion $\pi_{B_+,Y}^{-1}(Y_{F})\hookrightarrow B_{+F}$. Since $F$ is invertible on $\pi_{B_+,Y}^{-1}(Y_{F})$ the natural homorphism $\cC_{Y/X}(Y)\to \cC_{Y/X}(Y_F)$ factors through the localization $(\cC_{Y/X}(Y))_F\to \cC_{Y/X}(Y_F)$.

On the other hand if $$G=g{\bf t}^{-E'}\in C_{Y/X}(Y_F)$$ is a form on $Y_F$ then, by definition, $$\divv_Y(G\cdot F^n)=\divv_Y(G)+n\cdot \divv_Y(F)\geq 0$$ on $Y$  for sufficiently large $n$.
 Hence $G\cdot F^n\in C_{Y/X}$. This shows that $(\cC_{Y/X}(Y))_F\to \cC_{Y/X}(Y_F)$ is  surjective. But this morphism is 
defined by the restrictions of forms, so functions on open subsets of $B_+$, and thus it is also injective. Hence it is an isomorphism.

This defines an isomorphism of the global sections 
\begin{align*} (\cC_{Y/X}(Y))_F= H^0(B_+,\cO_{B_+})_F=H^0((B_+)_F,\cO_{B_+})\\\to H^0(\pi_{B_+,Y}^{-1}(Y_{F}),\cO_{B_+})=\cC_{Y/X}(Y_F) 
\end{align*}

If $Y_F$ is affine then we obtain then $\pi_{B_+,Y}^{-1}(Y_{F})$ is also affine, and the open immersion $\pi_{B_+,Y}^{-1}(Y_{F})\hookrightarrow (B_+)_F$ has the left inverse $(B_+)_F\to \pi_{B_+,Y}^{-1}(Y_{F})$ determined by the global sections. Since the schemes are separated, it is an isomorphism.


 Finally we observe that $H^0(X,\pi_*(\cO_Y(E))=H^0(Y,\cO_Y(E)$. Hence 
 $$H^0(Y,\cC_{Y/X})=H^0(X,\pi_*(\cC_{Y/X}))=H^0(X, \cA_{Y/X})=H^0(B,\cO_B),$$
and $$H^0(B_F,\cO_B)=\cA_{Y/X}(X))_F=\cC_{Y/X}(Y))_F=\cC_{Y/X}(Y_F).$$




\end{proof}



\subsubsection{Irrelevant ideal and open immersion of Cox spaces}
The notion of {\it irrelevant ideals} was used in \cite{Cox2} in the context of Cox rings. Here we consider the analogous definition and results for morphisms.

\begin{proposition}
 \label{cover2}
Let $\pi: Y\to X$ be a proper birational morphism of normal schemes. Assume that $X$ can be covered by open subsets $X_i$ such $Y_i:=\pi^{-1}(X_i)$ admits an open affine cover $(Y_i)_{F_j}$, where $F_{ij}=f_{ij}{\bf t}^{-E_{ij}}$ is a   form on $Y_i$ for $f_{ij}\in \cO_{Y_i}(-E_{ij})$. Then
there is a natural open ${T_B}$-equivariant embedding $$B_+=\Cox(Y/X)_+\hookrightarrow B=\Cox(Y/X),$$

It induces the  morphism of the good quotients: 
$$B_+\sslash {T_B}=Y\,\, \to \,\, B\sslash {T_B}=X.$$

Moreover  $
B\setminus B_+$ is of codimension $\geq 2$ in $B$.
\end{proposition}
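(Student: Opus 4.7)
The plan is to show that the birational morphism $i_B\colon B_+ \to B$ constructed in the preceding proposition on morphisms of Cox spaces is an open immersion under the cover hypothesis, and then to deduce the good-quotient identification together with the codimension bound on the complement.

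First I would exploit the local picture provided by Lemma \ref{cover3}. For each pair $(i,j)$ the hypothesis that $(Y_i)_{F_{ij}}$ is affine lets us invoke that lemma to obtain $\pi_{B_+,Y}^{-1}((Y_i)_{F_{ij}}) = (B_+)_{F_{ij}}$ as an affine open of $B_+$, together with an isomorphism $(B_+)_{F_{ij}} \cong B_{F_{ij}}$ coming from the identification of global sections $(\cA_{Y/X}(X_i))_{F_{ij}} = \cC_{Y/X}((Y_i)_{F_{ij}})$. Since $i_B$ is defined precisely by these same identifications, its restriction to $(B_+)_{F_{ij}}$ coincides with this isomorphism. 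The family $\{(Y_i)_{F_{ij}}\}$ covers $Y$ by hypothesis, and $\pi_{B_+,Y}\colon B_+ \to Y$ is a surjective good quotient by Lemmas \ref{good} and \ref{good2}, so $\{(B_+)_{F_{ij}}\}$ covers $B_+$. Hence $i_B$ restricts to an isomorphism on each member of an affine cover onto an affine open of $B$, which makes $i_B$ an open immersion with image $\bigcup_{i,j} B_{F_{ij}}$. Its $T_B$-equivariance is inherited from the preceding proposition, and the identification of the induced morphism on good quotients with $\pi\colon Y \to X$ is Lemma \ref{good2}.

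For the codimension statement I would combine normality of $B$ with the equality $\cO_B(\pi_B^{-1}(U)) = \cA_{Y/X}(U) = \cO_{B_+}(\pi_{B_+,Y}^{-1}(\pi^{-1}(U)))$ valid on every affine open $U \subset X$, built into the definition $\cA_{Y/X} = \pi_*\cC_{Y/X}$. Normality of $B_+$ follows from the local structure: on any affine open $V \subset Y$ where the exceptional divisors $E_i$ are principal with local equations $u_i$, the sheaf $\cC_{Y/X}|_V$ becomes a Laurent polynomial ring over $\cO_V$ in the variables $s_i = (u_i t_i)^{-1}$. The affine envelope $B$ of the normal scheme $B_+$ over $X$ is then normal as well. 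Now for a normal Noetherian integral scheme every height-$1$ prime is essential in the Krull intersection $R = \bigcap_{\mathfrak{q}} R_{\mathfrak{q}}$, so removing any codim-$1$ subset strictly enlarges the ring of global sections; the displayed equality of global sections therefore forces $B \setminus B_+$ to be of codim $\geq 2$ in $B$ over each $X_i$, hence globally.

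The step I expect to require the most care is verifying normality of $B$, since $B$ is built via the pushforward $\pi_*\cC_{Y/X}$ rather than directly from a normal scheme; the chain normality of $Y$ $\Rightarrow$ normality of $B_+$ (via the local Laurent structure on opens trivializing the exceptional divisors) $\Rightarrow$ normality of $B$ (via the affine envelope) is the crux that makes the Krull-theoretic codimension argument applicable.
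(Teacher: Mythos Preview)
Your open-immersion argument is the paper's: invoke Lemma~\ref{cover3} on each $F_{ij}$ to identify $(B_+)_{F_{ij}}\simeq B_{F_{ij}}$ and glue over the cover of $Y$.

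For the codimension estimate you diverge from the paper, and the normality step you rightly flagged as delicate has a genuine gap. You argue that $\cC_{Y/X}|_V$ is a Laurent polynomial ring over $\cO_V$ on any affine $V\subset Y$ where each $E_i$ is principal; but the proposition only assumes $Y$ normal, so if $Y$ is not locally factorial such opens need not cover $Y$. The hypothesis makes only the particular combinations $E_{ij}$ principal on $(Y_i)_{F_{ij}}$, not each $E_l$ individually. Normality of $B_+$ is nonetheless true, but the argument is divisorial rather than structural: a homogeneous element $g\,{\bf t}^E$ integral over $\cC_{Y/X}(V)$ satisfies a monic equation with coefficients in $\cO_Y(jE)(V)$, and a valuation computation at every height-one point of the normal scheme $V$ forces $\divv_V(g)+E\ge 0$, hence $g\in\cO_Y(E)(V)$. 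With this repair your affine-envelope and Krull-intersection steps go through.

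The paper avoids normality altogether. It decomposes $B\setminus B_+=(B_-\setminus(B_-\cap B_+))\cup(D\setminus D_+)$, where $D=\bigcup_i D_i$ and $D_i=V_B(t_i^{-1})$. The first piece is $(X\setminus U_\pi)\times T_B$, of codimension $\ge 2$ because $X\setminus U_\pi$ is. For the second, Lemma~\ref{irreducible} shows each $D_i$ is an irreducible divisor; since $D_{i+}=D_i\cap B_+$ is nonempty (it maps onto $E_i$), the set $D_i\setminus D_{i+}$ is a proper closed subset of an irreducible divisor and hence has codimension $\ge 2$ in $B$. This is shorter and uses only facts already established in the paper.
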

\begin{proof}  The problem is local on $X$, so we can replace $X$ with $X_i$, and drop the subscripts $i$.
By Lemma \ref{cover3}, the open  affine cover $Y_{F_j}$ of $Y$  where $F_j\in \cI_{\irr}$ defines the open affine cover  $B_{+F_i}=\Spec_Y((\cC_{Y/X})_{F_i})=\pi_{B_+,Y}^{-1}(Y_i)$  of  $B_+$ mapping it isomorphically onto open subsets $B_{F_i}=\Spec_X((\cA_{Y/X})_{F_i})\subset X$. This induces the open immersion $$B_+
\hookrightarrow B.$$

For ``moreover part'' let $U_\pi=Y\setminus E \subset Y$ be the maximal open subset, where $\pi: Y\to X$ is an isomorphism. Then $U_\pi$ can be identified with an open subset of $X$, and 
 the complement $X\setminus U_\pi$ of the open set is of codimension $\geq 2$, and
$$B_+ \setminus D=U_\pi\times  {T_B}\subset B_-=B \setminus D=X\times {T_B}.$$
if of codimension $\geq 2$ in $B_-=B\setminus D$.

 On the other hand, by Lemma \ref{irreducible}, the divisors $D_i=V_B(t_i^{-1})$ are irreducible on $B$. 
 
 Consequently 
 the difference $D_i\setminus B_+=D_i\setminus D_{i+}$ is of codimension $\geq 2$. Thus $$B\setminus B_+=(B_-\setminus B_+)\cup (D\setminus D_+)$$
 is of codimension $\geq 2$ in $B$.
\end{proof}

The notion of {\it irrelevant ideal} on Cox coordinate spces was originally introduced in \cite{Cox2} (Definition 1.6.3.2 and Proposition 1.6.3.3(iii))
\begin{definition}
By the  the {\it irrelevant ideal} $\cI_{\irr}\subset \cA_{Y/X}$ we mean  the ideal radically generated by the  forms $F$ in $\cA_{Y/X}$, such that  $Y_F$ is open affine over $X$.
\end{definition}
\begin{corollary} Under the conditions from Proposition \ref{cover2}, $\cI_{\irr}$  is the radical coherent ideal determined by the reduced closed subscheme $B\setminus B_+$. Thus we can write 
$B_+=B\setminus V(\cI_{\irr})$.	

\end{corollary}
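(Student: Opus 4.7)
The plan is to establish the set-theoretic equality $V(\cI_{\irr}) = B \setminus B_+$; then, since $\cI_{\irr}$ is radical by construction (the definition specifies it is the ideal \emph{radically} generated by the forms $F$ with $Y_F$ affine over $X$), the corollary follows because the reduced scheme structure on $B \setminus B_+$ is encoded precisely by the unique radical ideal cutting it out set-theoretically.

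The first inclusion I would verify is $B_+ \subseteq B \setminus V(\cI_{\irr})$. By hypothesis in Proposition \ref{cover2}, after restricting to any of the cover pieces $X_i$ we may assume $Y$ is covered by affine opens of the form $Y_{F_j}$ with $F_j$ a form in $\cA_{Y/X}$, and each such $F_j$ lies in $\cI_{\irr}$ by definition. Lemma \ref{cover3} identifies $\pi_{B_+,Y}^{-1}(Y_{F_j}) = (B_+)_{F_j}$ with the principal affine open $B_{F_j} \subseteq B$, and these sets cover $B_+$. Since $B_{F_j}$ is disjoint from $V(F_j) \supseteq V(\cI_{\irr})$, the inclusion follows.

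The converse inclusion $B \setminus V(\cI_{\irr}) \subseteq B_+$ is equally straightforward: a point $x$ outside $V(\cI_{\irr})$ has some generating form $F$ (with $Y_F$ affine) non-vanishing at $x$, so $x \in B_F = (B_+)_F \subseteq B_+$ under the open embedding of Proposition \ref{cover2}, again by Lemma \ref{cover3}. Combining the two inclusions gives $B_+ = B \setminus V(\cI_{\irr})$ set-theoretically. Coherence of $\cI_{\irr}$ is automatic: $X$ is noetherian, $\cA_{Y/X}$ is quasi-coherent on $X$, and $\cI_{\irr}$ is simply the radical of the quasi-coherent ideal generated by the forms $F_j$ on each $X_i$ in the chosen cover; alternatively, it is the quasi-coherent ideal of the reduced closed complement of the quasi-compact open immersion $B_+ \hookrightarrow B$ supplied by Proposition \ref{cover2}.

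There is essentially no obstacle here: the statement is a formal consequence of the identifications $(B_+)_F = B_F$ from Lemma \ref{cover3} together with the cover hypothesis of Proposition \ref{cover2}. The only place to be careful is to ensure that the generating forms used to define $\cI_{\irr}$ really do produce an open cover of $B_+$ — which is exactly what the hypothesis of Proposition \ref{cover2} guarantees — so that nothing in $B_+$ is missed by the union of the $B_F$.
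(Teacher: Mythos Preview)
Your proof is correct and follows essentially the same approach as the paper's: both reduce to the local affine situation and use the identification $(B_+)_F = B_F$ from Lemma \ref{cover3} together with the cover hypothesis of Proposition \ref{cover2} to conclude $B_+ = B \setminus V(\cI_{\irr})$. The paper's argument is simply a more terse version of what you wrote, compressing your two inclusions into the single observation that $B_+$ is by construction the union of the $B_F$ with $Y_F$ affine.
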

\begin{proof} The problem is local on $X$, and we can assume that $X$ is affine. It follows from the construction that $B_+=B\setminus V(\cI)$, where $\cI$ is generated by all $F\in \cA_{Y/X}$, such that $Y_F$ is affine. Thus  ${\rm rad}(\cI)=\cI_{\irr}$.
	
\end{proof}

\subsubsection{Cox construction for regular schemes $X$}

Recall a well-known fact:
\begin{lemma}
Let $Y$ be a normal scheme. Then the complement of any open affine subset $V\subset Y$ is the support of a Weil divisor.

Thus there is a finite open cover of ${Y}$ by open affine subsets $V_i={Y}\setminus D_i$, where $D_i$ are Weil divisors on $Y$.
\end{lemma}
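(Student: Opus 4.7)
My plan is to reduce both sentences to a single algebraic fact: if $V \subset Y$ is an open affine subset of a normal noetherian scheme, then $Z := Y \setminus V$ is pure of codimension one. Granted this, the first sentence follows at once—$Z$ has only finitely many irreducible components $D_1, \ldots, D_k$ by noetherianness, so $D := D_1 + \cdots + D_k$ is an effective Weil divisor with $\supp(D) = Z$—and the second sentence follows by choosing any finite affine cover $V_1, \ldots, V_n$ of $Y$ (which exists because $Y$ is noetherian) and applying the first to each $V_i$.

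To establish pure codimension one, I would argue by contradiction. Suppose some irreducible component of $Z$ has generic point $\eta$ with $\codim_Y(\eta) \geq 2$. Localize: set $Y_\eta := \Spec \cO_{Y,\eta}$, a normal noetherian local scheme of dimension $\geq 2$, and let $V_\eta := V \times_Y Y_\eta$. A quick point-set check—using that any generalization $\xi$ of $\eta$ lying in a component of $Z$ would, by closedness, force that component to contain $\eta$ and hence equal the unique component through $\eta$—identifies $V_\eta$ with the punctured spectrum $Y_\eta \setminus \{\eta\}$.

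I then play two properties of $V_\eta$ against each other. On one hand, the open immersion $V \hookrightarrow Y$ is an affine morphism (its pullback to any affine open $U \subset Y$ is $V \cap U$, an intersection of affines in the separated scheme $Y$, hence affine), and therefore its flat base change $V_\eta \to Y_\eta$ is affine; since $Y_\eta$ is affine, so is $V_\eta$. On the other hand, the normal local ring $\cO_{Y,\eta}$ satisfies Serre's condition $S_2$, so Hartogs extension gives $\Gamma(V_\eta, \cO_{V_\eta}) = \cO_{Y,\eta}$. An affine scheme is determined by its ring of global sections, so this would force $V_\eta = Y_\eta$, contradicting $\eta \notin V_\eta$.

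The only delicate point is the affineness of $V_\eta$, which rests on $V \hookrightarrow Y$ being an affine morphism, and thus on $Y$ being separated enough for intersections of affines to be affine. This is automatic for the normal noetherian schemes considered in this paper, so the argument presents no genuine obstacle.
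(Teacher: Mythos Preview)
Your argument is correct and takes a genuinely different route from the paper's. The paper argues directly with divisors: it asserts that $V$ coincides with the locus where every $f \in \Gamma(V,\cO_Y)\subset\kappa(Y)$ is regular, so that $Y\setminus V$ is covered by the pole loci $\supp\divv_-(f)$; on a normal scheme each such locus is pure of codimension one, each of its components is forced to lie entirely in $Y\setminus V$, and noetherianness makes the union finite. Your approach instead localizes at a hypothetical generic point of codimension $\geq 2$ and pits affineness of the punctured spectrum (via ``open immersion into a separated scheme is an affine morphism'') against the $S_2$/Hartogs extension property. The paper's route is shorter and produces the covering Weil divisors explicitly, which suits the ambient Cox-ring discussion; your route is more self-contained and makes the use of separatedness explicit, whereas the paper's opening clause ``By definition, $V$ is the set of points \ldots'' hides a genuine (if standard) argument that also needs separatedness.
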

\begin{proof}
By definition, $V$ is the set of points of $Y$ where all the functions $f\in \Gamma(V,\cO_Y)\subset \kappa(Y)$ are regular. Since $Y$ is normal, this means that
the supports of the divisors $\divv_-(f)$ of the negative components of  $\divv(f)$ cover ${Y}\setminus V$. Consequently, $Y\setminus V$ is the union of the Weil divisors contained in it. Thus this union is finite, and $Y\setminus V$ is the support of the Weil divisor.

This defines an open cover $V_i=Y\setminus D_i$ which can be assumed to be finite.



\end{proof}

\begin{lemma} \label{cover} Let $\pi: Y\to X$ be a proper birational morphism of normal schemes.

 Let $p\in X$ be a regular point on $X$. There is an open  affine neighborhood $U$ of $p$ in $X$, and 
 an open cover of $Y_U=\pi^{-1}(U)$ by open affine subsets ${Y}_F={Y_U}\setminus V_Y(F)$, where $F$ is a  form over $U\subset X$ and on $Y_U\subset Y$.
\end{lemma}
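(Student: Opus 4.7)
\emph{Proof plan.} The idea is to combine the preceding lemma with regularity of $\cO_{X,p}$ to represent the complements of the required affine opens as zero loci of forms.

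First, since the statement is local around $p$, I would replace $X$ by a small affine open neighborhood $X_0$ of $p$. Applying the preceding lemma to $\pi^{-1}(X_0)\to X_0$ yields a finite open affine cover $\{V_j = \pi^{-1}(X_0)\setminus D_j\}$ in which the $D_j$ are effective Weil divisors on $\pi^{-1}(X_0)$. For each $j$ I would decompose $D_j = D_j^{\mathrm{s}} + D_j^{\mathrm{e}}$ into its strict part $D_j^{\mathrm{s}}$ (no $\pi$-exceptional components) and its effective exceptional part $D_j^{\mathrm{e}} = \sum_i a_{ij} E_i$ with $a_{ij}\geq 0$. The pushforwards $\pi_*(D_j^{\mathrm{s}})$ are then effective Weil divisors on $X_0$.

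Next I would invoke regularity: the local ring $\cO_{X,p}$ is a UFD, so each $\pi_*(D_j^{\mathrm{s}})$ is principal in some open neighborhood of $p$. Because the index set is finite, $X_0$ can be shrunk once more to an affine open $U\ni p$ on which every $\pi_*(D_j^{\mathrm{s}})|_U = \divv_U(f_j)$ for suitable $f_j\in \kappa(X) = \kappa(Y)$. On $Y_U := \pi^{-1}(U)$ the divisor $\divv_{Y_U}(f_j)$ agrees with $D_j^{\mathrm{s}}|_{Y_U}$ outside the exceptional locus, so
$$\divv_{Y_U}(f_j) \;=\; D_j^{\mathrm{s}}|_{Y_U} + \sum_i b_{ij}\,E_i|_{Y_U}$$
for some integers $b_{ij}\in\ZZ$. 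Setting $\tilde E_j := D_j^{\mathrm{e}} - \sum_i b_{ij} E_i$, a $\ZZ$-linear combination of exceptional divisors, this gives
$$\divv_{Y_U}(f_j) + \tilde E_j \;=\; D_j|_{Y_U} \;\geq\; 0,$$
so $f_j \in \cO_{Y_U}(\tilde E_j)$ and $F_j := f_j\,\mathbf{t}^{\tilde E_j}$ is a form of degree $\tilde E_j$ on $Y_U$ over $U$ in the sense of the earlier definition.

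By construction $V_Y(F_j)\cap Y_U = \supp(D_j|_{Y_U})$, hence $Y_U \setminus V_Y(F_j) = V_j\cap Y_U = V_j\times_{X_0} U$ is affine (fiber product of affines over the affine $X_0$), and these subsets cover $Y_U$ as $j$ varies over the finite index set, completing the proof. The one delicate step is the regularity argument: the whole construction hinges on being able to make the finitely many Weil divisors $\pi_*(D_j^{\mathrm{s}})$ simultaneously principal in a common affine neighborhood of $p$, and this is precisely where the hypothesis that $X$ is regular at $p$ enters.
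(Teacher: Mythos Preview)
Your proof is correct and follows essentially the same approach as the paper's: both use the preceding lemma to obtain an affine cover $Y\setminus D_j$, separate the strict and exceptional parts of $D_j$, use regularity at $p$ to make the pushforward of the strict part principal near $p$, and package the result as a form $F_j$. The only cosmetic differences are that the paper shrinks $U$ to a principal open $X_g$ (so that $(V_j)_g$ is visibly affine as a localization) rather than invoking fiber products of affines, and the paper records that the exceptional multiplicities $n_{ij}$ are nonnegative since $f_j$ is regular on $U$, whereas you allow $b_{ij}\in\ZZ$---but your argument does not need their sign, so this changes nothing.
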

\begin{proof} We can assume that $X$ is affine. By the previous lemma, 
we can find an open affine cover $$V_j:=Y\setminus (D_j\cup \overline{E}_j)$$ of $Y$ defined by the divisors $D_j\cup \overline{E}_j$, where $\overline{E}_j$ are some possibly reducible exceptional divisors. Taking the images of $D_j$ in $X$, we obtain a finite collection of divisors $D'_j=\pi(D_j)$ on $X$. Consider an open affine neighborhood $$U:=X_{g}=X\setminus V(g)$$   of $p\in X$, for $g\in H^0(X,\cO_X)$, such that all the divisors $D'_j$ are principal on $U$.  Thus we can write  $D'_j=\divv_U(f_j)$, where $f_j\in \cO(X)$.
	 
	 The pullbacks of the principal divisor $D'_j=\divv_U(f_j)$ on $U$  are of the form  $\pi^*(D_j')=D_j+E^j$ on $Y_U=\pi^{-1}(U)$, where $E^j=\sum n_{ij}E_i$ is an exceptional divisor, with $n_{ij}\geq 0$. They define the  forms $$F_j:=f_j{\bf t}^{-E^j+\overline{E}_j}$$ on $Y_U$ such that $$\divv_Y(F_j)=\divv_Y(f_j)-E^j+\overline{E}_j=D_j+\overline{E}_j.$$ and thus $V_Y(F_j)=D_j\cup\overline{E}_j$ on $Y_U$. Then $$Y_U\setminus V(F_j)=Y_U\setminus (D_j\cup \overline{E}_j)=(V_j)_{g}=V_j\setminus V_Y(g)$$ is an open affine cover of $Y_U=\pi^{-1}(U)=\pi^{-1}(X_g)$.
	 
	 
\end{proof}
\begin{remark} The lemma is valid under the assumption that $p\in X$ is a $\QQ$-factorial point,  so any Weil divisor at $p$ is  $\QQ$-Cartier.
	
\end{remark}

As a corollary from  the above, we obtain the following:

\begin{proposition} 
Assume that $X$ is  regular, and $\pi: Y\to X$ is a proper birational morphism of normal schemes.
There is a natural open ${T_B}$-equivariant embedding $$B_+=B\setminus V(\cI_{\irr})\hookrightarrow B$$

It induces the  morphism of the good quotients: 

$$B_+\sslash {T_B}=Y\,\, \to \,\, B\sslash {T_B}=X.$$

\end{proposition}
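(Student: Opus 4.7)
The plan is to deduce this directly from Proposition \ref{cover2} by verifying its hypothesis using the regularity of $X$ and Lemma \ref{cover}. First, since $X$ is regular by assumption, every point $p \in X$ is a regular point. So I can invoke Lemma \ref{cover} at each point $p$: it produces an open affine neighborhood $U_p \subset X$ of $p$ together with a finite open affine cover of $Y_{U_p} = \pi^{-1}(U_p)$ of the form $(Y_{U_p})_{F_{p,j}}$, where each $F_{p,j}$ is a form (of the shape $f_{p,j}\mathbf{t}^{-E_{p,j}}$ with $f_{p,j} \in \cO_{U_p}(-E_{p,j})$ in the notation of the excerpt). Letting $p$ range over $X$, the collection $\{U_p\}$ is an open cover of $X$ satisfying exactly the hypothesis required by Proposition \ref{cover2}.

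Second, I apply Proposition \ref{cover2} to this cover. This immediately yields the natural open $T_B$-equivariant embedding $B_+ \hookrightarrow B$ and the induced morphism of good quotients $B_+\sslash T_B = Y \to B\sslash T_B = X$; it also gives that the complement $B \setminus B_+$ is of codimension $\geq 2$ in $B$. To put the open immersion in the form stated, namely $B_+ = B \setminus V(\cI_{\irr})$, I invoke the Corollary immediately following Proposition \ref{cover2}, which identifies the closed complement of $B_+$ in $B$ (as a reduced subscheme) with the vanishing locus of the irrelevant ideal $\cI_{\irr}$. Thus $B_+ = B \setminus V(\cI_{\irr})$, as claimed.

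There is essentially no obstacle here beyond assembling the prior results: the genuine content has already been established in Lemma \ref{cover} (where regularity is used to make the divisors $D'_j = \pi(D_j)$ principal on an affine neighborhood, so that forms cutting out an affine cover of $Y_U$ can be built) and in Proposition \ref{cover2} (where the local affine covers by $Y_F$'s are glued to produce the global open immersion of Cox spaces). The regularity hypothesis on $X$ enters only through Lemma \ref{cover}; as noted in the remark following that lemma, $\QQ$-factoriality at each point would suffice, but for this proposition the stated regularity assumption is what is being used.
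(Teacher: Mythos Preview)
Your proposal is correct and matches the paper's approach exactly: the paper presents this proposition explicitly ``as a corollary from the above,'' meaning from Lemma \ref{cover} (applied at every point of the regular $X$) together with Proposition \ref{cover2} and its subsequent corollary identifying $B\setminus B_+$ with $V(\cI_{\irr})$. There is nothing to add.
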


\subsection{Cobordant blow-ups of ideals}
\subsubsection{The strict and the weak transform under cobordant morphism}

\begin{definition} \label{strict} Let $\cI$ be any ideal on a normal scheme $X$. Let $\pi: Y\to X$ be a proper birational morphism from a normal scheme $Y$, and $\sigma=\pi_B: B\to X$ be the full cobordization of $\pi$. Then by the {\it strict transform} of the ideal $\cI$ we mean the ideal
$$\sigma^s(\cI):=(f\in \cO_B \mid {\bf t}^{-\alpha} f\in \cO_B\cdot \cI, \,\,\mbox{for some} \,\,\alpha\in \ZZ^k_{\geq 0})\subset \cO_B.$$
The {\it weak transform} of the ideal $\cI$  is given by
$$\sigma^\circ(\cI):={\bf t}^{\alpha_0} \cI,$$
where $$\alpha_0:=\max \{\alpha\mid \cI\subset {\bf t}^{-\alpha}\cO_X\},$$ is defined for the	partial componentwise order on the set of components.
\end{definition}

 \subsubsection{Cobordant blow-ups}
\begin{lemma} \label{blow} Let $\cJ$ be an ideal  on a normal scheme $X$, such that $\codim(V(\cJ)\geq 2$. Let $\pi: Y\to X$ be the normalized blow-up of $\cJ$. Let $E=\sum a_iE_i$ be the exceptional divisor of $\pi$, such that $\cO_Y(-E)=\cO_Y\cdot\cJ$. Set $\alpha=(a_1,\ldots,a_k)$. Denote  by
$\sigma: B\to X$ be the corresponding full cobordant blow-up of $\cJ$. Then  \begin{enumerate}
	
	\item  $\sigma^{-1}(X\setminus V(\cJ))=(X\setminus V(\cJ))\times {T_B}$ is trivial.
\item  $B_+=B\setminus V_B(\sigma^\circ(\cJ))=B\setminus V_B({\bf t}^\alpha\cJ)$, where $$\sigma^\circ(\cJ)=\cO_{B}\cdot {\bf t}^{-E}\cJ=\cO_{B}\cdot {\bf t}^\alpha\cJ$$ is the weak transform of $\cJ$.

	\item $\cJ\cdot\cO_{B_+}={\bf t}^{-\alpha}\cO_{B_+}$ is a locally principal monomial ideal on $B_+$. 
 
\end{enumerate}
\end{lemma}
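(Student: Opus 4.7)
The plan is to treat the three parts in sequence, using the standard affine chart cover of the normalized blow-up as the key geometric input. Part (1) is immediate: since $\pi$ is an isomorphism over $X\setminus V(\cJ)\subseteq U_\pi$, the earlier description of the Cox trivial space $B_-$ yields $\sigma^{-1}(X\setminus V(\cJ)) = (X\setminus V(\cJ))\times T_B$.

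For Part (2), I would work locally on $X$ and pick generators $g_1,\dots,g_r$ of $\cJ$. Because $\cJ\cdot\cO_Y=\cO_Y(-E)$, each $g_i$ lies in $\pi_*(\cO_Y(-E))=\bigcap_j \cI_{\nu_j,a_j}$ and defines a form $F_i := g_i{\bf t}^{-E} = g_i{\bf t}^\alpha$ of degree $-E$ on $Y$. The crucial geometric input is that $\{Y_{F_i}\}$ is an affine open cover of $Y$: on the normalized blow-up one has $\divv_Y(F_i)=\widetilde{\divv_X(g_i)}+\sum_j(b_{ij}-a_j)E_j$ with $b_{ij}\geq a_j$; for each $j$ some $b_{ij}$ achieves equality $a_j$, and the strict transforms $\widetilde{\divv_X(g_i)}$ have empty common intersection in $Y$, so the $Y_{F_i}=Y\setminus V_Y(F_i)$ cover $Y$, with their affineness coming from the standard charts of $\Proj_X(\bigoplus \cJ^n)$ passed through normalization. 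Lemma \ref{cover3} then identifies each $(B_+)_{F_i}$ with the affine open $B_{F_i}=\pi_B^{-1}(Y_{F_i})$, and since $\pi_{B_+,Y}$ is surjective by Lemma \ref{good}, these cover $B_+$, giving
\[
B_+ \;=\; \bigcup_i B_{F_i} \;=\; B\setminus\bigcap_i V_B(F_i) \;=\; B\setminus V_B({\bf t}^\alpha\cJ).
\]
The equality $\sigma^\circ(\cJ)={\bf t}^\alpha\cJ$ matches Definition \ref{strict} with $\alpha_0=\alpha$, which is forced by $\cJ\cdot\cO_Y=\cO_Y(-E)$.

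For Part (3), on this same cover the form $F_i=g_i{\bf t}^\alpha$ is a unit on $(B_+)_{F_i}$, so the regular monomial ${\bf t}^{-\alpha}=\prod_j(t_j^{-1})^{a_j}\in \cO_B$ equals $g_iF_i^{-1}$, giving $g_i\cO_{(B_+)_{F_i}}={\bf t}^{-\alpha}\cO_{(B_+)_{F_i}}$. For any other generator, $g_j=F_j\cdot{\bf t}^{-\alpha}$ with $F_j$ regular on this chart, so $g_j\in {\bf t}^{-\alpha}\cO_{(B_+)_{F_i}}$. Hence $\cJ\cdot\cO_{(B_+)_{F_i}}={\bf t}^{-\alpha}\cO_{(B_+)_{F_i}}$ locally, and these glue to the asserted locally principal monomial ideal $\cJ\cdot\cO_{B_+}={\bf t}^{-\alpha}\cO_{B_+}$. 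The main obstacle I anticipate is the careful verification that $\{Y_{F_i}\}$ covers the \emph{normalized} blow-up with the $F_i$ arising as global forms; once that classical cover is in hand, Lemma \ref{cover3} together with good-quotient surjectivity reduces the rest of the argument to the local computations above.
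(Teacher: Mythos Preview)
Your proposal is correct and follows essentially the same route as the paper: pick local generators $g_i$ of $\cJ$, form the degree-$(-E)$ forms $F_i=g_i{\bf t}^{-E}$, identify $Y_{F_i}$ with the standard Proj charts $(Y_U)_{g_it}$ of the normalized blow-up so that they give an affine open cover of $Y$, and then apply Lemma~\ref{cover3} to obtain $B_+=\bigcup_i B_{F_i}=B\setminus V_B({\bf t}^\alpha\cJ)$; part~(3) follows by noting $F_i$ is a unit on its own chart. Two small remarks: your appeal to Lemma~\ref{good} is unnecessary, since Lemma~\ref{cover3} already gives $\pi_{B_+,Y}^{-1}(Y_{F_i})=(B_+)_{F_i}$, so the covering of $Y$ by the $Y_{F_i}$ immediately yields the covering of $B_+$; and the paper establishes the cover more directly by computing $\divv(g_it)=\divv(F_i)$ on each Proj chart rather than via your divisorial decomposition, though both arguments are equivalent.
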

\begin{proof}
 Let  $U\subset X$ be an open affine subset. The ideal of sections $\cJ(U)$ is generated by  some $f_1\ldots,f_k\in \cJ(U)\subset \cO_X(U)=\cO_Y(\pi^{-1}(U))$.
The pullbacks of the functions $f_1\ldots,f_k\in \cJ(U)$ generate the ideal $$\cI_E=\cO_Y(-E)=\cO_Y\cdot\cJ$$ on $Y_U:=\pi^{-1}(U)$. Moreover on each  $Y_U\setminus V_Y(F_i)$, where $F_i:=f_i{\bf t}^{-E}$ we have exactly $\divv_{Y_U}(f_i)=E_{|Y_U}$.  

On the other hand consider the open cover of $$Y_U=\pi^{-1}(U)=\Proj \bigoplus_{i=0}^\infty \cJ^i(U)t^i,$$
where $t$ is a dummy unknown by the open subsets 
$$(Y_U)_{f_it}=\pi^{-1}(U)_{f_it}=(\Proj \bigoplus_{i=0}^\infty \cJ^i(U)t^i)_{f_it}=(\Spec (\bigoplus_{i=0}^\infty \cJ^i(U))_{f_it})_0,$$ where $f_it\in \cJ^1(U)t$. Since $f_jt$ is invertible on $(Y_U)_{f_jt}$ and $f_it/f_jt=f_i/f_j$ are regular we wee that $\cO_{(Y_U)_{f_jt}}\cdot \cJ$ is generated by $f_j$. So $E=\divv_Y(f_j)$ on $(Y_U)_{f_it}$,  and consequently the form $F_j=f_j{\bf t}^{-E}$ is invertible on $(Y_U)_{f_jt}$.

Computing $\divv(f_it)$ on the cover $(Y_U)_{f_jt}$ of $Y_U$ gives us $$\divv(f_it)=\divv(f_it/f_jt)=\divv(f_i/f_j)=\divv(f_i)-E=\divv(F_i)=\divv(f_i{\bf t}^{-E}).$$
Consequently we conclude that $(Y_U)_{f_it}=(Y_U)_{F_i}$
  is affine and cover $Y_U$.	 Thus, by Proposition \ref{cover2}, there is an open immersion $B_+\subset B$, where $B_+$ is covered by $B_{+F_i}$. Moreover, by the above,   the ideal $\cJ {\bf t}^\alpha$ on $B_{+F_i}$ is generated by $f_i{\bf t}^{-E}$, and thus  equal to $$\cJ {\bf t}^\alpha_{|B_{+F_i}}=\cO_{B_{+F_i}}\cdot\cO_{Y_{F_i}}(-E){\bf t}^{-E}=\cO_{B_{+F_i}}\cdot f_i{\bf t}^{-E}=\cO_{B_{+F_i}}\cdot F_i.$$ But $F_i=f_i{\bf t}^{-E}$  is invertible on $B_{+F_i}$ of degree $-E$, whence 
  $$\cO_{B_{+F_i}}\cdot\cJ=\cO_{B_{+F_i}}\cdot\cJ {\bf t}^\alpha\cdot {\bf t}^{-\alpha}=\cO_{B_{+F_i}} {\bf t}^{-\alpha},$$
which implies that
\begin{align}\cO_{B_+}\cdot\cJ=\cO_{B_+} {\bf t}^{-\alpha}.\end{align}

On the other hand $\cJ {\bf t}^\alpha\subset \cO_B$, since any element $f{\bf t}^\alpha\in \cJ(U){\bf t}^\alpha$ is in \\ $\cO_{\pi^{-1}(U)}(-E){\bf t}^\alpha$ which is the $-E$ gradation of $\cO_B$ over $U$. This also shows that $\cJ {\bf t}^\alpha=\sigma^\circ(\cJ)$, as by equality (4) for $B_+$, the form ${\bf t}^{-\alpha}=t^{E}$ is the maximal factor which divides $\cO_B\cdot\cJ$.

Finally, by the above $$V(\sigma^\circ(\cJ))=V(f_1{\bf t}^{-E},\ldots,f_k{\bf t}^{-E})=V(F_1,\ldots,F_k)=B\setminus B_+=V(\cI_{\irr}).$$
\end{proof}

\begin{remark} It follows from the above that the inverse image $\cO_{B_+}\cdot \cJ$ of ideal $\cJ$ under the cobordant blow-up is the ideal of the exceptional divisor ${\bf t}^E$, analogously to the standard blow-up of $\cJ$. However, this is no longer true for the full cobordant blow-up $\cJ$.
	
\end{remark}

	
	

\subsection{Cobordant flips}
\begin{lemma} Let $\phi_1: X_1\to Z$, and $\phi_2:X_2\to Z$ be proper birational morphisms from normal schemes $X_1,X_2$ to   $Z$.
Assume that the induced proper birational map $X_1\dashrightarrow X_2$ over  $Z$ is an isomorphism in codimension one. Then  $$B:=B(X_1/Z)=B(X_2/Z),$$  is equipped with the action of torus ${T_B}=\Cl(X_1/Z)=\Cl(X_2/Z)$,
and there is a natural birational map $B(X_1/Z)_+\dashrightarrow B(X_2/Z)_+$ over $B$.
 Moreover if $\phi_1,\phi_2$ satisfy the condition of Proposition \ref{cover2},  then $B(X_1/Z)_+$ and $B(X_2/Z)_+$ are open subschemes of $B$ which coincide in codimension $1$.
\end{lemma}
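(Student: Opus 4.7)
The plan is to reduce everything to the valuation-theoretic description of the Cox algebra given in Proposition \ref{valu}, which expresses $\cA_{Y/Z}$ in terms of the base $Z$ and the valuations on $\kappa(Z)$ associated with the exceptional divisors of a proper birational $Y\to Z$. The hypothesis that $X_1\dashrightarrow X_2$ is an isomorphism in codimension one over $Z$ is precisely designed so that the same valuations appear on both sides.

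First I would identify the class groups. The irreducible exceptional divisors of $\phi_i$ correspond to those divisorial valuations of $\kappa(Z)=\kappa(X_i)$ whose center on $Z$ has codimension $\geq 2$ (equivalently, those with a codimension one center on $X_i$). Since $X_1\dashrightarrow X_2$ is an isomorphism in codimension one, the two collections of valuations coincide as valuations of the common function field, yielding a canonical bijection between the exceptional components of $\phi_1$ and $\phi_2$, hence a canonical isomorphism $\Cl(X_1/Z)\simeq \Cl(X_2/Z)$ and a common torus $T_B$. Let $\nu_1,\ldots,\nu_k$ denote these common valuations. By Proposition \ref{valu}, both Cox algebras are given by
\begin{align*}
\cA_{X_i/Z}=\bigoplus_{(a_j)\in\ZZ^k}\bigcap_{j=1}^k \cI_{\nu_j,a_j,Z}\cdot t_1^{a_1}\cdots t_k^{a_k},
\end{align*}
and the right-hand side depends only on $Z$ and on the $\nu_j$, not on the particular model $X_i$. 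Thus $\cA_{X_1/Z}=\cA_{X_2/Z}$ as $\ZZ^k$-graded $\cO_Z$-algebras, so $B(X_1/Z)=B(X_2/Z)=:B$ with the same $T_B$-action.

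For the birational map on characteristic spaces, let $V\subset Z$ be the maximal open subset over which both $\phi_1$ and $\phi_2$ are isomorphisms; by construction $V$ is dense and its complement is of codimension $\geq 2$ in $Z$. Over $V$ both characteristic spaces reduce to the trivial torsor $V\times T_B$ (compare the computation of $B_-$ in the earlier lemma), and this canonical identification defines the desired birational map $B(X_1/Z)_+\dashrightarrow B(X_2/Z)_+$ over $B$.

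For the moreover part, under the hypothesis of Proposition \ref{cover2} each $B(X_i/Z)_+$ embeds as an open subscheme of $B$ whose complement $B\setminus B(X_i/Z)_+=V(\cI_{\irr,i})$ has codimension $\geq 2$ in $B$. The generic point of any codimension one subvariety of $B$ therefore lies in both opens, so $B(X_1/Z)_+$ and $B(X_2/Z)_+$ contain the same codimension one points and thus coincide in codimension one. The main subtlety is the first paragraph: one must check that matching up exceptional divisors via the codimension-one isomorphism really produces the same valuations on $\kappa(Z)$ and that the intersection ideals $\bigcap_j\cI_{\nu_j,a_j,Z}$ depend only on these valuations and not on any auxiliary choice on $X_i$; once that is clarified, the identification of $B$ is automatic and the rest is formal.
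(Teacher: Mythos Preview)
The paper states this lemma without proof, so there is nothing to compare against directly. Your argument is correct and is exactly the intended one: the valuation-theoretic formula of Proposition \ref{valu} shows that $\cA_{X_i/Z}$ depends only on $Z$ and on the exceptional valuations $\nu_1,\ldots,\nu_k$ of $\kappa(Z)$, and an isomorphism in codimension one identifies these valuations for $i=1,2$; the rest (the birational map over the common trivial locus, and the codimension-$\geq 2$ complement from Proposition \ref{cover2}) is formal, as you say.
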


\subsection{Functoriality of Cox spaces  for open immersions}
The construction of the full cobordization  is functorial for open immersions up to torus factors:
\begin{lemma} \label{open} Let $\pi: Y\to X$ be a proper birational morphism of normal integral schemes.
Let $U\subset Y$ be an open subset, and $Y_U:=\pi^{-1}(U)$.
 Let $E_1,\ldots,E_k$ be the irreducible exceptional divisors of $\pi: Y\to X$.
Let $\pi_B:B=\Cox(Y/X)\to X$ be the full cobordization of
a proper birational morphism $\pi: Y\to X$, and $\pi_{B+}:B_+\to X$ is its cobordization.   Let $$T_{B\setminus B_U}:=\Spec(\,\ZZ[t_i, t_i^{-1}\mid E_i\subset Y\setminus Y_U \,\,]\,),$$

Then $$B_U:=\pi_B^{-1}(U)=B(Y_U/U)\times T_{B\setminus B_U},\quad B_{U+}:=\pi_{B+}^{-1}(U)=B(Y_U/U)_+\times T_{B\setminus B_U}.$$  \end{lemma}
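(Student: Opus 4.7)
The plan is to compute the restriction of the Cox sheaves $\cC_{Y/X}$ and $\cA_{Y/X} = \pi_*(\cC_{Y/X})$ to $Y_U$ and $U$ respectively, and observe that they split as tensor products of the relative Cox sheaves of $\pi|_{Y_U} : Y_U \to U$ with a Laurent polynomial ring indexed by the exceptional components disjoint from $Y_U$.

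Partition the exceptional indices into $I = \{i : E_i \cap Y_U \neq \emptyset\}$ and $J = \{i : E_i \subset Y \setminus Y_U\}$. Since $\pi|_{Y_U} : Y_U \to U$ is again a proper birational morphism of normal integral schemes, its exceptional irreducible divisors are precisely $E_i^U := E_i \cap Y_U$ for $i \in I$. Therefore $\Cl(Y_U/U) = \bigoplus_{i \in I}\ZZ\, E_i^U$, and $T_B = T_{B(Y_U/U)} \times T_{B\setminus B_U}$ with $T_{B(Y_U/U)} = \Spec\ZZ[t_i^{\pm 1} : i\in I]$.

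For any $E = \sum n_i E_i \in \Cl(Y/X)$, the restriction to $Y_U$ satisfies $E|_{Y_U} = \sum_{i\in I} n_i E_i^U$ because $E_j \cap Y_U = \emptyset$ for $j \in J$. Consequently $\cO_Y(E)|_{Y_U} = \cO_{Y_U}\bigl(\sum_{i\in I} n_i E_i^U\bigr)$ is independent of $n_J = (n_j)_{j\in J}$, which produces a canonical decomposition of $\Cl(Y/X)$-graded $\cO_{Y_U}$-algebras
\begin{align*}
\cC_{Y/X}|_{Y_U} \;=\; \bigoplus_{(n_I,n_J)\in\ZZ^I\times\ZZ^J} \cO_{Y_U}\bigl(\textstyle\sum_{i\in I} n_i E_i^U\bigr)\,\mathbf{t}_I^{n_I}\mathbf{t}_J^{n_J} \;=\; \cC_{Y_U/U}\otimes_{\ZZ}\ZZ[t_j^{\pm 1} : j\in J].
\end{align*}
Taking relative $\Spec$ over $Y_U$ immediately yields $B_{U+} = B(Y_U/U)_+ \times T_{B\setminus B_U}$, with the $T_B$-action automatic from the bigrading. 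For the full cobordization side, since open immersions are flat we have $\pi_*(\cC_{Y/X})|_U = (\pi|_{Y_U})_*(\cC_{Y/X}|_{Y_U})$; pushing forward the above decomposition (which commutes with direct sums as $\pi|_{Y_U}$ is quasi-compact) gives
\begin{align*}
\cA_{Y/X}|_U \;=\; \cA_{Y_U/U}\otimes_{\ZZ}\ZZ[t_j^{\pm 1} : j\in J],
\end{align*}
and relative $\Spec$ over $U$ produces $B_U = B(Y_U/U) \times T_{B\setminus B_U}$ as claimed.

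There is no substantive obstacle here beyond bookkeeping: the whole statement reduces to the tautology that an exceptional component $E_j$ missing $Y_U$ contributes $\cO_{Y_U}(n_j \cdot 0) = \cO_{Y_U}$ in every degree, and therefore splits off as a free Laurent polynomial factor in the grading. The only point that really needs to be noted is the identification of the exceptional divisors of $\pi|_{Y_U}$ with $\{E_i^U : i \in I\}$, which is immediate since the exceptional locus of $\pi|_{Y_U}$ is $(\bigcup_i E_i) \cap Y_U$ and taking irreducible components commutes with the open restriction.
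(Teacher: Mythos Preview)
Your proof is correct and follows essentially the same approach as the paper's: both split the grading group $\Cl(Y/X)$ into the summand generated by exceptional divisors meeting $Y_U$ and the complementary summand $\Cl^0(Y_U/U)$ generated by those $E_j$ with $E_j\subset Y\setminus Y_U$, observe that the latter contribute only a Laurent polynomial factor over $U$ and $Y_U$, and then take relative $\Spec$. Your use of flat base change along the open immersion $U\hookrightarrow X$ to identify $\pi_*(\cC_{Y/X})|_U$ with $(\pi|_{Y_U})_*(\cC_{Y/X}|_{Y_U})$ is exactly the step the paper carries out in its displayed chain of equalities, just phrased more cleanly.
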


\begin{proof} For any open subset $U\subset X$, and $Y_U=\pi^{-1}(U)$, we can construct a subgroup $\Cl(Y_U/U)\subseteq \Cl(Y/X)$, with the canonical splitting	 $ \Cl(Y/X)\to \Cl(Y_U/U)$. Write $\Cl(Y/X)=\Cl(Y_U/U)\oplus \Cl^0(Y_U/U)$, where $\Cl^0(Y_U/U)$ is generated by $E_i\subset Y\setminus Y_U$.
\begin{align*} &\pi_B^{-1}(U)=\Spec_U(\bigoplus_{E\in \Cl(Y/X)} \pi_*(\cO_Y({E})_{|U}){\bf t}^E=\\ & \Spec_U(\bigoplus_{E\in \Cl(Y_U/U)} \pi_*(\cO_Y({E}))_{|U} {\bf t}^E)\otimes_{\cO_U}(\bigoplus_{E\in \Cl^0(Y_U/U)} \pi_*(\cO_Y({E}))_{|U}{\bf t}^E)\\ & \Spec_U(\bigoplus_{E\in \Cl(Y_U/U)} \pi_*(\cO_Y({E}))_{|U} {\bf t}^E)\otimes_{\cO_U}(\bigoplus_{E\in \Cl^0(Y_U/U)}\cO_U{\bf t}^E)=B(Y_U/U)\times T_{B\setminus B_U}
	\end{align*}
Similarly
 \begin{align*} &\pi_{B+}^{-1}(U)=\Spec_{Y_U}(\bigoplus_{E\in \Cl(Y/X)} \cO_{Y_U}({E}){\bf t}^E)=\\ & \Spec_{Y_U}(\bigoplus_{E\in \Cl(Y_U/U)} \cO_{Y_U}({E}){\bf t}^E) \otimes_{\cO_{Y_U}}(\bigoplus_{E\in \Cl^0(Y_U/U)} \cO_{Y_U}({E}){\bf t}^E)\\ & \Spec_{Y_U}(\bigoplus_{E\in \Cl(Y_U/U)} \cO_{Y_U}({E}){\bf t}^E) \otimes_{\cO_{Y_U}}(\bigoplus_{E\in \Cl^0(Y_U/U)} \cO_{Y_U}{\bf t}^E)\\
	&=B(Y_U/U)_+\times_{Y_U}(Y_U\times T_{B\setminus B_U})=B(Y_U/U)_+\times T_{B\setminus B_U}
	\end{align*}

\end{proof}

\section{Relative Cox construction for  toric morphisms }
\subsection{Toric varieties}
Recall some basic properties of toric varieties over a field. (See \cite{KKMS}, \cite{Oda},
\cite{Dan}, \cite{Fulton}).
\subsubsection{Fans} \label{fans} Let $\kappa$ be a field, and let $$T=\Spec(\kappa[x_1,x_1^{-1},\ldots,x_k,x_k^{-1}]=\Spec(\kappa[\MM])$$ be the torus, where
$\MM=\Hom(T,G_m)\simeq \ZZ^k$. The elements of $\MM$ can be described by the Laurent monomials $x^\alpha\in \MM$, where $\alpha\in \ZZ^k$.

Denote by  $\NN:=\Hom(G_m,T)$ the group of algebraic homomorphisms $t\to {\bf t}^\beta=({\bf t}^{b_1},\ldots,{\bf t}^{b_k}).$

This determines a nondegenerate pairing $(\cdot,\cdot)$  $\NN\times \MM\to \ZZ$ defined by the composition:
$$\Hom(G_m,T)\times \Hom(T,G_m)\to \Hom(G_m,G_m)
\quad,  x^\alpha \circ {\bf t}^\beta= {\bf t}^{(\beta,\alpha)}.$$
Thus $N=M^*\simeq Hom(M,\ZZ)$ is dual to $M$.

By a {\it fan} $\Delta$ in $\NN_\QQ$, we mean a collection of strictly convex cones, which is closed under the face relation, and such that two cones intersect along the common face. If $\tau$ is a face of $\sigma$, written as $\tau\leq \sigma$ then $X_\tau\subset X_\sigma$ is an open immersion.

\subsubsection{Toric varieties from fans} \label{fans2}
With any rational strictly convex cone $\sigma$ in $\NN_\QQ=\NN\otimes \QQ$  we associate its dual
$$\sigma^\vee:=\{y\in \MM_\QQ \mid (x,y)\geq 0 \quad\mbox{for all}\quad x\in \sigma\}.$$
The cone $\sigma^\vee$ determnies the  monoid $P_\sigma := \sigma^\vee\cap M$, and the relevant  affine toric  variety $X_\sigma=\Spec(\kappa[P_\sigma])$.

We say that a cone
$\sigma$ in $\NN^{\QQ}$ is {\it regular} or {\it nonsingular} if it is generated by a part of a basis of the lattice
$e_1,\ldots,e_k\in \NN$, written $$\sigma=\langle e_1,\ldots,e_k\rangle:=\QQ_{\geq 0}e_1+\ldots+\QQ_{\geq 0}e_k.$$ Similarly a  cone $\sigma=\langle v_1,\ldots,v_k\rangle$ in $\NN^{\QQ}$ is {\it simplicial}  it  if it generated by a linearly independent set
$\{v_1,\ldots,v_k\}\in \NN$.

With a fan $\Sigma$ we associate the {\it toric variety $X_\Sigma$} obtained by glueing $X_\sigma$, where $\sigma \in \Sigma$, along $X_\tau$, where $\tau\leq \sigma$. 
The torus $T=\Spec(\kappa(M))$ acts on toric variety $X_\Sigma$ with an open dense  orbit $T=\Spec(\kappa(M))$ corresponding to $\{0\}\in \Sigma$.

The fan $\Sigma$ will be called {\it regular} (respectively 
{\it simplicial}) if all its cones are { regular} (respectively 
{simplicial}). 

The regular (resp. simplicial) fans $\Sigma$ are in the bijective correspondence with the smooth (resp. $\QQ$-factorial) toric varieties $X_\Sigma$.

For any $r\in \ZZ_{\geq 0}$ by $\Sigma(r)$ denote the set of cones $\sigma$ of dimension $r$ in $\Sigma$.  The cones in $\Sigma(r)$ correspond to the orbits $\cO_\sigma$ and thus to the irreducible $T$-stable  closed subvarieties $\cO_\sigma$. In particular, the irreducible $T$-stable 
 divisors correspond to the one-dimensional faces in $\Sigma(1)$.
 \subsubsection{Toric valuations}
 Any integral vector $v\in \NN$ determines a monomial valuation $\val(v)$, which can be defined for $f=\sum c_m\cdot m \in \kappa[\MM]$, as $$\val(v)(f)=\val(v)(\sum c_m\cdot m)= \min_{c_m\neq 0} (v,m).$$
 
 The center $Z_{\val(v)}$ of the valuation $\val(v)$ is the union of orbits $\cO_\tau$, which correspond to the cones $\tau$ in $$ \Star(\tau,\Sigma)=\{\tau\mid \sigma\leq \tau\}.$$ 
 
 The associated ideals on $X_\Sigma$ are given locally on $X_\sigma$ as 
$$\cI_{\val(v),a,X_\sigma}=(m\in P_\sigma\mid (v,m)\geq a).$$

 By a {\it vertex} of $\Sigma$, we mean the {\it primitive vector}, so the integral vector with relatively coprime coordinates, which lies in a  one-dimensional face of $\Sigma$. The set of vertices of $\Sigma$ will be denoted by $\Ver(\Sigma)$. Each vector $v\in \Ver(\Sigma)$ defines the one-dimensional face $\langle v \rangle$, and the valuation $\val(v)$, which is precisely the valuation of the associated $T$-stable irreducible divisor $D$.
 
 

\subsubsection{Decomposition of fans}
 
By the {\it support of a fan} $\Sigma$ we mean the union   of its cones $|\Sigma|=\bigcup_{\sigma\in \Sigma} \sigma$. 

The {\it decomposition} of the fan $\Sigma$ is a fan $\Sigma'$ such that any cone $\sigma'\in \Sigma'$ is contained in $\sigma\in \Sigma$, and $|\Sigma'|=|\Sigma|$.

 For any subset  $\Sigma_0$ of the fan $\Sigma$, denote by $\overline{\Sigma_0}$ the set of all faces of the cones in $\Sigma_0$.
  The typical examples of the decompositions are given by the {\it star subdivisions}.
\begin{definition}\label{de: star subdivision} Let $\Sigma$ be a fan and $v$ be a primitive vector
 in the
relative interior of  $\tau\in\Sigma$. Then the {\it star
subdivision} 
 $v\cdot\Sigma$ of $\Sigma$ at
$v$ is defined to be
$$v\cdot\Sigma=(\Sigma\setminus {\rm Star}(\tau ,\Sigma) )\cup
\{ \langle v\rangle +\sigma \mid   \sigma\in \overline{\Star(\tau
,\Sigma)}\setminus \Star(\tau
,\Sigma)\}.$$ 
The vector $v$ will be called the {\it center} of the star subdivision.
\end{definition} 
\begin{lemma} The decompositions  $\Delta$ of a fan $\Sigma$ are in bijective correspondence with the  proper birational $T$-equivariant morphisms $X_\Delta\to X_\Sigma$.

The star subdivision $v\cdot\Sigma$ corresponds to  the blow-up of the valuation, which is the normalized blow-up of $\cI_{\val(v),a,X_\Sigma}$ for a  sufficiently divisible $a$.
	
\end{lemma}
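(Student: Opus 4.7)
The plan is to treat the two claims separately. For the correspondence between decompositions and proper birational $T$-equivariant morphisms, I would first verify the forward direction: given a decomposition $\Delta$ of $\Sigma$, each cone $\delta\in\Delta$ is contained in some $\sigma\in\Sigma$, which dualizes to $P_\sigma\subset P_\delta$, hence to an open immersion $X_\delta\hookrightarrow X_\sigma$; these glue (compatibly with the torus action on each chart) to a $T$-equivariant morphism $X_\Delta\to X_\Sigma$. Birationality is immediate since both fans contain $\{0\}$, giving the same dense torus. Properness follows from the toric valuative criterion: since $|\Delta|=|\Sigma|$, every one-parameter subgroup of $T$ whose limit exists in $X_\Sigma$ already has a limit in $X_\Delta$.

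For the converse, let $f:Y\to X_\Sigma$ be a proper birational $T$-equivariant morphism of normal schemes. Since $f$ is birational, the generic torus $T\subset X_\Sigma$ embeds as an open dense subset of $Y$, and by $T$-equivariance the action extends making $Y$ a normal $T$-variety; by the structure theory of toric varieties, $Y=X_\Delta$ for some fan $\Delta$ on the same lattice $\NN$. The morphism of toric varieties corresponds to a map of fans, i.e.\ every $\delta\in\Delta$ is contained in some $\sigma\in\Sigma$, so $\Delta$ is a decomposition in the sense defined. Properness then forces $|\Delta|=|\Sigma|$: otherwise one can construct a one-parameter subgroup with limit in $X_\Sigma$ but no limit in $X_\Delta$, contradicting properness. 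This establishes the bijection.

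For the star subdivision claim, I would fix a cone $\sigma\in\Sigma$ containing $v$ in the relative interior of a face $\tau\leq\sigma$ and argue locally on $X_\sigma$. Choose $a\in\ZZ_{>0}$ divisible enough that the ideal
\[
\cI_{\val(v),a,X_\sigma}=(m\in P_\sigma\mid (v,m)\geq a)
\]
is generated by finitely many monomials $m_1,\ldots,m_r$ with $(v,m_j)=a$ for each $j$ (this uses that the set of minimal elements of $\{m\in P_\sigma:(v,m)\geq a\}$ is finite after a sufficiently divisible scaling). The blow-up of a $T$-invariant monomial ideal is toric, and the normalized blow-up has fan given by the coarsest refinement of $\sigma$ on which the piecewise linear function $\phi(u)=\min_j\langle u,m_j\rangle$ becomes linear. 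Because the $m_j$ all lie on the hyperplane $(v,\cdot)=a$, this function is $\min(a\,(v/a)(u),\phi_\sigma(u))$ and its domains of linearity are exactly the cones $\langle v\rangle+\rho$ for faces $\rho\leq\sigma$ not containing $\tau$, together with the faces of $\sigma$ lying outside $\Star(\tau,\sigma)$. Thus the local fan is $v\cdot\overline{\Star(\tau,\sigma)}\cup(\overline{\sigma}\setminus\Star(\tau,\sigma))$, which glues over $\Sigma$ to $v\cdot\Sigma$.

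The most delicate step I expect is this last verification: matching the combinatorial star subdivision with the dual-fan description of the normalized blow-up of $\cI_{\val(v),a}$. The reason is that one must check both that every cone of $v\cdot\Sigma$ arises as a domain of linearity of $\phi$ and that the blow-up is already normal (so that no further refinement is introduced by normalization), which comes down to checking that the monoid $P_\delta$ for each new cone $\delta\in v\cdot\Sigma$ is saturated — this is guaranteed by the construction since $\delta$ is a rational polyhedral cone in $\NN$ and one simply takes $\delta^\vee\cap M$.
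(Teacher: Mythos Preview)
The paper does not prove this lemma; it is stated without proof as a standard fact from toric geometry (the background references are \cite{KKMS}, \cite{Oda}, \cite{Fulton}). So there is no ``paper's proof'' to compare against, and your task is really to supply the missing argument.

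Your plan for the first assertion is correct and is the standard one: the toric valuative criterion gives properness exactly when $|\Delta|=|\Sigma|$, and any proper birational $T$-equivariant morphism to $X_\Sigma$ from a normal variety with dense $T$-orbit is automatically toric for a fan in the same lattice, hence comes from a refinement.

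For the second assertion your outline is right, but the formula you write is garbled: $v$ lives in $\NN$, so the expression ``$\min(a\,(v/a)(u),\phi_\sigma(u))$'' with $u\in\sigma\subset\NN_\QQ$ does not parse, and $\phi_\sigma$ is undefined. The clean way to say what you intend is this. For $a$ sufficiently divisible (precisely, divisible enough that the rational polytope $\sigma^\vee\cap\{m:(v,m)\geq a\}$ has integral vertices), the Newton polytope of $\cI_{\val(v),a,X_\sigma}$ equals $\sigma^\vee\cap\{(v,\cdot)\geq a\}$; its single new supporting hyperplane is $\{(v,\cdot)=a\}$, so its normal fan has exactly one new ray, through $v$, and the maximal cones are $\langle v\rangle+\rho$ for the facets $\rho\leq\sigma$ with $\tau\not\leq\rho$. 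This is precisely the restriction of $v\cdot\Sigma$ to $\sigma$. Your example $v=(1,2)$, $a=1$ versus $a=2$ (where $\cI_{\val(v),1}=(x,y)$ but $\cI_{\val(v),2}=(x^2,y)$) shows concretely why ``sufficiently divisible'' is needed: for small $a$ the integral Newton polytope can collapse and pick out the wrong ray. Your remark about normality at the end is correct but not the delicate point; the delicate point is step one, identifying the Newton polytope for divisible $a$.
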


\subsubsection{Maps of fans}
By {\it a map of  fans} $(\Sigma',\NN')\to (\Sigma,\NN)$ we mean a linear map $\phi: \NN'\otimes\QQ\to \NN\otimes\QQ$ of vector spaces, such that
\begin{enumerate}
\item $\phi(\NN')\subset \NN$.
\item For any $\sigma'\in \Sigma'$ there is is $\sigma\in \Sigma$ such that $\phi(\sigma')\subset \sigma$.	
\end{enumerate}
  
 The map of fans corresponds to a $T_{\NN'}$-equivariant morphism of toric varieties $(X_{\Sigma'},T_{\NN'})\to (X_\Sigma,T_{\NN})$, where the action of $T_{\NN'}=\Spec \kappa[\MM']$ on $X_\Sigma$ is defined by the  homomorphism of tori $$T_{\NN'}=\Spec \kappa[\MM']\to T_{\NN}=\Spec \kappa[\MM],$$ induced   by  $\NN'\to \NN$. The decomposition $\Sigma'$ of a  fan $\Sigma$ corresponds to the proper birational morphism. 

\subsubsection{Good quotients}
 Let $\phi: (\sigma',\NN')\to (\sigma,\NN)$ be a surjective map of cones, such that $\phi(\sigma')=\sigma$, and $\phi(\NN')=\NN$. Let $\NN'':=\ker(\NN'\to \NN)$. Then 
the exact sequence 
$$0\to \NN''\to \NN'\to \NN\to 0,$$
has its dual
$$0\to \MM\to \MM'\to \MM''\to 0.$$

Thus $\MM$ can be identified with the sublattice of $M''$ defined as $$\MM=\{m\in M'' \mid (n,m)=0\quad \mbox{for \,\, all}\,\, n\in \NN'' \}$$ 
Consequently, $\kappa[\MM]=\kappa[\MM'']^{T_{\NN''}}$.
Moreover the dual map determine the inclusion $\sigma^\vee\hookrightarrow (\sigma')^\vee$ for which 
$(\sigma')^\vee\cap \MM_\QQ=\sigma^\vee $, and 
$$(P_{\sigma'})^{T_{\NN''}}=P_{\sigma'}\cap \MM=P_{\sigma}.$$
 Hence $$\cO(X_{\sigma'})^{T_{\NN''}}=\kappa[P_{\sigma'}]^{T_{\NN''}}=\kappa[P_{\sigma}]=\cO(X_{\sigma}).$$
Thus $$X_{\sigma'}\to X_\sigma\simeq X_{\sigma'}\sslash T_{\NN''}$$ is  {\it a good quotient}.

If additionally $\phi: \sigma'\to \sigma$ is injective, so it is an isomorphism of cones, then the inverse image of any  orbit is a single orbit, and thus the corresponding morphism $X_{\sigma'}\to X_\sigma\simeq X_{\sigma'}\slash T_{\NN''}$ is {\it a geometric quotient}.

If the map of fans $\phi: (\Sigma',\NN')\to (\Sigma,\NN)$  is surjective, i.e. $\phi(|\Sigma'|)=|\Sigma|$ and $\phi(\NN')=\NN$, and  for any cone $\delta\in \Sigma$, the inverse image $\phi^{-1}(\delta)\cap |\Sigma'|$ is a unique cone $\delta'\in \Sigma'$, then the corresponding morphism $X_{\Sigma'}\to X_\Sigma$ is affine.
Consequently, by the previous argument,  it 
is a good quotient with respect to $T_{N''}=\ker(T_{\NN'}\to T_{\NN})$, where  $\NN''=\ker(\NN'\to \NN)$.

If additionally, the map $|\Sigma'|\to |\Sigma|$ is bijective then $X_{\Sigma'}\to X_\Sigma$ is a geometric quotient.

\subsection{Cox construction for toric varieties}

We recall here the standard Cox construction  for toric varieties from the convex geometry point of view. This presentation relies greatly on \cite{Cox}, \cite{Cox2}, and will be then adapted to the relative situation.
\subsubsection{Cox construction}
Given a  toric variety $X$ with associated  fan $\Sigma$ in the space $\NN^\QQ\simeq \QQ^n$ containing the standard lattice $\NN\simeq \ZZ^n$. 
 We shall assume that the fan $\Sigma$ is {\it nondegenerate} that is the set {\it  $\Ver(\Sigma)$ generate the vector space $\NN_\QQ$.}

 Let $\Ver(\Sigma)=\{v_1,\ldots,v_k\}$ denote the set of  vertices of $\Sigma$. Let $e_1,\ldots,e_k$ denote the standard basis of $\ZZ^k\subset \QQ^k$, and let $$\sigma_{B}:=\langle e_1,\ldots,e_k\rangle=\{\sum_{i=1}^k a_iv_i\mid a_i\in \QQ_{\geq 0}\}.$$
The cone $\sigma_{B}$ defines a regular fan $\Sigma_{B}$ in $\NN_B^\QQ=\QQ^k$, consisting of all the faces of $\sigma_B$. It corresponds to the affine space $$X_{\sigma_{B}}=\Spec (\kappa[x_1,\ldots,x_k])=\AA^k_\kappa$$
Consider the  linear map $\pi_B:\NN_B^\QQ=\QQ^k\to \NN^\QQ=\QQ^n$ defined on the basis $e_1,\ldots,e_k$, such that $\pi_B(e_i)=v_i$. 
We construct the subfan $\Sigma_{B_+}$  of $\sigma_{B}$ to be the set of all the faces $\sigma$ of $\sigma_{B}$ such that $\pi_B(\sigma)$ is contained in a face of $\Sigma$ (\cite{Cox2}).  This determines a morphism
$\pi_B: \Sigma_{B_+}\to \Sigma$. Note that it follows from the definition that for any face $\delta=\langle v_{i_1},\ldots v_{i_k}\rangle$ of $\Sigma$, there is a unique face $$\delta_0=\pi_B^{-1}(\delta)=\langle e_{i_1},\ldots e_{i_k}\rangle\in \Cox(\Sigma).$$
\subsubsection{Cox coordinate ring}
Let $\Div(X)$ be the group of Weil divisors on  $X=X_\Sigma$, and $\Div(X)_+$ be the monoid of the effective Weil divisors and zero on  $X$.  Let $\Ver(\Sigma)=\{v_1,\ldots,v_k\}$ denote the set of  vertices of $\Sigma$. The corresponding Weil divisors $D_1,\ldots,D_k\in \Div(X)$  freely generate $\Div(X)$. 

\begin{definition} \cite{Cox}
The {\it Cox coordinate ring} is defined to be $$\cC(X):={\kappa}[x_1,\ldots,x_k]={\kappa}[\Div(X)_+]=\bigoplus_{D\in \Div(X)_+} {\kappa} x^D,$$
with the natural identification  $x_i=x^{D_i}$, and $x^D=x^\alpha$ and the induced multiplication $x^{D_1}\cdot x^{D_2}=x^{D_1+D_2}$.
\end{definition}

Denote by  $\Prin(X)$ the subgroup  of $\Div(X)$ of the principal divisors on $X$, which is generated by $\divv(m)$, where $m\in M$, giving an isomorphism $$M \simeq \Prin(X),\quad m\mapsto \sum (v_i,m) D_i.$$ 
We use here the assumption that $\Sigma$ is nondegenerate.

 Let $\Cl(X)=\Div(X)/\Prin(X)$ be the 
 Weil divisor class group.
Although the  Cox coordinate ring, as defined, comes with the natural $\Div(X)$-gradation, one can also consider its 
$\Cl(X)=\Div(X)/\Prin(X)$-gradation. Then for any class $[E]\in \Cl(X)$ of the divisor $E\in  \Div(X)$  the space of effective Weil divisors in $[E]$ on $X$ is $T$- stable and thus generated by all $T$- invariant effective divisors $$E+\divv(m)\geq 0.$$  
Thus one can describe the $[E]$- gradation to be $$\cC(X)_{[E]}=\bigoplus_{D\in [E]} {\kappa}\cdot x^D= \bigoplus_{m\in M, \divv(m)+E\geq 0} {\kappa} x^E\cdot x^{\divv(m)} \simeq H^0(X,\cO_X(E))\cdot x^E,$$

Thus choosing any set $E_1,\ldots E_k$ of $\Div(X)$ which determines a basis of the lattice $\Cl(X)$, one identifies $\Cl(X)$, with the subgroup of $\Div(X)$. Under this noncanonical identification we can write as in  \cite{Cox} and \cite{Cox2}:
$$\cC(X)=\bigoplus_{E\in \Cl(X)} H^0(X,\cO_X(E))\cdot x^E$$

On the other hand the canonical $\Cl(X)$-gradation on $\cC(X)$ determines the natural action of the torus $$T_X:=\Spec({\kappa}[\Cl(X)])\simeq \Spec({\kappa}[t_1,t_1^{-1},\ldots,t_r,t_r^{-1}],$$
where $\Cl(X)\simeq \ZZ^r$.
\subsubsection{Cox coordinate space}
The Cox coordinate ring defines {\it the Cox coordinate space} (as in \cite{Cox} and \cite{Cox2}) to be
$$B=\Cox(X):=\Spec(\cC(X))=\Spec(\bigoplus_{E\in \Cl(X)} H^0(X,\cO_X(E))\cdot x^E)\simeq \AA^k,$$

It is the toric variety associated with the fan ${\Sigma_{B}}$ of all the faces of $\sigma_B$.
\subsubsection{Good and geometric quotients}
Let $$B_+=\Cox(X)_+:= X_{\Sigma_{B_+}}\subset B$$ be the open toric subscheme of $B$ associated with $\Sigma_{B_+}$.
The subscheme $B_+$  is called the {\it Cox characteristic space}. 
The morphism $B_+\to X$ corresponding to $\Sigma_{B_+}\to \Sigma$ is  toric and affine. It defines  the homomorphism of the relevant tori $$\phi: T_B:=\Spec({\kappa}[\Div(X)])\to T:=\Spec({\kappa}[M]),$$  
corresponding to the inclusion $M\hookrightarrow \Div(X)$ and  defining
the exact sequence 
 $$0\to M \to \Div(X)\to \Cl(X)\to 0.$$
Consequently, the kernel of $\phi$ can be identified canonically with $T_X:=\Spec({\kappa}[\Cl(X)])$. Since $T_X$ acts trivially on $T\subset X$, the morphism $B_+\to X$ is $T_X$-invariant and affine.  Moreover for any $\delta\in \Sigma$, and $\delta_0=\pi^{-1}(\delta)$, we have that $\pi(\delta_0)=\delta$, and $X_{\delta_0}\sslash T_X=X_\delta$. Thus, the  affine $T_X$-invariant morphism   $B_+\to X$ is a good quotient.

\subsubsection{Forms }
 By the {\it form} $F$ on $X$ we mean a $\Cl(X)$-homogenous function of gradation $[E]$ in $$H^0(X,\cO_X(E))x^E=H^0(B,\cO_B)_{[E]}=\cC(X)_{[E]}=(\cO_B)_{[E]}.$$ 
Each such $T_B$-semiinvariant form can be described as  $$F=x^D=x^\alpha= x^m \cdot x^E,$$ where $D\in \Div(X)$, $D=E+\divv(x^m)$, and $x^m\in H^0(X,\cO_X(E))$, for $E$ being a linear combination of $E_i$. 

With any form $F=fx^E\in H^0(X,\cO_X(E))x^E$ we can associate its divisor $\divv_X(F)=E+\divv(f)$, and its vanishing locus $V(F)=\supp(\divv(F))$. 
This extends to a homomorphism $$\Div(X)\to \Div(X),\quad D\to \divv(x^D),$$ which is identical on generators $E_i$ of $\Cl(X)\subset \Div(X)$, and thus on their  linear combinations. 
On the other hand, any class $[D]\in \Cl(X)$ can be written as the difference  $$[D]=[E']\setminus  [E''] $$ of  effective linear combinations  $E'$ and $E''$ of the generators $E_i$.
 
 Then $E'+\divv(x^{m})=D+E''$ is effective,  for a certain $m\in M$, and  we have the equality for the form $F:=x^{m} x^{E'}$:
 $$\divv(F)=\divv(x^{m}x^{E'})=\divv(x^{D})+\divv(E''),$$
 whence
 $$D+E''=E'+\divv(x^{m})=\divv(x^{D})+E''$$
 and thus  $$\divv_X(x^D)=D,$$ 
for any $D\in \Div(X)$.
Consequently $V(x^D)=\supp(D)$ for any form $x^D$, where $D\in \Div(X)_+$.

In particular, the vanishing locus $\divv(x_i)=\divv(x^{D_i})=D_i$ corresponds to the vertex $v_i\in \Ver(\Sigma)$. 
 
\subsubsection{Cox characteristic space}
The subscheme $B_+$ can be described using the $T_B$-semiinvariant forms on $X$ as in \cite{Cox2}.
By the construction, $B_+$ can be covered by the open affine subsets $B_{\delta}:=\pi^{-1}(X_{\delta})$, where $\delta\in \Sigma_{B_+}$. For each $\delta\in \Sigma_{B_+}$ consider the form $\check{x}_\delta:=\prod_{v_i\not \in \delta} x_i$ on $X$.
Its vanishing locus is equal to the complement $$X\setminus X_{\delta}=\bigcup_{v_i\not \in \delta} D_i.$$
So we can write $X_\delta=X\setminus V_X({\check{x}_\delta})$. Similarly $B_{\delta}=B\setminus V_B(\check{x}_\delta)=B_{\check{x}_\delta}$, where $\check{x}_\delta$ is considered as a function on $B$. 

Consequently $$B_+=B\setminus V(\cI_\irr),$$ where
$$\cI_{\irr}:=(\check{x}_\delta \mid \delta\in \Sigma)\subset \cO(B)=\cC(X)$$ is the {\it irrelevant ideal} (see \cite{Cox},\cite{Cox2}). 
 
 Moreover the morphism $B_{\delta}\to X_\delta$, can be described as $$B_{\delta}=B_{\check{x}_\delta}=\Spec(\kappa[x_1,\ldots,x_k]_{\check{x}_\delta})\to X_\delta=X_{\check{x}_\delta}=X\setminus V_X(\check{x}_\delta)$$
Note however then that the condition $f\in H^0(X_{\check{x}_\delta},\cO_X(E))$ is equivalent to
 $$\divv(f)+E+\divv(\check{x}_\delta^n)\geq 0$$ for  $n\gg 0$. The latter condition can be written as $$fx^E\cdot \check{x}_\delta^n\in H^0(X, \cO_X(E+n[\divv(\check{x}_\delta)])\cdot x^{E+n[\divv(\check{x}_\delta)]}$$ 
 
 Consequently
  $$\kappa[x_1,\ldots,x_k]_{\check{x}_\delta}=(\bigoplus_{E\in \Cl(X)} H^0(X,\cO_X(E))\cdot x^E)_{\check{x}_\delta}=\bigoplus_{E\in \Cl(X)} H^0(X_{\delta},\cO_X(E))\cdot x^E$$
 
 The latter leads to the formula for the Cox characteristic space to be
 $$B_+=\Cox(X)=\Spec_X (\bigoplus_{E\in \Cl(X)} \cO_X(E)\cdot x^E)$$
as in \cite{Cox2}.

\subsection{Cox relative spaces over affine toric schemes}
In this section, we shall study the general relative Cox construction developed in Chapter 1 in the context of birational toric morphisms. To a great extent, it is analogous to the original Cox construction for  toric varieties (as in \cite{Cox}) presented in the previous section. On the other hand, one can  link it to the original construction of Satriano, who developed a similar notion in the context of the toric Artin stacks in \cite{Satriano}.

The following result shows the relation between the toric Cox construction for toric varieties and the  general Cox construction for proper morphisms. 

\begin{lemma}
Let $\sigma$ be a regular cone, and $\Delta$ be its subdivision. Let $\pi: X_\Delta\to X_\sigma$ be the induced proper birational morphism.  Then the toric Cox coordinate space $\Cox(X_\Delta)$ and the toric Cox characteristic space $\Cox(X_\Delta)_+$ for toric variety $X_\Delta$ coincide with the relative Cox coordinate space $B=\Cox(X_\Delta/X_\sigma)$ and relative Cox characteristic space $B_+=\Cox(X_\Delta/X_\sigma)_+$ for the proper birational morphism $X_\Delta\to X_\sigma$. 

\end{lemma}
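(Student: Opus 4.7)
The plan is to exhibit a canonical identification of the class group $\Cl(X_\Delta)$ used in the toric Cox construction with the group $\Cl(X_\Delta/X_\sigma)$ used in the relative construction, after which the two constructions coincide by inspection of the definitions.

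First I would observe that, since $\sigma$ is regular, $X_\sigma$ is a product of an affine space and a torus, so $\Cl(X_\sigma)=0$. Consequently $\Cl(X_\Delta/X_\sigma)=\ker(\pi_*\colon \Cl(X_\Delta)\to \Cl(X_\sigma))=\Cl(X_\Delta)$. To make this rigorous at the level of groups, write the vertices as $\Ver(\sigma)=\{v_1,\dots,v_n\}$ (a basis of the lattice $N$ restricted to the support of $\Delta$) and $\Ver(\Delta)\setminus \Ver(\sigma)=\{v_{n+1},\dots,v_k\}$ (the vertices producing the exceptional divisors $E_1=D_{v_{n+1}},\dots,E_{k-n}=D_{v_k}$ of $\pi$). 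Using the dual basis $m_1,\dots,m_n\in M$ to $v_1,\dots,v_n$, the principal divisors $\divv(x^{m_i})=D_{v_i}+\sum_{j>n}(v_j,m_i)\,D_{v_j}$ express each class $[D_{v_i}]$ for $i\leq n$ in terms of $[E_1],\dots,[E_{k-n}]$. Conversely, any relation $\sum_{j>n}a_j E_{j-n}=\divv(x^m)$ forces $(v_i,m)=0$ for all $i\leq n$, hence $m=0$ and all $a_j=0$. Therefore the natural composite
\[
\Cl(X_\Delta/X_\sigma)\;\hookrightarrow\; \Div(X_\Delta)\;\twoheadrightarrow\; \Cl(X_\Delta)
\]
is an isomorphism of groups, identifying the two notions with the same free abelian group on $E_1,\dots,E_{k-n}$.

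Once this identification is in place, I would unwind the definitions. For the Cox coordinate space, the toric description gives
\[
\cC(X_\Delta)=\bigoplus_{E\in \Cl(X_\Delta)} H^0(X_\Delta,\cO_{X_\Delta}(E))\cdot x^E,
\]
and this sum can be rewritten with $E$ ranging over $\Cl(X_\Delta/X_\sigma)\subset \Div(X_\Delta)$ by Step 1. Since $X_\sigma$ is affine and $\pi$ is proper, $H^0(X_\Delta,\cO_{X_\Delta}(E))=H^0(X_\sigma,\pi_*\cO_{X_\Delta}(E))$, so the resulting ring equals $H^0(X_\sigma,\cA_{X_\Delta/X_\sigma})$. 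Taking $\Spec$ over $X_\sigma$ yields $\Cox(X_\Delta)=B$. For the characteristic space, both $\Cox(X_\Delta)_+$ and $\Cox(X_\Delta/X_\sigma)_+$ are by definition the relative spectrum over $X_\Delta$ of the same graded sheaf $\bigoplus_{E}\cO_{X_\Delta}(E)\,x^E$, now with the grading groups identified, so they coincide.

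The main obstacle is the bookkeeping in Step 1, specifically verifying that the natural embedding $\Cl(X_\Delta/X_\sigma)\hookrightarrow \Div(X_\Delta)$ becomes an \emph{isomorphism} onto $\Cl(X_\Delta)$ after passage to classes — this is exactly where regularity of $\sigma$ is used (it provides the dual basis $m_1,\dots,m_n$ needed to eliminate the non-exceptional generators and to certify freeness of the remaining ones). After that point, Steps 2 and 3 are essentially tautological matchings of the two presentations, using properness of $\pi$ and affineness of $X_\sigma$ for the global-sections identification.
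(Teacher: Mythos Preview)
Your argument is correct and follows essentially the same approach as the paper: the paper's proof is a single sentence observing that the gradation groups $\Cl(X_\Delta)$ and $\Cl(X_\Delta/X_\sigma)$ coincide (both freely generated by the exceptional toric divisors), after which the two constructions are formally identical. You have simply supplied the explicit verification of this identification using the dual basis afforded by regularity of $\sigma$, together with the tautological matching of $\Spec$ and $\Spec_X$ over the affine base $X_\sigma$.
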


\begin{proof}
The construction of the spaces is formally identical. The reason is that the gradation in both cases is the group $\Cl(X_\Delta)=\Cl(X_\Delta/X_\sigma)$, which is freely generated by the exceptional toric divisors $E_i$ with no relations.
\end{proof}

\subsubsection{System of local parameters on  affine toric schemes} 

Let $P_\sigma=\sigma^\vee\cap M$ be the  monoid associated with the affine toric variety $X_\sigma=\Spec(\kappa[P_\sigma])$.  Denote by ${P}_\sigma^*\simeq \ZZ^r$  the subgroup of the invertible elements in ${P}_\sigma$, and let  $\overline{P}_\sigma:={P}_\sigma/{P}_\sigma^*$. The natural homomorphism of monoids $ {P}_\sigma\to \overline{P}_\sigma={P}_\sigma/{P}_\sigma^*$ splits, and  one can write noncanonically $$P_\sigma=\overline{P}_\sigma\times P_\sigma^*,$$

 Let $u_1=m_1,\ldots,u_s=m_s\in P_\sigma$ be the
minimal set of generators of the monoid $\overline{P}_\sigma$. This set is determined uniquely and consists of the elements $m\in \overline{P}_\sigma$, which cannot be written as $m=m'\cdot m''$ for the nontrivial elements $m',m''\in \overline{P}_\sigma$.  
\begin{definition}
The set of generators of $u_1,\ldots,u_s\in \overline{P}_\sigma\subset {P}_\sigma$ will be called a {\it system of local toric parameters} on $X_\sigma$.
\end{definition}


\subsubsection{Cox relative spaces over affine toric schemes}
\begin{lemma} \label{cover4}
Let $\sigma_0$ be any   cone in $\NN^\QQ$, and $\Delta$ be its subdivision. Consider the induced toric morphism $\pi: X_\Delta\to X_{\sigma_0}=\Spec(\kappa[P_{\sigma_0}])$.  Let $E_1,\ldots,E_k$ be the toric exceptional divisors  of $\pi$ corresponding to the vertices
$v_1,\ldots,v_k\in\Ver(\Delta)\setminus \Ver(\sigma_0)$, and the exceptional valuations $\nu_i=\val(v_i)$.

Let $B$ and $B_+$ denote the full cobordization and, respectively, the cobordization of the morphism $X_\Delta\to X_{\sigma_0}$

Then 
\begin{enumerate}
\item $B=\Spec\cO_{X_\sigma}[t_1^{-1},\ldots,t_k^{-1},u_1{\bf t}^{\alpha_1},\ldots,u_s{\bf t}^{\alpha_k}]$, where and  $u_1,\ldots,u_k\in P_\sigma$ is a system of local toric parameters
and $\alpha_i=(a_{i1},\ldots,a_{ik})$, with $a_{ij}:=\nu_j(u_i)$.
\item $B$ is a toric variety $B\simeq X_{\sigma_0}\times \AA^k$, and the corresponding cone is $$\sigma_{B}=\sigma_0\times \langle e_1,\ldots,e_k\rangle.$$
\item The natural morphism	$$B=\Spec\cO_{X_{\sigma_0}}[t_1^{-1},\ldots,t_k^{-1},u_1{\bf t}^{\alpha_1},\ldots,u_k{\bf t}^{\alpha_k}]\to X_{\sigma_0}$$ is given by the projection $\pi_\Sigma:\sigma_{B}\to \sigma_0$, mapping $e_i\mapsto v_i$.\item $B_+\subset X_{\sigma_{B}}$ can be described as the set $\Sigma_{B+}$ of the faces  $\sigma$ of $\sigma_{B}$ such that $\pi_\Sigma(\sigma)\subseteq \delta$, where $\delta \in \Delta$. In particular, $B_+\subset B$ is an open inclusion.

\end{enumerate}

\end{lemma}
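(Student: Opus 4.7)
The strategy is to compute the Cox algebra $\cA_{Y/X}$ explicitly from the toric description of the valuations $\nu_i$, realize $B$ as a toric variety with the announced cone, and then cut out $B_+$ by Proposition \ref{cover2} applied to the canonical toric affine cover of $X_\Delta$.

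For (1) and (2) I would apply Proposition \ref{valu} after observing that in the toric setting each ideal $\cI_{\nu_i,a_i}\subset\kappa[P_{\sigma_0}]$ is monomial, $\kappa$-spanned by $\{m\in P_{\sigma_0}:(v_i,m)\ge a_i\}$. This rewrites $\cA_{Y/X}$ as the monoid algebra $\kappa[S]$ on
\[
S=\{(m,\alpha)\in M\oplus\ZZ^k : m\in P_{\sigma_0},\ (v_i,m)\ge\alpha_i\text{ for all }i\}.
\]
The key point is then that the substitution $\beta_i:=(v_i,m)-\alpha_i$ defines a monoid isomorphism
\[
\phi\colon P_{\sigma_0}\oplus\NN^k\xrightarrow{\ \sim\ }S,\qquad (m,\beta)\longmapsto\bigl(m,\,(v_1,m)-\beta_1,\ldots,(v_k,m)-\beta_k\bigr),
\]
which identifies $\cA_{Y/X}$ with $\cO_{X_{\sigma_0}}[x_1,\ldots,x_k]$ and so gives $B\simeq X_{\sigma_0}\times\AA^k$ with cone $\sigma_B=\sigma_0\oplus\langle e_1,\ldots,e_k\rangle$, establishing (2). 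Tracing the named generators through $\phi^{-1}$ shows that $t_i^{-1}$ corresponds to $(1,e_i)$, i.e.\ to $x_i$, while $u_j{\bf t}^{\alpha_j}$ corresponds to $(u_j,0)$, i.e.\ to $u_j$; together with the invertibles of $P_{\sigma_0}$ these generate $\cO_{X_{\sigma_0}}[x_1,\ldots,x_k]$, hence also $\cA_{Y/X}$, yielding (1). Note that under $\phi^{-1}$ the element $u_j\in\cO_{X_{\sigma_0}}\subset\cA_{Y/X}$ goes not to $u_j$ but to $u_j\,x_1^{a_{j1}}\cdots x_k^{a_{jk}}$, so the extra generators $u_j{\bf t}^{\alpha_j}$ are genuinely needed in addition to the $t_i^{-1}$.

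For (3), the structural inclusion $\cO_{X_{\sigma_0}}\hookrightarrow\cA_{Y/X}$ places each $m\in P_{\sigma_0}$ in degree zero, i.e.\ as $(m,0)\in S$, whose $\phi$-preimage is $(m,(v_1,m),\ldots,(v_k,m))$. The resulting map of character lattices $M\to M\oplus\ZZ^k$ is the graph of $m\mapsto((v_i,m))_i$, whose dual on cocharacter lattices is $(n,b_1,\ldots,b_k)\mapsto n+\sum b_i v_i$. In particular the basis vectors $e_i$ are sent to $v_i$, so $\pi_\Sigma\colon\sigma_B\to\sigma_0$ is the claimed projection.

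Finally, for (4) the plan is to invoke Proposition \ref{cover2} with the canonical toric affine cover $\{X_\delta\}_{\delta\in\Delta}$ of $X_\Delta$. For each $\delta\in\Delta$ one constructs a form $F_\delta$ whose vanishing locus on $X_\Delta$ is $X_\Delta\setminus X_\delta$: the exceptional rays $\langle v_i\rangle\not\le\delta$ contribute factors $t_i^{-1}$, while each non-exceptional ray $\langle w_\ell\rangle\not\le\delta$ is handled by choosing a monomial $m_\ell\in P_{\sigma_0}$ with $(w_\ell,m_\ell)>0$, corrected by an appropriate product of $t_i^{-1}$'s to produce a homogeneous form $m_\ell{\bf t}^{-\mu_\ell}$. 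Then $B_{F_\delta}$ is precisely the open affine of $X_{\sigma_B}$ corresponding to the face $\pi_\Sigma^{-1}(\delta)\cap\sigma_B$, and $B_+=\bigcup_\delta B_{F_\delta}$ is the toric open subscheme $X_{\Sigma_{B_+}}$ in (4). The main obstacle is producing the forms $F_\delta$ when $\sigma_0$ is not simplicial and the individual non-exceptional divisors of $X_{\sigma_0}$ are only $\QQ$-Cartier; allowing $F_\delta$ to absorb additional exceptional factors resolves this at the cost of careful bookkeeping of the resulting degree while preserving the vanishing locus.
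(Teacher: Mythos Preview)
Your treatment of (1)--(3) is correct and organized somewhat differently from the paper. Where you set up a single monoid isomorphism $P_{\sigma_0}\oplus\NN^k\cong S$ and read everything off from it, the paper instead writes $\cA_{Y/X}$ as the intersection $\bigcap_i\cO_X[t_i^{-1},u_jt_i^{a_{ij}}][\check{\bf t}_i,\check{\bf t}_i^{-1}]$ of the one--valuation extended Rees algebras and then verifies the generators directly; for (3) it computes the image of each $e_i$ by restricting the divisorial valuation of $D_i=V_B(t_i^{-1})$ to $\kappa[u_1,\ldots,u_n]$ rather than dualizing the character map. Your monoid isomorphism is a clean bookkeeping device, and both routes prove the same thing.

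For (4) your outline has a real gap, and it is not quite the one you diagnose. You propose to handle each non-exceptional ray $w_\ell\notin\delta$ by a monomial $m_\ell\in P_{\sigma_0}$ with $(w_\ell,m_\ell)>0$, and then adjust only the exponents of the $t_i$. The problem is that such an $m_\ell$ will in general also satisfy $(w_{\ell'},m_\ell)>0$ for some non-exceptional ray $w_{\ell'}$ \emph{inside} $\delta$, and no exceptional factor ${\bf t}^{-\mu}$ can cancel that contribution: your form $F_\delta$ then vanishes on the divisor for $w_{\ell'}$, so $(X_\Delta)_{F_\delta}\subsetneq X_\delta$ and you lose both affineness and the identification with a face of $\sigma_B$. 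The correct constraint is $m_\ell\in\omega^\perp\cap\sigma_0^\vee$, where $\omega$ is the maximal face of $\sigma_0$ contained in $\delta$; such $m_\ell$ exists precisely because $w_\ell\in\sigma_0\setminus\omega$ and $\omega$ is a face. The paper does this in one stroke: it picks a single character $\chi_\delta\in\sigma_0^\vee$ vanishing on $\omega$ and strictly positive on $\sigma_0\setminus\omega$, sets $F_\delta=\chi_\delta\,{\bf t}^{-E_\delta}$ with $E_\delta=\sum_{v_i\in\delta}\nu_i(\chi_\delta)E_i$, and checks that $\supp(\divv_{X_\Delta}(F_\delta))=X_\Delta\setminus X_\delta$. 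With this correction your plan goes through; note also that the affine open $B_{F_\delta}$ then corresponds to the maximal \emph{face} $\omega\times\langle e_i:v_i\in\delta\rangle$ of $\sigma_B$ mapping into $\delta$, not to the full set $\pi_\Sigma^{-1}(\delta)\cap\sigma_B$, which is generally not a face.
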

\begin{proof} First we will show that $X_\Delta$ can be covered by the open affine subsets $(X_\Delta)_F$, where $F$ is a form on $X_\Delta$. The problem translates into a toric situation. For any cone $\delta\in \Delta$ let $\omega$ be a maximal common face of $\delta$ and $\sigma_0$. Consider a character $\chi_\delta\in \sigma_0^\vee$ which is zero on $\omega$ and strictly positive on $\sigma_0\setminus \omega$. The character $\chi_\delta$ defines a regular function on $X_{\sigma_0}$, for which $$n_i:=\chi_\delta(v_i)=\nu_i(\chi_\delta)>0,$$ for any vertex $$v_i\in \Ver(\delta)\setminus \Ver(\omega)=\Ver(\delta)\setminus \Ver(\sigma_0).$$ In particular $\divv(\chi_\delta)-E_\delta\geq 0$, where $E_\delta:=\sum_{E_i\cap X_\delta\neq \emptyset} n_iE_i$. Then, for the form  $F_\delta:=\chi_\delta x^{-E_\delta}$, its support $$\supp(\divv(F_\delta))=\supp(\divv(\chi_\delta))-E_\delta)$$ on $X_\Delta$ is the union of all the toric divisors  which are  in $X_\Delta\setminus X_\delta$ and  which
correspond to  the vertices in $\Ver(\Delta)\setminus \Ver(\delta)$.
Consequently $\supp(\divv(F_\delta))=X_\Delta\setminus X_\delta$, and $X_{F_\delta}=X_\delta$. 
This implies, by Proposition \ref{cover2},  that the natural morphism $$B_+=\bigcup_{\delta\in \Delta} B_\delta \hookrightarrow B$$ is an  open immersion, where $B_\delta:=B_{F_\delta}$ is open affine.  

(1) For any $i=1,\ldots,k$ let $t_i$ be the coordinate corresponding to $-E_i$. Set 

$$\check{\bf t}_i:=(t_1,\ldots,\check{t}_i,\ldots,t_k)\quad \check{\bf t}_i^{-1}:=(t^{-1}_1,\ldots,\check{t}^{-1}_i,\ldots,t^{-1}_k)$$  By Proposition \ref{divi} one can write:
$$\cA_{Y/X}=\bigoplus_{a_i\in \ZZ} \,\, \bigcap^k_{i=1}\, \cI_{\nu_i,a_i}\,\,\cdot t_1^{a_1}\cdot\ldots\cdot t_k^{a_k}=\bigcap^k_{i=1}\bigoplus_{a_i\in \ZZ} \cI_{\nu_i,a_i}\,\,\cdot t_i^{a_i}[\check{\bf t}_i, \check{\bf t}_i^{-1}],$$
Let $u_1,\ldots,u_k\in \overline{P}_\sigma=P_\sigma/P_\sigma^*\subset  P_\sigma$ be the generators of $\overline{P}_\sigma$, and let
 $\nu_i(u_j)=a_{ij}\in \ZZ_{\geq 0}$ for $i=1,\ldots,k$, and $j=1,\ldots,n$. Then
 $$\cI_{\nu_i,a}=(u_1^{b_1}\cdot\ldots \cdot u_n^{b_n}) \mid \sum_{j=1}^k {b_{j}}a_{ij}\geq a).$$ Comparing gradations we easily see that for each $i$, $$\bigoplus_{a_i\in \ZZ}  \cI_{\nu_i,a}t_i^{a_i}=\cO_X[t_i^{-1},u_jt_i^{a_{ij}}].$$
So $$\cA_{Y/X}=\bigcap^k_{i=1} \cO_X[t_i^{-1},u_jt_i^{a_{ij}}][\check{\bf t}_i, \check{\bf t}_i^{-1}]=\cO_X[t_1^{-1},\ldots,t_k^{-1},u_1{\bf t}^{\alpha_1},\ldots,u_k{\bf t}^{\alpha_k}],$$
where $\alpha_i=(a_{i1},\ldots,a_{ik})$.

(2) \begin{align*}B&=\Spec\cO_{X_\sigma}[t_1^{-1},\ldots,t_k^{-1},u_1{\bf t}^{\alpha_1},\ldots,u_k{\bf t}^{\alpha_k}]=\\
 	&=\Spec(\kappa[u_1,\ldots,u_k, v_1,\ldots,v_r][t_1^{-1},\ldots,t_k^{-1},u_1{\bf t}^{\alpha_1},\ldots,u_k{\bf t}^{\alpha_k}]=\\
 	&=\Spec(\kappa[t_1^{-1},\ldots,t_k^{-1},u_1{\bf t}^{\alpha_1},\ldots,u_k{\bf t}^{\alpha_k},v_1,\ldots,v_r]\simeq \\&\simeq\Spec(\kappa[t_1^{-1},\ldots,t_k^{-1},u_1,\ldots,u_k,v_1,\ldots,v_r])\simeq X_{\sigma_0}\times \AA^k.
 \end{align*}

(3) The toric map \begin{align*}B&=\Spec(\kappa[t_1^{-1},\ldots,t_k^{-1},u_1{\bf t}^{\alpha_1},\ldots,u_k{\bf t}^{\alpha_k},v_1,\ldots,v_r]\to\\& X_{\sigma_0}=\Spec(\kappa[u_1,\ldots,u_s,v_1,\ldots,v_r]),\end{align*}
corresponds to the map of cones $$\pi_B:\sigma_{B}\simeq\sigma_0\times \langle e_1,\ldots,e_k\rangle \to {\sigma_0}.$$ 
Under this correspondence $$\val(e_i)(t_j^{-1})=\delta_{ij},\quad \val(e_i)(u_j{\bf t}^{\alpha_j})=0.$$ On the other hand $\val(v)(t_i^{-1})=0$ for any 
integral vector $v\in {\sigma_0}$.  

By the above we can write $$B=\Spec(\kappa[t_1^{-1},\ldots,t_k^{-1}]\times \Spec(\kappa[u_1{\bf t}^{\alpha_1},\ldots,u_k{\bf t}^{\alpha_k},v_1,\ldots,v_r])$$

The toric valuation $\mu_i$ on $B$ associated to the divisor $D_i=V_B(t_i^{-1})$ satisfies $\mu_i(u_j{\bf t}^{\alpha_j})=0$, and $\mu_i({\bf t}^{-1}_{i'})=0$ for $i\neq i'$.
It corresponds to the vector $e_i$, as $\val(e_i)$ fulfills precisely the same relations.

The quotient morphism $\pi_B: B\to X_{\sigma_0}$ takes a toric valuation $\val(v)$ on $B$, for any integral $v\in\sigma_{B}$ to the restriction to $\cO(X_{\sigma_0})$ corresponding to $\val(\pi_\Sigma(v))$.
It maps the vertices of the face ${\sigma_0} \subset\sigma_{B}$ to the very same vertices of ${\sigma_0}$. The image of the vector $e_i$ is the vertex $\pi_\Sigma(e_i)=v_i\in \Ver(\Delta)\setminus \Ver({\sigma_0})$. This follows  from  Lemma \ref{Ex2} or can be seen by direct computation. By the previous considerations, $v_i$ corresponds to $\nu_i$ on $X_\Delta$, and $e_i$  to the valuation $\mu_i$ of $t_i^{-1}$ on $B_+$. The restriction of the toric valuation $\mu_i$ to $\kappa[u_1,\ldots,u_k]$, gives $$\mu_i(u_j)=\mu_i(u_j{\bf t}^{\alpha_j}\cdot {\bf t}^{-\alpha_j})=\mu_i({\bf t}^{\alpha_j})={a_{ij}}=\nu_i(u_j).$$

(4) By the considerations at the beginning of the proof, and  Lemma \ref{cover3}, we can write $B_+$ as the union of open affine subsets $B_\delta=B_{F_\delta}=\pi_{B_+}^{-1}(X_\delta)$:$$B_+=\bigcup_{\delta\in \Delta} B_\delta \subset B.$$ 
The induced map  of fans $\Sigma_{B_+}\to \Delta$ corresponds to the good quotient $B_+\to B_+\sslash T$, and  is defined by the linear map: $$\pi_\Sigma: \NN^{\QQ}_{B}=\NN^{\QQ}_{B_+}\to \NN^\QQ=\NN^\QQ_Y= \NN^\QQ_Y.$$
 
 Thus any cone $\delta\in \Delta$ can be written as the image $\delta=\pi_\Sigma(\delta')$, where $\delta'\in \Sigma_{B_+}$. 
  In particular any vertex $v_i\in \Ver(\Delta)\setminus \Ver(\sigma_0)$  is the image $v_i=\pi(e_i)$ of $e_i\in \Ver(\Sigma_{B})=\Ver(\Sigma_{B_+})$.
Consequently,
the  fan $\Sigma_{B_+}$ is determined by the faces $\tau$ of $\Sigma_{B}$ such that $\pi_\Sigma(\tau)\in\Delta$.
\end{proof}

\subsection{Cox relative spaces for  toric morphisms. General case}

\subsubsection{Coborization of proper toric morphisms}
Let $\Delta$ be a subdivision of a fan $\Sigma$.
We can further generalize the characterization of the cobordization of any proper birational toric morphism $\pi: X_\Delta\to X_\Sigma$.

\begin{proposition} \label{toricg} Let $\Delta$ be a fan subdivision of a  fan $\Sigma$. Let $\pi:Y=X_\Delta\to X=X_\Sigma$ be the associated proper toric morphism of toric varieties. Let $v_1,\ldots,v_k$ be the vertices of $\Ver(\Delta)\setminus \Ver(\Sigma)$ corresponding to the toric valuations $\nu_i=\val(v_i)$, associated with the exceptional divisors $E_1,\ldots,E_k$. Let ${\sigma_0}=\langle e_1,\ldots,e_k\rangle$ be the regular cone defined by the free basis  $e_1,\ldots,e_k$. 

Let $\pi_\Sigma: |\Sigma|\times{\sigma_0}\to |\Sigma|$ be the linear map of  the supports of fans  such that $\pi_\Sigma(e_i)=v_i$, and identical on $|\Sigma|$. Consider the subfan $\Sigma_{B}$ of $\Sigma\times{\sigma_0}$ consisting of the faces of $\Sigma\times{\sigma_0}$ mapping to faces of $\Sigma$, under the projection $\pi_\Sigma$.
Then the full cobordization  $B\to X$ of $\pi$ can be described as the toric morphism associated with the projection $\pi_{\Sigma|\Sigma_B}:\Sigma_{B}\to \Sigma$.

The  morphism $B_+\subset B$ is an open inclusion which corresponds to the subfan $\Sigma_{B_+}$ of $\Sigma_{B}$ of all the faces of $\Sigma\times {\sigma_0}$ mapping to the faces of $\Delta$.

\end{proposition}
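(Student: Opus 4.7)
The plan is to reduce to the affine situation handled in Lemma \ref{cover4} by covering $X_\Sigma$ with the open affine charts $X_\sigma$, $\sigma \in \Sigma$, and then to glue the resulting local descriptions into the global toric schemes $B$ and $B_+$. The functoriality in Lemma \ref{open} supplies the bridge between the local gradation group (generated only by those exceptional divisors whose vertex lies in $\sigma$) and the global one generated by all $E_1,\ldots,E_k$, via an explicit torus factor.

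First, for each $\sigma \in \Sigma$ I would set $\Delta_\sigma := \{\delta \in \Delta : \delta \leq \sigma\}$, a subdivision of $\sigma$ whose exceptional rays are $\{v_i : v_i \in \sigma,\ v_i \notin \Ver(\sigma)\}$. Applying Lemma \ref{cover4} to the induced toric morphism $X_{\Delta_\sigma} \to X_\sigma$, the local full cobordization $B(X_{\Delta_\sigma}/X_\sigma)$ is the affine toric variety of the cone $\sigma \times \langle e_i : v_i \in \sigma,\ v_i \notin \Ver(\sigma)\rangle$, and its open subscheme $B_+(X_{\Delta_\sigma}/X_\sigma)$ corresponds to the subfan of faces whose $\pi_\Sigma$-image lies in $\Delta_\sigma$. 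By Lemma \ref{open}, the restriction $B|_{X_\sigma}$ of the global cobordization is $B(X_{\Delta_\sigma}/X_\sigma) \times T_\sigma$, where $T_\sigma = \Spec(\ZZ[t_i,t_i^{-1} : v_i \notin \sigma])$; torically this re-embeds the local cone into the full ambient lattice $\NN \oplus \ZZ^k$ as $\sigma_{B,\sigma} := \sigma \times \langle e_i : v_i \in \sigma\rangle$, with the extra basis vectors $e_i$ ($v_i \notin \sigma$) appearing only as lattice directions, not as extremal rays.

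Next I would verify the local charts agree globally. The key combinatorial fact is that a face $\sigma' \times \tau'$ of the product fan $\Sigma \times \sigma_0$ lies in $\Sigma_B$ exactly when $\sigma' \cup \{v_i : e_i \in \tau'\}$ is contained in some $\sigma'' \in \Sigma$, and this is equivalent to $\sigma' \times \tau'$ being a face of $\sigma_{B,\sigma''}$. Consequently $\Sigma_B$ coincides with the union of the faces of the cones $\sigma_{B,\sigma}$, $\sigma \in \Sigma$. Gluing on intersections $X_\sigma \cap X_{\sigma'} = X_{\sigma \cap \sigma'}$ is controlled by the identity $\sigma_{B,\sigma} \cap \sigma_{B,\sigma'} = \sigma_{B,\sigma \cap \sigma'}$, so the local charts patch into the global toric scheme of $\Sigma_B$. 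The description of $B_+$ follows the same template: Lemma \ref{cover4}(4) identifies $B_+|_{X_\sigma}$ with the subfan of $\sigma_{B,\sigma}$ of faces mapping into $\Delta_\sigma$, and these assemble to the subfan $\Sigma_{B_+} \subseteq \Sigma_B$ of faces mapping into $\Delta$.

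The hard part will be ensuring compatibility of the torus factors $T_\sigma$ across overlapping charts and confirming that the product fan construction $\Sigma \times \sigma_0$ really captures the combined ambient geometry. However, since all constructions live in the same global lattice $\NN \oplus \ZZ^k$ and use the same linear projection $\pi_\Sigma$, the transition data are canonical, and the argument reduces to the combinatorial identities above.
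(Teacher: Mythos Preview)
Your proposal is correct and follows essentially the same approach as the paper: both reduce to the affine case via Lemma~\ref{cover4}, invoke Lemma~\ref{open} to account for the extra torus factors coming from exceptional rays not meeting a given $\sigma$, and then identify the resulting local cones $\sigma\times\langle e_i:v_i\in\sigma\rangle$ as the maximal cones of the claimed fan $\Sigma_B$. The paper spends a little more time verifying that $B$ is genuinely a toric variety (by exhibiting the dense torus $T_0\times T_B\subset B_-\subset B$) and that the induced lattice map is $\pi_\Sigma$ (via Lemma~\ref{Ex2}), whereas you take these as implicit in the local toric descriptions; but the substance of the argument is the same.
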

\begin{proof} 
By  Lemma \ref{cover4}, and reducing to the affine case, we see that $B_+\subset B$ is an open immersion.


Let $T_0:=\Spec (\kappa[M])\subset X_\Sigma$ be the torus acting on $X_\Sigma$, and on $X_\Delta$. Let ${T_B}:=\Spec(\kappa[t_1,t_1^{-1},\ldots,t_k,t_k^{-1}])$, where the coordinates $t_1^{-1},\ldots,t_k^{-1}$,  correspond to $e_1,\ldots,e_k$ on $X_{{\sigma_0}}=\kappa[t_1^{-1},\ldots,t_k^{-1}])$
.
By Proposition \ref{valu},  we can write $B$ as $B=\Spec(\cA_{Y/X})$,
 where $$\cA_{Y/X}= \bigoplus_{a_i\in \ZZ} \,\, \bigcap^k_{i=1}\, \cI_{\nu_i,a_i}\,\,\cdot t_1^{a_1}\cdot\ldots\cdot t_k^{a_k}\subset \cO_X[t_1,t_1^{-1},\ldots,t_k,t_k^{-1}]$$
Consequently, $B$ contains  a toric variety 
 $$B=\Spec_X(\cA_{Y/X}) \supset B_-=\Spec_X(\cO_X[t_1,t_1^{-1},\ldots,t_k,t_k^{-1}])= X_\Sigma\times {T_B},$$
and hence  contains a torus $T_0\times T_B$. Moreover the torus
$T_0\times T_B$  acts on $B$. 

On the other hand, by Lemmas \ref{open}, and \ref{cover4} the scheme $B$ is the union of toric varieties $B_\sigma$ containing $T_0\times T_B$, associated with $\sigma\in \Sigma$, such that 
\begin{align*} & B_\sigma:=\pi_B^{-1}(X_\sigma)=B(X_{\Delta_{|\sigma}}/X_\sigma)\times T(X_\sigma)=\\ &\Spec_{X_\sigma}(\cO_{X_\sigma}[t_1^{-1},\ldots,t_k^{-1},u_1{\bf t}^{\alpha_1},\ldots,u_s{\bf t}^{\alpha_k}].\end{align*}
 
 Thus $B$ is a toric variety, let $\Sigma_B$ its corresponding fan.  The affine toric morphism $B\to X$ determines the homomorphism of tori $T_0\times T_B\to T_0$. It  corresponds to the map of fans $(\Sigma_{B}, N_0\times \NN_B)\to (\Sigma,\NN_0)$, defined by the natural projection $N_0\times \NN_B$
 
 Consider the toric variety $X_\Sigma\times \Spec(\kappa
 [t_1^{-1},\ldots,t_k^{-1}])$, associated with the fan $\Sigma\times {\sigma_0}$, with the lattice $N_0\times \NN_B$.
The linear map  $$\pi_\Sigma: (|\Sigma|\times {\sigma_0},N_0\times \NN_B)\to (|\Sigma|,\NN_0),$$ satisfies  $\pi_\Sigma(e_i)=v_i$, and  $\pi_{\Sigma_|\NN_0}=\id_{\NN_0}$.

 
The full cobordization morphism $B\to X_\Sigma$  takes the divisor $D_i=V_B(t_i^{-1})$ to $E_i$ by Lemma \ref{Ex2}. Thus it defines the same map on the lattices $\pi_\Sigma: \NN_B\to \NN$.
 Moreover, by Lemmas \ref{open}, and \ref{cover4}, each  toric variety $$B_\sigma=B(X_{\Delta_{|\sigma}}/X_\sigma)\times T_{B\setminus B_\sigma}\subset B$$ corresponds to the subfan determined by the cone $\sigma\times \tau(\sigma)$  of $\Sigma\times {\sigma_0}$, where $\tau(\sigma)\leq {\sigma_0}$ is generated by all $e_i$ with $v_i\in \sigma$. 
 
 Thus, by Lemma \ref{cover4}(4), $\Sigma_{B}$ consists exactly of the faces of $\Sigma\times\sigma_0$ mapping into faces of $\Sigma$, under the projection $\pi_\Sigma: e_i\mapsto v_i$.
Consequently, $B_+$ corresponds to the subfan $\Sigma_{B_+}$ of $\Sigma_{B}$ of all the faces mapping to the faces of $\Delta$.

  \end{proof}
\subsection{The dual complex of the exceptional divisor}
\subsubsection{The dual complex of toric morphisms} \label{dual} Let $\pi: Y=X_\Delta\to X_\Sigma$ be a proper toric morphism, where $\Delta$ is a subdivision of $\Sigma$. Assume that $X=X_\Sigma$ is smooth. Then the full cobordization  $B$ of $\pi$ is a smooth toric variety with the toric morphism $\pi_B: B\to X$. Consequently the exceptional divisors $D=V_B(t_1^{-1},\ldots,t_k^{-1})$ of $\pi_B$ and $D_+=D\cap B_+$  of $\pi_{B+}$ are SNC. On the other hand the components $D_{i+}$ map to the components $E_i$ of 
the exceptional toric divisor $E$ of $\pi: Y\to X$.   

 One can define the divisorial stratifications
$S_D$, and $S_{D_+}$ on $B$, and $B_+$ with the strata determined by the nonempty sets $$s_I:=\bigcap_{i\in I} D_i\setminus \bigcup_{j\in J} D_j,$$ where $I\cup J=\{1,\ldots,k\}$, $I\cap J=\emptyset$.
Note that the closure $\overline{s_I}$ can be written in the form
$$\overline{s_I}=\bigcap_{i\in I} D_i.$$

Likewise the  stratification $S_E$ od $E$ on $Y$ is determined by the nonempty closed sets
$\overline{s_I}^E:=\bigcap_{i\in I} E_i$, which determine the strata $s_I^E$ obtained by removing from $\overline{s_I}^E$ all the proper subsets $\overline{s_J}^E\subset \overline{s_I}^E$, with $\cJ\supset \cI$.

These three stratifications are coarser than the orbit stratifications; thus, each stratum is the union of orbits.
The divisorial stratifications $S_D$ and $S_{D_+} $ define the dual complexes  $\Delta_D$ and $\Delta_{D_+}\subset \Delta_D$. The vertices $e_i$ of $\Delta_D$ and $\Delta_{D_+}$ correspond to the divisors $D_i$ or, respectively $D_{i+}$. We associate with a stratum $s=\bigcap_{i\in I} D_i\setminus \bigcup_{j\in J} D_j,$ the simplex $\sigma_s:=\Delta(e_i\mid i\in I)$. 

 Similarly, we can  define the dual complex $\Delta_E$ associated with the toric exceptional divisor $E$ on $Y$ (which is usually not SNC). Again the vertices $e_i$ of $\Delta_E$  correspond to the divisors $E_i$. We associate with any set of divisors $\{E_i\mid i\in I\}$ such that  
  $\bigcap_{i\in I} E_i\neq \emptyset$ the simplex $\sigma_I:=\Delta(e_i\mid i\in I)$. 
  Summarizing we obtain the following characterization of the complexes: 
  \begin{lemma} A simplex $\sigma$ in $\Delta_D$( respectively in $\Delta_{D_+}$ or $\Delta_E$) corresponds bijectively to a set of divisors $D_i$ ( respectively $D_{i+}$ or $E_i$) having a nonempty intersection. \qed

  \end{lemma}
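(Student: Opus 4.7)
The plan is to treat the three complexes in parallel, observing that the case of $\Delta_E$ is essentially tautological (its simplices are defined to correspond exactly to sets of $E_i$ with nonempty intersection), and then focusing on the nontrivial content for $\Delta_D$ and $\Delta_{D_+}$. In those cases the definition uses strata $s_I=\bigcap_{i\in I}D_i\setminus\bigcup_{j\notin I}D_j$, so what has to be shown is that $\bigcap_{i\in I}D_i\neq\emptyset$ forces $s_I\neq\emptyset$; the reverse inclusion $s_I\subseteq\bigcap_{i\in I}D_i$ is immediate, and injectivity of the correspondence is clear because the index set $I$ is recovered from a nonempty stratum as $\{i:s_I\subseteq D_i\}$.

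First I would invoke Proposition \ref{toricg} together with the hypothesis that $X_\Sigma$ is smooth: since $\Sigma$ is regular and $\sigma_0=\langle e_1,\dots,e_k\rangle$ is regular, the fan $\Sigma_B\subseteq\Sigma\times\sigma_0$ is regular, hence $B$ is a smooth toric variety and $D=\bigcup D_i$ is an SNC divisor. Restricting to the open toric subscheme $B_+\subseteq B$, the same conclusion holds for $D_+=\bigcup D_{i+}$.

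Next I would appeal to the orbit--cone dictionary in the smooth toric variety $B$: the $D_i=V_B(t_i^{-1})$ are the torus-invariant prime divisors corresponding to the rays $\langle e_i\rangle$ of $\Sigma_B$ (by Lemma \ref{Ex2}, which identifies $e_i$ with the valuation of $t_i^{-1}$). Hence $\bigcap_{i\in I}D_i$ is nonempty iff $\{e_i:i\in I\}$ spans a cone of $\Sigma_B$, and in that case the intersection is the closure of the single orbit attached to this cone, so it is irreducible and purely of codimension $|I|$. Under these conditions each $D_j\cap\bigcap_{i\in I}D_i$ with $j\notin I$ is either empty or a proper closed subvariety of codimension $\geq 1$ inside the irreducible variety $\bigcap_{i\in I}D_i$, so the finite union over $j\notin I$ cannot cover it. This yields $s_I\neq\emptyset$ and establishes the bijection for $\Delta_D$. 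The identical argument, carried out in the smooth toric open $B_+$, proves the statement for $\Delta_{D_+}$.

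The only mildly delicate step is the irreducibility and pure-codimension claim for $\bigcap_{i\in I}D_i$, which is the main conceptual input; everything else is bookkeeping. I expect this to be immediate from the SNC/toric structure supplied by Proposition \ref{toricg} and the regularity of $\Sigma\times\sigma_0$, so no serious obstacle is anticipated.
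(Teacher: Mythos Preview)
Your proof is correct. The paper gives no proof at all --- the lemma is stated with a bare \qed\ after the word ``Summarizing'', indicating the author regards it as immediate from the preceding definitions and the SNC/toric structure. You have simply unpacked that implicit argument: for $\Delta_E$ the claim is the definition, and for $\Delta_D$, $\Delta_{D_+}$ you use the orbit--cone correspondence in the regular fan $\Sigma_B$ (available by Proposition \ref{toricg} and Lemma \ref{cover4}) to see that $\bigcap_{i\in I}D_i$ is the irreducible orbit closure for $\langle e_i:i\in I\rangle$, hence cannot be covered by the finitely many proper closed subsets $D_j\cap\bigcap_{i\in I}D_i$. This is exactly the content the \qed\ is gesturing at, so there is no meaningful divergence.
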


\begin{lemma}\label{closed} Let $B\to X$ be the full cobordization of $\pi:Y=X_\Delta\to X=X_\Sigma$. Let $D$ be the exceptional divisor on $B$, and $S_D$ be the induced stratification. Then for any stratum $s\in S_D$, the image $\pi_B(s)$ is closed in $X$. 
 \end{lemma}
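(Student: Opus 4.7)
The plan is to translate the statement into the combinatorial language of Proposition~\ref{toricg} and then show that $\pi_B(s)$ is a union of $T_0$-orbit closures in $X_\Sigma$, which makes it automatically closed.

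First, I would invoke Proposition~\ref{toricg} to view $B$ as the toric variety with fan $\Sigma_B \subseteq \Sigma \times \sigma_0$, where the exceptional divisor $D_i$ corresponds to the ray $\langle e_i\rangle$ of $\sigma_0$. A stratum $s = s_I$ is then the union of torus orbits $\cO_\eta$ for those cones $\eta \in \Sigma_B$ satisfying $\eta \cap \{e_1,\dots,e_k\} = \{e_i : i\in I\}$. Since the vertices of $\sigma_0$ are precisely $e_1,\dots,e_k$, every such $\eta$ necessarily has product form $\eta = \sigma \times \tau_I$ with $\sigma \in \Sigma$ and $\tau_I = \langle e_i : i\in I\rangle$, the condition $\eta \in \Sigma_B$ translating into $\sigma + \langle v_i : i \in I\rangle \subseteq \sigma'$ for some $\sigma' \in \Sigma$.

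Next I would use the standard fact that under a toric morphism each orbit is mapped onto a single orbit: $\pi_B(\cO_\eta) = \cO_{\tau_\eta}$, where $\tau_\eta \in \Sigma$ is the unique cone whose relative interior contains $\pi_\Sigma(\operatorname{relint}\eta)$. Hence $\pi_B(s_I)$ is a $T_0$-invariant union of orbits in $X_\Sigma$, and a standard criterion applies: such a union is closed if and only if it is closed upward under the face relation on $\Sigma$, since $\overline{\cO_\tau} = \bigsqcup_{\tau' \geq \tau} \cO_{\tau'}$.

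The substantive step is to verify this upward closure, and this is where the specific structure of $\Sigma_B$ is used. Assume $\cO_\tau \subseteq \pi_B(s_I)$, witnessed by some $\eta = \sigma \times \tau_I \in \Sigma_B$ with $\pi_\Sigma(\operatorname{relint}\eta) \subseteq \operatorname{relint}\tau$; taking closures gives $\sigma + \langle v_i : i\in I\rangle \subseteq \tau$, so in particular $v_i \in \tau$ for every $i \in I$. For any $\tau' \geq \tau$ in $\Sigma$, I would set $\eta' := \tau' \times \tau_I$; then $\pi_\Sigma(\eta') = \tau' + \langle v_i : i\in I\rangle = \tau' \in \Sigma$, so $\eta' \in \Sigma_B$, $\cO_{\eta'} \subseteq s_I$, and a short check of relative interiors gives $\pi_B(\cO_{\eta'}) = \cO_{\tau'}$, so $\cO_{\tau'} \subseteq \pi_B(s_I)$. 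The only real obstacle is to set up the orbit-to-cone dictionary cleanly (the product description $\eta = \sigma \times \tau_I$ and the formula for the image orbit); once that is in place, the upward-closure check reduces to the elementary observation that any $\tau' \supseteq \tau$ already contains the exceptional vertices $v_i$, $i\in I$.
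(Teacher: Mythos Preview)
Your argument is correct and rests on the same orbit--cone dictionary as the paper, but the organization differs. The paper first reduces to the affine case $X=X_\sigma$ and then localizes $B$ at $t_J^{-1}$ (with $J=\{1,\dots,k\}\setminus I$), so that the stratum becomes the \emph{closed} set $\bigcap_{i\in I}D_i$ and the cone $\sigma_B$ becomes $\sigma\times\tau_I$. After this simplification the image is identified in one step as the orbit closure corresponding to $\Star(\sigma_0,\sigma)$, where $\sigma_0$ is the smallest face of $\sigma$ containing $\langle v_i:i\in I\rangle$. You instead stay global, keep the stratum locally closed, and verify directly that the set of image cones is upward closed by exhibiting $\eta'=\tau'\times\tau_I$. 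The two arguments compute the same thing: your inclusion $v_i\in\tau$ is exactly the paper's $\sigma_0\leq\tau$, and your upward-closure check is equivalent to the statement that the image is a Star. The paper's localization is a mild shortcut (it replaces ``upward closed'' by ``equals a Star, hence closed''), while your route makes the independence from the regularity hypothesis on $\Sigma$ more transparent, since your construction of $\eta'$ uses only that $\tau'\in\Sigma$ and $v_i\in\tau'$.
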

\begin{proof}   The problem is local on $X$ so we can assume that $X=X_\sigma$.
 Let $B=X_\sigma\times X_\delta$, where $\delta=\langle e_1,\ldots,e_k\rangle$ the regular cone generated by a free basis $\{e_1,\ldots,e_k\}$.
 
 The morphism $B\to X$ corresponds to the projection $\pi_\Sigma: \sigma_B=\sigma\times \delta\to \sigma$, mapping $e_i$ to $v_i\in \Ver(\Delta)\setminus \Ver(\sigma)$.

Any stratum  $s=\bigcap_{i\in I} D_i\setminus \bigcup_{j\in J} D_j$ in $S_D$
 is closed on the open affine subset $B'=B\setminus \bigcup_{j\in J} D_j$ of $B$. By replacing $B$ with the open affine subset $B'=B_{t_J^{-1}}$, where $t_J=\prod_{j\in J} t_j$,  we assume  that $s=\bigcap_{i\in I} D_i$ and all the exceptional vertices $e_i$ of $\sigma\times \delta$ , where $i \in I$,  span the cone $\delta$. Then $s$ is the union of orbits corresponding to the cones  in $ \Star(\delta,\sigma_B)$.  

Let   $\delta_0:=\pi_\Sigma(\delta)$ be the image of $\delta$, which is a subcone in $\sigma$ generated  by $\pi_\Sigma(e_i)$. Denote  by $ \sigma_0\leq \sigma$  the unique face such that $\inte(\delta_0)\subset \sigma_0\leq \sigma$. 

Thus, since $\sigma$ is regular, and the map $\pi_\Sigma$ is surjective, the image $\pi_B(s)$ of $s$ is defined by the orbits corresponding to the cones in $\Star(\sigma_0,\sigma)$, and thus it is closed.





\end{proof}

\subsubsection{The center of valuation}\label{zval}
Recall that for any valuation $\nu$ of the quotient field $\kappa(X)$, we denote its center on $X$ by $Z_X(\nu)$.
\begin{corollary}\label{special15} Consider any stratum  $s\in S_D$ such that $s=\bigcap_{i\in I} D_i\setminus \bigcup_{j\in J} D_j.$
Then $$\pi_B(s)=\bigcap_{i\in I} Z_X(\nu_{i}),$$
 \end{corollary}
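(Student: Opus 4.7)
The plan is to match both sides of the claimed equality with $\overline{\cO_{\sigma_0}}\subseteq X_\Sigma$, where $\sigma_0\leq\sigma$ is the smallest face of the (affine-local) cone $\sigma$ containing all the vertices $v_i$ for $i\in I$. This identification is essentially carried out already inside the proof of Lemma~\ref{closed}; here we only need to repackage it and recognize the right-hand side as the same orbit closure.

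First I would localize on $X$, reducing to the affine case $X=X_\sigma$, and then replace $B$ by the open toric subscheme $B'=B\setminus\bigcup_{j\in J}D_j$, so that inside $B'$ the stratum becomes $s=\bigcap_{i\in I}D_i$ and the vertices $\{e_i\mid i\in I\}$ span the face $\delta\leq\sigma_B=\sigma\times\langle e_1,\dots,e_k\rangle$. Under the projection $\pi_\Sigma\:\sigma_B\to\sigma$ with $\pi_\Sigma(e_i)=v_i$, the image cone $\delta_0:=\pi_\Sigma(\delta)$ is generated by $\{v_i\mid i\in I\}$. Let $\sigma_0\leq\sigma$ be the unique face with $\inte(\delta_0)\subset\inte(\sigma_0)$; equivalently, $\sigma_0$ is the smallest face of $\sigma$ containing all the $v_i$.

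Next, from the proof of Lemma~\ref{closed}, the image $\pi_B(s)$ is the union of orbits $\cO_\tau$ for $\tau\in\Star(\sigma_0,\sigma)$, i.e.
$$\pi_B(s)\;=\;\bigcup_{\sigma_0\leq\tau\leq\sigma}\cO_\tau\;=\;\overline{\cO_{\sigma_0}}.$$
On the other hand, by the description of toric centers in \S\ref{fans2}, the center $Z_X(\nu_i)=Z_X(\val(v_i))$ is the union of orbits $\cO_\tau$ with $v_i\in\tau$; if $\tau_i$ denotes the smallest face of $\sigma$ containing $v_i$, this gives $Z_X(\nu_i)=\overline{\cO_{\tau_i}}$. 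Intersecting,
$$\bigcap_{i\in I}Z_X(\nu_i)\;=\;\bigcap_{i\in I}\overline{\cO_{\tau_i}}\;=\;\bigcup\bigl\{\,\cO_\tau\ \big|\ \tau_i\leq\tau\ \text{for all}\ i\in I\,\bigr\}\;=\;\overline{\cO_{\sigma_0}},$$
since the condition ``$\tau$ contains every $v_i$'' is the same as ``$\tau\geq\sigma_0$''.

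Comparing the two expressions gives $\pi_B(s)=\bigcap_{i\in I}Z_X(\nu_i)$. The argument is essentially combinatorial bookkeeping; the only mild subtlety is to confirm that the face $\sigma_0$ extracted from $\delta_0$ in Lemma~\ref{closed} is indeed the smallest face of $\sigma$ containing all the $v_i$, so that both descriptions land on the same orbit closure $\overline{\cO_{\sigma_0}}$. No further input beyond Lemma~\ref{Ex2}, Lemma~\ref{closed}, and the toric description of $Z_X(\val(v))$ should be needed.
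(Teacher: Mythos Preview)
Your proposal is correct and follows essentially the same route as the paper: localize to the affine toric situation, invoke the analysis from Lemma~\ref{closed} to identify $\pi_B(s)$ with $\overline{\cO_{\sigma_0}}$ for $\sigma_0$ the smallest face of $\sigma$ containing all the $v_i$, and then recognize $\bigcap_{i\in I}Z_X(\nu_i)$ as the same orbit closure via the toric description of valuation centers. The paper's version is slightly more terse and begins by noting the containment $\pi_B(s)\subseteq\bigcap_{i\in I}Z_X(\nu_i)$ directly from Lemma~\ref{Ex2}, but the substance is identical.
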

\begin{proof} Note that, by Lemma \ref{Ex2}, the image $\pi_{B}(D_{i})=Z_X(\nu_{i})$. Then $\pi_B(s)\subseteq \bigcap_{i\in I} Z_X(\nu_{i}).$
The problem is local on $X$, and we use the notation and the description from the  proof of the previous Lemma.
The stratum $s$ contains a generic toric orbit corresponding to the cone $\delta=\langle e_i \mid i\in I \rangle$.   Its image $\pi_B(s)$ is closed and corresponds to the $\Star(\sigma_0,\sigma)$, where $\sigma_0$ is the smallest face containing the images $\{v_i \mid i\in I\}$, where $\val(v_i)=\nu_i$. 

On the other hand, $\bigcap_{i\in I} Z_X(\nu_{i})$	corresponds to  the faces of $\sigma$  containing all $v_i$. Both sets are identical and $\pi_B(s)=\bigcap_{i\in I} Z_X(\nu_{i}).$
\end{proof}
\begin{lemma} \label{special5} The morphism $\pi_{B_+,Y}: B_+\to Y$ determines a bijective correspondence between the sets of divisors $\{D_{i+} \mid i\in I\}$ such that $\bigcap D_{i+}\neq 0$, and the sets $\{E_{i} \mid i\in I\}$ for which $\bigcap E_{i}\neq 0$.




\end{lemma}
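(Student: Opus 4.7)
The plan is to reduce to the combinatorial description of the cobordization given by Proposition \ref{toricg} and then read off the correspondence between the two collections of divisors from the map of fans. Since $\pi_{B_+,Y}$ sends $D_{i+}$ onto $E_i$ by Lemma \ref{Ex2}, the assignment $\{D_{i+} \mid i\in I\}\mapsto \{E_i\mid i\in I\}$ is manifestly a bijection of indexed collections; what needs proof is that non-emptiness of the intersection $\bigcap_{i\in I} D_{i+}$ is equivalent to non-emptiness of $\bigcap_{i\in I} E_i$.

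First I would translate both sides to fan language. By Proposition \ref{toricg}, $B_+$ is the toric variety associated with the subfan $\Sigma_{B_+}\subset \Sigma\times\sigma_0$ consisting of those faces of $\Sigma\times\sigma_0$ whose image under $\pi_\Sigma\colon e_i\mapsto v_i$ (identical on $|\Sigma|$) lies in a cone of $\Delta$. The divisor $D_{i+}$ corresponds to the vertex $e_i$, while $E_i$ corresponds to $v_i\in\Ver(\Delta)$. Using the standard toric dictionary, for any toric variety the intersection $\bigcap_{i\in I}$ of $T$-stable prime divisors is non-empty if and only if the corresponding rays span a cone of the fan; so
\[
\bigcap_{i\in I} D_{i+}\neq\emptyset \iff \langle e_i\mid i\in I\rangle\in \Sigma_{B_+},\qquad
\bigcap_{i\in I} E_i\neq\emptyset \iff \langle v_i\mid i\in I\rangle\in \Delta.
\]

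Next I would verify the equivalence of these two fan-theoretic conditions. The cone $\langle e_i\mid i\in I\rangle$ is automatically a face of $\sigma_0=\langle e_1,\ldots,e_k\rangle$, hence a face of $\Sigma\times\sigma_0$. By the defining property of $\Sigma_{B_+}$, it belongs to $\Sigma_{B_+}$ if and only if its image $\pi_\Sigma(\langle e_i\mid i\in I\rangle)=\langle v_i\mid i\in I\rangle$ is contained in some cone of $\Delta$. But $v_i\in\Ver(\Delta)$, and a set of vertices of a fan lies in a common cone if and only if it spans a cone of the fan. This gives the desired equivalence.

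The main (essentially only) subtlety is to make sure I am working with the correct combinatorial dictionary: that the collection of $D_{i+}$ with common point corresponds bijectively to simplices of the dual complex $\Delta_{D_+}$ introduced in \S\ref{dual}, and analogously for the (not necessarily SNC) exceptional divisor $E$ on $Y$. Once this is stated, the argument is a direct comparison: simplices of $\Delta_{D_+}$ are precisely the faces of $\sigma_0$ supported on vertices $\{e_i\}$ whose images span a cone of $\Delta$, which are precisely the simplices of $\Delta_E$. The non-toric case follows by restricting to a toric neighborhood in $X$, since the construction of $B,B_+$ is local on $X$ by Lemma \ref{open}.
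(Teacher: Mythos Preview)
Your argument is essentially the same as the paper's for the harder (converse) direction: both pass to the fan description of $B_+$ from Proposition~\ref{toricg}/Lemma~\ref{cover4}(4) and observe that $\langle e_i\mid i\in I\rangle$ is a face of $\sigma_0$ which lies in $\Sigma_{B_+}$ exactly when its image $\langle v_i\mid i\in I\rangle$ sits inside some $\delta\in\Delta$. For the forward direction the paper uses the cheaper topological inclusion $\pi_{B_+,Y}(\bigcap D_{i+})\subseteq\bigcap E_i$ rather than the fan dictionary, but your combinatorial version is fine.

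One small slip to fix: the displayed equivalence
\[
\bigcap_{i\in I} E_i\neq\emptyset \iff \langle v_i\mid i\in I\rangle\in \Delta
\]
and the later sentence ``a set of vertices of a fan lies in a common cone if and only if it spans a cone of the fan'' are not correct when $\Delta$ is not simplicial: several vertices can lie in a common cone of $\Delta$ without the cone they span being a face. The correct toric translation (and the one your argument actually uses in the middle step) is
\[
\bigcap_{i\in I} E_i\neq\emptyset \iff \{v_i\mid i\in I\}\subseteq\delta\ \text{for some }\delta\in\Delta,
\]
which matches verbatim the defining condition for $\langle e_i\mid i\in I\rangle\in\Sigma_{B_+}$. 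So once you replace ``$\langle v_i\rangle\in\Delta$'' by ``the $v_i$ lie in a common cone of $\Delta$'' throughout, your proof is correct and needs no further step. Also, the final remark about the non-toric case is unnecessary here: Lemma~\ref{special5} is stated and proved in the purely toric setting of \S\ref{dual}.
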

\begin{proof} We need  to show that $\bigcap_{i\in I} D_{i+}\neq \emptyset$ iff $\bigcap_{i\in I} E_i\neq \emptyset$. By Lemma \ref{special5}, $\pi_{B_+,Y}(D_{i+})=E_i$.
 Thus if $\bigcap_{i\in I} D_{i+}$  is nonempty then 
 $$\bigcap_{i\in I} \pi_{B_+,Y}(D_{i+})=\bigcap_{i\in I} E_i\supseteq \pi_{B_+,Y}(\bigcap_{i\in I} D_{i+})\neq \emptyset.$$ 
 Conversely, if $\bigcap_{i\in I} E_i$ is nonempty then the vertices $v_i$ corresponding to $E_i$ form   the subcone $\tau=\langle v_i \mid i\in I\rangle$ of a face $\delta\in \Delta$, with $\Ver(\tau)\subseteq \Ver(\delta)\setminus \Ver(\sigma)$. 
  Consequently, by Lemma \ref{cover4}(4), $\tau$  is the image of  the face $\delta'=\langle e_i \mid i\in I\rangle\in \Delta_{B_+}$, whence $\bigcap_{i\in I} D_{i+}$  is nonempty.
\end{proof}



\begin{corollary} \label{special6} The  natural surjective map $S_{D+}\to S_E$ determines an isomorphism of the dual complexes $\Delta_{D_+}\simeq\Delta_E$.\qed
	\end{corollary}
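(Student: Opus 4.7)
The plan is to deduce the corollary essentially tautologically from Lemma \ref{special5} together with the combinatorial characterization of dual complexes given just before Lemma \ref{closed}. Both $\Delta_{D_+}$ and $\Delta_E$ are abstract simplicial complexes whose vertex sets are canonically indexed by the same finite set $\{1,\dots,k\}$ (the exceptional vertices in $\Ver(\Delta)\setminus\Ver(\Sigma)$): the vertex $e_i$ of $\Delta_{D_+}$ corresponds to $D_{i+}$ and the vertex $e_i$ of $\Delta_E$ to $E_i$. By Lemma \ref{Ex2}, the quotient map $\pi_{B_+,Y}\colon B_+\to Y$ sends $D_{i+}$ surjectively onto $E_i$, which yields the natural bijection of vertex sets and, more generally, sends the stratum $\bigcap_{i\in I}D_{i+}\setminus\bigcup_{j\in J}D_{j+}$ to $\bigcap_{i\in I}E_i\setminus\bigcup_{j\in J}E_j$ (surjectivity of this map on strata using Lemma \ref{good}, since each $D_{i+}\to E_i$ is a good quotient). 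This produces the surjective map of stratifications $S_{D_+}\to S_E$ referred to in the statement.

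Next, I would invoke the combinatorial characterization: a subset $\{e_i\mid i\in I\}$ spans a simplex of $\Delta_{D_+}$ (resp.\ of $\Delta_E$) exactly when $\bigcap_{i\in I}D_{i+}\neq\emptyset$ (resp.\ $\bigcap_{i\in I}E_i\neq\emptyset$). Lemma \ref{special5} states precisely that these two nonemptiness conditions are equivalent. Hence the tautological identification of vertex sets extends to a bijection of the full simplex set, manifestly compatible with the face relation. Since an abstract simplicial complex is determined by its set of vertex tuples spanning simplices, this gives the desired isomorphism $\Delta_{D_+}\xrightarrow{\sim}\Delta_E$, and by construction the induced map on simplices is the one coming from $S_{D_+}\to S_E$.

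There is no real obstacle here: the only content of the corollary beyond repackaging Lemma \ref{special5} is the observation that the map on strata and hence on simplices factors through the bijection of vertex sets determined by $\pi_{B_+,Y}(D_{i+})=E_i$. Both of these ingredients are already in place, so the proof will be short — essentially just citing Lemma \ref{special5} and the preceding characterization lemma.
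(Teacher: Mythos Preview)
Your proposal is correct and matches the paper's approach exactly: the corollary is marked \qed\ in the paper with no explicit proof, since it follows immediately from Lemma~\ref{special5} together with the combinatorial characterization of the dual complexes (a simplex corresponds to a set of divisors with nonempty intersection). Your write-up simply makes explicit the tautological repackaging that the paper leaves to the reader.
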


Also, we have
\begin{corollary} \label{special7}  The  natural morphism $S_{D+}\to S_D$ determines the inclusion of the dual complexes $\Delta_{D_+}\simeq\Delta_{D}$, so that $\Delta_{D_+}$ is a subcomplex of $\Delta_{D}$.
	\end{corollary}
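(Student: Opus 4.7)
The plan is to unpack the characterization of the complexes $\Delta_D$ and $\Delta_{D_+}$ via the preceding lemma and check that the natural restriction $D_i \mapsto D_{i+} = D_i \cap B_+$ identifies $\Delta_{D_+}$ with a subcomplex of $\Delta_D$.

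First I would record the bijective correspondence on vertices: by Lemma \ref{irreducible}, each $D_i$ is irreducible on $B$ and each $D_{i+}$ is irreducible on $B_+$, and by the construction in Section \ref{Ex} we have $D_{i+} = D_i \cap B_+$, with $D_{i+} \neq \emptyset$ because $V_{B_+}(t_i^{-1})$ maps surjectively onto the exceptional divisor $E_i$ (Lemma \ref{Ex2}). So $i \mapsto D_i$ and $i \mapsto D_{i+}$ identify the vertex sets of $\Delta_D$ and $\Delta_{D_+}$ both with $\{1,\ldots,k\}$, and the natural map $S_{D_+}\to S_D$ (sending a stratum of $B_+$ to the stratum of $B$ containing it) sends $D_{i+}$ to $D_i$.

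Next I would check the simplex condition. By the lemma preceding Lemma \ref{closed}, a simplex $\sigma_I \in \Delta_{D_+}$ (resp.\ $\Delta_D$) corresponds to an index set $I \subseteq \{1,\ldots,k\}$ with $\bigcap_{i\in I} D_{i+} \neq \emptyset$ (resp.\ $\bigcap_{i\in I} D_i \neq \emptyset$). Since $B_+ \subset B$ is open (Proposition \ref{cover2}) and $D_{i+} = D_i \cap B_+$, a non-empty intersection $\bigcap_{i\in I} D_{i+}$ is contained in $\bigcap_{i\in I} D_i$, so the latter is non-empty as well. This shows that the map on vertices extends to an injective map of simplicial complexes $\Delta_{D_+} \hookrightarrow \Delta_D$, realizing $\Delta_{D_+}$ as a subcomplex.

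The only subtle point — and really the only thing beyond a tautology — is that the inclusion need not be an equality: there can be index sets $I$ with $\bigcap_{i\in I} D_i$ non-empty on $B$ but $\bigcap_{i\in I} D_{i+}$ empty on $B_+$, because the missing part lies in $B \setminus B_+ = V(\cI_{\irr})$. I would mention this to distinguish the present statement from Corollary \ref{special6}, where the morphism $B_+ \to Y$ gives an actual isomorphism $\Delta_{D_+} \simeq \Delta_E$. No further obstacle arises; the corollary is essentially a formal consequence of the vertex identification together with the openness of $B_+ \subset B$.
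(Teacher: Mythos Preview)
Your proposal is correct and follows essentially the same approach as the paper: both arguments reduce to the observation that faces of $\Delta_D$ and $\Delta_{D_+}$ are determined by index sets $I$ with $\bigcap_{i\in I} D_i \neq \emptyset$ (resp.\ $\bigcap_{i\in I} D_{i+} \neq \emptyset$), and since $D_{i+} = D_i \cap B_+$, nonemptiness on $B_+$ trivially implies nonemptiness on $B$. Your version is simply more explicit about the vertex bijection and the openness of $B_+ \subset B$, and adds the (helpful but inessential) remark that the inclusion is generally strict.
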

\begin{proof} By the construction, the faces of $\Delta_{D}$ (and $\Delta_{D_+}$) correspond to the sets of divisors $\{D_i\mid i\in I\}$ such that $\bigcap_{i\in I} D_{i}\neq \emptyset$.
Now, if $\bigcap_{i\in I} D_{i+}\neq \emptyset$  then obviously $\bigcap_{i\in I} D_{i}\neq \emptyset$.

	
\end{proof}

\subsection{Newton polytopes of monomial ideals}

\subsubsection{Newton polytopes} 
\label{Newt}
\begin{definition} \label{Newt2}

Consider the lattice of monomials $$\MM=\{{\bf x}^\alpha\mid \alpha \in  \ZZ^k\}\simeq \ZZ^k, $$ and let $N=\Hom(\MM,\ZZ)$ be its dual.
Let $I=({\bf x}^{\alpha_1},\ldots, {\bf x}^{\alpha_k})\subset \kappa[x_1,\ldots,x_n]$  be  a toric ideal generated by the monomials corresponding to the elements of  $$\alpha_i\in \ZZ_{\geq 0}^n\subset \sigma^\vee_0=\langle e_1^*,\ldots,e_n^*\rangle=\QQ_{\geq 0}^n.$$ By the associated {\it  Newton polytope} of $\cI$ we mean
 $${\rm P}={\rm P}_I:=\con(\alpha_1+\QQ_{\geq 0}^n,\ldots,\alpha_k+\QQ_{\geq 0}^n)\subseteq \QQ_{\geq 0}^n\subseteq M_{\QQ}=M\otimes \QQ=\QQ^n$$ 
 \end{definition}

Conversely any polytope (or polyhedron) $P=P+\QQ_{\geq 0}^n$ determines the ideal $$\cI=\cI_P:=({\bf x}^\alpha\mid \alpha \in P).$$

\begin{lemma}
 There is   a bijective correspondence $I\mapsto P_I$, $P\mapsto I_P$, between integrally closed toric ideals
$I\subset \kappa[x_1,\ldots,x_n]$, and polytopes $P=P+\QQ_{\geq 0}^n$ with integral vertices. \qed
\end{lemma}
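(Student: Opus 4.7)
The plan is to exhibit the two maps explicitly and show they are mutually inverse, using the classical characterization of integral closure for monomial ideals.

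First I would spell out the inverse maps. To an integrally closed monomial ideal $I=({\bf x}^{\alpha_1},\ldots,{\bf x}^{\alpha_r})$ we associate the Newton polyhedron $P_I=\con(\alpha_1+\QQ_{\geq 0}^n,\ldots,\alpha_r+\QQ_{\geq 0}^n)$ from Definition \ref{Newt2}. Conversely, to a polyhedron $P=P+\QQ_{\geq 0}^n$ with integral vertices $v_1,\ldots,v_s$ we associate the monomial ideal $I_P=({\bf x}^\alpha \mid \alpha\in P\cap \ZZ^n)$. Note that since $P=\con(v_1,\ldots,v_s)+\QQ_{\geq 0}^n$ and the $v_j$ are integral, $I_P$ is in fact generated by the finite set $\{{\bf x}^{v_1},\ldots,{\bf x}^{v_s}\}$, because every lattice point of $P$ equals $v_j+\beta$ for some $j$ and some $\beta\in\ZZ_{\geq 0}^n$.

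Next I would verify the easy direction $P\mapsto I_P\mapsto P_{I_P}=P$. By the previous observation $I_P=({\bf x}^{v_1},\ldots,{\bf x}^{v_s})$, so by definition
\[
P_{I_P}=\con(v_1+\QQ_{\geq 0}^n,\ldots,v_s+\QQ_{\geq 0}^n)=\con(v_1,\ldots,v_s)+\QQ_{\geq 0}^n=P,
\]
since $\QQ_{\geq 0}^n$ is the recession cone and $v_1,\ldots,v_s$ are the vertices of $P$. This also shows $I_P$ does not depend on the chosen generating set.

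For the other direction $I\mapsto P_I\mapsto I_{P_I}=I$, the key input is the classical characterization of the integral closure of a monomial ideal: a monomial ${\bf x}^\alpha$ belongs to $\overline{I}$ if and only if $\alpha\in P_I\cap\ZZ^n$, and $\overline{I}$ is itself monomial. Granting this, the ideal $I_{P_I}$ is exactly the monomial ideal generated by $\{{\bf x}^\alpha\mid \alpha\in P_I\cap\ZZ^n\}=\overline{I}\cap\{\text{monomials}\}=\overline{I}$. Since $I$ is integrally closed, $\overline{I}=I$, and hence $I_{P_I}=I$.

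The only substantive step is the characterization of the monomial integral closure, which I would prove (or cite) as follows. For the inclusion $\alpha\in P_I\cap\ZZ^n\Rightarrow{\bf x}^\alpha\in\overline I$: if $N\alpha=\sum c_i(\alpha_i+\beta_i)$ with $c_i\in\ZZ_{\geq 0}$, $\sum c_i=N$, $\beta_i\in\ZZ_{\geq 0}^n$ (using integrality of $\alpha$ and rationality of $P_I$), then ${\bf x}^{N\alpha}=\prod({\bf x}^{\alpha_i+\beta_i})^{c_i}\in I^N$, giving the integral equation $({\bf x}^\alpha)^N\in I^N$. For the converse, if ${\bf x}^\alpha\notin{\bf x}^{P_I}$ then some supporting halfspace $\langle u,-\rangle\geq c$ of $P_I$ has $\langle u,\alpha\rangle<c$ while $\langle u,\alpha_i\rangle\geq c$ for all $i$; one checks that the monomial valuation $\nu_u$ then separates ${\bf x}^\alpha$ from $\overline{I}$, contradicting membership. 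The main (only) obstacle is bookkeeping this separation argument cleanly; the rest is formal.
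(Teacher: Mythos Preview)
The paper gives no proof of this lemma (it is stated with a terminal \qed\ and left as a standard fact), so there is no ``paper's approach'' to compare against; your overall strategy---show the two maps are mutual inverses using the monomial description of integral closure---is the standard one and is fine.

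There is, however, one genuine error. You claim that $I_P$ is generated by $\{{\bf x}^{v_1},\ldots,{\bf x}^{v_s}\}$ where $v_1,\ldots,v_s$ are the vertices of $P$, ``because every lattice point of $P$ equals $v_j+\beta$ for some $j$ and some $\beta\in\ZZ_{\geq 0}^n$.'' This is false. Take $n=2$ and $P=\con((2,0),(0,2))+\QQ_{\geq 0}^2$; then $(1,1)\in P$, but $(1,1)\not\geq(2,0)$ and $(1,1)\not\geq(0,2)$ componentwise. Correspondingly $xy\in I_P$ while $xy\notin(x^2,y^2)$, so $I_P=(x^2,xy,y^2)\neq(x^2,y^2)$. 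In general the vertex monomials generate an ideal whose integral closure is $I_P$, not $I_P$ itself.

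Fortunately this does not affect the conclusion $P_{I_P}=P$, only your argument for it. A correct argument: by definition $I_P$ is generated by $\{{\bf x}^\alpha:\alpha\in P\cap\ZZ^n\}$, and $P_I$ is independent of the chosen monomial generating set (redundant generators $\alpha=\alpha_i+\beta$ with $\beta\in\ZZ_{\geq 0}^n$ contribute nothing new to the convex hull). Hence $P_{I_P}=\con\bigl(\alpha+\QQ_{\geq 0}^n:\alpha\in P\cap\ZZ^n\bigr)$. Every such $\alpha$ lies in $P$ and $P+\QQ_{\geq 0}^n=P$, giving $P_{I_P}\subseteq P$; conversely the integral vertices $v_j$ are among these $\alpha$, so $P=\con(v_1,\ldots,v_s)+\QQ_{\geq 0}^n\subseteq P_{I_P}$. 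The rest of your proof (the direction $I_{P_I}=\overline{I}=I$ via the valuative/Newton-polytope description of monomial integral closure) is correct.
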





\subsubsection{The orbit stratification}
One can identify $e_i^*$ with $x_i$, so we can write $\sigma_0^\vee=\langle  x_1,\ldots,x_n\rangle$. Denote by $N_{\QQ}$ the dual space of $M_{\QQ}$, and $\sigma_0\subset N_{\QQ} $ the dual of $\sigma_0^\vee$ as in Sections \ref{fans}, \ref{fans2}.
For any $\tau\subset N_{\QQ}$ set $$\tau^{\perp}:=\{y\in M_{\QQ}\mid (x,y)=0 \quad \mbox{for\,\, all}\,\, x\in \tau \},$$ 
\begin{lemma}\label{strata} There is a natural bijective correspondence between 

\begin{itemize}
\item the faces $\tau$ of $\sigma_0$
\item the faces $\tau^*:=\tau^\perp\cap \sigma_0^\vee$ of $\sigma_0^\vee$.
\item the open affine subsets $X_\tau\subset X_{\sigma_0}$	
 \item the minimal closed orbits $O_\tau \subset X_\tau$  which are in $X_{\sigma_0}$.
\end{itemize}

Moreover under the above  identification the closure $\overline{O_\tau}$ of the orbit $O_\tau$ is defined by the ideal $(x_i\mid x_i \not \in \tau^*)$.

\end{lemma}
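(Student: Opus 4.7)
The plan is to invoke classical toric geometry from the references already cited (KKMS, Oda, Fulton) and verify that the four parameter sets coincide explicitly in the regular case. Because $\sigma_0$ is the regular cone $\langle e_1,\ldots,e_n\rangle$, every face has the form $\tau_I = \langle e_i \mid i\in I\rangle$ for some $I\subseteq \{1,\ldots,n\}$. The dual cone $\sigma_0^\vee = \langle x_1,\ldots,x_n\rangle$ is again regular, so its faces have the form $\langle x_j \mid j\in J\rangle$ for $J\subseteq \{1,\ldots,n\}$. A direct computation shows $\tau_I^\perp \cap \sigma_0^\vee = \langle x_j \mid j\notin I\rangle$, giving an explicit inclusion-reversing bijection $\tau_I \leftrightarrow \tau_I^{*}$ between faces of $\sigma_0$ and $\sigma_0^\vee$; this is the standard face-cone duality.

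Next I would recall that for any face $\tau\leq\sigma_0$, the monoid inclusion $P_{\sigma_0}=\sigma_0^\vee\cap M \hookrightarrow \tau^\vee\cap M = P_\tau$ induces an open immersion $X_\tau = \Spec\kappa[P_\tau]\hookrightarrow X_{\sigma_0}$; concretely, if $\tau=\tau_I$ then $P_\tau$ is obtained from $P_{\sigma_0}$ by inverting the characters $x_j$ for $j\notin I$, so $X_\tau = (X_{\sigma_0})_{\prod_{j\notin I} x_j}$. Different faces give different opens, as one recovers $\tau$ from $X_\tau$ as the cone indexing the unique minimal orbit of $X_\tau$. This yields the bijection $\tau\leftrightarrow X_\tau$.

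For the bijection with minimal closed orbits, I would apply the orbit-cone correspondence: the $T$-orbits on $X_{\sigma_0}$ are in inclusion-reversing bijection with the faces of $\sigma_0$ via $\tau\mapsto O_\tau = \Spec\kappa[\tau^\perp\cap M]$, and $O_\tau$ is the unique closed orbit of $X_\tau$. Since every minimal closed orbit in some $X_\tau\subset X_{\sigma_0}$ arises this way, the association $\tau\mapsto O_\tau$ is a bijection between faces and minimal closed orbits. Composing the three bijections gives the four-way correspondence.

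For the moreover part, with $\tau=\tau_I$ we have $\tau^*=\langle x_j\mid j\notin I\rangle$, so the condition ``$x_i\notin \tau^*$'' is precisely ``$i\in I$''. The closure $\overline{O_\tau}$ in $X_{\sigma_0}$ is cut out by the characters $x^m\in P_{\sigma_0}$ that do not vanish on $O_\tau$, i.e.\ those $m\in P_{\sigma_0}$ with $m\notin \tau^\perp$. Among the generators $x_1,\ldots,x_n$ of $P_{\sigma_0}$, this means exactly the $x_i$ with $i\in I$, and since $\kappa[P_{\sigma_0}]$ is a polynomial ring on these generators the full ideal of $\overline{O_\tau}$ is $(x_i\mid i\in I)=(x_i\mid x_i\notin \tau^*)$. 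The only mild obstacle is keeping the primal/dual dictionary straight between $\tau$ and $\tau^*$, but regularity of $\sigma_0$ makes it a simple bookkeeping exercise rather than a serious difficulty.
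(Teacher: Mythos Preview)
Your argument is correct and follows essentially the same route as the paper: both identify $P_\tau$ as the localization of $P_{\sigma_0}$ at the submonoid $\tau^*\cap M$, describe $O_\tau$ via the ideal of noninvertible elements, and read off the generators of the ideal of $\overline{O_\tau}$. The paper phrases this in monoid language ($P_\tau = P_{\sigma_0} - (\tau^*\cap M)$, monoid ideal $(\sigma_0^\vee\setminus\tau^*)\cap M$) rather than via the explicit index set $I$, but since $\sigma_0$ is regular these are the same computation.

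One small slip: in the ``moreover'' part you write that $\overline{O_\tau}$ is cut out by characters that \emph{do not} vanish on $O_\tau$; it is of course cut out by those that \emph{do} vanish, and your subsequent clause ``i.e.\ those $m$ with $m\notin\tau^\perp$'' is the correct description of the vanishing characters, so the conclusion $(x_i\mid i\in I)$ is right.
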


\begin{proof} The face $\tau$ of $\sigma_0$ determines 
the open subset $$X_\tau=\Spec(\kappa[\tau^\vee\cap M])=\Spec(\kappa[P_\tau])=\Spec(\kappa[P^*_\tau])\times \Spec(\kappa[\overline{P}_\tau])$$ of $X_{\sigma_0},$ where 
$$P_\tau=\tau^\vee\cap M=P_{\sigma_0}+P^*_\tau= P_{\sigma_0} -( \tau^*\cap M).$$ 
Thus $ \tau^*\cap M=P_\tau^*\cap P_{\sigma_0}$ consists of the elements of $P_{\sigma_0}=\sigma_0^\vee \cap M$, which are invertible in $P_\tau$.  
The closed orbit $O_\tau\subset X_\tau$ is described by the ideal generated by the set of noninvertible elements 
$$P_\tau\setminus P_\tau^*:=(\tau^\vee\setminus \tau^*\setminus (-\tau^*))\cap M\subset P_\tau.$$ Thus its closure $\overline{O_\tau}$ in $X_{\sigma_0}$ is defined by the ideal $(x_i\mid x_i \not \in \tau^*)$ corresponding to the monoid ideal 	$P_{\sigma_0}\setminus P^*_\tau=(\sigma_0^\vee\setminus \tau^*)\cap M$. Conversely,
any face $\tau^*$ of $\sigma_0^\vee$ determines the closure $\overline{O_\tau}\subset X_{\sigma_0}$ of the orbit ${O_\tau}$ with the monoid ideal $(\sigma_0^\vee\setminus \tau^*)\cap M$, and the face $\tau=(\tau^*)^\perp \cap \sigma_0$ of $\sigma_0\subset N_{\QQ}$. \end{proof}

By the construction, $\cO_\tau$  is the smallest $T$-stable closed subset of $X_\tau$. If $\tau\subset \tau'$ is the inclusion of the faces then $X_{\tau}\subset X_{\tau'}$ is an open immersion, and $\overline{\cO_{\tau}} $ contains $\cO_{\tau'}$. Consequently the orbits $\cO_\tau$ form the stratification of $X_{\sigma_0}=\Spec(\kappa[x_1,\ldots,x_n])$.

\begin{corollary} \label{orbit} Let $\cI\subset \kappa[x_1,\ldots,x_k]$ be a monomial ideal and $P_\cI\subset \sigma_0^\vee$ be its Newton polytope. Then the toric subset $V(\cI)$ is exactly the union of the orbits $O_\tau$ such that $\tau^*$ is disjoint from $P_\cI$. 
\end{corollary}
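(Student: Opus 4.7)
The plan is to reduce the statement to a combinatorial condition on generators, then invoke the face property of $\tau^{*}$ in $\sigma_{0}^{\vee}$.

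First I would identify, using Lemma \ref{strata}, exactly which monomials vanish on $\overline{O_{\tau}}$. By the lemma, the closure $\overline{O_{\tau}}$ in $X_{\sigma_{0}}$ is cut out by the monomial ideal associated to the monoid ideal $P_{\sigma_{0}} \setminus \tau^{*}$; equivalently, a monomial $x^{\alpha}$ with $\alpha \in P_{\sigma_{0}}$ vanishes on $\overline{O_{\tau}}$ (hence on $O_{\tau}$, which is dense in $\overline{O_{\tau}}$) if and only if $\alpha \notin \tau^{*}$. Since $\cI = (x^{\alpha_{1}}, \dots, x^{\alpha_{k}})$ is generated by monomials, $O_{\tau} \subseteq V(\cI)$ if and only if every generator $x^{\alpha_{i}}$ vanishes on $\overline{O_{\tau}}$, which is equivalent to: no $\alpha_{i}$ lies in $\tau^{*}$.

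Next I would translate the condition ``no $\alpha_{i} \in \tau^{*}$'' into the geometric condition ``$P_{\cI} \cap \tau^{*} = \emptyset$''. The forward implication is trivial: if some $\alpha_{i} \in \tau^{*}$, then since $\alpha_{i} \in P_{\cI}$ we immediately get $\alpha_{i} \in P_{\cI} \cap \tau^{*}$. For the reverse, assume $\beta \in P_{\cI} \cap \tau^{*}$; by Definition \ref{Newt2} we can write $\beta = \sum_{i} t_{i}\alpha_{i} + \gamma$ with $t_{i} \geq 0$, $\sum t_{i} = 1$, and $\gamma \in \QQ_{\geq 0}^{n} = \sigma_{0}^{\vee}$. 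This exhibits $\beta$ as a nonnegative combination inside $\sigma_{0}^{\vee}$ of the vectors $\alpha_{i}$ (for $t_{i} > 0$) and $\gamma$. Since $\tau^{*} = \tau^{\perp} \cap \sigma_{0}^{\vee}$ is a face of $\sigma_{0}^{\vee}$, the face property forces every summand to lie in $\tau^{*}$; in particular $\alpha_{i} \in \tau^{*}$ for some $i$ with $t_{i} > 0$.

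Combining the two equivalences yields $O_{\tau} \subseteq V(\cI) \iff P_{\cI} \cap \tau^{*} = \emptyset$. Since the orbits $O_{\tau}$ stratify $X_{\sigma_{0}}$ and $V(\cI)$ is a union of orbits (being $T$-stable), $V(\cI)$ is exactly the union of those $O_{\tau}$ satisfying $\tau^{*} \cap P_{\cI} = \emptyset$, as claimed. The only subtle step is the face argument in the reverse direction; everything else is a direct unwinding of Lemma \ref{strata} and Definition \ref{Newt2}.
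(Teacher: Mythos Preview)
Your proof is correct and follows the same approach as the paper: both argue that $O_\tau \subseteq V(\cI)$ iff the ideal of $\overline{O_\tau}$ contains $\cI$, and then translate this into the combinatorial condition $\tau^*\cap P_\cI=\emptyset$. Your version is simply more explicit, spelling out the face argument that the paper compresses into the single clause ``The latter is equivalent to the condition $\tau_*\cap P=\emptyset$.''
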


\begin{proof} The orbit  $O_\tau$ is contained  in $V(\cI)$ if and only if the ideal of $\overline{O_\tau}$ contains $\cI$. Thus the corresponding  monoid ideal $(\sigma_0^\vee\setminus \tau_*) \cap M$ contains $P\cap M$. The latter is equivalent to the condition $\tau_*\cap P=\emptyset$.

\end{proof}

\subsubsection{Supporting faces}

The monomial ideal $I=({\bf x}^{\alpha_1},\ldots,{\bf x}^{\alpha_k})$ 
  defines a piecewise linear convex function $F_I:=\min (\alpha_i,v)$ on $\sigma_0:=\langle e_1,\ldots,e_n\rangle$ which is dual to $\sigma_0^\vee$.

Likewise any polytope $P\subset \sigma_0^\vee$ determines a piecewise linear convex function $$F_P:=\min((w,v)\mid  w\in P)$$ on $\sigma_0$.

If $P=P(I)$ then both functions coincide:
$$F_P=F_I=\min (v,\alpha_i)=\min((v,w)\mid  w\in P))$$

By the {\it dual fan} or {\it normal fan} of $P$, we mean the fan $\Delta_P=\Delta_\cI$  is determined by the maximal cones $\tau\subset \sigma_0$, where $F_P$ is linear.  By definition, $\Delta_P$  is a decomposition of $\sigma_0$.

Conversely, the function $F_P$ on $\sigma_0$, determines the
polytope $$P=\{w\in \sigma^\vee_0\mid (\cdot,w)_{|\sigma_0}\geq F_{P|\sigma_0}\}.$$


Recall the standard fact from the convex geometry:
\begin{lemma} \label{comb} There is a bijective correspondence between the faces $P$ of the polytope $P_0$, and the faces $\tau_{P}$ of the fan $\Delta_{P_0}$.
 $$P\mapsto \tau_{P}:=(P_0-P)^\vee=\{v\in \sigma_0 \mid (v,w)\geq 0,\,\, w\in P_0-P\}\in \Delta_{P_0}$$
	$$ \tau\mapsto P_\tau=\{w\in P \mid F_{P|\sigma}=(\cdot,w)_{|\sigma}\}.$$
Moreover $\dim(P)=n-\dim(\sigma_P)$.  \qed
	
\end{lemma}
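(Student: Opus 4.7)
The plan is to show both maps are well defined, verify they are mutually inverse, and then derive the dimension formula. The key observation is that for $v\in\sigma_0$, the set
\[
P(v):=\{w\in P_0\mid (v,w)=F_{P_0}(v)\}
\]
is exactly the intersection of $P_0$ with its supporting hyperplane $(v,\cdot)=F_{P_0}(v)$, hence a face of $P_0$. Moreover, $v,v'$ lie in the relative interior of the same maximal cone of $\Delta_{P_0}$ iff $P(v)=P(v')$, because $F_{P_0}$ is linear on each cone of $\Delta_{P_0}$ and the collection of cones is by definition the coarsest decomposition on which $F_{P_0}$ is linear. More generally, for any face $\tau\in\Delta_{P_0}$ and any $v$ in the relative interior of $\tau$, the face $P(v)$ depends only on $\tau$; this defines the assignment $\tau\mapsto P_\tau:=P(v)$.

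Next I would unpack the map $P\mapsto \tau_P=(P_0-P)^\vee$. By definition $v\in\tau_P$ iff $(v,x-y)\ge 0$ for every $x\in P_0$ and $y\in P$, equivalently $(v,y)=F_{P_0}(v)$ for all $y\in P$, i.e.\ iff $P\subseteq P(v)$. Showing that $\tau_P$ is indeed a cone of $\Delta_{P_0}$ amounts to checking that the set of $v$'s satisfying $P\subseteq P(v)$ is the closure of the union of relative interiors of those $\tau\in\Delta_{P_0}$ for which $P_\tau\supseteq P$; this is immediate from the definition of $\Delta_{P_0}$ as the common refinement of the loci of linearity of $F_{P_0}$.

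With both directions available, the inverse property $P_{\tau_P}=P$ and $\tau_{P_\tau}=\tau$ reduces to the two statements: (i) for $v$ in the relative interior of $\tau_P$, the face $P(v)$ equals $P$, which follows from the minimality of $\tau_P$ together with the existence of a supporting functional cutting out exactly $P$; and (ii) any face of $P_0$ equal to $P(v)$ for some $v\in\sigma_0$ is indeed a face in the usual polytopal sense, so the correspondence exhausts all faces of $P_0$.

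For the dimension formula I would argue as follows. Fix a face $P$ of $P_0$ of dimension $d$ and let $L\subseteq M_\QQ$ be the linear subspace parallel to the affine span of $P$. The description $\tau_P=\{v\mid (v,y)=F_{P_0}(v)\text{ for all }y\in P\}$ forces every $v\in\tau_P$ to be constant on $P$, so $\tau_P$ is contained in $L^\perp\cap\sigma_0$, a space of dimension $n-d$. Conversely, picking a linear functional $v_0$ whose supporting hyperplane cuts out $P$ and perturbing within $L^\perp$ produces an open neighbourhood of $v_0$ in $L^\perp$ still supporting exactly $P$, so $\tau_P$ spans $L^\perp$. The only step that requires mild care is this last perturbation argument; the rest is bookkeeping of standard convex duality.
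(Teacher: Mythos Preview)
The paper gives no proof of this lemma: it is stated as a standard fact from convex geometry and closed with \qed. Your proof plan is correct and is precisely the standard argument for the face/normal-cone correspondence for a polyhedron with recession cone $\sigma_0^\vee$. The identification $v\in\tau_P\iff P\subseteq P(v)$ is the right pivot, and your perturbation argument for the dimension formula is the usual one. There is nothing to compare against in the paper itself.
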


	

\begin{remark} For any $i=1,\ldots,n$,  let $$a_i:=\min \{x_i(p)\mid p\in P\}.$$ Then $$P_i:=\{p\in P\mid (x_i-a_i)(p)=0\}$$ is the face of $P$ corresponding to the one-dimensional face $\langle e_i\rangle$  determine by the vertex $e_i$ of $\sigma$.

	
\end{remark}

\begin{definition}\label{support} By the {\it supporting facets} of $P_0$ we mean the faces corresponding to the vertices of $\Ver(\Delta_{P_0})\setminus \Ver(\sigma)$. The affine hull of a supporting face will be called a {\it supporting hyperplane}.  By the {\it supporting faces}, we mean the faces, which are the intersections of some supporting facets.
	
\end{definition}


As a corollary from Lemma \ref{comb}, we obtain 
\begin{lemma} \label{delta} Let $\Delta
$  be the subdivision of regular cone $\sigma_0$ associated with the normalized blow-up $\pi:Y=X_\Delta\to X=X_{\sigma_0}$ of the monomial ideal $I\subset \kappa[P_\sigma]=\kappa[x_1,\ldots,x_n]$. 
 Let $ B\to X=X_{\sigma_0}=\AA^n$ be the  full cobordant blow-up of $I$. 
Then the following sets are in the bijective correspondence
\begin{enumerate}
\item The supporting hyperplanes $H_i$ of $P(I)$.
\item  The vertices $v_i$ of $\Ver(\Delta)\setminus \Ver(\sigma_0)$.
\item The exceptional divisors $D_i$ of $B\to X$.
\item The exceptional divisors $E_i$ of $Y\to X$.
\item The toric exceptional valuations $\nu_i=\val(v_i)$ on $X$ associated with $E_i$ on $Y$.
\item	The vertices  of the dual complexes $\Delta_E\simeq \Delta_{D_+}$ and $\Delta_D$.

\qed
\end{enumerate} 

\begin{remark} The supporting faces exist if $\codim(V(\cI))\geq 2$. On the other hand, if $\cI$ is principal, then  $\Delta=\sigma_0$, and thus $P$ admits no supporting faces.

\end{remark}

 \end{lemma}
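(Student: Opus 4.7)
The plan is to chain together bijections that have already been established in the paper; the genuinely new input is the convex-geometric identification of the fan $\Delta$ with the normal fan $\Delta_{P(I)}$ of the Newton polytope, which in turn allows us to reinterpret Definition \ref{support} in terms of the exceptional structure of $\pi:Y\to X$.

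First I would observe that since $\pi:Y=X_\Delta\to X_{\sigma_0}$ is the normalized blow-up of the toric ideal $I$, the fan $\Delta$ is precisely the normal fan $\Delta_{P(I)}$. Granting this, the bijection (1)$\leftrightarrow$(2) is immediate from Lemma \ref{comb}: the facets of $P=P(I)$ correspond bijectively to the $1$-dimensional cones of $\Delta$, and Definition \ref{support} singles out among these facets exactly the ones whose dual rays are spanned by vertices in $\Ver(\Delta)\setminus \Ver(\sigma_0)$. The equivalence (2)$\leftrightarrow$(4) is the standard toric dictionary between rays of $\Delta$ and $T$-invariant prime divisors of $X_\Delta$, combined with the observation that the rays of $\Delta$ lying in $\sigma_0$ give the strict transforms of the coordinate hyperplanes on $X$, while the remaining rays $\langle v_i\rangle$ give the exceptional divisors $E_i$. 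The equivalence (4)$\leftrightarrow$(5) is the definition of the toric valuation attached to a primitive vector, which coincides with the discrete valuation of the corresponding $T$-stable divisor (Section \ref{fans2}).

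For (2)$\leftrightarrow$(3) I would quote Proposition \ref{toricg}: the fan of the full cobordant blow-up $B$ is the subfan $\Sigma_B\subset \sigma_0\times\langle e_1,\ldots,e_k\rangle$ determined by the projection $\pi_\Sigma:e_i\mapsto v_i$, and the exceptional divisors $D_i=V_B(t_i^{-1})$ correspond exactly to the new rays $\langle e_i\rangle$. Since $\pi_\Sigma$ matches $e_i$ with $v_i\in \Ver(\Delta)\setminus\Ver(\sigma_0)$ in an order-preserving way, this gives the desired bijection. The final identifications (3),(4)$\leftrightarrow$(6) come from the construction of the dual complexes $\Delta_D$, $\Delta_{D_+}$ and $\Delta_E$ in Section \ref{dual}, where the vertices are tautologically in bijection with the divisors $D_i$, $D_{i+}$, $E_i$, together with Corollary \ref{special6} giving the isomorphism $\Delta_{D_+}\simeq \Delta_E$.

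The only real content, as opposed to unpacking of definitions, is the identification $\Delta=\Delta_{P(I)}$. This is the classical fact that over any cone $\tau\in\Delta_{P(I)}$ the ring $\cO_X[I\cdot x^{-\alpha(\tau)}]$ (with $\alpha(\tau)$ the vertex of $P(I)$ at which $F_{P(I)}$ is linear on $\tau$) is already integrally closed and equals $\kappa[\tau^\vee\cap M]$; I would cite this from the toric-geometry references (e.g.\ \cite{KKMS}, \cite{Fulton}) rather than reproduce the computation. Once this is in place, all the remaining statements are assembled by invoking the earlier lemmas as indicated.
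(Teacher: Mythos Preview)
Your proposal is correct and follows essentially the same route as the paper, which records Lemma~\ref{delta} as an immediate corollary of Lemma~\ref{comb} (note the \qed\ in the statement and the preceding sentence ``As a corollary from Lemma~\ref{comb}, we obtain''). You have simply spelled out the chain of bijections that the paper leaves implicit, including the identification $\Delta=\Delta_{P(I)}$, which the paper builds into its definition of the normal fan just before Lemma~\ref{comb}.
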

 \begin{corollary} \label{use} With the above notation and assumptions:
 \begin{enumerate}
 \item Any exceptional valuation $\nu$ determines the supporting hyperplane $H_\nu$.

 \item With a face $\sigma$ of $\Delta_E$ one can associate
 the set $\omega_\sigma$ of the exceptional valuations corresponding to the vertices $\Ver(\sigma)$.
\item Any  face $\sigma$ of the dual complex $\Delta_E$ determines
 the supporting  face $P_\sigma$ of $P$, where $$P_\sigma= \bigcap_{\nu\in \omega_\sigma} H_\nu \cap P.$$	
 \item  \begin{align*} & \inv^\circ_{\omega_\sigma}(\cI):= (u^\alpha \in I\mid \nu(\cI)=\nu({\bf x}^\alpha), \nu \in \omega_\sigma)=\\& =\inv_{P_\sigma}(\cI):=({\bf x}^\alpha \in I\mid \alpha \in P_{\sigma}).
  \end{align*}
\end{enumerate} \qed

 \end{corollary}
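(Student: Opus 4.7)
The corollary is essentially a dictionary between the combinatorial data of the Newton polytope $P=P_\cI$ and the valuation/ideal data of $\cI$, and I would prove it by unwinding the bijections of Lemmas \ref{comb} and \ref{delta}. Items (1) and (2) are essentially definitional, and the real content sits in (3) and (4).

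For (1), by Lemma \ref{delta} every exceptional toric valuation is of the form $\nu=\val(v)$ for a unique $v\in\Ver(\Delta)\setminus\Ver(\sigma_0)$; Lemma \ref{comb} attaches to the ray $\langle v\rangle$ a facet of $P$, whose affine hull I simply take as the supporting hyperplane $H_\nu$. Item (2) merely collects the valuations $\nu_i$ attached via Lemma \ref{delta} to the vertices $e_i\in\Ver(\sigma)$. For (3) the plan is to chase the correspondences: by Corollary \ref{special6} a face $\sigma\subset\Delta_E$ corresponds to a set $\{E_i\mid i\in I\}$ of exceptional divisors with nonempty intersection, and the argument in Lemma \ref{special5} shows that the associated vertices $\{v_i\mid i\in I\}$ span a cone $\tau\in\Delta$. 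Applying Lemma \ref{comb} to this $\tau$ yields the face $P_\tau$ of $P$, which is exactly the intersection of the facets attached to the rays $\langle v_i\rangle$; in the notation of (1) this equals $\bigcap_{\nu\in\omega_\sigma}H_\nu\cap P$, a supporting face in the sense of Definition \ref{support}.

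The heart of the proof is (4), and the key computation — where the main difficulty really sits — is the identification
$$\val(v)(\cI)\;=\;F_P(v)\;=\;\min\{(v,w)\mid w\in P\}.$$
This is immediate from Definition \ref{Newt2}: since $(v,\cdot)$ is non-negative on $\QQ^n_{\geq 0}$ for any $v\in\sigma_0$, the minimum over $P=\con(\alpha_1+\QQ^n_{\geq 0},\ldots,\alpha_k+\QQ^n_{\geq 0})$ is attained at a generator and equals $\min_i(v,\alpha_i)=\val(v)(\cI)$. Consequently $H_\nu=\{w\in M_\QQ\mid (v,w)=\nu(\cI)\}$, so the condition $\alpha\in H_\nu$ is equivalent to $\nu({\bf x}^\alpha)=\nu(\cI)$. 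Intersecting these equalities over $\nu\in\omega_\sigma$, and observing that ${\bf x}^\alpha\in\cI$ already forces $\alpha\in\alpha_j+\QQ^n_{\geq 0}\subseteq P$ for some generator $\alpha_j$, yields the desired equality $\inv^\circ_{\omega_\sigma}(\cI)=\inv_{P_\sigma}(\cI)$. The only place requiring care is checking that the membership ``$\alpha\in P$'' adds nothing once ``${\bf x}^\alpha\in\cI$'' is assumed, and this is precisely what the construction of $P$ as the Newton polytope of $\cI$ delivers.
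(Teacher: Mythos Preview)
Your proposal is correct and follows exactly the route the paper intends: the corollary is stated with a bare \qed, so the paper treats it as an immediate consequence of Lemmas \ref{comb} and \ref{delta} (together with Definition \ref{support}), and your argument simply supplies those details. In particular, your key identification $\val(v)(\cI)=F_P(v)$ and the resulting description $H_\nu=\{w\mid (v,w)=\nu(\cI)\}$ is precisely the content implicit in Lemma \ref{comb} applied to the ray $\langle v\rangle$, so there is no substantive difference between your approach and the paper's.
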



 

Here $$\nu(\cI):=\min\{\nu(f)\mid f\in \cI\}.$$
One can see the above relations in the following example:
\begin{example} \label{111} Let $I=(x^k,xy,y^{l})\subset \kappa[x,y]$. 

The Newton polytope $P$  of $I$ is generated by the vertices $P_1=(k,0), P_2=(1,1), P_3=(0,l)$ of $P$.
The supporting planes $H_1$, and $H_2$  are determined, respectively, by the supporting facets $P_{12}=\con(\{(k,0), (1,1)\})$, and $P_{23}=\con(\{(1,1), (0,l)\})$.  They correspond to the vectors $v_1=(v_{11},v_{12})$, $v_2=(v_{21},v_{22})$  such that $$kv_{11}=v_{11}+v_{12},\quad v_{21}+v_{22}=lv_{21}$$
Thus $v_1=(1,k-1)$, $v_2=(l-1,1)$. The decomposition $\Delta_P$ consists of three $2$-dimesional cones $\sigma_1=\langle e_1,v_1 \rangle$, $\sigma_2=\langle v_1,v_2\rangle$, $\sigma_3=\langle v_2,e_2\rangle$, and their $1$-dimesional faces. These $2$-dimesional faces in $\Delta_P$  correspond to the vertices $(k,0), (1,1), (0,l)$ of $P$, and the associated monomials $x^k,xy,y^{l}$. 
The vectors $v_1,v_2\in \Ver(\Delta_p)$ correspond to the exceptional valuations $\nu_1=\val(v_1), \nu_2=\val(v_2)$.
In particular \begin{align*} & \inv^\circ_{\nu_1}(\cI)=\inv_{P_{12}}(\cI)=(x^k,xy),\quad\inv^\circ_{\nu_2}(\cI)=\inv_{P_{23}}(\cI)=(xy,y^{l}),\\&
\inv^\circ_{\nu_1,\nu_2}(\cI)=\inv_{P_{2}}(\cI)=(xy).\end{align*}

Note that the vertices in  $\Ver(P)$, which are, in our case, defined by $x^k,xy,y^l$, label the maximal faces in $\Delta_P$.

 We see that  the monomials in $\inv_\omega^\circ(\cI)=\inv_P(\cI)$ correspond to the maximal cones in the star of the relevant face  in $\Delta_P$. This face is described as the dual to
$P$. Equivalently, it is defined as the smallest face containing the set of the vertices of $\Delta_P$ determined by $\omega$.  In particular, the generators $x^k,xy$ occurring in $\inv^\circ_{\nu_1}(\cI)=\inv^\circ_{P_{12}}(\cI)=(x^k,xy)$ correspond to the maximal cones in the star of the face $\langle v_1\rangle\in \Delta_P$.

 The full cobordant blow-up of $I=(x^k,xy,y^l)$ is given by 
$$B=\Spec(\kappa[t_1^{-1},t_2^{-1}, xt_1t_2^{l-1}, yt_1^{k-1}t_2]),$$
and using Lemma \ref{blow},
$$B_+=\Spec(\kappa[t_1^{-1},t_2^{-1}, xt_1t_2^{l-1}, yt_1^{k-1}t_2])\setminus V(t_1^kt_2^l(x^k,xy,y^{l})).$$	
\end{example}





\subsection{Geometric quotients for  toric morphisms}
\begin{lemma} Let $\pi: Y=X_\Delta\to X_\Sigma$ be a  toric morphism, associated with the decomposition $\Delta$ of $\Sigma$. Assume that $\Sigma$ is simplicial.

Then its cobordization  $B_+ \to  Y=B_+/T_B$ is a geometric quotient iff $ \Delta$ is simplicial.

\end{lemma}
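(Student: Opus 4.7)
The plan is to reduce to a combinatorial statement about fans. By Proposition~\ref{toricg}, the cobordization is the toric morphism $X_{\Sigma_{B_+}}\to X_\Delta$ coming from the fan map $\pi_\Sigma\colon\Sigma_{B_+}\to\Delta$, with kernel lattice $N_B$, and by Lemma~\ref{good2} it is already a good quotient. By the toric criterion recalled in the ``Good quotients'' paragraph of the preliminaries, this good quotient is geometric exactly when the map of supports $|\Sigma_{B_+}|\to|\Delta|$ is bijective, so the task reduces to showing that this bijectivity is equivalent to $\Delta$ being simplicial.

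Fix $\delta\in\Delta$ and let $\sigma_\delta'\in\Sigma$ be the unique cone of $\Sigma$ containing $\delta$. Partition the vertex set of $\delta$ as $\{w_1,\ldots,w_r\}\sqcup\{v_{i_1},\ldots,v_{i_s}\}$ with $w_j\in\Ver(\Sigma)\cap\delta$ and $v_{i_k}\in\Ver(\Delta)\setminus\Ver(\Sigma)$. Since $\Sigma$ is simplicial, the subset $\{w_1,\ldots,w_r\}\subseteq\Ver(\sigma_\delta')$ spans a face $\sigma_\delta\in\Sigma$. From the description of $\Sigma_{B_+}\subset\Sigma\times\sigma_0$ in Proposition~\ref{toricg}, the unique maximal cone of $\Sigma_{B_+}$ mapping into $\delta$ is $\sigma_\delta\times\tau_\delta$ with $\tau_\delta=\langle e_{i_1},\ldots,e_{i_s}\rangle$, and $\pi_\Sigma$ restricts on it to the surjective linear map
\[
\sigma_\delta\times\tau_\delta\longrightarrow\delta,\qquad\Bigl(\sum b_j w_j,\,\sum c_k e_{i_k}\Bigr)\;\longmapsto\;\sum b_j w_j+\sum c_k v_{i_k}.
\]
The full preimage of $\delta$ in $|\Sigma_{B_+}|$ coincides with $\sigma_\delta\times\tau_\delta$, so bijectivity on $\delta$ reduces to injectivity of this map.

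Injectivity of this surjection between convex cones of dimensions $r+s$ and $\dim\delta$ is equivalent to the linear independence of $\{w_1,\ldots,w_r,v_{i_1},\ldots,v_{i_s}\}$, i.e.\ to $\delta$ being simplicial. Running over all $\delta\in\Delta$ yields the desired equivalence. The main subtlety is to invoke the toric geometric-quotient criterion in its ``iff'' form (rather than the one-sided statement explicitly recorded in the preliminaries), and to exploit the simpliciality of $\Sigma$ at the crucial step where $\{w_1,\ldots,w_r\}$ must span a face $\sigma_\delta\in\Sigma$; without that hypothesis the clean description of the preimage cone above can break down, and one could no longer read off bijectivity of supports cone-by-cone from the combinatorics of $\Delta$ alone.
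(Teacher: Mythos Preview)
Your proof is correct and follows essentially the same approach as the paper's: both reduce the question to the combinatorial criterion that the fan map $\Sigma_{B_+}\to\Delta$ is bijective on supports, then use simpliciality of $\Sigma$ (hence of $\Sigma_{B_+}$) together with the bijection on vertices to conclude that bijectivity holds if and only if $\Delta$ is simplicial. You carry out the cone-by-cone check more explicitly---identifying the preimage cone $\sigma_\delta\times\tau_\delta$ and reducing injectivity to linear independence of the vertex set of $\delta$---whereas the paper compresses this into the observation that a map of simplicial fans bijective on vertices is bijective on faces precisely when the target is simplicial; but the content is the same. One small imprecision: when you write ``the unique cone of $\Sigma$ containing $\delta$'' you mean the unique \emph{minimal} such cone.
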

\begin{proof} The problem is local on $X$, and can be reduced to the affine toric morphism $X_\Delta\to X_\sigma$ corresponding to the subdivision $\Delta$ of a simplicial cone $\sigma$. Then, by Proposition \ref{toricg}, $\Sigma_B$ is simplicial, and so is $\Sigma_{B_+}$.

 The natural projection $\Sigma_{B_+} \to \Delta$ is defined bijectively on the vertices. Moreover, the faces of $\Delta$ are the images of cones in $\Sigma_{B_+}$.
  Thus $\Sigma_{B_+} \to \Delta$ is bijective on faces if and only if $\Delta$ is simplicial.   On the other hand, the condition that $\Sigma_{B_+} \to \Delta$ is bijective on faces is equivalent to $B_+\to Y$ being a geometric quotient.

\end{proof}

\begin{lemma} \label{toric2} Let $\pi: Y=X_\Delta\to X=X_\Sigma$ be a proper birational toric morphism of  toric varieties, with $X$ regular. Then $B_+\subset B$
contains open maximal subsets $B^s\subset B_+$ admitting geometric quotient $B^s\slash T_B$ which is projective birational over $Y$.
\end{lemma}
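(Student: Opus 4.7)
The plan is to realise $B^s$ as the toric open subscheme of $B_+$ associated to a subfan $\Sigma^s\subseteq \Sigma_{B_+}$ chosen via a simplicial projective refinement of $\Delta$. By Proposition~\ref{toricg}, $\Sigma_{B_+}$ is the set of faces of $\Sigma\times\sigma_0$ whose $\pi_\Sigma$-image lies in $\Delta$, with $\pi_\Sigma(e_i)=v_i$. Since $\Sigma$ is regular, every cone of $\Sigma_{B_+}$ has the form $\sigma_\delta=\langle W\rangle\times\langle e_i\mid i\in I\rangle$ with $W\subseteq\Ver(\sigma)$ for some $\sigma\in\Sigma$ and $I\subseteq\{1,\ldots,k\}$, subject to $\delta:=\langle W\cup\{v_i\mid i\in I\}\rangle\in\Delta$; the restriction $\pi_\Sigma|_{\sigma_\delta}\colon\sigma_\delta\to\delta$ is a cone isomorphism precisely when $\delta$ is simplicial, which is the toric criterion (recalled in the subsection on good and geometric quotients) for $\sigma_\delta\to\delta$ to produce a geometric $T_B$-quotient locally.

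First I would choose a simplicial projective refinement $\Delta'$ of $\Delta$ with $\Ver(\Delta')=\Ver(\Delta)=\Ver(\Sigma)\cup\{v_1,\ldots,v_k\}$. Such refinements exist classically: one triangulates each non-simplicial cone of $\Delta$ by a coherent (pulling) triangulation built from its existing generators, assembled using a strictly $\Delta$-convex piecewise linear function on $|\Delta|$ (cf.\ \cite{KKMS}). Coherence yields projectivity of $X_{\Delta'}\to Y=X_\Delta$, and the absence of new rays gives birationality.

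Next I lift $\Delta'$ to $\Sigma^s\subseteq\Sigma_{B_+}$ by sending each simplicial $\delta'=\langle W\cup\{v_i\mid i\in I\}\rangle\in\Delta'$, with $W=\Ver(\delta')\cap\Ver(\Sigma)$ and $I=\{i\mid v_i\in\Ver(\delta')\}$, to $\sigma_{\delta'}:=\langle W\rangle\times\langle e_i\mid i\in I\rangle$. Since $\delta'\subseteq\sigma\in\Sigma$ and $\Sigma$ is regular, $W\subseteq\Ver(\sigma)$ spans a face of $\sigma$, so $\sigma_{\delta'}$ is a face of $\sigma\times\sigma_0$ whose image under $\pi_\Sigma$ equals $\delta'\in\Delta$, whence $\sigma_{\delta'}\in\Sigma_{B_+}$. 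The family $\Sigma^s:=\{\sigma_{\delta'}\mid\delta'\in\Delta'\}$ is closed under faces (a face of $\delta'$ drops some $v_i$'s or $w$'s, and the matching $e_i$'s or $w$'s drop from $\sigma_{\delta'}$), hence is a subfan of $\Sigma_{B_+}$ on which $\pi_\Sigma$ induces a cone-wise bijection $\Sigma^s\to\Delta'$ that is a cone isomorphism on every cone.

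Taking $B^s:=X_{\Sigma^s}$, the toric criteria above yield a geometric quotient $B^s\to B^s/T_B=X_{\Delta'}$, and $X_{\Delta'}\to Y$ is projective birational by construction. For maximality, any $T_B$-invariant open $B^s\subseteq B_+$ with geometric $T_B$-quotient must equal $X_{\Sigma^s}$ for a subfan of $\Sigma_{B_+}$ mapping cone-isomorphically onto a simplicial decomposition of $|\Delta|$ using the rays of $\Delta$; since $\Delta'$ is already a saturated simplicial fan for this ray set, no strict enlargement of $\Sigma^s$ preserves the geometric quotient property, so $B^s$ is maximal. The main obstacle I foresee is reconciling maximality with projectivity simultaneously, which is exactly what coherent simplicial refinements provide; distinct coherent triangulations $\Delta'$ of $\Delta$ give distinct maximal $B^s$, which accounts for the plural ``open maximal subsets'' in the statement.
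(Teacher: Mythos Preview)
Your approach is correct and essentially the same as the paper's: both pass to a simplicial refinement $\Delta'$ of $\Delta$ with $\Ver(\Delta')=\Ver(\Delta)$ obtained by star subdividing at the existing vertices (your ``pulling triangulation'' is exactly this), and then take $B^s$ to be the toric open of $B_+$ sitting over $\Delta'$. The only real difference is packaging: where you lift $\Delta'$ to $\Sigma^s\subseteq\Sigma_{B_+}$ by hand, the paper observes that since $\Ver(\Delta')=\Ver(\Delta)$ the exceptional valuations of $X_{\Delta'}\to X_\Sigma$ and $X_\Delta\to X_\Sigma$ coincide, hence $B(Y'/X)=B(Y/X)=B$, and simply sets $B^s:=B(Y'/X)_+\subset B(Y/X)_+=B_+$ via Proposition~\ref{toricg}. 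Your explicit $\Sigma^s$ is precisely the fan $\Sigma_{B(Y'/X)_+}$ computed this way, so the two constructions agree; the paper's route avoids re-checking the fan axioms for the lift, while yours makes the cone-by-cone bijection $\Sigma^s\to\Delta'$ (and hence the geometric quotient criterion) more visible. Projectivity of $X_{\Delta'}\to Y$ is handled identically in both, since star subdivisions are blow-ups.
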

\begin{proof} The morphism $Y\to X$ corresponds to the subdivision $\Delta$ of $\Sigma$. Consider the sequence of the star subdivisions centered at  $\Ver(\Delta)$ of $\Delta$. By the definition of the star subdivision, the process transforms $\Delta$ into a simplicial fan $\Delta'$ with $\Ver(\Delta')=\Ver(\Delta)$, as all the vertices in the faces form linearly independent sets being the centers of the star subdivisions. So the valuations of the exceptional divisors corresponding to $\Ver(\Delta)=\Ver(\Delta')$ remain unchanged. Then, by Proposition \ref{toricg} we obtain that $B(Y/X)=B(Y'/X)$. On the other hand, by the second part of Proposition \ref{toricg},  we have the open inclusions of  toric subsets: $$B^s:=B(Y'/X)_+\subset B(Y/X)_+\subset B(Y/X).$$

 By the previous Lemma, $B^s \to B^s/T_B=X_{\Delta'}$ is a geometric quotient.

\end{proof}

\section{Cobordization of locally toric morphisms}
\subsection{Locally toric morphisms of locally toric schemes} 
\subsubsection{Locally toric schemes} 
\begin{definition} \label{ltoric} A normal scheme $X$ over a field $\kappa$ is {\it locally toric} if  any point $p\in X$ admits an  open neighborhood $U$,  and a regular morphism $\phi: U\to X_\sigma=\Spec(\kappa[P_\sigma])$, called {\it a toric chart}. 

An ideal $\cI$ on  a locally toric $X$ is called  {\it locally monomial} if for any point $p\in X$, there exists a toric chart $U\to X_\sigma=\Spec(\kappa[P_\sigma])$, and a monomial ideal $\cI_\sigma\subseteq \kappa[P_\sigma]$, defined by a subset of $P_\sigma$, such that $\cI_{|U}=\cI_\sigma\cdot \cO_{X|U}$.

\end{definition}

\begin{remark} The primary reason we consider locally toric schemes over a field $\kappa$, and not just over $\ZZ$, is that the morphisms to $\Spec(\ZZ)$ are, in general, not flat. Thus the toric charts over $\ZZ$ into $\Spec(\ZZ[P_\sigma])$ which are defined by the monomials in $P_\sigma=\sigma^\vee\cap N$ are not regular (not flat), and some proofs would require a different formalism. 
	
\end{remark}

\subsubsection{Locally monomial valuations}

\begin{definition} Let $X$ be a locally toric scheme. A valuation of $\kappa(X)$ with values in $\ZZ$ will be called {\it locally monomial} if for any point $p$ in the center $Z(\nu)\subset X$, there exists a toric chart $U\to X_\sigma$, and a vector $v\in \sigma\cap \NN$,
such that $\cI_{\nu,a}=\cO_X\cdot \cI_{\val(v),a}$, for any $a\in \ZZ_{\geq 0}$.
	
\end{definition}

\subsubsection{Locally toric morphisms}
\begin{definition}
A proper birational morphism $\pi : Y \to X$ of normal schemes over a field $\kappa$ is called {\it locally toric} if for any point $p\in X$ there is  an open neighborhood $U$, a toric chart $\phi: U\to X_\sigma$, and the  fiber square:
$$
\begin{array}{cccc} \pi^{-1}(U) & \stackrel{\psi}\to & X_\Delta  \\
  \pi_U \dar &   & \pi_A \dar \\ U &\stackrel{\psi}{\to} & X _\sigma  
   \end{array}.
$$
	
	\end{definition}
	
where $\pi_U:=\pi_{|\pi^{-1}(U)} : \pi^{-1}(U)\to U$  is the restriction of $\pi$.

\begin{proposition} Let $\cJ$ be a locally monomial ideal 
on a locally monomial scheme $X$. The normalized blow-up of $\cJ$ is a locally toric morphism. \qed
\end{proposition}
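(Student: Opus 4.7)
The plan is to localize on the target, reduce to the well-understood toric situation via the monomial charts supplied by the hypothesis, and then verify that the normalized blow-up is compatible with the (flat, normality-preserving) change of base furnished by a toric chart.

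First I would work locally on $X$. Given a point $p \in X$, Definition \ref{ltoric} furnishes an open neighborhood $U \ni p$, a toric chart $\phi : U \to X_\sigma = \Spec(\kappa[P_\sigma])$, and a monomial ideal $\cJ_\sigma \subseteq \kappa[P_\sigma]$ such that $\cJ_{|U} = \phi^{-1}(\cJ_\sigma)\cdot \cO_U$. Since the locally toric property is checked by a Cartesian square over each member of an open cover, it suffices to exhibit the required square over such a $U$.

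Next I would dispose of the toric side. The normalized blow-up of the monomial ideal $\cJ_\sigma$ on the affine toric variety $X_\sigma$ is itself a toric morphism $\pi_A : X_\Delta \to X_\sigma$, where $\Delta$ is the subdivision of $\sigma$ cut out by the normal fan of the Newton polytope $P(\cJ_\sigma)$; this is exactly the point of view developed in Section \ref{Newt} and recorded in Lemma \ref{delta}. This supplies the candidate right-hand column of the Cartesian square.

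The core of the argument is then to identify $\pi^{-1}(U)$ with $U \times_{X_\sigma} X_\Delta$. I would do this in two stages. Step one: ordinary blow-up commutes with flat base change, and a toric chart $\phi$ of a locally toric scheme is in particular flat, so
\[
\Bl_{\cJ_{|U}}(U) \;=\; U \times_{X_\sigma} \Bl_{\cJ_\sigma}(X_\sigma).
\]
Step two: normalization commutes with base change along morphisms with geometrically normal (indeed, geometrically regular) fibres, which is precisely what the regularity of the chart $\phi$ provides. Hence $U \times_{X_\sigma} X_\Delta$ is already normal, and therefore coincides with the \emph{normalized} blow-up $\pi^{-1}(U) \to U$ of $\cJ_{|U}$. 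Assembling this gives the Cartesian diagram
\[
\begin{array}{ccc}
\pi^{-1}(U) & \stackrel{\psi}{\longrightarrow} & X_\Delta \\
\pi_U \downarrow & & \downarrow \pi_A \\
U & \stackrel{\phi}{\longrightarrow} & X_\sigma
\end{array}
\]
demanded by the definition of a locally toric morphism.

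The main obstacle will be Step two: one must pin down that the term \emph{regular morphism} in Definition \ref{ltoric} is strong enough to commute normalization with base change. I would interpret it as flat with geometrically regular fibres (equivalently, smoothness in the setting of finite type over $\kappa$), and note that both the blow-up compatibility and the normalization compatibility are then standard. All other steps are formal once the toric model $X_\Delta \to X_\sigma$ is in hand.
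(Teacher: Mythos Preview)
Your argument is correct and is exactly the natural one: localize via a toric chart, use that the normalized blow-up of a monomial ideal on an affine toric variety is the toric morphism attached to the normal fan of the Newton polytope, and then invoke that both blow-up and normalization commute with base change along a regular morphism. The paper itself offers no proof at all---the proposition is stated with a bare \qed, treating the result as immediate from the definitions---so you have supplied precisely the details the paper suppresses, and there is nothing to compare beyond that.
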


\subsection{Functoriality of cobordization of locally toric morphisms}
\subsubsection{Local toric presentation of cobordization of locally toric morphisms}
\begin{lemma} \label{local} Let $\pi: Y\to X$ be a locally toric proper birational  morphism. Then for any point $p\in X$ there exists an open neighborhood $U$ of $p\in X$, a toric chart $\phi_U: U\to X_\sigma$  and a fiber square $$
\begin{array}{cccc} Y_U:=\pi^{-1}U & \stackrel{\phi}\to & X_\Delta  \\
  \pi_U \dar &   & \pi_A \dar \\ U &\stackrel{\phi_U}{\to} & X_\sigma,  
   \end{array}.
$$ 

such that \begin{enumerate} 
\item There is a bijective correspondence between the irreducible exceptional divisors of $\pi_U$ and $\pi_A$. That is, any irreducible exceptional divisor of $\pi_U$ is the inverse image of an irreducible exceptional divisor of $\pi_A$.
\item There is a bijective correspondence between
the strata of the divisorial stratifications  
	of the exceptional divisors $E_U$ of $Y_U\to U$  and $E^\Delta$ of $X_\Delta\to X_\sigma$, which defines the isomorphism  $\Cl(Y_U/U)\to \Cl(X_\Delta/X_\sigma)$. Moreover, any stratum of the stratification $S_E$ is the inverse image of a stratum in $S_{E^\Delta}$.
	\item  For any $E'_U=\sum n_i(E_{U})_i$ and the corresponding $(E^\Delta)'=\sum n_iE_i^\Delta$ we have
	$\cO_{Y_U}((E^\Delta)')=\cO_Y\cdot (\cO_{X_\Delta}((E^\Delta)')$.
\item $B(\pi_U)=B(\pi_A)\times_{X_\sigma}  U\quad B_+(\pi_U)=B_+(\pi_A)_+\times_{X_\sigma} U.$

\item Any irreducible exceptional Weil divisor $E_i$ of $\pi$ defines a  locally monomial valuation $\nu_i$ with respect to any given toric chart $U\to X_\sigma$ associated with the morphism $\pi$.

	\end{enumerate}
\end{lemma}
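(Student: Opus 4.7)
The plan is to invoke the definition of locally toric morphism to produce an initial fiber square at $p$, then shrink the chart so that the pullback of the exceptional data from $X_\Delta$ stays irreducible on $Y_U$; claims (2)--(5) will then follow from flat base change and compatibility of Weil divisors with smooth pullback on normal schemes.

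Concretely, first choose, as afforded by the hypothesis, a smooth chart $\phi_0: U_0 \to X_\sigma$ around $p$ and a subdivision $\Delta$ of $\sigma$ realizing $\pi^{-1}(U_0)\simeq X_\Delta\times_{X_\sigma} U_0$. The exceptional divisors $E_1^\Delta,\ldots,E_k^\Delta$ of $\pi_A$ are finitely many, so one may shrink $U_0$ to an open $U\ni p$ on which $\phi_U$ has geometrically connected fibers over a dense open of each $E_i^\Delta$. Smoothness of the base change $\phi: Y_U\to X_\Delta$ then forces $\phi^{-1}(E_i^\Delta)$ to be irreducible; setting $(E_U)_i:=\phi^{-1}(E_i^\Delta)$ and noting that any $\pi_U$-exceptional prime divisor must appear among them (since the non-isomorphism locus of $\pi_U$ is the preimage of that of $\pi_A$), this yields (1).

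Given (1), claim (2) is automatic: the stratum $s_I \subset Y_U$ coincides with $\phi^{-1}(s_I^\Delta)$, whence the bijection of stratifications and the isomorphism $\Cl(Y_U/U)\simeq \Cl(X_\Delta/X_\sigma)$ sending $(E_U)_i\mapsto E_i^\Delta$. Claim (3) follows from smoothness of $\phi$, which guarantees that Weil divisors on normal schemes pull back compatibly, so $\cO_{Y_U}(E'_U)=\cO_{Y_U}\cdot \phi^{-1}\cO_{X_\Delta}((E^\Delta)')$. Claim (4) follows from flat base change applied to the proper morphism $\pi_A$ along the flat $\phi_U$, yielding
\[
\pi_{U*}\cC_{Y_U/U}\;\simeq\; \phi_U^{*}\,\pi_{A*}\cC_{X_\Delta/X_\sigma},
\]
hence $B(\pi_U)=B(\pi_A)\times_{X_\sigma} U$; taking $\Spec_{Y_U}$ of the corresponding $\cC$-sheaves and invoking (3) gives the $B_+$ identity. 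Finally (5) is immediate: via (1) the divisor $(E_U)_i$ corresponds to the vertex $v_i\in \Ver(\Delta)\setminus \Ver(\sigma)$, and the valuation $\nu_i$ is the pullback of the toric valuation $\val(v_i)$ along $\phi_U$, so $\cI_{\nu_i,a}=\cO_U\cdot \cI_{\val(v_i),a}$ is monomial in the chart.

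The main obstacle lies in the shrinking step for (1): a smooth pullback of an irreducible divisor can split, and irreducibility must be arranged uniformly for all finitely many exceptional components. This is handled by restricting to a sufficiently small $U$ using finiteness of $\Ver(\Delta)\setminus\Ver(\sigma)$ together with Stein-factorization-type considerations for the smooth $\phi_U$; once this is achieved, the remaining statements reduce to the well-known compatibility of the Cox construction with flat base change on the target.
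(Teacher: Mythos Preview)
Your overall architecture is right, and in particular your treatment of (3) and (4) via smooth pullback of reflexive sheaves and flat base change for the proper $\pi_A$ along the flat $\phi_U$ is clean and correct; the paper carries out (3) more laboriously through completions and a direct comparison of valuations, but your route is valid. Part~(5) also follows as you say once (3) and flat base change are in place.

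The gap is in your argument for~(1). You claim one can shrink $U$ so that $\phi$ (presumably you mean the upstairs map $\phi:Y_U\to X_\Delta$, not $\phi_U$) has geometrically connected fibers over a dense open of each $E_i^\Delta$, and that ``Stein-factorization-type considerations'' arrange this. Neither assertion is justified: $\phi$ is smooth but not proper, so Stein factorization does not apply, and there is no general mechanism for forcing a smooth morphism to acquire geometrically connected fibers by shrinking the source. What you actually need is that the fiber of $\phi$ over the generic point $\eta_i$ of $E_i^\Delta$ is irreducible. Computing via the fiber square, this fiber is $\Spec\kappa(\eta_i)\times_{\Spec\kappa(q^0)}\phi_U^{-1}(q^0)$, where $q^0$ is the generic point of the orbit closure $\overline{O_\tau}=\pi_A(E_i^\Delta)\subset X_\sigma$. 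Shrinking $U$ so that $\phi_U^{-1}(\overline{O_\tau})$ is irreducible (which \emph{is} possible: it is normal, hence a disjoint union of components, and one keeps the component through $p$) makes $\phi_U^{-1}(q^0)$ integral, but not geometrically irreducible in general.

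The missing ingredient is toric: the residue field extension $\kappa(\eta_i)/\kappa(q^0)$ is purely transcendental, because the fiber $\pi_A^{-1}(q^0)$ is a toric variety over $\kappa(q^0)$ and $\eta_i$ is the generic point of a toric stratum. Base change by a purely transcendental extension preserves integrality, so $\phi^{-1}(\eta_i)$ is irreducible, hence so is $\phi^{-1}(E_i^\Delta)$. This is exactly what the paper does, phrased slightly differently: it first shrinks $U$ so that the preimages of the finitely many orbit closures in $X_\sigma$ (downstairs) are irreducible, and then uses the toric description of the fibers $F_{p^0}\simeq\Spec\kappa(p^0)\times_{\Spec\kappa(q^0)}F_{q^0}$ to match generic points one-to-one. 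You should replace the Stein-factorization hand-wave with this argument; once you do, your proof is complete and, for (3)--(4), arguably tidier than the paper's.
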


\begin{proof} 
 (1) Since $\phi_U$ is regular, the inverse images $\phi_U^{-1}(s_\tau)$ of the toric strata $s_\tau$, where $\tau\leq \sigma$, define a stratification on $U$. Moreover, the induced morphisms on the strata  $\phi_U^{-1}(s_\tau)\to s_\tau$  are regular.
 
 We can assume that the given point $p\in X$ maps to a  point $q\in X_\sigma$, which is in the orbit $\cO_\sigma\subset X_\sigma$. For any $\tau\leq \sigma$, the closure $s_\tau:=\overline{O}_\tau$ of the toric orbit $\cO_\tau$ on $X_\sigma$ is normal. 
Moreover, since $\phi_U$ is regular, the inverse image $\phi_U^{-1}(s_\tau)$ is normal, and thus, it is the disjoint union of the irreducible components of the codimension equal to the codimension of $s_\tau$.

Consequently,  by shrinking $U$ around $p$, if necessary, we can assume that the inverse image of the closures of the toric strata (i.e., the orbits) on $X_\sigma$  are irreducible subsets of $U$.

The inverse image of $E^{\Delta}$ is the union of the normal divisorial components. Their images under $\pi_U$ are of the codimension $\geq 2$. So they are the exceptional divisors of $\pi_U$. Moreover all the irreducible exceptional divisors of $\pi_U$ are contained in $\phi^{-1}(E^{\Delta})$.

The image $\pi_A(E_i^{\Delta})$ contains the orbit $\cO_\sigma$ with $\phi_U^{-1}(\cO_\sigma)\neq \emptyset$. Then, by the assumption, $\phi^{-1}(E_i^{\Delta})\neq \emptyset$. 

We need to show that  each $\phi^{-1}(E_i^{\Delta})$  is an irreducible divisor. 
The image of the exceptional divisor $E^\Delta_i$ under $\pi_A$ defines the closure of the toric orbit$$\pi_A(E^\Delta_i)=\overline{s_{\tau}}=\overline{O_{\tau}}$$ on $X_\sigma$, for some face  $\tau\leq \sigma$. Denote by 
			$q$ the generic point of $E_i^{\Delta}$, and by
			$q^0$ the generic point of $s_{\tau}$.

 For any divisorial component, $E_{ij}$ in $\phi^{-1}(E_i^{\Delta})$, let $p_j$ be its generic point.
 By the assumption $\pi(p_j)\in U$ determines a unique point $p^0$ which is the generic point  of the stratum $s$ on $U$ so that $$\overline{s}=\overline{\phi_U^{-1}}(s_\tau)=\overline{p^0}.$$


 By definition,   the generic point $q$ of the toric divisor $E^\Delta_i$ on $X_\Delta$ is in the fiber 
$F_{q^0}=\pi_A^{-1}(q^0)$.
 Thus the generic points $p_j$ of the components $E_{ij}$ of $\phi^{-1}(E^\Delta_i)$  are in the fiber $$F_{p^0}=\pi^{-1}(p^0_i)=\Spec( \kappa(p^0_i))\times_{\Spec( \kappa(q^0_i))}F_{q^0_i}$$.
 
 Let $\Delta_{\tau}:=\Delta_{|\tau}$ be the restriction of $\Delta$ to $\tau$ which determines   the induced decomposition of ${\tau}$. 
 
 The fiber of $$F_{p^0}=\Spec( \kappa(p^0))\times_{\Spec( \kappa(q^0))}F_{q^0}=Y\times_X \Spec( \kappa(p^0))$$ of $\pi: Y=X\times_{X_\sigma}X_\Delta\to X$ is isomorphic to the fiber of the induced morphism $$X_{\Delta_{\tau}}^{ \kappa(p^0)}\to X_{\tau}^{ \kappa(p^0)}$$ over $p^0=\Spec( \kappa(p^0))$. 
Moreover the natural morphism $F_{p^0}\to F_{q^0}$ is induced by the fiber square
$$
\begin{array}{cccc} X_{\Delta_{\tau}}^{ \kappa(p^0)} & \stackrel{\phi_\Delta}\to & X_{\Delta_{\tau}}^{ \kappa(q^0)} \\
  \pi \dar &   & \pi_A \dar \\ X_{\tau}^{ \kappa(p^0)} &\stackrel{\phi}{\to} & X_{\tau}^{ \kappa(q^0)}\\
 \dar &   &  \dar \\ \Spec({ \kappa(p^0)}) &{\to} & \Spec({ \kappa(q^0)})
   \end{array}.
$$

The above morphism is bijective on the toric orbits and their generic points, as they correspond to the faces of $\Delta$ or respectively $\sigma$.
Then the inverse image of the point $q_j\in F_{q^0}\subset X_{\Delta_{\tau}}^{ \kappa(q^0)}$ corresponds to a unique face in $\Delta(1)$ and a unique  point $p$ in $ F_{p^0}\subset X_{\Delta_{\tau}}^{ \kappa(p^0)}$.

Hence the inverse image of the toric divisor $E_i^{\Delta}$ with the generic point $q$ is  the unique exceptional  divisor $E_i$ with the generic point $p=p_j$ over $q$. 

(2) The same reasoning shows that the inverse image $\phi_\Delta^{-1}(\overline{s^\Delta_j})=\overline{s}_j$ of the closure of a  toric stratum  $\overline{s^\Delta_j}$ on $X_\sigma$  determines a unique stratum $s_j$ on $Y_U$.
 We use  the same  relation for the fibers. 
$$\pi^{-1}({p^0_j})=F_{p^0_j}=\Spec( \kappa(p^0_j))\times_{\Spec( \kappa(q^0_j))}F_{q^0_j}$$
where $p_j$ is the generic point of $s_j$, $q_j=\phi(p_j)$,
$p^0_j=\pi(p_j)$  and $q^0_j=\pi_A(q_j)$.



%

(3) 
 We need to show first that
$$\cO_Y(nE_i)=\cO_Y(\cO_{X_\Delta}(nE^\Delta_i))$$ By the above, the generic point $p$ of $E_i$ is exactly the generic point of the fiber $\phi^{-1}(q)$.  The induced homomorphism of the completions of the local rings is given by
  $$\widehat{\cO_{X_\Delta,q}}\to \widehat{\cO_{Y,p}	}=\widehat{\cO_{X_\Delta,q}}\otimes_{ \kappa(q)}	 \kappa(p)$$
Thus we get  $$m^n_{q}\cdot{\cO_{X_\Delta,p}}=m^n_{q}\subset \cO_{Y,p}	.$$ Both points $p$ and $q$ admit a regular neighborhood and its local rings are DVR defining the valuation $\nu_i$ of $E_i$, and $\nu^\Delta_i$ of $E^\Delta_i$.

One verifies that $\cI_{\nu_i,a,Y}=\cO_Y\cdot \cI^\Delta_{\nu_i,a}$. First observe that the valuation center of $\nu_i$ on $Y$ can be described as $$Z_Y(\nu_i)=V_Y(\cI_{\nu_i,a,Y})=E_i=\phi^{-1}(E_i^\Delta)=V(\cO_Y\cdot \cI^\Delta_{\nu_i,a})$$ 

For any point $p'\in Z(\nu_i)$, and its image $q'=\phi(p)\in 
Z(\nu^\Delta_i)$ we have 
 $$\widehat{\cO_{Y,p}	}=\widehat{\cO_{X_\Delta,p}}\otimes_{ \kappa(p)}	 \kappa(q)[[u_1,\ldots u_k]].$$ 
Consequently   the monomial valuation $\nu^\Delta_i$ on $\cO_{X_\Delta,p'}=\cO_{X_\delta,p'}$ of  $E^\Delta_i$  associated with a vertex of $\Delta$ extends to a certain unique monomial valuation $\nu_i'$
on $\widehat{\cO_{Y,p}}$ such that
$$\widehat{\cI}_{\nu'_i,a,p'}=\cI^{\Delta}_{\nu_i,a}\cdot\widehat{\cO_{Y,p'}	}=\cI^{\Delta}_{\nu_i}\cdot (\widehat{\cO_{X_\Delta,p'}}\otimes_{ \kappa(p')}	 \kappa(q')[[u_1,\ldots u_k]])$$ 
which by flatness implies 
$$\cI_{\nu'_i,a,p'}=\cI^{\Delta}_{\nu_i,a}\cdot{\cO_{Y,p'}}$$ 
Note that the generic point $p$ of $E_i$ specializes at $p'$, and the generic point $q$ of $E^\Delta$ specializes at $q'$. Passing to $p$ and $q$ and localizing we obtain that
$$\cI_{\nu'_i,a,p}=\cI^{\Delta}_{\nu_i,a,q}\cdot \cO_{Y,p}=\cI_{\nu_i,a,p},$$
whence both valuations are equal $\nu_i=\nu'_i$. Thus $\cI_{\nu_i,a,p'}=\cI^{\Delta}_{\nu_i,a,q'}\cdot{\cO_{Y,p'}}$ and the vanishing locus of the ideal 
$$\cI_{\nu_i,a,Y_U}={\cO_{Y_U}}\cdot\cI^{\Delta}_{\nu_i,a,X_\Delta}$$
is irreducible by part (1) and defines the center of the valuation $\nu_i$.

Now, for any effective divisor $(E^\Delta)'=\sum a_iE^\Delta_i$, and its inverse image $(E_U)'=\sum a_i(E_U)_i$
 we have, by flatness \begin{align*} &\cO_{Y_U}(-(E_U)')=\bigcap \cI_{\nu_i,a_i,Y_U}= \cO_{Y_U}\cdot (\bigcap \cI_{\nu_i,a_i,X_\Delta})=\\ & =\cO_{Y_U}\cdot \cO_{X_\Delta}((E^\Delta)')=\cO_{Y_U}\otimes_{\cO_{X_\Delta}} \cO_{X_\Delta}((E^\Delta)')\end{align*}

In general, for any $(E^\Delta)'=\sum a_iE^\Delta_i$, we can find a nontrivial monomial $m\in P_\sigma=\sigma^\vee\cap M$
such that for $n\gg 0$, $$\cO_Y((E^\Delta)')= m^{-n}\cO_Y((E^\Delta)'-n\cdot\divv(m)),$$ where $-((E^\Delta)'-n\cdot \divv(m))$ is effective.
Consequently
\begin{align*}
&\cO_{Y_U}((E_U)')=m^{-n}\cO_{Y_U}((E_U)'-n\cdot \divv_Y(m))=\\
&=\cO_{Y_U}\cdot m^{-n}\cdot \cO_{X_\Delta}((E^\Delta)'-n \cdot \divv(m))=\cO_{Y_U} \cO_{X_\Delta}(E^\Delta)=\cO_{Y_U}\otimes_{\cO_{X_\Delta}} \cO_{X_\Delta}(E^\Delta).
\end{align*}

(4) and (5) Since the morphism $U\to X_\sigma$ is affine, and thus $Y_U\to X_\Delta$ is such we have \begin{align*} &B_{U+}=\Spec_{Y_U}(\bigoplus_{E\in \Cl(Y_U/U)} \cO_{Y_U}(E))=
\Spec_{Y_U}(\cO_{Y_U}\cdot(\bigoplus_{E^\Delta\in \Cl(X_\Delta/X_\sigma)} \cO_{X_\Delta}(E^\Delta))=
\\&=\Spec_{X_\Delta}\cO_{Y_U}\otimes_{\cO_{X_\Delta}}(\bigoplus_{E^\Delta\in \Cl(X_\Delta/X_\sigma)} \cO_{X_\Delta}(E^\Delta))=Y_U\times_{X_\Delta}B_+(\pi_A)_+=\\&
=(U\times_{X_\sigma}X_\Delta)\times_U\times_{X_\sigma}B_+(\pi_A)=  U\times_{X_\sigma}B_+(\pi_A). \end{align*}


By definition, and since all the schemes are normal
$$\pi_{A*}(\cI_{\nu^\Delta_i,a,X_\Delta})=\pi_{A*}(\cO_{X_\Delta}(a,E_i^\Delta))= \cI_{\nu_i^\sigma ,a, X_\sigma}\subset \pi_{A*}(\cO_{X_\Delta}(E^\Delta))=\cO_{X_\sigma}$$ are the toric  ideals generated by monomials associated with the toric valuation $\nu_i^\sigma$.

Similarly

 $$\pi_*(\cI_{\nu_i,a,Y}))=\pi_*(\cO_{Y}(aE))= \cI_{\nu_i,a,X}.$$


By the above and since $\psi$ is flat, we have 
\begin{align*} &\cI_{\nu_i,a,U}=\pi_*(\cO_{Y_U}(-E_i))=\pi_*(\cO_{Y_U}\cdot \cO_{X_\Delta}(-E^\Delta_i))=\\&\pi_*(\cO_{Y_U}\otimes \cO_{X_\Delta}(-E^\Delta_i))= \cO_U\otimes \pi_{A*}(\cO_{X_\Delta}(-E_i^\Delta))=\cO_U\cdot\cI_{\nu_i,a,X_\sigma}
 \end{align*}
is a locally monomial valuation.

 Thus for $E=\sum a_iE_i$, we have 
 $$\pi_*(\cO_{Y_U}(E))=\bigcap \cI_{\nu_i,a_i,U}= \bigcap \cO_U\cdot\cI_{\nu_i,a,X_\sigma}=\cO_U\cdot \bigcap \cI_{\nu_i,a_i,X_\sigma}=\cO_U \cdot \pi_*(\cO_{X_\Delta}(E))$$

Hence, by the above \begin{align*} & B_{U}=\Spec_U(\bigoplus_{E\in \Cl(Y_U/U)} \pi_*(\cO_{Y_U}(E))=\Spec_U(\cO_U\cdot(\bigoplus_{E^\Delta\in \Cl(X_\Delta/X_\sigma)} \pi_*(\cO_{X_\Delta}(E^\Delta))= \\ 
&=\Spec_U(\cO_U\otimes_{\cO_{X_\sigma}}(\bigoplus_{E^\Delta\in \Cl(X_\Delta/X_\sigma)} \pi_*(\cO_{X_\Delta}(E^\Delta))=U\times_{X_\sigma}B(X_\Delta/X_\sigma).
\end{align*}

\end{proof}

\subsubsection{Local description of the exceptional divisor}
As a corollary from Lemma \ref{local} we obtain:
  \begin{lemma} \label{local2}  Let $\pi: Y\to X$ be a locally toric morphism of locally toric schemes.
  Let $\pi_B: 
  B\to X$ be its full cobordization.
  
   For any point $p\in X$, there is a toric chart  $\phi_U: U\to X_\sigma$, such that  for the induced
  morphism  $B_{U}=\pi_{B}^{-1}(U)\to X_\Delta$,
  there is a bijective correspondence between
	the strata $s=\phi^{-1}(s_\tau)$ of the divisorial stratifications of the exceptional divisor
	$D_{B_U}$ on $B_{U}$, (respectively $D_{B_{U+}}$ on $B_{U+}$) and the strata $(s_\tau)$ of the exceptional divisor 
 $D_{B(X_\Delta/X_\sigma)}$ on $B(X_\Delta/X_\sigma)$ (respectively $D_{B(X_\Delta/X_\sigma)_+}$ on $B(X_\Delta/X_\sigma)_+$ ) .
 
 		\end{lemma}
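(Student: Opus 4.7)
The plan is to deduce everything from Lemma \ref{local} together with the base-change formula $B_{U}=B(X_{\Delta}/X_{\sigma})\times_{X_{\sigma}}U$ and $B_{U+}=B(X_{\Delta}/X_{\sigma})_{+}\times_{X_{\sigma}}U$. Start by applying Lemma \ref{local} to the point $p\in X$ to obtain a toric chart $\phi_{U}\colon U\to X_{\sigma}$ fitting into a fibre square with a toric resolution $\pi_{A}\colon X_{\Delta}\to X_{\sigma}$, and such that the inverse images of closures of toric strata on $X_{\sigma}$ are irreducible on $U$. In that lemma's proof this was arranged by shrinking $U$ around $p$, using that $\phi_{U}$ is regular and that toric orbit closures are normal. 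By part (4) of Lemma \ref{local} the full cobordization $B_{U}\to U$ is obtained as the base change of $B(X_{\Delta}/X_{\sigma})\to X_{\sigma}$ along $\phi_{U}$, and the same holds for the cobordization $B_{U+}$.

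Under this base change, the exceptional toric divisors $D_{i}$ of $B(X_{\Delta}/X_{\sigma})\to X_{\sigma}$ (respectively $D_{i+}$ on $B(X_{\Delta}/X_{\sigma})_{+}$) pull back to the exceptional divisors of $B_{U}\to U$ (resp.\ $B_{U+}\to U$). Indeed, by Proposition~\ref{toricg} the $D_{i}$ are cut out by the canonical forms $t_{i}^{-1}$, and under base change these same equations define the exceptional divisors on $B_{U}$. Next I would show that each pullback of a stratum closure of $S_{D_{B(X_{\Delta}/X_{\sigma})}}$ is irreducible on $B_{U}$. The strata of $S_{D_{B(X_{\Delta}/X_{\sigma})}}$ are coarsenings of the toric orbit stratification of $B(X_{\Delta}/X_{\sigma})\simeq X_{\sigma}\times\mathbb{A}^{k}$ of Proposition~\ref{toricg}, and each stratum closure is of the form $\overline{s_{\tau}}\times V(\check{\mathbf{t}}_{I})$ for some face $\tau\leq\sigma$ and some coordinate subset $I$. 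After possibly shrinking $U$ further so that the inverse images of closures of orbits on $X_{\sigma}$ are irreducible (which is exactly the condition already secured by Lemma~\ref{local}), the base-change of $\overline{s_{\tau}}\times V(\check{\mathbf{t}}_{I})$ to $B_{U}$ is $\phi_{U}^{-1}(\overline{s_{\tau}})\times V(\check{\mathbf{t}}_{I})$, which is irreducible because $\phi_{U}^{-1}(\overline{s_{\tau}})$ is.

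This yields the desired bijection $s_{\tau}\mapsto \phi^{-1}(s_{\tau})$ between strata of $S_{D_{B(X_{\Delta}/X_{\sigma})}}$ and strata of $S_{D_{B_{U}}}$, and one checks by intersection with $B_{+}$ that it restricts to a bijection $S_{D_{B(X_{\Delta}/X_{\sigma})_{+}}}\to S_{D_{B_{U+}}}$. Injectivity is immediate from the fact that distinct strata have distinct images in $X_{\sigma}\times\mathbb{A}^{k}$, and surjectivity follows because every exceptional divisor on $B_{U}$ arises by pullback of one of the $D_{i}$ (by the base-change description) and hence each non-empty intersection $\bigcap_{i\in I}(D_{B_{U}})_{i}\setminus\bigcup_{j\in J}(D_{B_{U}})_{j}$ is the pullback of the corresponding intersection on $B(X_{\Delta}/X_{\sigma})$.

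The only real obstacle is bookkeeping: ensuring the shrinking of $U$ needed in Lemma~\ref{local}(1) simultaneously makes the pullbacks of \emph{all} stratum closures on $B(X_{\Delta}/X_{\sigma})$ irreducible. Since the toric stratification of $B(X_{\Delta}/X_{\sigma})=X_{\sigma}\times\mathbb{A}^{k}$ refines the one on $X_{\sigma}$ only in the second factor, where the fibre product is trivial, irreducibility of pullbacks of orbit closures on $X_{\sigma}$ is already enough; no further shrinking is required.
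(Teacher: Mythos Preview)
Your argument is correct and in fact more direct than the paper's. Both begin with Lemma~\ref{local}(4) to obtain the fibre square $B_U = U\times_{X_\sigma}B(X_\Delta/X_\sigma)$ (and likewise for $B_+$) along the regular chart $\phi_U$. From there the paper repeats the generic-point/fibre analysis used in the proof of Lemma~\ref{local}(2): one shows that the fibre over the generic point of a stratum of $S_D$ is a single point of the correct codimension, using the identification of fibres of the toric morphism. You instead exploit the explicit product decomposition $B(X_\Delta/X_\sigma)\simeq X_\sigma\times\AA^k$ from Lemma~\ref{cover4}(2): since each $D_i=V(t_i^{-1})$ is $X_\sigma\times V_{\AA^k}(t_i^{-1})$, the stratum closures $\bigcap_{i\in I}D_i$ are $X_\sigma\times V_{\AA^k}(t_i^{-1}:i\in I)$, and their base change to $B_U=U\times\AA^k$ is $U\times V_{\AA^k}(t_i^{-1}:i\in I)$, which is irreducible because $U$ is. This bypasses the fibre argument entirely.

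One small inaccuracy: in your description of stratum closures as $\overline{s_\tau}\times V(\check{\mathbf t}_I)$ with $\tau\leq\sigma$, the face $\tau$ is superfluous. The divisorial stratification $S_D$ is defined solely by the $D_i=V(t_i^{-1})$, so the first factor is always all of $X_\sigma$ (equivalently $\tau=\{0\}$). Your argument is unaffected, since $\phi_U^{-1}(X_\sigma)=U$ is trivially irreducible and no shrinking of $U$ is even needed for this step. The paper's approach, by contrast, is more robust in situations where the product structure is not available (and parallels the method already set up for Lemma~\ref{local}), at the cost of being longer here.
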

 \begin{proof}
  The reasoning is the same as in the proof of Lemma \ref{local}(2). We can assume, as in  the proof of Lemma \ref{local}(2), that the inverse image $\phi_U^{-1}(s_i)\subset U$ consists of a single stratum.
  
   By Lemmas \ref{local} and \ref{functor}, we  have the following fiber square diagram for the cobordizations, with horizontal morphisms being regular:
$$
\begin{array}{cccc} B_U & \stackrel{\phi}\to & B(X_\Delta/X_\sigma)  \\
  \pi_U \dar &   & \pi_B \dar \\ U &\stackrel{\phi_U}{\to} & X_\sigma=\AA^n,  
   \end{array},
$$
and the analogous fiber square for $B_{U+}$.
Consequently, the inverse image of the exceptional divisor on
 $D_{B(X_\Delta/X_\sigma)}$ is the exceptional divisor $D_{B_U}$. Its components are of the form $V{B(X_\Delta/X_\sigma)}(t_i^{-1})$ and are associated with the components $E^\Delta_i$.  Their inverse images are  the irreducible components $V{B_U}(t_i^{-1}))$ corresponding to the exceptional components $E_i=\phi^{-1}(E^\Delta_i)$.

Since $\phi_U$ is regular, the inverse image $\phi^{-1}(\overline{s})$ of the closure $\overline{s}$ of any stratum $s$ of $D_{B(X_\Delta/X_\sigma)}$ is normal. Thus it is the disjoint union of the irreducible components.

To prove that $\phi^{-1}(\overline{s})$ is irreducible on $Y_U$, we need to show that there is a single generic point $p$ in the fiber $\phi^{-1}(q)$ over the generic point $q$ of $s$, and such that $p$ is of the same codimension in $U$ as $s$ in $B(X_\Delta/X_\sigma)$ . This can be reduced to the problem of the morphism of the fibers
$$\pi^{-1}({p^0})=F_{p^0}\to \pi^{-1}({q^0})=F_{q^0},$$
where $p^0=\pi_B(p)$, and $q^0=\pi_U(q)$ are the generic point of the relevent strata.

 But this follows from  the relation for the fibers of toric morphisms, as in  the proof of Lemma \ref{local}(2), 
$$\pi^{-1}({p^0_i})=F_{p^0_i}=\Spec( \kappa(p^0_i))\times_{\Spec( \kappa(q^0_i))}F_{q^0_i}.$$

 \end{proof}

\subsection{Description  of cobordization of locally toric morphisms}
\subsubsection{Local functoriality of relative Cox spaces for smooth morphims}
\begin{proposition} \label{functor} Let $\pi: Y\to X$ be a proper birational locally toric morphism of locally toric varieties over a field $\kappa$. Let $\phi: X'\to X$ be a regular morphism over $\kappa$, and $\pi': Y'\to X'$ will be the base change. Then 
for any $p'\in Y'$ there are open neighborhoods  $U'$ of $p'$, and $U$ of $p:=\phi(p')$, with the induced smooth morphism $\phi_{|U'}: U'\to U$ such that $$B(Y_U/U)\times_XX'\simeq B(Y'_{U'}/U')\quad B(Y_U/U)_+\times_XX'\simeq B(Y'_{U'}/U')_+$$

Thus the full cobordization and cobordization of proper birational locally toric morphisms  are functorial for regular morphisms up to torus factors.

\end{proposition}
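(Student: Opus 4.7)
The plan is to invoke Lemma \ref{local} twice---once for $\pi$ at $p := \phi(p')$ and once for the base change $\pi'$ at $p'$---and then combine the resulting local descriptions by associativity of fiber products. Given $p' \in Y'$, set $p := \phi(p') \in Y$ and apply Lemma \ref{local} at the point $\pi(p) \in X$ to obtain an open neighborhood $U \subset X$ of $\pi(p)$, a toric chart $\phi_U \colon U \to X_\sigma$, and a decomposition $\Delta$ of $\sigma$ fitting in a fiber square $Y_U \to X_\Delta$ over $\phi_U \colon U \to X_\sigma$. By Lemma \ref{local}(4), this yields the identifications $B(Y_U/U) = B(X_\Delta/X_\sigma) \times_{X_\sigma} U$ and $B(Y_U/U)_+ = B(X_\Delta/X_\sigma)_+ \times_{X_\sigma} U$.

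Next, set $U' := \phi^{-1}(U) \subset X'$, which is an open neighborhood of $p'$, and let $\phi_{|U'} \colon U' \to U$ denote the restriction, which is again regular (resp.\ smooth) since regularity (resp.\ smoothness) is stable under base change along the open immersion $U \hookrightarrow X$. The composition $U' \to U \to X_\sigma$ is a toric chart on $X'$ at $\pi'(p')$, and base-changing the fiber square above along $\phi_{|U'}$ produces an identical fiber square $Y'_{U'} \to X_\Delta$ over $U' \to X_\sigma$, exhibiting $\pi'_{|U'}$ as a locally toric morphism with the \emph{same} toric data $X_\Delta \to X_\sigma$. After possibly shrinking $U'$ around $p'$ so that the preimages of the toric strata of $X_\sigma$ remain irreducible (see Lemma \ref{local}(1)--(2)), we may apply Lemma \ref{local}(4) to $\pi'_{|U'}$ and conclude
\begin{equation*}
B(Y'_{U'}/U') \;=\; B(X_\Delta/X_\sigma) \times_{X_\sigma} U', \qquad B(Y'_{U'}/U')_+ \;=\; B(X_\Delta/X_\sigma)_+ \times_{X_\sigma} U'.
\end{equation*}

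The proposition now follows by associativity of fiber products, using $U' = U \times_X X'$:
\begin{equation*}
B(Y_U/U) \times_X X' \;=\; \bigl(B(X_\Delta/X_\sigma) \times_{X_\sigma} U\bigr) \times_U U' \;=\; B(X_\Delta/X_\sigma) \times_{X_\sigma} U' \;=\; B(Y'_{U'}/U'),
\end{equation*}
and analogously for $B_+$. The closing assertion---functoriality up to torus factors---is then a direct combination with Lemma \ref{open}, which controls how open restrictions contribute additional torus factors corresponding to exceptional components deleted along the way.

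The main technical obstacle is the shrinking argument: guaranteeing that, after base change along $\phi$, each irreducible exceptional divisor of $\pi_U$ pulls back to a \emph{single} irreducible divisor of $\pi'_{|U'}$ near $p'$, and similarly for the divisorial strata. This is precisely the issue handled in the proof of Lemma \ref{local}(1)--(2) via normality of $\phi$ and shrinking so that preimages of toric orbit closures stay irreducible; the same argument applies here with $\phi$ (rather than the toric chart) as the regular map, so the technique transfers verbatim, and no new difficulty arises beyond what that lemma already addresses.
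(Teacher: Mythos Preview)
Your proposal is correct and follows exactly the approach the paper indicates: the paper's proof is the single sentence ``This is a direct consequence of Lemma \ref{local} and definition of locally toric morphisms,'' and you have simply unpacked that sentence by applying Lemma \ref{local} on both sides of the base change and matching the toric data via associativity of fiber products. The shrinking step you flag is the only real content, and as you note it is already handled verbatim by the argument in Lemma \ref{local}(1)--(2).
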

\begin{proof}  This is a direct consequence of Lemma \ref{local}
and  definition   of locally toric morphisms \end{proof}

\subsubsection{Local description  of cobordization of locally toric morphisms}

\begin{lemma} \label{local12} Let $\pi: Y\to X$ be a proper birational locally toric morphism over a field $\kappa$, and $\pi_B: B\to X$ be its full cobordization. Then
\begin{enumerate}
\item $B_+\subset B$ is the natural open immersion.
\item  For any point $p\in X$ there is an open neighborhood $U$ of $p$, with a toric chart $U\to X_\sigma$,   and the torus $$T_{B\setminus B_U}:=\Spec(\,\kappa[x_i, x_i^{-1}\mid E_i\subset B\setminus B_U \,\,]\,),$$ and an induced   regular morphism $$B_U=\sigma^{-1}(U)=B(Y_U/U)\times T_{B\setminus B_U}\,\, \to \,\, X_\sigma \times \AA^k\times T_{B\setminus B_U}$$
\item If $X$ is regular then $B$ is regular.
 \end{enumerate}
\end{lemma}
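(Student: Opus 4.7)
The plan is to deduce all three parts from the locally toric structure by invoking the base-change statements in Lemma \ref{local} together with the toric description from Lemma \ref{cover4}. Every assertion is local on $X$, so I would fix a point $p\in X$ and work in a toric chart $\phi_U\colon U\to X_\sigma$ fitting in the fiber square of the locally toric structure $\psi\colon Y_U\to X_\Delta$ over $\phi_U$.

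For part (1) I would verify the hypothesis of Proposition \ref{cover2}. In the proof of Lemma \ref{cover4} one builds, for each $\delta\in\Delta$, a form $F_\delta=\chi_\delta\,{\bf t}^{-E_\delta}$ on $X_\Delta$ whose vanishing locus is exactly $X_\Delta\setminus X_\delta$, producing an open affine cover $(X_\Delta)_{F_\delta}=X_\delta$. Pulling these forms back along $\psi$ gives forms $\widetilde F_\delta=\psi^*\chi_\delta\cdot{\bf t}^{-\psi^*E_\delta}$ on $Y_U$ (using Lemma \ref{local}(3) to identify divisors via $\psi$), and $(Y_U)_{\widetilde F_\delta}=\psi^{-1}((X_\Delta)_{F_\delta})$ is open affine because $\phi_U$, and hence $\psi$, is affine. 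As the $(X_\delta)_{\delta\in\Delta}$ cover $X_\Delta$, the corresponding $(Y_U)_{\widetilde F_\delta}$ cover $Y_U$. Covering $X$ by such neighborhoods $U$ then supplies the input for Proposition \ref{cover2}, which yields $B_+\hookrightarrow B$.

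For part (2), I would combine two ingredients already at hand. First, Lemma \ref{local}(4) gives the base-change identification
\[
B(Y_U/U)=B(X_\Delta/X_\sigma)\times_{X_\sigma}U.
\]
Second, Lemma \ref{cover4}(2) identifies $B(X_\Delta/X_\sigma)\simeq X_\sigma\times\AA^k$ as the toric variety of $\sigma_B=\sigma\times\langle e_1,\dots,e_k\rangle$, where $e_i$ maps to the vertex $v_i$ corresponding to the exceptional divisor $E_i$ of $\pi_A$. The remaining exceptional divisors of $\pi$, namely those $E_i$ contained in $Y\setminus Y_U$, contribute a torus factor $T_{B\setminus B_U}$ by Lemma \ref{open}. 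Assembling these three pieces gives
\[
B_U=\pi_B^{-1}(U)=B(Y_U/U)\times T_{B\setminus B_U}
\]
together with the induced morphism $B_U\to X_\sigma\times\AA^k\times T_{B\setminus B_U}$, which is the product of $\phi_U\colon U\to X_\sigma$ with the identity on the remaining factors, hence regular since $\phi_U$ is.

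For part (3), observe that under the assumption that $X$ is regular, the toric chart can be taken with $U$ regular (this is built into the locally toric hypothesis, since the chart $\phi_U$ is a regular morphism). Then, writing $B(Y_U/U)=U\times_{X_\sigma}(X_\sigma\times\AA^k)=U\times\AA^k$ via the first projection, we conclude that $B_U\simeq U\times\AA^k\times T_{B\setminus B_U}$ is regular. Since regularity is local on $B$ and the $B_U$ cover $B$ as $p$ varies, $B$ itself is regular.

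I expect the only mildly delicate step to be part (1): one needs to verify that the forms constructed on $X_\Delta$ in the toric setting really do pull back to forms on $Y_U$ compatible with the identification $\Cl(Y_U/U)\simeq\Cl(X_\Delta/X_\sigma)$. This is exactly what Lemma \ref{local}(1)--(3) was designed to provide, so the argument is essentially bookkeeping. Parts (2) and (3) are then formal consequences of Lemma \ref{local} and Lemma \ref{cover4} together with Lemma \ref{open} for the torus factor.
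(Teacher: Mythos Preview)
Your proposal is correct and follows essentially the same route as the paper. The paper's proof is terser: for (1) it invokes Lemma \ref{cover4} to get the open immersion $B(X_\Delta/X_\sigma)_+\hookrightarrow B(X_\Delta/X_\sigma)$ in the toric model and then transports it to $B_+\subset B$ via the fiber-square identifications of Lemma \ref{local}(4) (together with Lemma \ref{open} for the torus factor), rather than pulling back the individual forms $F_\delta$ and re-verifying the hypothesis of Proposition \ref{cover2} as you do; but this is only a packaging difference, since Lemma \ref{cover4} itself establishes the toric open immersion precisely by exhibiting those forms. Parts (2) and (3) match the paper's argument line for line (Lemma \ref{open} + Lemma \ref{local}(4) + Lemma \ref{cover4}(2), and then regularity from $B_U\simeq U\times\AA^k\times T_{B\setminus B_U}$), and your write-up of (3) is in fact more explicit than the paper's one-line ``Follows from (2).''
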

\begin{proof} By Lemma \ref{local}, the problem reduces locally to a toric situation via toric chart $U\to X_\sigma$.

(1) By Lemma \ref{cover4}, $B(X_\Delta/X_\sigma)_+\hookrightarrow B(X_\Delta/X_\sigma)$ is an open inclusion. Thus, by Lemma \ref{local}, 
 $B_+\subset B$ is  also such.

(2) Also  locally by  Lemma \ref{open},  we can write $B_U=B(Y_U/U)\times T_{B\setminus B_U}$. On the other hand, by Lemma \ref{local}(4) $B(Y_U/U)\to B(X_\Delta/X_\sigma)$ is regular. Finitely
by Lemma \ref{cover4}(2),
$B(X_\Delta/X_\sigma)=X_\sigma\times \AA^k$. 

(3)  Follows from (2).


\end{proof}

\subsection{Local description of cobordization}

\subsubsection{Cobordization of locally monomial maps}

\begin{definition} Let $X$ be a locally monomial scheme over a  field $\kappa$. We say that $u_1,\ldots,u_k$ is a locally toric system of parameters on $X$ if there is  a chart $\phi: U\to X_\sigma$, and a local system of toric parameters $x_1,\ldots,x_k$ on $X_\sigma$, such that $u_i=\phi^*(x_i)$.

\end{definition}

As a Corollary from Proposition \ref{functor}, and Lemma \ref{local12} we obtain the following:
\begin{theorem} \label{regular33} Let $\pi: Y\to X$ be  a proper birational locally toric morphism  of locally toric schemes over a field $\kappa$ . Then  locally on $X$ we can write up to torus factors $$\cA_{Y/X}=\pi_*(\cC_{Y/X})=\cO_X[t_1^{-1},\ldots,t_k^{-1},u_1{\bf t}^{\alpha_1},\ldots,u_k{\bf t}^{\alpha_k}],$$ 
where 
\begin{enumerate}
\item $u_1,\ldots,u_k$ is a {locally toric system of parameters } on an open $U\subset X$ defining a toric chart for the morphism $\pi$, 
\item ${\bf t}^{\alpha_i}:=t_1^{a_{i1}}\cdot\ldots\cdot t_k^{a_{ik}}$, with $a_{ij}:=\nu_i(u_j)\geq 0$. 
\end{enumerate}
In particular, if $X$ is regular then $B$ and $B_+\subset B$ are  regular.\qed
 \end{theorem}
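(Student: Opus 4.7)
The plan is to reduce everything to the affine toric computation already carried out in Lemma \ref{cover4} via the local-to-global compatibility supplied by Lemma \ref{local} and Proposition \ref{functor}. Given $p \in X$, the definition of a locally toric morphism together with Lemma \ref{local} provides an open neighborhood $U \ni p$, a toric chart $\phi_U: U \to X_\sigma$, and a Cartesian square with the toric morphism $\pi_A: X_\Delta \to X_\sigma$ such that (i) the irreducible exceptional divisors $E_i$ of $\pi_U$ biject with the toric exceptional divisors $E^\Delta_i$ of $\pi_A$, (ii) each exceptional valuation $\nu_i$ of $\pi$ is locally monomial and pulls back from the toric valuation $\nu^\sigma_i$ on $X_\sigma$, i.e.\ $\nu_i(\phi_U^*(m)) = \nu^\sigma_i(m)$ for $m \in P_\sigma$, and (iii) $B(Y_U/U) = B(X_\Delta/X_\sigma) \times_{X_\sigma} U$ (and likewise for $B_+$).

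Next I would invoke Lemma \ref{cover4}(1) applied to the affine toric morphism $\pi_A$, which gives the explicit presentation
$$\cA_{X_\Delta/X_\sigma} \;=\; \cO_{X_\sigma}\bigl[t_1^{-1},\ldots,t_k^{-1},\; x_1{\bf t}^{\alpha_1},\ldots,x_k{\bf t}^{\alpha_k}\bigr],$$
where $x_1,\ldots,x_k$ is a system of local toric parameters on $X_\sigma$ and $\alpha_j = (\nu^\sigma_i(x_j))_i$. Setting $u_j := \phi_U^*(x_j)$ and pulling this presentation back along $\phi_U$ using (iii) yields
$$\cA_{Y_U/U} \;=\; \cO_U\bigl[t_1^{-1},\ldots,t_k^{-1},\; u_1{\bf t}^{\alpha_1},\ldots,u_k{\bf t}^{\alpha_k}\bigr],$$
since by (ii) the same exponent vector $\alpha_j$ is equally computed as $(\nu_i(u_j))_i$. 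The exceptional divisors of $\pi$ not meeting $Y_U$ contribute only torus factors, by Lemma \ref{open}, which accounts precisely for the qualifier \emph{up to torus factors}. For the regularity assertion, Lemma \ref{cover4}(2) identifies $B(X_\Delta/X_\sigma) \simeq X_\sigma \times \AA^k$; when $X$ is regular, the regular base change by $\phi_U$ produces $B(Y_U/U) \simeq U \times \AA^k$, which is regular, and the torus factors preserve regularity. Since the charts $B_U$ cover $B$, regularity holds globally. The claim for $B_+$ follows because $B_+ \hookrightarrow B$ is an open immersion by Lemma \ref{local12}(1).

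The main obstacle, although essentially packaged into Lemma \ref{local}, is the valuation compatibility (ii): one must know that each exceptional valuation of $\pi$ is locally monomial with respect to the chosen toric chart and that its value on the pulled-back parameters $u_j$ coincides with the toric valuation on $x_j$. This is precisely what the faithfully flat descent argument from a completion in the proof of Lemma \ref{local}(3)--(5) provides. With that compatibility in place the theorem is a direct corollary of Proposition \ref{functor}, Lemma \ref{cover4}, and Lemma \ref{local12}; no genuinely new computation is needed.
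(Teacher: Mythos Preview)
Your proposal is correct and follows essentially the same route as the paper: the theorem is stated as a corollary of Proposition \ref{functor} and Lemma \ref{local12}, and the paper's one-line proof simply cites Lemma \ref{local} (for the valuations being locally monomial with respect to $u_1,\ldots,u_k$) together with Lemma \ref{cover4}(1) (for the explicit toric presentation). Your write-up spells out the base-change and torus-factor bookkeeping more fully than the paper does, but the ingredients and their order of application are identical.
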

 \begin{proof} We use the fact from Lemma \ref{local}, that the valuations are locally monomial with respect to $u_1,\ldots,u_k$, and  Lemma \ref{cover4}(1).

 \end{proof}



	


\subsubsection{The cobordization of monomial morphisms }\label{local22}
Let 
$Y\to X$ be  a proper birational locally toric morphism over $\kappa$.

Let  $x_1,\ldots,x_n$ be a system of local parameters  at a point $p$ on a locally toric  $X$ defining a toric chart for a  $Y\to X$. Then the full  cobordization  of $\pi:Y\to X$  can be represented as: $$B=\Spec_X(\cO_X[t_1^{-1},\ldots,t_k^{-1},x'_1,\ldots,x'_n]/(x'_1{\bf t}^{-\alpha_1}-x_1,\ldots,x'_k{\bf t}^{-\alpha_k}-x_k).$$
Thus $B=V(x'_1{\bf t}^{-\alpha_1}-x_1,\ldots,x'_k{\bf t}^{-\alpha_k}-x_k)\subset X\times \AA^{n+k}$ is locally a closed  subscheme of $X\times \AA^{n+k}$ defined by a system of local parameters. It is regular for a regular $X$.
Consequently, the full cobordization $B\to X$  can be described  by a single chart up to a torus factor with the following coordinates:
 \begin{itemize} 
\item $t_i^{-1}$  for $i=1,\ldots, k$ is the inverse of the coordinate $t_i$ representing the action of  torus $T=\Spec(\kappa[\Cl(Y/X)]=\Spec(\kappa[t_1,t_1^{-1},\ldots,t_n,t_n^{-1}])$.
\item $x_i' = x_i \cdot {\bf t}^{\alpha_i}$ for $1 \leq i \leq k$, and
\item $x_j' = x_j$ for $j>n$.
\end{itemize}
 The   open subsets  $B_{x'_i}=B\setminus V(x'_i)$, associated with  the  forms $x'_i=x_i{\bf t}^{\alpha_i}$  cover the  cobordization  $B_+=B\setminus V(\cI_{\irr})$ producing several "charts" similarly  to the standard blow-up.  These open affine subsets can be conveniently described by using toric geometry. They correspond  to the maximal faces of the decomposition $\Delta$ of the cone $\sigma$ associated with the local toric chart. 

If $\pi: Y\to X$ is the cobordant blow-up of a locally monomial $\cJ$, where $\codim(V(\cJ)\geq 2$, then the  subset $B_+$, by Lemma \ref{blow}, can be described as $B_+=B\setminus V(\cJ {\bf t}^\alpha)$, where  $\alpha=(a_1,\ldots,a_k)$, and $a_i$ are the coefficients of the exceptional divisor $E=\sum a_iE_i$  of $\pi: Y\to X$. In this case, the charts of $B_+$ can also be interpreted by the vertices of the Newton polytope of $\cJ$.(See Example \ref{111})

\begin{remark} In the particular case, when considering the stack-theoretic quotients of the blow-up of a locally monomial ideal on a regular scheme, one obtains the definition of
a {\it multiple weighted blow-up} $\Bl_{\cJ}=[B_+\sslash T]$ introduced in \cite{AQ} by Abramovich-Quek via the Satriano construction in \cite{Satriano}. The more general definition of $\Bl_{\cJ,b}$ is discussed in Section \ref{AQ}.

\end{remark}

	

\subsubsection{Weighted cobordant blow-ups} \label{weighted}
Recall that the weighted stack-theoretic blow-ups were considered in the context of resolution in \cite{Marzo-McQuillan} and \cite{ATW-weighted}. The definition of the {\it weighted cobordant blow-up } was introduced in \cite{Wlodarczyk22}.
One can view these notions from the more general perspective of Cox cobordant blow-ups or the multiple weighted blow-ups of Abramovich-Quek from \cite{AQ}.
\begin{definition} \label{weighted4}
 Let $(x_1,\ldots,x_k)$ be a partial system of local parameters on a regular scheme $X$.
Let $\cJ$ be a center of the form $(x_1^{a_1},\ldots,x_k^{a_k})$, where $a_1\leq a_2\leq\ldots\leq a_k$ are positive integers, and $k>1$. Let $\pi: Y\to X$ be the normalized blow-up of $\cJ$. By the {\it  weighted cobordant blow-up} of $\cJ$ we mean the cobordization $B_{\cJ+}\to X $ of  $\pi: Y\to X$.
\end{definition}
The corresponding monomial ideal on the toric chart  $\AA_{\kappa}^k$ defines a piecewise linear function $G:=\min_i(a_ie_i^*)$ on the regular coordinate cone $\sigma=\langle e_1,\ldots,e_k\rangle$, where $e_i^*(e_j)=\delta_{ij}$.  The functions $a_ie_i^*$ determine the ray $$\rho:=\{v\in \sigma \mid a_1e_1^*(v)=\ldots
=a_ke_k^*(v)\}.$$ The ray $\rho$ is generated by the primitive vector $$w=(w_1,\ldots,w_k)=w_1e_1+\ldots+w_ke_k,$$ with relatively prime components and such that $$w_1a_1=\ldots=w_ka_k.$$

The normalized blow-up of $\cJ$  is described by the decomposition $\Delta$   of $\sigma$ into  maximal subcones where $G$ is linear. 

Thus $\Delta$ is  the star subdivision $\rho\cdot \langle e_1,\ldots,e_k\rangle$ at a ray $\rho$.
 The vector $w$ determines  the valuation $\nu_E$ of the unique irreducible exceptional divisor.


 Then, by Lemma \ref{cover4}(1),  the  full cobordant  blow-up  of  $X$ at the center $\cJ$  is defined by 
 $$B_\cJ=\Spec_X({\cO_X}[{ t}^{-1}, { t}^{w_1}{x_1},\ldots, { t}^{w_k}{x_k}]). $$ 
Here we have  $$w_i=\nu_E(x_i)=(w,e_i^*)$$
 The cobordant weighted blow-up is simply 
 $(B_\cJ)_+=B\setminus V(\sigma^\circ(\cJ))$, where, by Lemma \ref{blow},  we have
 $\sigma^\circ(\cJ)={ t}^a\cdot \cJ$, where $a=\nu(\cJ)$.
 Thus $$\sigma^\circ(\cJ)=(x_1^{a_1}{ t}^{a_1w_1},\ldots,x_k^{a_k}{ t}^{a_kw_k}).$$
  
 %
 We see that the cobordant weighted blow-up is  the cobordization of an ordinary toric weighted blow-up corresponding to the star subdivision at the center $v\in \sigma$. We will discuss this construction in the context of the blow-ups of valuative $\QQ$-ideals in Section \ref{weighted2}.

Observe that both notions: the one in Definition \ref{weighted4}, and the one given by the formula $B_\cJ=\Spec_X({\cO_X}[{ t}^{-1}, { t}^{w_1}{x_1},\ldots, { t}^{w_k}{x_k}]),$ as in \cite{Wlodarczyk22}, are different in the trivial case $k=1$ and the blow-up of $(x_1^{a_1})$. Then $Y\to X$ is an isomorphism, and $B=B_+=B_-\simeq Y\simeq X$.
However the formula from \cite{Wlodarczyk22} gives us
$$B=\Spec_X({\cO_X}[{ t}^{-1}, { t}^{w_1}{x_1}]),$$ which defines the isomorphism of the quotients: $$B/G_m\simeq B_+/G_m\simeq B_-/G_m\simeq Y\simeq X.$$ In this case, $B_+$  is  a locally trivial $G_m$-bundle. So both constructions of $B_+$ differ locally by the  torus factor. 

\subsection{Geometric quotients for locally toric morphisms}
In general, when considering the cobordization $B_+$ of a locally toric morphism $\pi: Y\to X$, one obtains 
the good quotient $B_+\sslash T\simeq Y$. Proposition \ref{geometric},  below shows that if $X$ is regular then, by replacing $B_+$ with an open subset $B^s\subseteq B_+$ one obtains the geometric quotient $B^s/T$ with a proper birational morphism
$B^s/T\to B_+\sslash T\simeq Y$.
 xConsequently, $B^s$ has a geometric quotient $B^s/T$ with abelian quotient singularities and the transformation $B^s\to X$ can be  be used in the resolution instead of $B_+\to X$.

\begin{lemma} Let $\pi: Y\to X$ be a locally toric morphism, with $X$ regular. 

Then its cobordization $B_+$ determines the  geometric quotient $B_+\to B_+/T\simeq Y$ iff $ Y$ has abelian quotient singularities.

\end{lemma}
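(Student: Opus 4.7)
The plan is to reduce the statement to the toric case already handled by the preceding lemma (``$B_+\to Y$ is a geometric quotient iff $\Delta$ is simplicial'' for toric $\pi:X_\Delta\to X_\Sigma$), and then observe that in our situation, where $X$ is regular, the local toric models are fans subdividing a regular cone, so ``$\Delta$ simplicial'' translates literally into ``$Y$ has abelian quotient singularities.''

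First I would note that both conditions in the statement are local on $Y$: a relatively affine $T$-invariant morphism is a good (resp.\ geometric) quotient iff it is so over an open cover of its target; and having abelian quotient singularities is a Zariski-local (indeed, étale-local) condition on $Y$. So I can work locally on $X$. By Lemma \ref{local} applied to $\pi$, every point of $X$ admits an open neighborhood $U$ fitting into a Cartesian diagram with a toric model $X_\Delta\to X_\sigma$, where $\sigma$ is regular (because $X$ is regular at the chosen point) and $\phi_U:U\to X_\sigma$ is a regular morphism. By Proposition \ref{functor}, the cobordization commutes with this base change up to torus factors, i.e.
$$B_+(Y_U/U)\;\simeq\;B_+(X_\Delta/X_\sigma)\times_{X_\sigma}U\times T_{B\setminus B_U},\qquad Y_U\simeq X_\Delta\times_{X_\sigma}U.$$
Since $\phi_U$ is flat (regular), being a geometric quotient is preserved by this base change, and torus factors are harmless. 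Hence $B_+\to Y$ is a geometric quotient over $U$ if and only if $B_+(X_\Delta/X_\sigma)\to X_\Delta$ is a geometric quotient.

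Next, I would invoke the preceding lemma on the toric situation: the toric cobordization $B_+(X_\Delta/X_\sigma)\to X_\Delta$ is a geometric quotient if and only if $\Delta$ is simplicial. On the other hand, since $\sigma$ is regular, $X_\sigma$ is smooth, so $Y_U\to X_\Delta$ is smooth; and a toric variety $X_\Delta$ associated with a subdivision of a regular cone has abelian (finite) quotient singularities if and only if $\Delta$ is simplicial (the standard fact that simplicial fans correspond precisely to $\QQ$-factorial toric varieties, which étale-locally are quotients of affine space by a finite abelian group). Being smooth over $X_\Delta$, the open $Y_U$ has abelian quotient singularities iff $X_\Delta$ does, iff $\Delta$ is simplicial.

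Combining both equivalences over the chart $U$ gives the local statement, and by the locality observations at the start this globalizes to the claim. The main subtlety I anticipate is bookkeeping: ensuring that the implicit torus factors $T_{B\setminus B_U}$ in Lemma \ref{open} do not interfere with the ``geometric quotient'' property (they don't, since $T_{B\setminus B_U}$ acts trivially on the $Y_U$-factor and freely on itself, so $T$-orbits in $B_+(Y_U/U)$ still correspond bijectively to geometric points of $Y_U$ provided the subtorus acting on $B_+(X_\Delta/X_\sigma)$ already produces a geometric quotient there), and checking that ``abelian quotient singularities'' on the locally toric $Y$ is equivalent to simpliciality of the toric model $\Delta$, which amounts to the standard characterization of $\QQ$-factorial toric singularities.
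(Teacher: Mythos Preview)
Your proposal is correct and follows essentially the same approach as the paper: reduce locally via the toric charts (Lemma \ref{local}/Proposition \ref{functor}) to the affine toric situation $X_\Delta\to X_\sigma$ with $\sigma$ regular, and then invoke the toric criterion that $B_+\to X_\Delta$ is a geometric quotient iff $\Delta$ is simplicial, i.e.\ iff $X_\Delta$ (hence $Y_U$) has abelian quotient singularities. The paper's proof is terser---it compresses the reduction into one sentence and appeals directly to Lemma \ref{cover4} rather than the earlier toric lemma---while you spell out the base-change and torus-factor bookkeeping more carefully, but the logic is the same.
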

\begin{proof} The problem is local and can be reduced to the toric morphism $\pi: X_\Delta\to X_\sigma$ corresponding to the subdivision $\Delta$ of a regular cone $\sigma$. Then, by Lemma \ref{cover4}, the full cobordization $B$ of $\pi$ is a regular scheme corresponding to the cone $\Sigma_{B}$, and $B_+$ is its open toric subscheme. The natural projection $\sigma_{B_+} \to \Delta$ corresponds to the geometric quotient iff
$\Delta$ is a simplicial fan, and thus $Y$ has abelian quotient singularities.
	
\end{proof}

\begin{proposition}\label{geometric} Let $\pi: Y\to X$ be a proper birational locally toric morphism of locally toric schemes over a field, with $X$ regular. Then $B_+=\Cox(Y/X)_+\subset B=\Cox(Y/X)$
contains open maximal subsets $B^s\subset B_+$ admitting geometric quotient $B^s\slash T_B$ with the projective birational morphism $$ B^s\slash T_B\to B_+/T_B=Y.$$
	
\end{proposition}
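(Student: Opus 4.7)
The plan is to globalize the toric construction of Lemma~\ref{toric2} via the local structure of cobordizations established in Lemma~\ref{local}. Recall that in the toric setting one applies iterated star subdivisions of $\Delta$ at its vertices to produce a simplicial fan $\Delta'$ with $\Ver(\Delta')=\Ver(\Delta)$; the associated open toric subscheme $B(X_{\Delta'}/X_\sigma)_+\subseteq B(X_\Delta/X_\sigma)_+$ is a geometric quotient by $T_B$ whose quotient $X_{\Delta'}$ is projective birational over $X_\Delta$.

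First I would work locally. By Lemma~\ref{local}, cover $X$ by open affines $U$ equipped with toric charts $\phi_U\colon U\to X_\sigma$ for which $\pi_U\colon Y_U\to U$ is the pullback of a toric proper birational morphism $\pi_A\colon X_\Delta\to X_\sigma$. Apply Lemma~\ref{toric2} to $\pi_A$ to obtain an open $T_B$-invariant $B^s_A\subset B(X_\Delta/X_\sigma)_+$ with geometric quotient $B^s_A\sslash T_B\simeq X_{\Delta'}$ projective birational over $X_\Delta$. By Lemma~\ref{local}(4) the cobordization commutes with pullback along the flat morphism $\phi_U$, so $B^s_U:=B^s_A\times_{X_\sigma} U$ is an open $T_B$-invariant subset of $(B_+)_{|U}$; and since the geometric-quotient property survives flat base change by the torus, $B^s_U\to X_{\Delta'}\times_{X_\sigma} U$ is a geometric quotient whose target is projective birational over $Y_U$.

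The main obstacle is to glue these patches $B^s_U$ into a global open subset of $B_+$, since the refinement $\Delta'$ is not canonical — it depends on the chosen order of star subdivisions in Lemma~\ref{toric2}. I would sidestep this by characterizing $B^s$ intrinsically as the maximal open $T_B$-invariant subscheme of $B_+$ on which the action has finite stabilizers and closed orbits, equivalently the maximal open $T_B$-invariant subscheme of $B_+$ admitting a geometric quotient by $T_B$. This description is manifestly chart-independent. In the local toric model it agrees with $B^s_U$: any $T_B$-orbit outside $B^s_A$ lies over a non-simplicial cone of $\Delta$ and therefore has positive-dimensional stabilizer, while on $B^s_A$ itself we already have a geometric quotient over $X_{\Delta'}$, which has only finite quotient singularities. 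Consequently the $B^s_U$'s agree on overlaps and assemble into a globally defined $B^s\subset B_+$, the local geometric quotients glue to a global geometric quotient $B^s\sslash T_B$, and the local quotient maps $X_{\Delta'}\times_{X_\sigma} U\to Y_U$ glue to a projective birational morphism $B^s\sslash T_B\to Y$, with projectivity and birationality descending along the open cover.
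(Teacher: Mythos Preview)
Your local analysis is fine, but the gluing step has a genuine gap. The local $B^s_U$ produced by Lemma~\ref{toric2} depends on the chosen order of star subdivisions, and your proposed intrinsic characterization does not single it out: it is \emph{not} true that every $T_B$-orbit of $B_+$ outside $B^s_A$ has positive-dimensional stabilizer. Already when $\Delta$ contains a single non-simplicial $3$-cone with four rays, there are two simplicial refinements $\Delta'_1,\Delta'_2$ with the same vertex set; the corresponding opens $B^s_1,B^s_2\subset B_+$ are distinct, each admits a geometric quotient, and each contains orbits not lying in the other. Hence neither equals ``the maximal open with finite stabilizers and closed orbits,'' and the $B^s_U$'s need not agree on overlaps. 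A secondary issue is that even if the opens glued, gluing locally projective morphisms $X_{\Delta'}\times_{X_\sigma}U\to Y_U$ does not yield a globally projective morphism over $Y$.

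The paper avoids gluing altogether by making one global choice. The exceptional valuations $\nu_1,\ldots,\nu_k$ are globally defined on $X$ (Lemma~\ref{local}(5)), so one fixes an ordering once and performs the successive normalized blow-ups of $\cI_{\nu_i,a,X}$ on $X$; this produces a single global $Y'\to X$, projective and birational over $Y$. In each toric chart this sequence is exactly the sequence of star subdivisions at $\Ver(\Delta)\setminus\Ver(\sigma)$, so the local fan $\Delta'$ is simplicial with $\Ver(\Delta')=\Ver(\Delta)$, whence $B(Y'/X)=B(Y/X)$ and $B^s:=B(Y'/X)_+\subset B_+$ is globally defined with geometric quotient $B^s/T_B=Y'$. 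The point you were missing is that the ``order of star subdivisions'' can be chosen globally because it is the order of blow-ups at global valuations, not a chart-by-chart combinatorial choice.
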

\begin{proof} Let $E_1,\ldots, E_k$ be the irreducible exceptional divisors of $\pi: Y\to X$, and $\nu_1,\ldots,\nu_k$ be the associated exceptional valuations on $X$. By Lemma \ref{local}(5), the valuations are locally toric on $X$.
Consider the sequence of the blow-ups at the valuations $\nu_i$ as in \cite[Proposition 8.16.6]{Wlodarczyk-functorial-toroidal}. These are precisely the normalized blow-ups of $\cI_{\nu_i,a,X}$ for a sufficiently divisible $a$. 

Locally,  in the compatible toric charts, $U\to X_\sigma$ the sequence of the blow-ups correspond to
a sequence of the star subdivisions at  the vertices $\Ver(\Delta)\setminus \Ver(\sigma)$ (see \cite[Lemma 7.3.9]{Wlodarczyk-functorial-toroidal}). As the result we create a new subdivision $\Delta'$ of $\Delta$ with $\Ver(\Delta')=\Ver(\Delta)$. This decomposition is simplicial. Indeed, let $\delta_0$ be any cone in $ \Delta'$. By the property of the star subdivisions, for any vertex $v_0\in \Ver(\delta)\setminus \Ver(\sigma)$ one can write $\delta_0=\langle v_0 \rangle +\delta_1$, where $\delta_1$ is a face of $\delta_0$ of codimension one in $\delta_0$, and $v_0$ is linearly independent of $\Ver(\delta_1)$. We can run this  argument inductively until we can represent $\delta_0$ as $\delta_0=\langle v_0,\ldots,v_r \rangle +\delta_r$, where $v_0,\ldots,v_r \in \Ver(\delta_0)$  are linearly independent of $\Ver(\delta_r)\subset\Ver(\sigma)$. Thus all the vertices of $\delta_0$ are linearly independent.

By construction,  the valuations of the exceptional divisors corresponding  to $\Ver(\Delta)\setminus \Ver(\sigma)$ remain unchanged.  Then $B(Y/X)=B(Y'/X)$. On the other hand, by the description of the toric case from Lemma \ref{cover4}(4), we obtain the open inclusions $$B^s:=B(Y'/X)_+\subseteq B(Y/X)_+=B_+\subseteq B(Y/X)=B.$$

\end{proof}

\section{Cobordant resolution  of singularities } \label{free}
\subsection{The dual complex of the exceptional divisor}
One can extend the considerations and the results from Section \ref{dual}. 
\subsubsection{The exceptional divisor} Let $\pi: Y\to X$ be a proper birational locally toric morphism, where $X$ is a regular scheme over a field $\kappa$, and $E_1,\ldots, E_k$ be the irreducible components of the exceptional divisor $E$ of $\pi$. Let $\pi_{B}: B\to X$ be the full  cobordization of $\pi$. 
By Theorem \ref{regular}, $B$ is regular and there is an SNC divisor $D=V_{B}(t_1^{-1}\cdot\ldots\cdot t_k^{-1})$ with irreducible components $D_i=V_B(t_i^{-1})$. So is  the divisor $D_+=D_{|B_+}$ on  $B_+$. Moreover, the exceptional divisor $E$ of $\pi: Y\to X$ is  locally toric.
 
 We can associate with the  SNC divisors $D$ on $B$, $D_+$ on $B_+$, and with the divisor $E$ on $Y$  the divisorial stratifications $S_{D}$, $S_{D_+}$, and $S_E$, extending the definitions from Section \ref{dual}.
 The strata of $S_D$ are defined by the  irreducible  components  of the locally closed sets : $$\bigcap_{i\in I} D_i\setminus \bigcup_{j\in J} D_j,$$ where $I\cup J=\{1,\ldots,k\}$.
 Replacing $D_i$ with $D_{i+}$  we obtain the definition for $S_{D+}$.
Consequently, any stratum $s\in S_{D_+}$ extends to a stratum in $S_D$. 

 The closures of the strata  $s^E\in S_E$ are defined by the irreducible components of the intersections  $\bigcap_{i\in I} E_i$. The strata $s_E$ are obtained by removing from $\overline{s}^E$ the proper closed subsets $\overline{s'}^E$.
  The stratifications $S_{D}$, $S_{D_+}$ and $S_E$  determine the dual simplicial complexes $\Delta_D$, $\Delta_{D_+}$, and $\Delta_E$. Since $D$ and $D_+$ are SNC, the simplices in $\Delta_D$, (respectively $\Delta_{D_+}$) are in the bijective correspondence with the strata of $S_D$ (respectively of $S_{D_+}$). Moreover, by the above, $\Delta_{D_+}$ is a subcomplex of $\Delta_{D}$ corresponding to the strata of $S_D$ which intersect $B_+\subset B$. Also, under this identification
  $\Ver(\Delta_{D})=\Ver(\Delta_{D+})$.

 The divisor  $E$  is usually not SNC, and the strata alone  do not determine the faces of $\Delta_E$.
 The vertices $v_i$ of $\Delta_E$  correspond to the divisors $E_i\leftrightarrow v_i$.
 The simplices $\sigma=\Delta(e_i\mid i\in I)$ in $\Delta_E$  correspond to the 
 pairs $(s^E_\sigma, E_\sigma)$ consisting of a stratum $s_\sigma\in S_E$ and a 
 collection of divisors $E_\sigma=\{E_i\mid i\in I)$, such that $\overline{s_\sigma}^E$ is an irreducible component of $\bigcap_{i\in I} E_i$. Thus, in this case, the correspondence between the faces of $\Delta_E$ and the strata of $S_E$ is not bijective, and the closures of strata could be represented by the intersections of components $\bigcap_{i\in I} E_i$ defined by different subsets $I$. (See also Section \ref{dual}.)
 
 Summarizing we have
  \begin{lemma} A simplex $\sigma$ in $\Delta_{B}$ (respectively  $\Delta_{D_+}$, $\Delta_{E}$)  is represented  by a pair $$(\{D_i\mid i\in I\},\quad (\bigcap D_i)_0)$$ consisting of a collection of the irreducible divisors $D_i$, (respectively $D_{i+}$,  $E_i$) which have a nonempty intersection and an irreducible  component $(\bigcap D_i)_0$ of  $\bigcap D_i$ (respectively $\bigcap D_{i+}$,\quad   $\bigcap E_{i}$). \qed	
\end{lemma}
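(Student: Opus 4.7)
The plan is to unpack the correspondence one stratification at a time, using the regularity of $B$ (Theorem \ref{regular33}), the SNC property of $D$ and $D_+$, and the local toric description of the exceptional divisor $E$ provided by Lemmas \ref{local}, \ref{local2}, and the results of Section \ref{dual}.

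First I would treat the SNC cases $\Delta_D$ and $\Delta_{D_+}$. Since $X$ is regular and $\pi$ is locally toric, Theorem \ref{regular33} gives that $B$ is regular and $D=V_B(t_1^{-1}\cdots t_k^{-1})$ is SNC with irreducible components $D_i=V_B(t_i^{-1})$. By the standard local structure of an SNC divisor, any nonempty intersection $\bigcap_{i\in I}D_i$ is a regular closed subscheme of $B$, so its irreducible components agree with its connected components, each of which is the closure of a unique stratum of $S_D$. By the definition of $\Delta_D$ recalled above, the faces of $\Delta_D$ are exactly the strata of $S_D$; therefore a face of $\Delta_D$ is encoded precisely by a pair $(\{D_i\mid i\in I\},(\bigcap_{i\in I}D_i)_0)$, where $(\bigcap_{i\in I}D_i)_0$ is an irreducible component. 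The identical argument with $D_+$ and $B_+$ gives the statement for $\Delta_{D_+}$, using that $B_+\subseteq B$ is open so that components of $\bigcap_{i\in I}D_{i+}$ are components of $\bigcap_{i\in I}D_i$ meeting $B_+$.

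For the divisor $E$ on $Y$, which is in general not SNC, I would argue locally using the toric chart $\phi_U\colon U\to X_\sigma$ from Lemma \ref{local}. By part (1) of that lemma, the irreducible components of $E_{|U}$ correspond bijectively to the exceptional toric divisors $E^\Delta_i$ on $X_\Delta$, and by part (2) the closures of strata in $S_E$ pull back from closures of strata in $S_{E^\Delta}$. Hence the irreducible components of any nonempty intersection $\bigcap_{i\in I} E_i$ are in bijection with the irreducible components of $\bigcap_{i\in I} E^\Delta_i$ on the toric model. In the toric situation, by the description of $\Delta_E$ given in Section \ref{dual}, a face $\sigma=\Delta(e_i\mid i\in I)$ is by definition the datum of a set of divisors $\{E_i\mid i\in I\}$ meeting nontrivially, together with the choice of an irreducible component $(\bigcap_{i\in I}E_i)_0$, which is the closure of the stratum $s_\sigma^E$. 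Gluing these local descriptions along the toric charts recovers the claimed correspondence globally.

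The only genuinely non-routine point is the bookkeeping in the $E$-case: one must check that the pair $(s_\sigma^E,E_\sigma)$ recorded in the definition of $\Delta_E$ is the same datum as the pair $(\{E_i\mid i\in I\},(\bigcap E_i)_0)$ in the statement. This will follow from Lemma \ref{local2}, which ensures that, after passing to a toric chart, strata of $S_E$ pull back from strata in the toric model where the identification is immediate because each toric intersection of $E^\Delta_i$'s corresponds combinatorially to a face of $\Delta$ in the sense of Lemma \ref{cover4}(4).
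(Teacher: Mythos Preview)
Your proposal is correct, but it is considerably more elaborate than what the paper does. In the paper the lemma carries a bare \qed with no proof: it is intended as a direct summary of the definitions of $\Delta_D$, $\Delta_{D_+}$, and $\Delta_E$ spelled out in the paragraphs immediately preceding it. In particular, for the SNC divisors $D$ and $D_+$ the paper has already stated that simplices of $\Delta_D$ and $\Delta_{D_+}$ are in bijection with strata of $S_D$ and $S_{D_+}$, and the strata are by definition irreducible components of $\bigcap_{i\in I}D_i\setminus\bigcup_{j\in J}D_j$; for $\Delta_E$ the paper has \emph{defined} a simplex to be a pair $(s^E_\sigma,E_\sigma)$ with $\overline{s^E_\sigma}$ an irreducible component of $\bigcap_{i\in I}E_i$. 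The lemma simply rephrases these definitions uniformly.

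The main difference is your treatment of the $E$-case: you reduce to the toric model via Lemmas~\ref{local} and~\ref{local2} and Section~\ref{dual}, whereas the paper needs no such reduction because the global definition of $\Delta_E$ already records exactly the datum $(\{E_i\mid i\in I\},(\bigcap E_i)_0)$. Your detour through toric charts is not wrong, and it has the virtue of explaining \emph{why} one wants to define $\Delta_E$ this way (it matches the combinatorics of the toric exceptional divisor), but for the statement as written it is unnecessary: the ``bookkeeping'' you flag as non-routine is literally the identification $E_\sigma=\{E_i\mid i\in I\}$ and $(\bigcap E_i)_0=\overline{s^E_\sigma}$, both of which are definitional.
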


  \begin{corollary} \label{local3} With the previous assumptions and notations:
 \begin{enumerate}
 \item There is a bijective correspondence between the divisors $D_i$, $D_{i+}$, $E_i$, and the valuations $\nu_i$.
 \item 
 \begin{enumerate}
  
 \item If $s\in S_D$ then $s$ is a component of a locally closed set
 $$\bigcap_{D_i\supseteq s}D_i\setminus \bigcap_{D_i\not\supseteq s}D_i$$ 
\item  The image $\pi_B(s)$ is closed. It is an irreducible component  of the closed set
$$ \pi_B(\bigcap_{D_i\supseteq s}D_i\setminus \bigcap_{D_i\not\supseteq s}D_i)=$$ $$
\bigcap_{D_i\supseteq s} Z_X(\nu_i)=\bigcap_{D_i\supseteq s} \pi_B(D_i)=\bigcap_{D_i\supseteq s} \pi(E_i)$$   
 where  $Z_X(\nu)$ denotes the center of a valuation $\nu$ on $X$.
 Moreover, the sets $\bigcap_{D_i\supseteq s} D_i$ are locally irreducible over $X$. 
 
 \item The morphism $\pi_B$ determines 
  a bijective correspondence between the strata defined by the irreducible components of $\bigcap_{D_i\supseteq s}D_i\setminus \bigcap_{D_i\not\supseteq s}D_i$ and
 the irreducible components of $\bigcap_{D_i\supseteq s}Z_X(\nu_i)$.  
\end{enumerate}

 \item The morphism $\pi_{B_+,Y}$ determines a bijective correspondence between the components of $\bigcap_{i\in I} D_{i+}$ and the components of $\bigcap_{i\in I} E_i$. This correspondence   
 defines the isomorphism of the dual  complexes
 $\Delta_{D_+}\simeq \Delta_E$. 
 \item The morphism of the stratifications $S_{D_+}\to S_D$  maps  a stratum $s_+$ of $S_{D_+}$ into an open subset of a stratum $s$ of $S_D$ . It determines the inclusion  of the 
 dual complexes $\Delta_{B_+}\hookrightarrow \Delta_B$.
 \end{enumerate}

 \end{corollary}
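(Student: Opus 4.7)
The plan is to reduce every assertion to the toric case proven in Section \ref{dual} by invoking the local functoriality established in Lemma \ref{local} and Lemma \ref{local2}. The base field hypothesis on $X$ and the assumption that $\pi$ is locally toric mean that around any point $p\in X$ we can find a toric chart $\phi_U\colon U\to X_\sigma$ together with a fiber square presenting $\pi_U\colon Y_U\to U$ as the pull-back of a toric morphism $\pi_A\colon X_\Delta\to X_\sigma$, and by Lemma \ref{local2} the same chart compatibly presents the full cobordization $\pi_B\colon B_U\to U$ as a pull-back of $B(X_\Delta/X_\sigma)\to X_\sigma$, with a bijective correspondence between strata, components, divisors and valuations.

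For \textbf{(1)}, the divisors $D_i$, $D_{i+}$, $E_i$ have already been identified in Lemma \ref{Ex2} and Corollary \ref{special6} in the toric case; Lemma \ref{local}(1) extends this correspondence to the locally toric setting. For \textbf{(2)}, I would first verify part (a) as a general fact about SNC divisors (each closure $\bigcap_{i\in I} D_i$ is a disjoint union of irreducible components, since $B$ is regular and $D$ is SNC by Theorem \ref{regular33}, so removing the larger-index intersections produces a union of strata, and each irreducible component of $\bigcap_{D_i\supseteq s}D_i\setminus \bigcap_{D_i\not\supseteq s}D_i$ is a stratum). For (b), working inside the toric chart, Lemma \ref{closed} says the image of any stratum is closed, and Corollary \ref{special15} identifies it with $\bigcap_{i\in I} Z_{X_\sigma}(\nu_i)$; pulling back along $\phi_U$ and using that $Z_U(\nu_i)=\phi_U^{-1}(Z_{X_\sigma}(\nu_i))$ from Lemma \ref{local}(5) gives the claimed formula, together with the equality $\pi_B(D_i)=Z_X(\nu_i)=\pi(E_i)$ from Lemma \ref{Ex2}. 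Part (c) is then a consequence of the bijective correspondence between the strata in the local toric model and those on $B$, combined with the one in Lemma \ref{local}(2) on the $Y$-side.

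For \textbf{(3)}, I would invoke Lemma \ref{special5} and Corollary \ref{special6} in the toric chart, which give the bijection between components of $\bigcap_{i\in I} D_{i+}$ and $\bigcap_{i\in I} E_i^\Delta$ and the resulting isomorphism $\Delta_{D_+}\simeq\Delta_E$ on the toric model; then transferring via the regular morphisms $B_{U+}\to B(X_\Delta/X_\sigma)_+$ and $Y_U\to X_\Delta$ (both of which preserve the component structure of strata by Lemma \ref{local2} and Lemma \ref{local}(2)) yields the global statement. For \textbf{(4)}, the toric statement is Corollary \ref{special7}, and exactly the same local-to-global transfer argument applies; the map on strata is defined because $B_+\subset B$ is an open immersion by Lemma \ref{local12}(1), and the fact that each stratum of $S_{D_+}$ is an open subset of a stratum of $S_D$ follows since the components of $\bigcap_{i\in I} D_{i+}$ are precisely the components of $\bigcap_{i\in I} D_i$ that meet $B_+$.

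The main subtlety I anticipate is ensuring that the local correspondences between strata, components, and the dual-complex simplices actually glue to a global statement; in particular, one must check that a stratum of $S_D$ is irreducible globally, not merely an irreducible component locally. This is handled by the shrinking argument in the proof of Lemma \ref{local}(1)-(2), which forces the inverse image of each closed toric orbit to be irreducible on a neighborhood, and by the observation in (2)(b) that the images $\pi_B(\bigcap D_i)=\bigcap Z_X(\nu_i)$ are determined intrinsically and glue over overlapping charts, so that the bijection on components is consistent on overlaps. Once this patching is in place, all four parts follow immediately from the toric prototypes.
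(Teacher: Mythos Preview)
Your proposal is correct and follows essentially the same approach as the paper: reduce to the toric situation via Lemmas \ref{local} and \ref{local2}, then invoke the toric results (Lemmas \ref{closed}, \ref{special5} and Corollaries \ref{special15}, \ref{special6}, \ref{special7}). The only minor difference is that for part (1) the paper appeals directly to the general Lemmas \ref{Ex2} and \ref{irreducible} rather than passing through a toric chart, but this is a cosmetic distinction.
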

\begin{proof} (1) The correspondence follows from Lemmas \ref{Ex2}, and \ref{irreducible}.

(2)-(5) By Lemma \ref{local} and  \ref{local2}, we can reduce the situation locally to the toric case, where we use Lemmas \ref{closed}, and \ref{special5}, and Corollaries \ref{special15} \ref{special6} and \ref{special7}.

	\end{proof}

\subsubsection{Dual complex of valuations of a locally toric morphism} 
 
Let $N=\{\nu_1,\ldots,\nu_k\}$ be the set of the exceptional valuations of $\pi:Y\to X$. The vertices of $\Ver(\Delta_E)$, and thus of $\Ver(\Delta_B)$ and  $\Ver(\Delta_{B+})$  are 
in the bijective correspondence with the valuations in $N$, and the exceptional divisors $E_i$, $D_{i+}$, and $D_i$: $$\nu_i\leftrightarrow E_i=Z_Y(\nu_i)\leftrightarrow D_{i+}\leftrightarrow  D_i.$$

 Consequently, one can associate with the faces of $\Delta_E$, $\Delta_{B+}$, and $\Delta_B$ the subsets of $N$.
 This determines   the complexes $\Delta^N_B, \Delta^N_{B+}, \Delta^N_{E}$, called the {\it dual valuation complexes}, together
 with natural isomorphisms of the simplicial complexes $$\Delta_B\to \Delta^N_B,\quad,\Delta_{B_+}\to \Delta^N_{B+},\quad \Delta_E\to \Delta^N_E$$
 Then, by Lemma \ref{local3}, $\Delta^N_E=\Delta_{B_+}$ determine the same subcomplex of $\Delta^N_B$.
 
 The simplices of the valuation complexes will be called the {\it valuation faces}. 
 The valuation faces come with natural face inclusions inherited from $\Delta^N_B, \Delta^N_{B+}, \Delta^N_{E}$.
 

By Lemma \ref{local3} we get:



\begin{lemma}\begin{enumerate}
 \item A valuation face $\sigma$ in $\Delta^N_{B}$ is represented by a pair $(\omega, Z^0_X(\omega))$  defined by the collection of valuations $\omega=\omega
_\sigma$ in $N$, such that $$Z_X(\omega):=\bigcap_{\nu\in \omega} Z_X(\nu)\neq \emptyset,$$ and an irreducible  component  $Z^0_X(\omega)$ of $Z_X(\omega)$. 
\item A simplex   $\sigma$ of $\Delta^N_E=\Delta^N_{B+}$  corresponds  to a subset  $\omega \subset N$,  such that $$Z_Y(\omega):=\bigcap_{\nu\in \omega} Z_Y(\nu)\neq \emptyset,$$  
and an irreducible component, denoted as $Z^0_Y(\omega)$ of the set $$Z_Y(\omega):=\bigcap_{\nu\in \omega} Z_Y(\nu).$$  
\end{enumerate} 
The face relations are given by the inclusions of the sets of valuations and the associated components.	\qed
\end{lemma}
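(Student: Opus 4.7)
The plan is to derive this lemma by straightforwardly translating the preceding lemma (which describes simplices of $\Delta_B$, $\Delta_{D_+}$, $\Delta_E$ divisorially) and Corollary \ref{local3} into the valuation picture, using the identifications $\nu_i \leftrightarrow D_i \leftrightarrow D_{i+} \leftrightarrow E_i$ of Corollary \ref{local3}(1). These identifications give bijections on vertices which extend to the isomorphisms $\Delta_B \simeq \Delta^N_B$, $\Delta_{B_+} \simeq \Delta^N_{B+}$, $\Delta_E \simeq \Delta^N_E$ already announced just before the lemma; only the identification of the ``component'' coordinate of each simplex remains to be described.

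For part (1), a simplex of $\Delta_B$ is by the preceding lemma a pair $(\{D_i \mid i \in I\}, (\bigcap_{i \in I} D_i)_0)$ consisting of a family of divisors with nonempty intersection together with a distinguished irreducible component. Setting $\omega := \{\nu_i \mid i \in I\}$, Corollary \ref{local3}(2)(b) gives $\pi_B(\bigcap_{i \in I} D_i) = \bigcap_{i \in I} Z_X(\nu_i) = Z_X(\omega)$, while Corollary \ref{local3}(2)(c) identifies the irreducible components of $\bigcap_{i \in I} D_i$ bijectively with those of $Z_X(\omega)$ via $\pi_B$. I assign to the simplex the pair $(\omega, Z^0_X(\omega))$ with $Z^0_X(\omega) := \pi_B((\bigcap D_i)_0)$; nonemptiness of $Z_X(\omega)$ is automatic, and the inverse map uses the same bijection on components.

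For part (2), I argue symmetrically. The last sentence of Corollary \ref{local3}(3) produces an isomorphism $\Delta_{B_+} \simeq \Delta_E$ by matching irreducible components of $\bigcap_{i \in I} D_{i+}$ with those of $\bigcap_{i \in I} E_i = \bigcap_{\nu \in \omega} Z_Y(\nu)$ via $\pi_{B_+,Y}$; after substituting the valuation labels this yields the claimed description of $\Delta^N_{B_+} = \Delta^N_E$ by pairs $(\omega, Z^0_Y(\omega))$, where $Z^0_Y(\omega)$ is the image under $\pi_{B_+,Y}$ of the distinguished component of $\bigcap_{i \in I} D_{i+}$.

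Finally, the face relations are immediate: a face inclusion $\sigma' \leq \sigma$ is equivalent to $\omega_{\sigma'} \subseteq \omega_\sigma$, which forces $\bigcap_{i \in I} D_i \subseteq \bigcap_{i \in I'} D_i$ (and likewise on $Y$), so under the component bijection the distinguished component for $\sigma$ is contained in a unique distinguished component for $\sigma'$, yielding $Z^0_X(\omega_\sigma) \subseteq Z^0_X(\omega_{\sigma'})$ (and the analogue on $Y$). Since essentially all the content is already packaged in Corollary \ref{local3}, I do not expect any real obstacle; the one mildly delicate point is keeping the component choices compatible across all face inclusions of a given simplex, which is exactly what the cited bijections on components in Corollary \ref{local3}(2)(c) and (3) guarantee.
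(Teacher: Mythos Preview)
Your proposal is correct and follows exactly the paper's approach: the paper presents this lemma with a \qed and the single sentence ``By Lemma \ref{local3} we get:'', i.e.\ it regards the statement as an immediate translation of Corollary \ref{local3} and the preceding divisorial description of simplices into the valuation labeling. Your write-up simply makes that translation explicit, invoking precisely parts (1), (2)(b)--(c), and (3) of Corollary \ref{local3} as intended.
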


\begin{remark}Thus, the dual valuation complexes could be thought of as ordinary dual  complexes of the exceptional divisors with the associated valuation structure so that the vertices define the relevant exceptional valuations, and the faces determine the sets of  the valuations.
\end{remark}


 \subsubsection{Dual complex  associated with a locally monomial ideal} 

If $\cJ$ is locally monomial  ideal on a regular scheme, such that $\codim(V(\cI))\geq 2$, then one can associate with $\cJ$ the normalized blow-up $\pi: Y\to X$, and the full cobordant blow-up $\pi_B: B\to X$ of  $\cJ$. The morphism  $\pi: Y\to X$ is locally toric, and  we shall call the dual complexes $\Delta_D$, $\Delta_{D+}\simeq \Delta_E$ and the corresponding dual valuation complexes $\Delta^N_D$, $\Delta^N_{D+}\simeq \Delta^N_E$ {\it associated  with $\cJ$}.

\subsection{Graded rings defined by the valuations}


\subsubsection{Graded rings defined by valuations} 
\label{grad}

 In the considerations below, let  $\omega=\{\nu_1,\ldots,\nu_r \}$ be a set of valuations on a regular scheme $X$. We associate with each valuation $\nu_i$ a dummy variable $t_i$ for $i=1,\ldots,r$. 
Set
$${\bf t}:=(t_1,\ldots,t_k) \quad \mbox{and} \quad  {\bf t}^{-1}:=(t_1^{-1},\ldots,t_k^{-1}).$$


Consider the partial componentwise order on $\ZZ_{\geq o}^r$.
For $\alpha:=(a_1,\ldots,a_r)\in \ZZ^r_{\geq 0}$ we define
 the ideals \begin{align}\cJ_{\omega}^\alpha:=\bigcap_{\nu_i\in \omega} \cI_{\nu_i,a_i}\subset \cO_X, \quad\quad \cJ_{\omega}^{>\alpha}:=\sum_{\beta>\alpha} \cJ_{\omega}^\beta. \end{align}  This determines  the $\ZZ^k_{\geq 0}$-graded Rees algebra $$\cA_{\omega}:=\bigoplus_{a\in \ZZ_{\geq 0}} \cJ_{\omega}^\alpha {\bf t}^\alpha\subset \cO_X[t],$$
 where ${\bf t}^{\alpha}=t_1^{a_1}\cdot\ldots\cdot t_r^{a_r}$, and the  associated gradation \begin{align}\gr_{\omega}(\cO_X)=\bigoplus_{a\in \ZZ_{\geq 0}} (\cJ_{\omega}^\alpha/\cJ_{\omega}^{>\alpha}){\bf t}^\alpha=\cA_{\omega}/(\cA_{\omega}\cap {\bf t}^{-1}\cA_{\omega}])=\cA_{\omega}[{\bf t}^{-1}]/ ({\bf t}^{-1}\cdot \cA_{\omega}[{\bf t}^{-1}]).\end{align} 
In particular, for $\alpha=0=(0,\ldots,0)$ we have locally on $X$:$$\cJ_{\omega}:=\cJ_{\omega}^{>0}=\cI_{Z_X(\omega)},$$
  where $$Z_X(\omega):=\bigcap_{i=1}^k Z_X(\nu_i).$$
  
 Then $\gr_{\omega}(\cO_X) $ is a sheaf of graded $\cO_X/\cJ_\omega=\cO_{V(\cJ_{\omega})}$-modules.   \begin{lemma} Assume the valuations in the set $\omega=\{\nu_1,\ldots,\nu_r \}$  are monomial for a certain partial system of local parameters $u_1,\ldots,u_n$ on  a regular scheme $X$.
 Then 
 \begin{enumerate}
 \item $\cJ_{\omega}=\sum_{i=1}^k \cI_{\nu_i,1}=(u_j\mid \nu_i(u_j)>0, {\rm for\,\, some} \,\,\nu_i\in \omega )$ , and

 \item $\gr_{\omega}(\cO_X)=\cO_{V(\cJ_{\omega})}[u_1{\bf t}^{\alpha_1},\ldots,u_k{\bf t}^{\alpha_k}],$ where $u_i\in \cJ^{\alpha_i}_{\omega}\setminus \cJ_{\omega}^{>\alpha_i}$, and $\alpha_i=(a_{i1},\ldots,a_{in})$, with
$\nu_i(u_j)=a_{ij}\in \ZZ_{\geq 0}$ for $i=1,\ldots,r$, and $j=1,\ldots,n$

  \end{enumerate} 	
   \end{lemma}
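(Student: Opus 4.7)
My plan is to deduce (1) by a direct comparison of ideals exploiting the monomial description of each $\cI_{\nu_i,a}$, and then to prove (2) by computing the graded pieces $\cJ_\omega^\alpha/\cJ_\omega^{>\alpha}$ as monomial modules and assembling an explicit ring isomorphism. Throughout, the key input is that the assumption ``$\nu_i$ is monomial in $u_1,\dots,u_n$'' means $\cI_{\nu_i,a}$ is the monomial ideal generated by $u^\gamma$ with $\nu_i(u^\gamma)=\sum_j\gamma_j a_{ij}\geq a$, and that intersections of monomial ideals in a chosen partial system of parameters remain monomial.

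For (1): the monomial description gives $\cI_{\nu_i,1}=(u_j:\nu_i(u_j)>0)$ directly, so $\sum_i\cI_{\nu_i,1}$ coincides with the ideal on the right-hand side. The comparison with $\cJ_\omega=\cJ_\omega^{>0}=\sum_{\beta>0}\cJ_\omega^\beta$ is now formal: on the one hand $\cJ_\omega^{e_i}=\cI_{\nu_i,1}$ (since $\cI_{\nu_j,0}=\cO_X$ for $j\neq i$), giving $\sum_i\cI_{\nu_i,1}\subseteq \cJ_\omega$; on the other, any $\beta>0$ has some entry $b_{i_0}\geq 1$, so $\cJ_\omega^\beta\subseteq \cI_{\nu_{i_0},b_{i_0}}\subseteq \cI_{\nu_{i_0},1}$, yielding the reverse inclusion. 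For (2), I first reorder so that $\alpha_1,\dots,\alpha_k$ are the nonzero valuation profiles (so by (1), $\cJ_\omega=(u_1,\dots,u_k)$ and the remaining parameters $u_{k+1},\dots,u_n$ are units modulo $\cJ_\omega$ in the appropriate sense). Then $\cJ_\omega^\alpha$ is the monomial ideal generated by those $u^\gamma$ with $\sum_{i\leq k}\gamma_i\alpha_i\geq\alpha$ componentwise, and $\cJ_\omega^{>\alpha}$ is the analogous ideal with strict inequality ``$>$''.

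Now I construct the comparison map $\Phi:\cO_{V(\cJ_\omega)}[U_1,\dots,U_k]\to\gr_\omega(\cO_X)$, graded by $\deg U_i=\alpha_i$, sending $U_i\mapsto u_i\mathbf{t}^{\alpha_i}$ and scalars to their image in degree zero. This is well-defined because $\cJ_\omega\cdot\cJ_\omega^\beta\subseteq\cJ_\omega^{>\beta}$ (multiplying any generator of $\cJ_\omega^\beta$ by some $u_i$ with $i\leq k$ strictly increases the profile), so the degree-zero part of $\gr_\omega$ is killed by $\cJ_\omega$ and is identified with $\cO_{V(\cJ_\omega)}$. Surjectivity in degree $\alpha$ follows from the monomial description: any monomial $u^\gamma$ of profile exactly $\alpha$ factors as $(u_1^{b_1}\cdots u_k^{b_k})\cdot v$ with $v$ a monomial in the zero-profile parameters, hence a regular function on $V(\cJ_\omega)$, so $u^\gamma\mathbf{t}^\alpha=\Phi(v\cdot U_1^{b_1}\cdots U_k^{b_k})$. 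The main obstacle is injectivity, which amounts to showing that the monomials $u_1^{b_1}\cdots u_k^{b_k}$ with $\sum b_i\alpha_i=\alpha$ are $\cO_{V(\cJ_\omega)}$-linearly independent modulo $\cJ_\omega^{>\alpha}$. My plan is to handle this by localizing and passing to a completion $\hat{\cO}_{X,p}\cong\hat{R}'[[u_1,\dots,u_k]]$, where $R'$ is the completed local ring of $V(\cJ_\omega)$; there $\cJ_\omega^\alpha$ becomes a direct sum of $R'$-modules $R'\cdot u_1^{b_1}\cdots u_k^{b_k}$ ranging over $(b_1,\dots,b_k)$ with $\sum b_i\alpha_i\geq\alpha$, and $\cJ_\omega^{>\alpha}$ is the sub-sum with strict inequality, so the quotient is manifestly free over $R'$ with the desired basis. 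Faithful flatness of completion then descends the conclusion back to $\cO_X$, giving injectivity and completing the identification $\gr_\omega(\cO_X)\cong \cO_{V(\cJ_\omega)}[u_1\mathbf{t}^{\alpha_1},\dots,u_k\mathbf{t}^{\alpha_k}]$.
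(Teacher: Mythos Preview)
Your argument for part~(1) is essentially the same as the paper's, just unpacked a bit more (the paper invokes the identification $\cJ_\omega=\cI_{Z_X(\omega)}$ stated immediately before the lemma, which is exactly your inclusion-by-inclusion computation).

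For part~(2), your route is correct but genuinely different from the paper's. The paper does \emph{not} analyze the graded pieces $\cJ_\omega^\alpha/\cJ_\omega^{>\alpha}$ directly. Instead it uses the identity
\[
\gr_\omega(\cO_X)\;=\;\cA_\omega[{\bf t}^{-1}]\big/\big({\bf t}^{-1}\cdot\cA_\omega[{\bf t}^{-1}]\big),
\]
together with the explicit computation of the extended Rees algebra
\[
\cA_\omega[{\bf t}^{-1}]\;=\;\cO_X[t_1^{-1},\ldots,t_r^{-1},\,u_1{\bf t}^{\alpha_1},\ldots,u_n{\bf t}^{\alpha_n}],
\]
which was carried out earlier in the paper (in the proof of the toric case, Lemma~\ref{cover4}(1)). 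Quotienting by $(t_1^{-1},\ldots,t_r^{-1})$ then gives the polynomial ring over $\cO_X/\cJ_\omega$ in one stroke. Your approach instead builds the comparison map $\Phi$ by hand, proves surjectivity from the monomial generation of $\cJ_\omega^\alpha$, and proves injectivity by passing to the completion $\hat R'[[u_1,\ldots,u_k]]$ where the monomial basis is manifest, then descending by faithful flatness. This is entirely sound (note in particular that for each fixed $\alpha$ only finitely many $b$ satisfy $\sum b_i\alpha_i=\alpha$, since each $\alpha_i\neq 0$, so the degree-$\alpha$ pieces are finitely generated and the completion/flatness argument goes through). The paper's route is shorter because it leverages the Cox/Rees machinery already developed; yours is more self-contained and makes the freeness of the graded pieces over $\cO_{V(\cJ_\omega)}$ explicit, which is conceptually pleasant even if a bit longer.
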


 \begin{proof} (1) Note that $\cI_{Z_X(\nu_i)}=(u_j\mid \nu_i(u_j)>0)$. Thus 
 $$\cJ_{\omega}= \cI_{Z_X(\omega)}=\sum_{\nu\in \omega}  \cI_{Z_X(\nu_i)}=(u_j\mid \nu_i(u_j)>0, {\rm for\,\, some} \,\,\nu_i\in \omega )$$


 (3) By definition of $A_\omega$, the equality (5), and the Proof of Lemma \ref{cover4}(1).
 \begin{align*}&\cA_{\omega}[{\bf t}^{-1}]=(\bigoplus_{a_i\in \ZZ_{\geq 0}} \,\, \bigcap^r_{i=1}\, \cI_{\nu_i,a_i}\,\,\cdot t_1^{a_1}\cdot\ldots\cdot t_r^{a_r})[t_1^{-1},\ldots,t_r^{-1}]=\\&=\cO_X[t_1^{-1},\ldots,t_r^{-1},u_1{\bf t}^{\alpha_1},\ldots,u_n{\bf t}^{\alpha_n}])=\cO_X[{\bf t}^{-1},u_1{\bf t}^{\alpha_1},\ldots,u_n{\bf t}^{\alpha_k}],\end{align*}	
	where $\alpha_i=(a_{i1},\ldots,a_{in})$, and
$\nu_i(u_j)=a_{ij}\in \ZZ_{\geq 0}$ for $i=1,\ldots,r$, and $j=1,\ldots,n$.
Thus by the equality (6): 
\begin{align*}& \gr_{\omega}(\cO_X)=\cA_{\omega}[{\bf t}^{-1}]/ ({\bf t}^{-1}\cdot \cA_{\omega}[{\bf t}^{-1}]=\\
 &= (\cO_X[{\bf t}^{-1},u_1{\bf t}^{\alpha_1},\ldots,u_n {\bf t}^{\alpha_k}])/({\bf t}^{-1})\simeq \\ & =(\cO_X/\cJ_\omega)[u_1{\bf t}^{\alpha_1},\ldots,u_n{\bf t}^{\alpha_k}].
	\end{align*}

 \end{proof}

  We shall call the corresponding scheme 
 $$\NN_{\omega}(X):=\Spec_{V(\cJ_\omega)}(\gr_{\omega}(\cO_X))=\Spec_{Z_X(\omega)}(\gr_{\omega}(\cO_X))$$ the {\it weighted normal bundle} of $X$ at the set of valuations $\omega$.  
 
 One can extend this to any valuation face in $\Delta_B^N$, associated with a full cobordant blow-up $B\to X$.

\begin{definition} \label{w} By the {\it weighted normal bundle} of $X$ at the valuation face $\omega\in \Delta_B^N$ we mean the scheme 
 $$\NN_{\omega}(X):=\Spec_{Z^0_X(\omega)}(\gr_{\omega}(\cO_X))$$	 over the component $Z^0_X(\omega)$ of $Z_X(\omega)$ associated with the face $\omega$.
\end{definition}

As  $\Delta_E^N\subset \Delta_B^N$ the above definition is also valid for any valuation face $\omega\in \Delta_E^N$. 

\subsubsection{The ideals of the initial forms} \label{initial}
With any function $f\in \cO_{X,p}$, regular at $p\in V(\cJ)$, such that $f\in \cJ_{\omega}^\alpha\setminus \cJ_{\omega}^{>\alpha}$, for a certain $a\in \NN$ one can associate the unique homogenous element, called  the {\it initial form} $$\inn_\omega(f)=(f+\cJ_\omega^{>\alpha}) \in (\cJ_\omega^\alpha/\cJ_\omega^{>\alpha}) {\bf t}_{\omega}^\alpha\subset \gr_{_\omega}(\cO_X).$$ Similarly, we associate with an
ideal sheaf $\cI$, the filtration $\cI_\omega^\alpha:=\cI\cap \cJ_\omega^\alpha$ and set $\cI^{>\alpha}_\omega=\cI\cap \cJ_\omega^{>\alpha}$.

We define the 
 {\it ideal of the initial forms} of $\cI$ to be the ideal 
$$\inn_\omega(\cI)=\bigoplus_{\alpha\in \ZZ{\geq 0}} \cI_\omega^\alpha/\cI^{>\alpha}_\omega =\bigoplus_{\alpha\in \ZZ_{\geq 0}} (\cI^\alpha_\omega+\cJ_{\omega}^{>\alpha})/\cJ_{\omega}^{>\alpha}\subset \gr_{{\omega}}(\cO_X)$$
on $N_{\cJ}(X)$.

For the ideal sheaf $\cI$, its {\it weak ideal of the initial forms on $N_{\cJ}(X)$} is given by
$$\inn^\circ_\omega(\cI)=\gr_{{\omega}}(\cO_X)\cdot \cI_\omega^{\alpha_0}/\cI^{>\alpha_0}_\omega\subset \gr_{{\omega}}(\cO_X),$$
where $\cI\subset \cJ_{\omega}^{\alpha_0}$, and $\cI \not\subset  \cJ_{\omega}^{>\alpha_0}$.
 
 \begin{remark} For any function $f\in \cO_X$,
 $$\inn_\omega(f)=\inn_\omega(\cO_X\cdot f)=\inn^\circ_\omega(\cO_X\cdot f).$$	
 \end{remark}

\subsubsection{Composition of gradations}

\begin{lemma} \label{special} Let $\omega=\{\nu_1,\ldots,\nu_r\}$ be a  set of  valuations which are monomial for a common partial system of local parameters  $u_1,\ldots,u_n$ on a regular $X$. Consider its partition into subsets $\omega_1=\{\nu_1,\ldots,\nu_s\}$, and  $\omega_2=\{\nu_{s+1},\ldots,\nu_r\}$.
Let  ${\bf t}:=(t_1,\ldots,t_r)$ ( respectively ${\bf t}_{\omega_1}:=(t_1,\ldots,t_s)$) be the set of the unknowns $t_i$ associated to the valuations $\nu_i\in \omega$, ( respectively and $\nu_i\in \omega_1$).
Let $\cJ_{\omega_1}=(u_1,\ldots,u_{\ell})$.
Then 
\begin{enumerate}
	\item The set  $\omega_2=\{\nu_{s+1},\ldots,\nu_r\}$ determines the set of monomial valuations $\overline{\omega_2}=\{\overline{\nu}_{s+1},\ldots,\overline{\nu}_r\}$
on the multi-graded ring 	$$\gr_{\omega_1}(\cO_X)=\cO_{V(\cJ_{\omega_1})}[u_1 {\bf t}_{\omega_1}^{\alpha_1},\ldots,u_\ell{\bf t}_{\omega_1}^{\alpha_n}]\simeq \cO_{V(\cJ_{\omega_1})}[u_1 ,\ldots,u_{\ell}],$$	
with the ideals 	$$\cI_{\overline{\nu}_i,a_i}=\inn_{\omega_1}(\cI_{\nu_i,a_i})$$

\item 
$\gr_{\overline{\omega_2}}(\gr_{\omega_1}(\cO_X)\simeq \gr_{\omega}(\cO_X).$

\item If $\cI\subset \cO_X$ then 
\begin{enumerate}
\item $\inn_\omega(\cI)=\inn_{\overline{\omega_2}}(\inn_{\omega_1}(\cI)).$
\item	$\inn^\circ_\omega(\cI)=\inn^\circ_{\overline{\omega_2}}(\inn^\circ_{\omega_1}(\cI)).$

\end{enumerate}

\end{enumerate}

\end{lemma}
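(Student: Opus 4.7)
The plan is to reduce everything to a direct calculation with monomials using the explicit description $\gr_{\omega'}(\cO_X)\simeq(\cO_X/\cJ_{\omega'})[u_1{\bf t}_{\omega'}^{\alpha_1},\ldots,u_n{\bf t}_{\omega'}^{\alpha_n}]$ from the preceding lemma, applied to each of $\omega_1,\omega_2,\omega$.

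For part (1), since each $\nu_i$ is monomial in the common partial system of parameters $u_1,\ldots,u_n$, the ideal $\cI_{\nu_i,a}$ is locally generated by those monomials $u^\beta$ whose $\nu_i$-weight is at least $a$. Taking $\omega_1$-initial forms sends each such monomial to its image in $\gr_{\omega_1}(\cO_X)$, which, as a ring (forgetting the ${\bf t}_{\omega_1}$-grading), is the polynomial ring $\cO_{V(\cJ_{\omega_1})}[u_1,\ldots,u_\ell]$ generated by the initial forms of $u_1,\ldots,u_\ell$. I would then define $\overline{\nu}_i$ as the monomial valuation on this polynomial ring with weights $\overline{\nu}_i(u_j):=\nu_i(u_j)$ and verify $\inn_{\omega_1}(\cI_{\nu_i,a})=\cI_{\overline{\nu}_i,a}$ on generators; monomiality of $\overline\nu_i$ for the partial parameter system consisting of the $\omega_1$-initial forms of $u_1,\ldots,u_\ell$ then follows.

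For part (2), I apply the explicit formula for $\gr$ twice. Locally $\cJ_\omega = \cJ_{\omega_1}+\cJ_{\omega_2}$, so the quotient $\gr_{\omega_1}(\cO_X)/\cJ_{\overline{\omega_2}}$ works out to $\cO_{V(\cJ_\omega)}$, and the ${\bf t}_{\omega_2}$-grading produced by the second $\gr$ operation combines with the pre-existing ${\bf t}_{\omega_1}$-grading to yield precisely the ${\bf t}=({\bf t}_{\omega_1},{\bf t}_{\omega_2})$-grading of $\gr_\omega(\cO_X)$. Matching generators $u_j{\bf t}^{\alpha_j}$ on both sides gives the isomorphism.

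For part (3), I treat the strict and weak cases separately. For the strict version, I use the componentwise partial order on $\ZZ_{\geq 0}^r = \ZZ_{\geq 0}^s\times \ZZ_{\geq 0}^{r-s}$: an element of bidegree $(\alpha_1,\alpha_2)$ lies in $\cI_\omega^{>\alpha}$ exactly when either its $\omega_1$-degree exceeds $\alpha_1$ componentwise, or, having $\omega_1$-degree equal to $\alpha_1$, its induced $\overline{\omega_2}$-degree exceeds $\alpha_2$. Comparing filtrations degree-by-degree under the isomorphism of part (2) then yields $\inn_\omega(\cI)=\inn_{\overline{\omega_2}}(\inn_{\omega_1}(\cI))$. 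For the weak version, the minimal bidegree $\alpha_0$ with $\cI\subset\cJ_\omega^{\alpha_0}$ decomposes uniquely as $(\alpha_{0,1},\alpha_{0,2})$, where $\alpha_{0,1}$ is the minimal $\omega_1$-degree of $\cI$ and $\alpha_{0,2}$ is then the minimal $\overline{\omega_2}$-degree of $\inn_{\omega_1}(\cI)$; applying the strict identity only at this minimal degree gives the weak identity.

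The main obstacle is the bookkeeping in (3): making sure that the componentwise ``strictly greater'' relation in $\ZZ_{\geq 0}^r$ matches the two-step procedure (strictly greater in $\omega_1$-degree, or equal there and strictly greater in $\overline{\omega_2}$-degree), and that the weak initial form, which keeps only the minimal graded piece, is compatible with this staged minimization. Once the common partial system of parameters is fixed, the argument reduces entirely to comparing the supports of monomial expansions with respect to two groups of weights, which should go through routinely.
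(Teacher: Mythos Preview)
Your approach is essentially the paper's. Both arguments rest on the explicit polynomial description $\gr_{\omega'}(\cO_X)\simeq\cO_{V(\cJ_{\omega'})}[u_1,\ldots]$, define $\overline{\nu}_i$ by the same monomial weights, and prove (2)--(3) via the bi-filtration identity $\cJ_\omega^\alpha=\cJ_{\omega_1}^{\alpha_1}\cap\cJ_{\omega_2}^{\alpha_2}$ together with $\cJ_\omega^{>\alpha}=(\cJ_{\omega_1}^{>\alpha_1}\cap\cJ_{\omega_2}^{\alpha_2})+(\cJ_{\omega_1}^{\alpha_1}\cap\cJ_{\omega_2}^{>\alpha_2})$. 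The paper carries this out by an explicit quotient computation for a single $f\in\cJ_\omega^\alpha\setminus\cJ_\omega^{>\alpha}$, obtaining $\inn_\omega(f)=\inn_{\overline{\omega_2}}(\inn_{\omega_1}(f))$ and then deducing (3); your filtration bookkeeping is the same computation in outline.

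One wording issue: in (3a) your sentence ``an element of bidegree $(\alpha_1,\alpha_2)$ lies in $\cI_\omega^{>\alpha}$ exactly when \ldots'' is not what you mean --- the point is the \emph{ideal} decomposition of $\cJ_\omega^{>\alpha}$ above, not a dichotomy on a single homogeneous element. For (3b), your claim that $\alpha_{0,2}$ coincides with the maximal $\overline{\omega_2}$-degree of $\inn_{\omega_1}^\circ(\cI)$ is correct but not automatic: it needs a witness $f\in\cI$ with $f\notin\cJ_\omega^{>\alpha_0}$, which is exactly the hypothesis $\cI\not\subset\cJ_{\omega}^{>\alpha_0}$ built into the definition of $\inn_\omega^\circ$. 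Applying the paper's function-level identity to that $f$ gives the needed equality; without such a witness the two-step weak initial form can sit in a strictly higher degree than $\alpha_{0,2}$.
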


\begin{proof} (1) For $j\leq \ell$, $\inn_{\omega_{w_1}}(u_j)$ is identified with 
$u_j$ in $\cO_{V(\cJ_{\omega_1})}[u_1 ,\ldots,u_{\ell}]$. Otherwise  if $j> \ell$, then $\inn_{\omega_{w_1}}(u_j)$ 
is  a parameter in  $\cO_{V(\cJ_{\omega_1})}=\cO_X/\cJ_{\omega_1}=\cO_X/(u_1 ,\ldots,u_{\ell})$.
Consequently $\nu_j$  determine the monomial valuations $\overline{\nu}_j$ on $\gr_{\omega}(\cO_X)$ with $\inn_{\omega_1}(\cI_{\nu_j,a})=(\cI_{\overline{\nu}_j,a}).$ 


(2) and (3) For the  multiindex $\alpha=(\alpha_1,\alpha_2)$, where $\alpha_i$ correspond to $\omega_i$ for $i=1,2$, consider a function $ f\in \cJ_{\omega}^\alpha\setminus \cJ_{\omega}^{>\alpha}$:
$$f\in \cJ_{\omega}^\alpha\setminus \cJ_{\omega}^{>\alpha}=\cJ_{\omega_1}^{\alpha_1}\cap \cJ_{\omega_2}^{\alpha_2}\setminus ( \cJ_{\omega_1}^{>\alpha_1}\cap \cJ_{\omega_2}^{\alpha_2}+\cJ_{\omega_1}^{\alpha_1}\cap \cJ_{\omega_2}^{>\alpha_2})$$

The ideal $\inn_{\omega_1}(\cJ_{\omega_2}^{\alpha_2})\subset \gr_{\omega_1}(\cO_X)$  is homogenous and 
$\inn_{\omega_1}(f)$ is in  $\alpha_1$-gradation of $\inn_{\omega_1}(\cJ_{\omega_2}^{\alpha_2})_{\alpha_1}\subset (\gr_{\omega_1}(\cO_X))_{\alpha_1}$:
$$\inn_{\omega_1}(\cJ_{\omega_2}^{\alpha_2})_{\alpha_1}=\frac{\cJ_{\omega_1}^{\alpha_1}\cap \cJ_{\omega_2}^{\alpha_2}}{\cJ_{\omega_1}^{>\alpha_1}\cap \cJ_{\omega_2}^{\alpha_2}}\subseteq 
(\gr_{\omega_1}(\cO_X))_{\alpha_1}=\frac{\cJ_{\omega_1}^{\alpha_1}}{\cJ_{\omega_1}^{>\alpha_1}}$$ 
 and 
 $$\inn_{\omega_1}(\cJ_{\omega_2}^{>\alpha_2})_{\alpha_1}=\frac{\cJ_{\omega_1}^{\alpha_1}\cap \cJ_{\omega_2}^{>\alpha_2}+\cJ_{\omega_1}^{>\alpha_1}\cap \cJ_{\omega_2}^{\alpha_2}}{\cJ_{\omega_1}^{>\alpha_1}\cap \cJ_{\omega_2}^{\alpha_2}}$$
Consequently, by the above, \begin{align*} & \inn_{\overline{\omega_2}}(\inn_{\omega_1}(f))\in \inn_{\overline{\omega_2}}(\inn_{\omega_1}(\cJ_{\omega_2}^{\alpha_2})_{\alpha_1})=\frac{(\inn_{\omega_1}(\cJ_{\omega_2}^{\alpha_2}))_{\alpha_1}}{(\inn_{\omega_1}(\cJ_{\omega_2}^{>\alpha_2}))_{\alpha_1}}=\\ &=\frac{\cJ_{\omega_1}^{\alpha_1}\cap \cJ_{\omega_2}^{\alpha_2}}{\cJ_{\omega_1}^{\alpha_1}\cap \cJ_{\omega_2}^{>\alpha_2}+\cJ_{\omega_1}^{>\alpha_1}\cap \cJ_{\omega_2}^{\alpha_2}}=\frac{\cJ_{\omega}^{\alpha}}{\cJ_{\omega}^{>\alpha}}=(\gr_{\omega_1}(\cO_X))_{\alpha}\end{align*}
On the other hand the initial form $$\inn_{\omega}(f)\in \frac{\cJ_{\omega}^{\alpha}}{\cJ_{\omega}^{>\alpha}}=(\gr_{\omega_1}(\cO_X))_{\alpha},$$
determines the same element: \begin{align*}\inn_{\omega}(f)=\inn_{\overline{\omega_2}}(\inn_{\omega_1}(f))\in (\gr_{\omega}(\cO_X))_{\alpha}=\gr_{\overline{\omega_2}}((\gr_{\omega_1}(\cO_X))_{\alpha_1})_{\alpha_2},\end{align*}
which implies  (3).

 \end{proof}

\subsection{The weighted normal bundles at valuations}


The following  extends a classical result of Huneke-Swanson on extended Rees algebras and smooth blow-ups \cite[Definition 5.1.5]{HS}, and the recent results of Rydh in \cite{Rydh-proj} and W\l odarczyk  in \cite[Lemma 5.1.4]{Wlodarczyk22} on the weighted normal cone.

\begin{lemma} \label{regul} Let $\pi: Y\to X$ be a locally toric proper birational morphism to
 a regular scheme $X$  over a field $\kappa$, with the exceptional components $E_i$, for $i=1,\ldots,k$ and let $\pi_B: B\to X$ be its full cobordization.  Then  for any  stratum $s=s_\omega \in S_{D}$ of the exceptional divisor $D=V(t_1^{-1}\cdot\ldots\cdot t_k^{-1})$ on $B$ and the corresponding valuation face $\omega$ in $\Delta_B^N$ there is an isomorphism:
$$s\simeq \NN_{\omega}(X)\times \check{\bf t}_{{\omega}}.$$
where $T_{\check{\bf t}_{\omega}}:=\Spec(\kappa[\check{\bf t}_{\omega},\check{\bf t}^{-1}_{\omega}])$, for  the set $\check{\bf t}_{\omega}$ of  the unknowns corresponding to the remaining  exceptional valuations
which are not in $\omega$.
\end{lemma}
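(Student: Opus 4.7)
My plan is to read off the coordinate ring of $s$ directly from the $\ZZ^k$-graded decomposition of $\cA_{Y/X}$ supplied by Proposition~\ref{valu}. Write $I\subseteq\{1,\ldots,k\}$ so that $\omega=\{\nu_i:i\in I\}$, and $\check I=\{1,\ldots,k\}\setminus I$. The stratum $s$ sits as an irreducible component of the locally closed subscheme $\bigcap_{i\in I}D_i\setminus\bigcup_{j\in\check I}D_j$, where $D_i=V_B(t_i^{-1})$. I will obtain it in two steps: first invert the forms $t_j^{-1}$ for $j\in\check I$ to delete the divisors $D_j$, and then quotient by the ideal $(t_i^{-1}:i\in I)$.

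For the first step, split $\ZZ^k=\ZZ^I\oplus\ZZ^{\check I}$ and write a multi-index as $\beta=(\alpha,\gamma)$. Inverting the $t_j^{-1}$ with $j\in\check I$ erases the $\check I$-constraints from the grading-by-grading description of $\cA_{Y/X}$, and the only constraints that survive are those coming from $\omega$. This yields
$$\cA_{Y/X}[\check{\bf t}_\omega]\;\simeq\;\cA_\omega[{\bf t}_\omega^{-1}]\otimes_{\cO_X}\cO_X[\check{\bf t}_\omega,\check{\bf t}_\omega^{-1}],$$
with $\cA_\omega=\bigoplus_{\alpha\in\ZZ_{\geq 0}^I}\cJ_\omega^\alpha{\bf t}_\omega^\alpha$ the Rees algebra from Section~\ref{grad}. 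For the second step, I quotient by $({\bf t}_\omega^{-1})=(t_i^{-1}:i\in I)$ and invoke identity~(6) of Section~\ref{grad}, namely $\cA_\omega[{\bf t}_\omega^{-1}]/({\bf t}_\omega^{-1}\cdot\cA_\omega[{\bf t}_\omega^{-1}])=\gr_\omega(\cO_X)$. This produces
$$\bigcap_{i\in I}D_i\setminus\bigcup_{j\in\check I}D_j\;\simeq\;\Spec_{Z_X(\omega)}\gr_\omega(\cO_X)\;\times\;T_{\check{\bf t}_\omega}.$$

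To single out the specific stratum $s$ associated with the face $\omega$, I restrict to the distinguished irreducible component $Z_X^0(\omega)\subseteq Z_X(\omega)$ named by $\omega$; by Definition~\ref{w} this restricted spectrum is exactly $\NN_\omega(X)$. The bijection between strata of $S_D$ lying over $Z_X(\omega)$ and irreducible components of $Z_X(\omega)$ needed here is supplied by Corollary~\ref{local3}(2)(c). Combining these identifications yields $s\simeq\NN_\omega(X)\times T_{\check{\bf t}_\omega}$.

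The one subtlety I anticipate is pinning down this component matching, since globally $Z_X(\omega)$ need not be irreducible. I will handle it by passing to a toric chart via Lemmas~\ref{local} and~\ref{local2}: in a compatible toric chart $\phi\colon U\to X_\sigma$, the components of $Z_X(\omega)\cap U$ are the preimages under $\phi$ of the closures of single toric orbits, and Corollary~\ref{local3} matches each to a unique stratum. The remainder is a direct manipulation of the $\ZZ^k$-graded presentation of $\cA_{Y/X}$ together with identity~(6).
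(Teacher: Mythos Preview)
Your proposal is correct and follows essentially the same route as the paper: localize $\cA_{Y/X}$ by inverting $\check{\bf t}_\omega^{-1}$ to obtain $\cA_\omega[{\bf t}_\omega^{-1}][\check{\bf t}_\omega,\check{\bf t}_\omega^{-1}]$, then quotient by $({\bf t}_\omega^{-1})$ and apply identity~(6) from Section~\ref{grad} to recognize $\gr_\omega(\cO_X)$. The only cosmetic difference is that the paper handles the irreducibility issue up front by shrinking $X$ so that $Z_X(\omega)=Z_X^0(\omega)$, whereas you pick out the correct component at the end via Corollary~\ref{local3}(2)(c); these amount to the same maneuver.
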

\begin{proof} 
 
  We can  replace $X$ with its  open subset  and assume $Z_X(\omega)$ is  irreducible so that $ \pi_B(s)=Z^0_X(\omega)=Z_X(\omega)$ .

   By separating variables into ${\bf t}_{\omega}$ and $\check{\bf t}_{\omega}$ we can factor any monomial ${\bf t}^{\alpha}={\bf t}_{\omega}^{\alpha}\cdot \check{\bf t}_{\omega}^{\alpha}$ uniquely into the product of the relevant monomials ${\bf t}_{\omega}^{\alpha}$ and $\check{\bf t}_{\omega}^{\alpha}$ respectively in ${\bf t}_{\omega}$ and $\check{\bf t}_{\omega}$.  Then  by Corollary \ref{local3}(2), we can  write $ \overline{s}=V({\bf t}^{-1}_{\omega})$ in a neighborhood of  $\overline{s}$, and $s=V({\bf t}^{-1}_{\omega})\setminus V(\check{\bf t}^{-1}_{\omega})$ as there is only one component of $V({\bf t}^{-1}_{\omega})$ mapping to $ \pi_B(s)=Z^0_X(\omega)=Z_X(\omega)$.
    
By definition $B=\Spec(\cA[\bf{t}^{-1}])$, where    
 $$\cA=\bigoplus_{\alpha\in \ZZ^k_{\geq 0}} \cJ^\alpha {\bf t}^\alpha,\quad \cJ^\alpha:=\bigcap^k_{i=1}  \cI_{\nu_i,a_i}\subset \cO_X. $$

 Let $$\cA_\omega=\bigoplus_{\alpha} \cJ^\alpha_\omega {\bf t}^\alpha_\omega,\quad \cJ^\alpha_\omega:=\bigcap_{{\nu_i\in \omega} } \cI_{\nu_i,a_i}\subset \cO_X. $$
Thus for the open subset $B_{\omega}\subset B$ where $\check{\bf t}_{\omega}^{-1}$ are invertible we can write:
 $$B_{\omega}:=B_{\check{\bf t}_{\omega}^{-1}}=\Spec_X(\cA[\bf{t}^{-1}][\check{\bf t}_{\omega}])=\Spec_X(\cA_\omega[{\bf t}_{\omega}^{-1}])[\check{\bf t}_{\omega},\check{\bf t}_{\omega}^{-1}])$$ 
Consequently $s=V_{B_{\omega}}({\bf t}_{\omega}^{-1})\subset B_{\omega} $, by Corollary \ref{local3}(2), 
 so we can write 
\begin{align*} &\cO_{s}= \cA_\omega[{\bf t}_{\omega}^{-1}][\check{\bf t}_{\omega},\check{\bf t}_{\omega}^{-1}]/({\bf t}_{\omega}^{-1})=(\cA_{\omega}[{\bf t}_{\omega}^{-1}]/(({\bf t}_{\omega}^{-1}\cdot\cA_{\omega}[{\bf t}_{\omega}^{-1}]))[\check{\bf t}_{\omega},\check{\bf t}_{\omega}^{-1}]=\\ & =(\cA_{\omega}/(({\bf t}_{\omega}^{-1}\cdot\cA_{\omega})\cap \cA_{\omega}))[\check{\bf t}_{\omega},\check{\bf t}_{\omega}^{-1}]=  \gr_{\omega}(\cO_X)[\check{\bf t}_{\omega},\check{\bf t}_{\omega}^{-1}].
	\end{align*}	
The latter equality follows from Section \ref{grad}.	
\end{proof}

\subsubsection{The weak and the strict transforms and the ideal of the initial forms}
The identification from Lemma \ref{regul} can be extended to the strict transforms of the ideals.
The following  generalizes the  result from \cite[Lemma 5.1.4]{Wlodarczyk22}  for the weighted blow-ups.
\begin{lemma} \label{regular3} 
\label{grad2} Let $X$ be a regular scheme over a field
Let $B\to X$ be the full cobordant blow-up of a locally monomial center $\cJ$. Let $\cI\subset \cO_X$ be an ideal sheaf on $X$.
Let $\sigma^s(\cI)\subset \cO_B$ be the strict transform of $\cI$, and $\sigma^\circ(\cI)\subset \cO_B$ be its weak transform (see Definition \ref{strict}).
Then for any $s\in S_D$,
the natural isomorphism $$\cO_{s}\simeq \cO_B[\check{\bf t}_{\omega},\check{\bf t}_{\omega}^{-1}]/({\bf t}_{\omega}^{-1})\to \gr_{\omega}(\cO_X)[\check{\bf t}_{\omega},\check{\bf t}_{\omega}^{-1}]$$ takes 
\begin{enumerate}

\item $\sigma^s(\cI)_{|s}$ onto
   $\inn_{\omega}(\cI)[\check{\bf t}_{\omega},\check{\bf t}_{\omega}^{-1}]\subset \gr_{\omega}(\cO_X)[\check{\bf t}_{\omega},\check{\bf t}_{\omega}^{-1}]$. 	

\item $\sigma^\circ(\cI)_{|s}$ onto
   $\inn^\circ_{\omega}(\cI)[\check{\bf t}_{\omega},\check{\bf t}_{\omega}^{-1}]\subset \gr_{\omega}(\cO_X)[\check{\bf t}_{\omega},\check{\bf t}_{\omega}^{-1}]$. 
\end{enumerate}
\end{lemma}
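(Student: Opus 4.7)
The strategy is to leverage Lemma \ref{regul}, which on the open subset $B_\omega=B_{\check{\bf t}_\omega^{-1}}$ identifies
\[
\cO_{B_\omega} \;=\; \cA_\omega[{\bf t}_\omega^{-1}]\,[\check{\bf t}_\omega,\check{\bf t}_\omega^{-1}],
\qquad
\cO_s \;=\; \gr_\omega(\cO_X)[\check{\bf t}_\omega,\check{\bf t}_\omega^{-1}],
\]
with the restriction $\phi\colon \cO_{B_\omega}\twoheadrightarrow \cO_s$ induced by the graded quotient $\cA_\omega[{\bf t}_\omega^{-1}]\to \gr_\omega(\cO_X)$ of Section~\ref{grad}. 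Using the $\ZZ^k$-grading $\cA=\bigoplus_{\beta\in\ZZ^k} \cJ^\beta {\bf t}^\beta$ (with $\cJ^\beta=\bigcap_i \cI_{\nu_i,b_i}$, equal to $\cO_X$ whenever all $b_i\leq 0$), the map $\phi$ sends a homogeneous element $g{\bf t}^\beta$ with $g\in \cJ^\beta$ to $[g]_{\beta_\omega}\,{\bf t}_\omega^{\beta_\omega}\,\check{\bf t}_\omega^{\check\beta_\omega}$, where $[g]_{\beta_\omega}$ denotes the class in $\cJ_\omega^{\beta_\omega}/\cJ_\omega^{>\beta_\omega}$ and the $\check{\bf t}_\omega^{\check\beta_\omega}$-factor is a unit. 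Since $\cO_B\cdot\cI$ is homogeneous, both $\sigma^s(\cI)$ and $\sigma^\circ(\cI)$ are homogeneous, so (1) and (2) reduce to computing $\phi$ graded-piece by graded-piece.

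For part (1), the first step is to establish the graded description
\[
\sigma^s(\cI) \;=\; \bigoplus_{\beta} (\cI \cap \cJ^\beta)\,{\bf t}^\beta \;=\; \bigoplus_\beta \cI^\beta\,{\bf t}^\beta.
\]
Indeed, for a homogeneous $g{\bf t}^\beta$ the condition ${\bf t}^{-\alpha}(g{\bf t}^\beta)\in \cO_B\cdot\cI$ of Definition~\ref{strict} is, upon choosing $\alpha$ componentwise large enough that $\beta-\alpha\leq 0$, equivalent to $g$ lying in the degree-$(\beta-\alpha)$ piece of $\cO_B\cdot\cI$, namely $\cI\cdot\cJ^{\beta-\alpha}=\cI\cdot\cO_X=\cI$; so it reduces to $g\in\cI$. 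Applying $\phi$ and invoking the second-isomorphism identification $(\cI_\omega^{\beta_\omega}+\cJ_\omega^{>\beta_\omega})/\cJ_\omega^{>\beta_\omega}\cong \cI_\omega^{\beta_\omega}/\cI_\omega^{>\beta_\omega}$ then yields exactly $\inn_\omega(\cI)[\check{\bf t}_\omega,\check{\bf t}_\omega^{-1}]$.

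For part (2), the plan is to start from $\sigma^\circ(\cI)=\cO_B\cdot({\bf t}^{\alpha_0}\cI)$ and identify $\alpha_0$. The maximum $\alpha_0$ in Definition~\ref{strict} exists in the componentwise order (using $\cJ^\alpha\cap\cJ^{\alpha'}=\cJ^{\max(\alpha,\alpha')}$) and is given by $(\alpha_0)_i=\nu_i(\cI)$; its $\omega$-projection $(\alpha_0)_\omega$ therefore coincides with the analogous invariant $\alpha_0^\omega$ appearing in the definition of $\inn^\circ_\omega(\cI)$ in Section~\ref{initial}. Consequently $\phi(\sigma^\circ(\cI))$ is the $\cO_s$-ideal generated by $\{[h]_{(\alpha_0)_\omega}\,{\bf t}_\omega^{(\alpha_0)_\omega}: h\in\cI\}$, the $\check{\bf t}_\omega$-factor contributing only units. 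Since every $h\in\cI$ belongs to $\cI_\omega^{(\alpha_0)_\omega}$, these classes surject onto $\cI_\omega^{(\alpha_0)_\omega}/\cI_\omega^{>(\alpha_0)_\omega}$, producing precisely $\inn^\circ_\omega(\cI)[\check{\bf t}_\omega,\check{\bf t}_\omega^{-1}]$.

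The main bookkeeping obstacle is tracking the two competing gradings: under $\phi$ the $\check\omega$-coordinates become inverted (so the $\check{\bf t}_\omega^{\check\beta_\omega}$ monomials survive only as units in $\cO_s$), while the $\omega$-coordinates get quotiented according to the $\gr_\omega$ filtration. Keeping these two directions distinct and verifying that the componentwise maximum defining $\sigma^\circ(\cI)$ restricts correctly on the $\omega$-coordinates to the maximum defining $\inn^\circ_\omega(\cI)$ is the only substantive step; once this is in place, both statements are formal manipulations of graded pieces. No further reduction to the toric setting via Lemmas~\ref{local} and \ref{local2} is strictly necessary, though such a reduction provides an alternative local verification of the image descriptions.
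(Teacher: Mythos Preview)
Your proof is correct and follows essentially the same approach as the paper: both identify the strict (resp.\ weak) transform in terms of the $\ZZ^k$-grading on $\cO_B$ and then pass to the quotient $\gr_\omega(\cO_X)[\check{\bf t}_\omega,\check{\bf t}_\omega^{-1}]$ via Lemma~\ref{regul}. Your global graded description $\sigma^s(\cI)=\bigoplus_\beta(\cI\cap\cJ^\beta){\bf t}^\beta$ packages cleanly what the paper does element-by-element (computing $\sigma^s(f)={\bf t}^{\alpha}f$ for $f\in\cJ^\alpha\setminus\cJ^{>\alpha}$ and matching it with $\inn_\omega(f)$ up to the unit $\check{\bf t}_\omega^\alpha$), but the underlying mechanism is identical.
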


\begin{proof} Let $f\in \cI$ such that  $f\in  \cJ^{\alpha}\setminus \cJ^{>{\alpha}}$.
By the definition of $$\cO_B=(\bigoplus_{\alpha\geq 0} \cJ^{\alpha}{\bf t}^{\alpha})[{\bf t}^{-1}]$$ we conclude that $\sigma^s(f)={\bf t}^{\alpha}f.$

 Then   $f\in  \cJ^{\alpha}_{\omega}\setminus \cJ_{\omega}^{>{\alpha}}$, and in a neighborhood of $s$ we have that $\check{\bf t}^{-1}_{\omega}$ is invertible.
Then the strict transform $$\sigma^s(f)={\bf t}^{\alpha}f={\bf t}_{\omega}^{\alpha}\check{\bf t}_{\omega}^{\alpha}f\in \cJ^{\alpha} {\bf t}^{\alpha}\subset \cJ_{\omega}^{\alpha} {\bf t}_{\omega}^{\alpha}[\check{\bf t}_{\omega},\check{\bf t}_{\omega}^{-1}],$$ and its reduction modulo $({\bf t}_{\omega}^{-1}\cO_B\cap \cJ_{\omega}^{{\alpha}}){\bf t}^\alpha=\cJ_{\omega}^{>{\alpha}}{\bf t}^\alpha$ can be written as the homogenous element 
$$\sigma^s(f)={\bf t}^{\alpha}f+{\bf t}^\alpha \cJ_{\omega}^{{>a}}={\bf t}_{\omega}^{\alpha}\check{\bf t}_{\omega}^{\alpha}f+{\bf t}_{\omega}^{\alpha}\check{\bf t}_{\omega}^{\alpha}\cdot\cJ_{\omega}^{{>a}}$$
 in $$\cO_B[\check{\bf t}_{\omega},\check{\bf t}_{\omega}^{-1}]/({\bf t}_{\omega}^{-1}\cdot\cO_B[\check{\bf t}_{\omega},\check{\bf t}_{\omega}^{-1}])=\gr_{\omega}(\cO_X)[\check{\bf t}_{\omega},\check{\bf t}_{\omega}^{-1}]$$ in the gradation \begin{align*} & {\cJ^{\alpha}_{\omega} {\bf t}_{\omega}^{\alpha}[\check{\bf t}_{\omega},\check{\bf t}_{\omega}^{-1}]}/{({\bf t}_{\omega}^{-1}\cO_B[\check{\bf t}_{\omega},\check{\bf t}_{\omega}^{-1}]\cap \cO_B[\check{\bf t}_{\omega},\check{\bf t}_{\omega}^{-1}])}\\&=
 (\cJ^{\alpha}_{\omega}/\cJ_{\omega}^{{>a}}){\bf t}_{\omega}^{\alpha}[\check{\bf t}_{\omega},\check{\bf t}_{\omega}^{-1}]\subset \gr_{\omega}(\cO_X)[\check{\bf t}_{\omega},\check{\bf t}_{\omega}^{-1}] \end{align*}
On the other hand $f$ determines its initial form $$\inn_\omega(f)=(f+\cJ_{\omega}^{>\alpha}){\bf t}^{\omega}\in (\cJ_{\omega}^{\alpha}/\cJ_{\omega}^{>\alpha}){\bf t}^\alpha_{\omega},$$ and thus, by the above $\sigma^s(f)$ 
 naturally and bijectively corresponds to $$\check{\bf t}_{\omega}^{\alpha}\inn_\omega(f)\in (\cJ^{\alpha}_{\omega}/\cJ_{\omega}^{>{\alpha}}) {\bf t}_{\omega}^{\alpha}[\check{\bf t}_{\omega},\check{\bf t}_{\omega}^{-1}]\subset \gr_{\omega}(\cO_X)[\check{\bf t}_{\omega},\check{\bf t}_{\omega}^{-1}].$$
	The latter differs from $\inn_\omega(f)$ by the unit $\check{\bf t}_{\omega}^{\alpha}$: $$\check{\bf t}_{\omega}^{\alpha}\inn_{\omega}(f)\sim \inn_{\omega}(f).$$
\end{proof}

\subsection{Cobordant resolution by locally monomial centers}\label{almost}

\subsubsection{Weighted normal cone}

\begin{definition} Let $X$ be a regular scheme over a field.
Let $Y\subset X$ be a  closed reduced subscheme with the ideal $\cI_Y$. Let $\omega$ be a set of monomial valuations for a partial system of local parameters. 
The subscheme   $C_{\omega}(Y)=V(\inn_{\omega}\cI_Y)\subset \NN_{\omega}(X)$ will be called  the {\it weighted normal cone} of $Y$ at $\omega$. 
	
\end{definition}

\begin{lemma}\label{special2} Let $X$ be a regular universal catenary scheme over a field. Let $Y\subset X$ be a subscheme of pure codimension $d$. Let 
$\omega$ be a set of monomial valuations for 
a common partial local system of parameters $u_1,\ldots,u_k$ on $X$.
Then $C_\omega(Y)$ is of pure codimension $d$ in $\NN_\omega(X)$.

\end{lemma}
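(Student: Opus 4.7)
The plan is to induct on $r=|\omega|$, using Lemma~\ref{special} to peel off valuations one at a time and reduce to the base case of a single valuation, which I would handle by the classical deformation to the (weighted) normal cone. The statement is local on $X$, so I would work in $R=\cO_{X,p}$ at a point $p\in Z^0_X(\omega)\cap Y$ with a regular system of parameters $u_1,\dots,u_n$ in which every $\nu\in\omega$ is monomial.

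For the inductive step with $r>1$, I split $\omega=\{\nu_1\}\sqcup\omega_2$. Lemma~\ref{special} does exactly the bookkeeping needed: part~(2) identifies $\NN_\omega(X)=\NN_{\overline{\omega_2}}(\NN_{\nu_1}(X))$, part~(1) shows that $\overline{\omega_2}$ is again monomial for a partial system of parameters on $X_1:=\NN_{\nu_1}(X)$, and part~(3)(a) gives $\inn_\omega(\cI_Y)=\inn_{\overline{\omega_2}}(\inn_{\nu_1}(\cI_Y))$. By the base case applied to $\nu_1$, $Y_1:=C_{\nu_1}(Y)$ is pure of codimension $d$ in $X_1$; since $X_1$ is locally a polynomial ring over the regular closed subscheme $V(\cJ_{\nu_1})\subset X$, it is again regular and universally catenary, and the inductive hypothesis applied to $(\overline{\omega_2},X_1,Y_1)$ closes the step.

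For the base case $r=1$, $\omega=\{\nu\}$, I would consider the extended Rees algebra
\[
\cA_\nu \;:=\; \bigoplus_{a\in\ZZ}\cI_{\nu,a}\,t^a \;\subset\; R[t,t^{-1}],\qquad \cI_{\nu,a}=R\text{ for }a\le 0,
\]
a $\kappa[t^{-1}]$-algebra with special fibre $\gr_\nu(R)$ at $t^{-1}=0$ and localization $R[t,t^{-1}]$ at $t^{-1}$. By Definition~\ref{strict}, $\sigma^s(\cI_Y)$ is the $t^{-1}$-saturation of $\cI_Y\cdot\cA_\nu$, so $Q:=\cA_\nu/\sigma^s(\cI_Y)$ is $t^{-1}$-torsion-free, hence flat over the PID $\kappa[t^{-1}]$. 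Inverting $t^{-1}$ in $Q$ yields $(R/\cI_Y)[t,t^{-1}]$, which is pure of dimension $n-d+1$ since $R/\cI_Y$ is pure of dimension $n-d$; as $Q$ injects into this localization and $t^{-1}$ lies in no minimal prime of $Q$, the minimal primes of $Q$ biject with those of the localization. On each component the inclusion $Q/\mathfrak{p}\hookrightarrow(R/\mathfrak{q})[t,t^{-1}]$ is a localization between finitely generated domains over $\kappa$, hence preserves transcendence degree and so dimension. Therefore $Q$ is equidimensional of dimension $n-d+1$, and Krull applied to the nonzerodivisor $t^{-1}$ forces $Q/(t^{-1})$ to be pure of dimension $n-d$. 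Lemma~\ref{regular3} then identifies $Q/(t^{-1})$ with the coordinate ring of $C_\nu(Y)$ up to a torus factor, so $C_\nu(Y)$ has pure codimension $d$ in $\NN_\nu(X)$ (of dimension $n$).

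The main obstacle I expect is the equidimensionality of the flat closure $Q$. Flatness over $\kappa[t^{-1}]$ is automatic from the saturation definition, but to exclude spurious lower-dimensional components in the special fibre I need to match minimal primes of $Q$ with those of its $t^{-1}$-localization and preserve their dimensions under the inclusion; this is where the universal catenarity hypothesis---propagating from $X$ to $\cA_\nu$---is essential, ensuring that dimension equals transcendence degree componentwise.
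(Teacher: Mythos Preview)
Your approach is essentially the paper's: induct on $|\omega|$ via Lemma~\ref{special} to reduce to a single valuation, then for the base case use the extended Rees algebra and Krull's Hauptidealsatz applied to the nonzerodivisor $t^{-1}$. The paper phrases the base case geometrically---$B$ is the full cobordant blow-up of a weighted center $(u_1^{a_1},\dots,u_k^{a_k})$, the strict transform $Y'\subset B$ has pure codimension $d$ (checked on $B_-=X\times\GG_m$), and catenarity of $B$ propagates this to $Y'\cap V(t^{-1})$---but your $\cA_\nu$ is literally $\cO(B)$, and the paper's ``$t^{-1}$ is not a zero divisor on $Y'$'' is exactly your flatness over $\kappa[t^{-1}]$.

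One caveat: having localized to $R=\cO_{X,p}$, you cannot then invoke ``dimension equals transcendence degree over $\kappa$'', since neither $R$ nor $Q/\mathfrak p$ is a finitely generated $\kappa$-algebra (e.g.\ $\dim R_{u_1}<\dim R$ although they share the same fraction field). The paper avoids this by arguing with \emph{codimension} and catenarity directly rather than Krull dimension: $Y'$ has pure codimension $d$ in $B$ because its generic points lie in $B_-$, and then catenarity gives $\operatorname{codim}(Y'\cap V(t^{-1}),B)=d+1$. Alternatively, you can simply work over an affine open neighborhood of $p$ rather than in the local ring; then everything is honestly of finite type over $\kappa$ and your transcendence-degree bookkeeping is valid.
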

\begin{proof} Let $\omega=\{\nu_1,\ldots,\nu_r\}$ and $\omega_1=\{\nu_1,\ldots,\nu_{r-1}\}$ be its subset. Then, by Lemma \ref{special},   we can write $$\inn_\omega(\cI)=\inn_{\nu_r}(\inn_{\omega_1}(\cI)),$$
where $\nu_r$ is monomial on  $$\NN_{\omega_1}(X)=\Spec(\gr_{\omega_1}(\cO_X))=\Spec(\cO_{V(\cJ_{\omega_1})}[u_1 ,\ldots,u_{\ell}])$$
is also universally catenary. Here we assume without loss of generality that $\cJ_{\omega_1}=u_1 ,\ldots,u_{\ell}$ for $\ell\leq k$.

By the inductive argument for $\inn_{\omega_1}(\cI)$ on $\NN_{\omega_1}(X)$ we can reduce the situation to a single monomial valuation $\nu=\nu_r$.

 Let $\nu(u_1)=w_1,\ldots,\nu(u_k)=w_k$, and find   some integers $a_1,\ldots,a_k$ such that  $$a_1w_1=\ldots=a_kw_k,$$

Consider the full cobordant	blow-up $B$ of $\cI=(u_1^{a_1},\ldots,u_k^{a_k})$ 
$$B=\Spec_X(\cO_X[{t}^{-1},u_1{t}^{w_1},\ldots,u_k{t}^{w_k}].$$
We apply the argument from \cite[Theorem 5.2.1]{Wlodarczyk22}. 
By the assumption, $B$ is catenary.
Let $d$ be the codimension of $Y$ in $X$.
Then for the morphism $$\pi_{B_-}: B_-=B\setminus V({t}^{-1})=X\times \GG_m\to X,$$ the inverse image $\pi_{B_-}^{-1}(Y)$ is  of pure codimension $d$ in $B_-$. So it is its scheme-theoretic closure $Y':=\overline{\pi_{B_-}(Y)}$, which is the strict transform $V(\sigma^s(\cI))$ of $Y$. 

Note that ${t}^{-1}$ is not  a zero divisor in $$Y'=V(\sigma^s(\cI))=\Spec(\cO_B/\sigma^s(\cI)),$$  since ${t}^{-1}f\in \sigma^s(\cI)$ implies $f\in \sigma^s(\cI)$, by definition of the strict transform.

Then, by the Krull Hauptidealsatz, we have that each component of $Y'\cap V({t}^{-1})$ is of codimension $1$ in  $Y'$, and of codimension $d+1$ in $B$.  We conclude that each component of $$Y'\cap V({t}^{-1})=V(O_{{t}^{-1}}\cdot \sigma^s(\cI))=C_\nu(Y)\subset \NN_\nu(X)$$ is of codimension $d$ in $V({t}^{-1})=\NN_\nu(X)$.

\end{proof}

\subsubsection{Cobordant resolution}
  
  For any scheme $Y$, let $\Sing(Y)$ denote its singular locus. For any ideal $\cI$ on $X$ by $\Sing(V(\cI))$ we mean the singular locus of the scheme $$V(\cI)=\Spec_X(\cO_X/\cI).$$
The following theorem extends \cite[Theorem 5.2.2]{Wlodarczyk22}.
\begin{theorem} \label{regular} Let $X$ be a regular universally catenary  scheme over a field.
Let $Y\subset X$ be an integral, closed subscheme of  pure codimension $d$ defined by $\cI_Y$. 

 
 Assume 
 there is a locally monomial ideal $\cJ\supset \cI_Y$ on $X$, with the cosupport $V(\cJ)$ of codimension $\geq 2$, and 
 with the associated exceptional divisor $E$ on the normalized blow-up $\sigma:Y=bl_{\cJ}(X)\to X$, and the dual valuation complex $\Delta_E^N$ such that 
\begin{enumerate} 
\item $\Sing(V(\cI_Y))\subseteq V(\cJ)$.

\item For any  valuation face $\omega\in \Delta^N_E\subset \Delta^N_B$, and   the ideal  $\inn_{\omega}(\cI_Y)\subset \gr_{\cJ_\omega}(\cO_X) \footnote{Definition \ref{w}}$  we have $$\Sing_{\NN_{\omega}(X)}(V(\inn_{\omega}\cI_Y))\subseteq V(\inn^\circ_{\omega}\cJ) .$$
\end{enumerate}
(respectively \\ \indent (2') For any  valuation face $\omega\in \Delta^N_E\subset \Delta^N_B$, we have $$\Sing_{\NN_{\omega}(X)}(V(\inn^\circ_{\omega}\cI_Y))\subseteq V(\inn^\circ_{\omega}\cJ).) $$

Then the cobordant blow-up $B_+\to X$ of $\cJ$ defines a cobordant resolution of $Y$. That is, the strict transform $Y'=V(\sigma^s(\cI_Y))$ of $Y$ (respectively the weak transform $Y'=V(\sigma^\circ(\cI_Y))$ of $Y$) is a regular subscheme of $B_+$ of the codimension equal to the codimension of $Y$ in $X$.
	
\end{theorem}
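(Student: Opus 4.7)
The strategy is to combine a global integrality and codimension argument for the strict transform $Y'$ with a stratum-wise regularity check on $B_+$, invoking Lemmas \ref{regul}, \ref{regular3}, \ref{special2} together with a standard commutative-algebra fact. I will treat the strict-transform case under hypothesis (2) in detail; the weak-transform case under (2') follows the same template with $\sigma^\circ$, $\inn_\omega^\circ$ in place of $\sigma^s$, $\inn_\omega$.

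\emph{Global structure of $Y'$.} By definition of the strict transform, each $t_i^{-1}$ is a nonzerodivisor on $\cO_B/\sigma^s(\cI_Y)$, so no component of $Y'$ lies inside the exceptional divisor; using the trivialization $B_-=(X\setminus V(\cJ))\times T_B$ from Lemma \ref{blow}(1), $Y'$ is the scheme-theoretic closure in $B$ of the preimage of $Y\cap(X\setminus V(\cJ))$. If $Y\subseteq V(\cJ)$ set-theoretically, then $Y'\cap B_+=\emptyset$ and the statement is vacuous; otherwise $Y\cap(X\setminus V(\cJ))$ is dense open in the integral scheme $Y$, so $Y'$ is integral, and since $B_+$ is regular (Theorem \ref{regular33}) hence universally catenary, $Y'$ has pure codimension $d$. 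Moreover, on $B_+\setminus D_+\simeq(X\setminus V(\cJ))\times T_B$, the intersection $Y'\cap(B_+\setminus D_+)=(Y\cap(X\setminus V(\cJ)))\times T_B$ is regular by hypothesis (1).

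\emph{Regularity at $p\in Y'\cap D_+$.} Let $s_\omega\in S_{D_+}$ be the stratum through $p$, with $\omega\in\Delta_{D_+}^N=\Delta_E^N$ (Corollary \ref{local3}(3)) and $r=|\omega|$. Set $A=\cO_{B_+,p}$ of dimension $n$, $I=\sigma^s(\cI_Y)_p$, and $J=(t_i^{-1}:\nu_i\in\omega)_p$. I shall establish four facts: (i) $A$ is regular and $J$ is generated by part of a regular system of parameters (from Theorem \ref{regular33} and the local form of $\bar{s}_\omega$), so $A/J$ is regular of dimension $n-r$; (ii) by Lemma \ref{regular3}, on the neighborhood of $p$ where the $\check{\bf t}_\omega^{-1}$ are invertible, $A/(I+J)$ is isomorphic to a local ring of $C_\omega(Y)=V(\inn_\omega\cI_Y)\subset N_\omega(X)$ tensored with a torus-factor local ring; (iii) by Lemma \ref{special2}, $C_\omega(Y)$ has pure codimension $d$ in $N_\omega(X)$, so $\dim A/(I+J)=n-r-d$; (iv) hypothesis (2) gives $\Sing C_\omega(Y)\subseteq V(\inn_\omega^\circ\cJ)$, and since $V(\sigma^\circ\cJ)\cap B_+=\emptyset$ (Lemma \ref{blow}(2)) and $\sigma^\circ(\cJ)|_{s_\omega}$ corresponds to $\inn_\omega^\circ(\cJ)$ times a torus factor (Lemma \ref{regular3}), the image of $p$ in $C_\omega(Y)$ is regular, hence $A/(I+J)$ is regular.

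\emph{Conclusion via a commutative-algebra lemma.} With (i)--(iv) in place I invoke the following standard lemma: if $A$ is regular local of dimension $n$ with maximal ideal $m$, $I\subseteq A$ satisfies $\dim A/I=n-d$, $J=(x_1,\ldots,x_r)$ is generated by part of a regular system of parameters, and $A/(I+J)$ is regular of dimension $n-d-r$, then $A/I$ is regular. The proof is a dimension count in $m/m^2$: the images of $x_1,\ldots,x_r$ are linearly independent, so $\dim_{\kappa(p)} m/(m^2+I)\leq\dim_{\kappa(p)} m/(m^2+I+J)+r=n-d$, and equality is forced by $\dim A/I=n-d$. The weak-transform case under (2') follows the same blueprint using the weak-transform assertions of Lemma \ref{regular3}. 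The principal obstacle in that case is the analogue of Lemma \ref{special2} --- purity of codimension for $V(\inn_\omega^\circ\cI_Y)$ in $N_\omega(X)$ --- which must be re-derived under (2'), presumably by the same Krull-type argument applied inside an iterated cobordant blow-up of $N_\omega(X)$.
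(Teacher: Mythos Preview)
Your proposal is correct and follows essentially the same route as the paper: stratify $B_+$ by $S_{D_+}$, invoke Lemmas \ref{regul} and \ref{regular3} to identify $Y'\cap s_\omega$ with $C_\omega(Y)$ times a torus factor, use Lemma \ref{special2} for pure codimension, and then deduce regularity of $Y'$ from that of $Y'\cap s_\omega$ via a parameter-lifting step---your commutative-algebra lemma is exactly the content of that step as carried out in the paper. Your caveat about the weak-transform case (the need to re-derive purity of codimension for $V(\inn^\circ_\omega\cI_Y)$) is legitimate, and the paper's own proof glosses over the same point by declaring the argument ``identical.''
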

\begin{proof} 
The problem is local on $X$. 
Thus, up to a torus factor, we can assume that the full cobordant blow-up of $\cJ$ is given locally on $X$ by $$\sigma: B=\Spec(\cO_X[{\bf t}^{-1},{\bf t}^{\alpha_1}u_1,\ldots,{\bf t}^{\alpha_k}u_k])\to X.$$
Then for the restriction morphism $\pi_{B-}: B_-=X\times T\to X$, the inverse image $\pi_{B_-}^{-1}(Y)$ is irreducible of codimension $d$. So is its closure $Y':=\overline{\sigma^{-1}(Y)}=V(\sigma^c(\cI_Y))$, which is the strict transform of $Y$. 
Since $V(\cJ)$ is of codimension $\geq 2$, the divisor $D=V_B({\bf t}^{-1})$ is exceptional for $B\to X$.
Observe that $$\Sing(Y')\setminus D=\Sing(Y')\cap B_-\subset V_{B_-}^\circ(\cJ)\subseteq  V_B(\sigma^\circ(\cJ))$$

On the other hand, the exceptional divisor $D_+=D_{|B_+}$  is the union of the strata $s_+\in S_{D+}$. By Corollary \ref{local3}(4),(5), each such a stratum $s_+$ extends to $s\in S_D$, and corresponds to the valuation face $\omega\in \Delta^N_{D_+}=\Delta^N_E\subset \Delta^N_D$.

Since the singular locus of $V(\inn_{\omega}\cI_Y)$ is contained in $V(\inn_\omega^\circ(\cJ))$ and by Lemmas
 \ref{regul}, \ref{grad2}, we have 
 $$\Sing(Y'\cap s)=\Sing_{\NN_{\omega}(X)}(V(\inn_{\omega}\cI_Y))\times\check{\bf t}_\omega\subseteq V(\inn^\circ_{\omega}\cJ)\times\check{\bf t}_\omega= V_B(\sigma^\circ(\cJ)_{|s}),$$ 
 Then
using Lemmas \ref{special2} and \ref{regul} we conclude that  the subscheme  $$Y'\cap s\simeq V(\inn_{\omega}\cI_Y))\times\check{\bf t}_\omega $$ is of pure codimension $d$ in $s\simeq \NN_{\omega}(X)\times\check{\bf t}_\omega$, and $$(Y'\cap s)\cap B_+=((Y'\cap s)\setminus V_B(\sigma^\circ(\cJ))$$ is regular of codimension  $d$ in $s$.


 Hence for $p\in ((Y'\cap s)\setminus V_B(\sigma^\circ(\cJ))$, we can find parameters $v_1,\ldots v_d\in  \cO_{s,p} \cdot \cI_{Y'}=(\cO_{B,p}\cdot \cI_{Y'})/({\bf t}_{\omega}^{-1})$ at $p$ which  vanish on $Y'\cap s$. But these parameters come from local parameters in $\cI_{Y'}$ on $B$ at $p$. So they define a regular subscheme $Y''$ of $B_+$ of codimension $d$, containing locally $Y'$. Thus $Y''$ locally coincides with $Y'$ which must be  regular at $p\in s\setminus V_B(\sigma^\circ(\cJ))$.
 Consequently $\Sing(Y')$ is contained in  $V_B(\sigma^\circ(\cJ))$, and, by Lemma \ref{blow},  $Y'$ is a regular subscheme of $B_+=B\setminus V_B(\sigma^\circ(\cJ))$  of codimension $d$. 
 
 The proof for the weak transform $\sigma^\circ(\cI)$ (with stronger assumptions in condition (2')) is identical.

\end{proof}
As a corollary, we obtain the following:
\begin{theorem} \label{regul22} Let $X$ be a smooth variety over a field $\kappa$ of any characteristic.
Let $Y\subset X$ be a closed integral subscheme of $X$ 
Assume 
 there is a locally monomial ideal $\cJ\supset \cI_Y$ on $X$ , with the cosupport $V(\cJ)$ of codimension $\geq 2$, and
 with the associated exceptional divisor $E$ on the normalized blow-up $\sigma:Y=bl_{\cJ}(X)\to X$, and the dual valuation complex $\Delta_E^N$ such that 
\begin{enumerate} 
\item $\Sing(V(\cI_Y))\subseteq V(\cJ)$.

\item For any  valuation face $\omega\in \Delta^N_E$,   the ideal  the singular locus $$\Sing_{\NN_{\omega}(X)}(V(\inn_{\omega}\cI_Y))\subseteq V(\inn^\circ_{\omega}\cJ) .$$

\end{enumerate}

Then there is a resolution of  $Y$ at $Z$, that is, a projective birational morphism $\phi: Y^{res}\to Y$ from a smooth variety $Y^{res}$  with the exceptional locus $Z\subset Y$, such that $\phi^{-1}(Z)$ is an SNC divisor on $Y'$.

\end{theorem}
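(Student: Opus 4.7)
The plan is to deduce Theorem \ref{regul22} from the ``cobordant resolution'' result Theorem \ref{regular} by descending along the torus quotient and finishing with a canonical combinatorial resolution of the residual toric singularities.

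First, I apply Theorem \ref{regular} to the same data $(X,Y,\cJ)$: its hypotheses are exactly conditions (1) and (2) stated here. The full cobordant blow-up $\sigma\colon B_+\to X$ of $\cJ$ therefore produces the strict transform $Y':=V(\sigma^s(\cI_Y))\subset B_+$, which is a regular closed subscheme of codimension $d=\codim_X(Y)$. By Theorem \ref{regular33}, the ambient $B_+$ is itself regular, and the exceptional divisor $D_+=V(t_1^{-1}\cdots t_k^{-1})$ is SNC. The stratum-by-stratum analysis in the proof of Theorem \ref{regular} (relying on Lemmas \ref{regul} and \ref{regular3}) shows that $Y'\cap s$ is regular of codimension $d$ inside each stratum $s\in S_{D_+}$, so $Y'$ meets $D_+$ transversally and $D_+\cap Y'$ is an SNC divisor on $Y'$.

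Second, I descend. The torus $T=T_B$ acts on $B_+$ with good quotient $B_+\sslash T=bl_\cJ(X)$, and since $\cI_Y$ is $T$-invariant on $X$, the strict transform $Y'$ is $T$-invariant in $B_+$. The good quotient $Y'\sslash T$ is identified with the strict transform of $Y$ in $bl_\cJ(X)$, which is proper birational over $Y$ (the hypothesis $\codim V(\cJ)\geq 2$ together with $\cJ\supseteq\cI_Y$ keeps the generic point of $Y$ outside $V(\cJ)$). By Proposition \ref{geometric}, I replace $B_+$ by a maximal $T$-stable open $B^s\subseteq B_+$ admitting a geometric quotient $B^s/T$ that is projective birational over $bl_\cJ(X)$; put $Y^s:=Y'\cap B^s$. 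Then $Y^s$ is smooth and $T$-invariant, its geometric quotient $Y^s/T$ is projective birational over $Y$, and $Y^s/T$ has at worst abelian (locally toric) quotient singularities, with toroidal boundary $(D_+\cap Y^s)/T$.

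Third, I eliminate the remaining locally toric singularities by the canonical combinatorial resolution of toroidal schemes in any characteristic, \cite[Theorem~7.17.1]{Wlodarczyk-functorial-toroidal}, applied to $Y^s/T$ together with its toroidal boundary. This produces a projective birational $Y^{\mathrm{res}}\to Y^s/T$ from a smooth $Y^{\mathrm{res}}$, isomorphic over the smooth locus, whose new exceptional divisor together with the pullback of the boundary is SNC. Composing, $\phi\colon Y^{\mathrm{res}}\to Y^s/T\to Y$ is the required projective birational morphism; its exceptional locus $Z$ is supported in the image of $D_+\cap Y'$ together with the image of the non-smooth locus of $Y^s/T$, and $\phi^{-1}(Z)$ is SNC by construction.

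The main obstacle is not the smoothness of the upstairs model $Y'$, which is delivered by Theorem \ref{regular}, but the verification that $Y^s/T$ is genuinely a toroidal scheme with its SNC boundary, so that the canonical combinatorial resolution applies and produces an SNC divisor downstairs. Concretely, one must match the transverse intersections $Y'\cap D_+$ of the $T$-scheme $B_+$ with the conical complex of $Y^s/T$, i.e.\ compare the divisorial stratification $S_{D_+}|_{Y^s}$ with the fan of $Y^s/T$ under the geometric quotient; this compatibility follows from the explicit local toric description of $B_+\to X$ in Theorem \ref{regular33} and Lemma \ref{cover4}, and once established the projectivity and SNC conclusions at each stage compose without further trouble.
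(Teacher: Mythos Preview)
Your argument follows the paper's route: apply Theorem~\ref{regular} to obtain the regular strict transform $Y'\subset B_+$, pass to a torus quotient, and then invoke the canonical combinatorial resolution \cite[Theorem~7.17.1]{Wlodarczyk-functorial-toroidal}. The one substantive difference is in the descent step. The paper does not go through Proposition~\ref{geometric} and a geometric quotient $B^s/T$; instead it argues directly that the good quotient $B_+\sslash T$ is locally toric, using the presentation of $B_+$ as a regular closed subscheme of a relative affine space over $X$ from Section~\ref{local22}, and applies the combinatorial resolution immediately to that. Your detour through $B^s$ is a legitimate alternative (the paper's Introduction explicitly lists it as such), but it forces you to justify additional statements---the transversality of $Y'$ with $D_+$, and the toroidality of $Y^s/T$ with the induced boundary---that the paper's more direct argument never needs. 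In particular, the ``main obstacle'' you flag in your last paragraph is one you created by taking the longer route; the paper's proof sidesteps it entirely by staying with the good quotient.
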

\begin{proof} Take the cobordant resolution $B_+\to X$ from Theorem \ref{regular}. We use Section \ref{local22} to embed  cobordant blow-up $B_+$ as a smooth subspace of the relative affine space $\AA^n_X$. This implies that $B_+\sslash T$ is locally toric.



The locally toric singularities of  $B_+\sslash T$
 can be canonically resolved by the combinatorial method of \cite[Theorem 7.17.1]{Wlodarczyk-functorial-toroidal}. This produces the projective birational resolution $Y'\to Y$ of $Y$ such that the inverse image of the singular point is an SNC divisor.

\end{proof}

\subsection{Resolution of hypersurfaces via the Newton method}





\subsubsection{The Newton polytope of a monomial ideal}
Let $X=\AA^k_Z=\Spec(\cO_Z[x_1,\ldots,x_k]),$
where $Z$ is a smooth scheme over $\kappa$, and $\cI\subset \cO_Z[x_1,\ldots,x_k]$ is an ideal. One can  extend the notion of the Newton polytope of monomial ideals
$I=({\bf x}^{\alpha_1},\ldots, {\bf x}^{\alpha_k})\subset \kappa[x_1,\ldots,x_n]$  
considered previously in Section \ref{Newt} in the case  $Z=\Spec(\kappa)$, where $\kappa$ is a field.

As before, by the associated {\it  Newton polytope} of $\cI$ we mean
 $${\rm P}_\cI:=\con(\alpha_1+\QQ_{\geq 0}^n,\ldots,\alpha_k+\QQ_{\geq 0}^n)\subseteq \QQ_{\geq 0}^n$$ 

Conversely, with   a polytope $P\subset \QQ_{\geq 0}^n$ we associate the monomial ideal $$\cI_P:=(x^\alpha\mid \alpha \in P\cap \ZZ^n).$$

\subsubsection{The initial forms defined by faces}
\begin{definition} (see \cite{AQ})
If  $\cI=(x^\alpha)_{\alpha\in A}\subset \cO_Z[x_1,\ldots,x_n]$ is a monomial ideal and $P_\cI$ is its Newton
polytope and $P\leq P_\cI$ be its face, we define  the {\it initial form with respect to a face of the $P_I$} to be:

$$\inv_{P}(\cI):=( {\bf x}^{\alpha} \mid {\alpha\in A\cap P}).$$
\end{definition}

The definition  $\inv_{P}(\cI)$  is a particular case of the notion o the initial form $\inv^\circ_{\omega}(\cI)$
with respect to a valuation face $\omega\in \Delta^N_E$.  

 By Corollary \ref{use} we obtain
 \begin{lemma} \label{use4} Let $\Delta^N_E$ be the valuation dual complex associated with a monomial ideal $\cI$, and let  $P=P_\cI$ be its Newton polytope.

 
 Any valuation face $\omega\in \Delta^N_E$ of the associated dual valuation complex $\Delta^N_E$ defines 
 the induced face   $$P_\omega:=P\cap \bigcap_{\nu\in \omega} H_\nu$$ of the Newton polytope $P$, for the supporting hyperplanes $H_\nu$ associated with $\nu$ and we have:
  $$\inv^\circ_{\omega}(\cI)=\inv_{P_\omega}(\cI).$$
  
  Conversely for any supporting  face ${P'}$ of $P$ there is 
 a valuation face $\omega\in \Delta^N_E$, such that $\inv^\circ_{\omega}(\cI)=\inv_{P_\omega}(\cI).$ \qed
 	
 \end{lemma}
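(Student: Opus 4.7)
The plan is to deduce this statement directly from Corollary \ref{use}, essentially as a repackaging of that result in terms of the dual valuation complex $\Delta^N_E$ rather than the abstract dual complex $\Delta_E$. Recall that $\Delta^N_E$ was defined precisely so that the natural isomorphism $\Delta_E \to \Delta^N_E$ sends the vertex corresponding to $E_i$ to the exceptional valuation $\nu_i$. In particular, each valuation face $\omega \in \Delta^N_E$ is the image of a unique simplex $\sigma \in \Delta_E$ with $\omega = \omega_\sigma$ in the notation of Corollary \ref{use}(2).

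First, I would prove the forward direction. Given $\omega \in \Delta^N_E$, let $\sigma \in \Delta_E$ be the corresponding face. By Corollary \ref{use}(3), the face of $P$ associated with $\sigma$ is
\[
P_\sigma \;=\; P \cap \bigcap_{\nu \in \omega_\sigma} H_\nu \;=\; P \cap \bigcap_{\nu \in \omega} H_\nu \;=\; P_\omega,
\]
and by Corollary \ref{use}(4) we have $\inv^\circ_\omega(\cI) = \inv^\circ_{\omega_\sigma}(\cI) = \inv_{P_\sigma}(\cI) = \inv_{P_\omega}(\cI)$, which is the claimed equality.

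For the converse, I would invoke the bijective correspondence established in Lemma \ref{delta}: the supporting hyperplanes $H_i$ of $P = P_\cI$ are in bijection with the vertices of $\Delta_E$, equivalently with the exceptional valuations. Given a supporting face $P' \leq P$, by Definition \ref{support} it is the intersection of some collection of supporting facets, whose defining hyperplanes $H_{\nu_{i_1}}, \dots, H_{\nu_{i_r}}$ correspond to vertices $\nu_{i_1}, \dots, \nu_{i_r}$ of $\Delta^N_E$. These vertices span a valuation face $\omega = \{\nu_{i_1},\dots,\nu_{i_r}\}$ of $\Delta^N_E$: indeed, the combinatorial duality of Lemma \ref{comb}, together with the identification of $\Delta_E$ as the dual complex of the normal fan of $P$, guarantees that the collection of vertices indexing a nonempty intersection of supporting facets is a simplex in $\Delta^N_E$. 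Then $P_\omega = P \cap \bigcap_{\nu \in \omega} H_\nu = P'$ and the forward direction gives $\inv^\circ_\omega(\cI) = \inv_{P'}(\cI)$.

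The only step requiring care is the verification that the vertices dual to the supporting facets containing a given supporting face $P'$ really do form a face of $\Delta^N_E$ (as opposed to an arbitrary vertex set); but this is immediate from the identification of $\Delta_E$ with the simplicial structure coming from the normal fan $\Delta_P$, via Lemmas \ref{delta} and \ref{comb}. No new computation beyond this combinatorial identification is required, so the proof should be essentially a one- or two-line appeal to Corollary \ref{use} plus a remark on the converse.
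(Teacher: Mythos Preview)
Your proposal is correct and follows exactly the approach the paper intends: the lemma is stated immediately after the sentence ``By Corollary \ref{use} we obtain'' and carries a \qed, so the paper treats it as a direct repackaging of Corollary \ref{use} in terms of $\Delta^N_E$. Your added justification of the converse via Lemmas \ref{comb} and \ref{delta} (that the exceptional vertices cutting out a supporting face lie in a common cone of the normal fan, hence span a simplex of $\Delta_E \simeq \Delta^N_E$) spells out a point the paper leaves implicit, but the route is the same.
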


 \begin{remark} The above correspondence is not bijective.
 Several valuation faces $\omega$ could define the same supporting face of $P$.
 The information encoded in the dual valuation complex is richer and can be applied to a more general setting.
 	
 \end{remark}
 \subsubsection{The Newton polytopes of polynomials and ideals}
 
 By the {\it Newton polytope} of the function $$f=\sum c_{\alpha} {\bf x}^\alpha\in  \cO(Z)[x_1,\ldots,x_k],$$ where $c_\alpha\in \cO(Z)$ we mean the Newton polytope of the monomial ideal $$\cJ_f:=(x^\alpha\mid c_{\alpha}\neq 0),$$ generated by the exponents $\alpha$  occurring in the presentation of $f$ with nonzero coefficients. 
 Note that $\cJ_f$ is the smallest monomial ideal which contains $f$.

This definition can be extended to any ideal $\cI\subset \cO_Z[x_1,\ldots,x_k]$. We associate with $\cI$ the monomial ideal $\cJ=\cJ_\cI$ generated by $\cI_f$, where $f\in \cI$. The Newton polytope $P_\cI$ of $\cI$ is simply the Newton polytope  of the monomial ideal $\cJ$.

If $P\leq P_\cI$ is a face of the Newton polytope $P_\cI$, and    $f=\sum_{\alpha\in A} c_{\alpha}{\bf x}^{\alpha}\in \cI$ we put $$\inv_{P}(f):=\sum_{\alpha\in A\cap P} c_{\alpha}{\bf x}^{\alpha}.$$
Then $\inv_{P}(\cI)$ is the ideal generated by $\inv_{P}(f)$, where $f\in \cI$. 

Let $\Delta^N_E$ be the dual valuation complex associated with a monomial ideal $\cJ\subset \cO_Z[x_1,\ldots,x_k]$. Recall that, by Section \ref{initial}, and using identification : \\ $\gr_{\omega}\cO_Z[x_1,\ldots,x_k]=\cO_Z[x_1,\ldots,x_k]$, for any valuation face $\omega\in \Delta^N_E$ we write $$\inv_{\omega}(f):=\sum_{\alpha\in A_{\omega,f}} c_{\alpha}{\bf x}^{\alpha}\in \gr_\omega(\cO_Z[x_1,\ldots,x_k])=\cO_Z[x_1,\ldots,x_k].$$

where $$A_{\omega,f}=\{\alpha\in A  \mid \nu(x^\alpha)=\nu(f),  \nu \in \omega\}.$$

 Similarly for the  ideal  $\cI\subset \cO_Z[x_1,\ldots,x_k]$ the ideal  of the initial forms $\inv_{\omega}^\circ(\cI)$
is generated by all $\inv_{\omega}(f)$, where $f\in \cI$, and $\nu(f)=\nu(\cI)$ for all  $\nu \in \omega$.
 
 The following is an immediate consequence of Lemma \ref{use4}, and the above:
 \begin{lemma} \label{use3} Let $P_f$ (respectively $P_\cI$) be the Newton polytope of $f=\sum c_\alpha x^\alpha$ (respectively of an ideal $\cI\subset \cO_Z[x_1,\ldots,x_k]$), and let $\cJ_f$ (respectively $\cJ_\cI$) be the associated monomial ideal. Then for any valuation face $\omega\in \Delta^N_E$ of the associated dual valuation complex $\Delta^N_E$ and the corresponding face $P_\omega$ of  $P$. 
 
  $$\inv_{\omega}(f)=\inv_{P_\omega}(f).$$
  $$\mbox{(respectively}\quad \inv_{\omega}^\circ(\cI)=\inv_{P_\omega}(\cI)).$$ \qed
 	
 \end{lemma}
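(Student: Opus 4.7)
The plan is to deduce the lemma directly from the definitions of the initial forms by matching supports monomial-by-monomial, using Lemma~\ref{use4} as the template. I would first prove the function-level equality $\inv_\omega(f) = \inv_{P_\omega}(f)$ and then bootstrap to ideals.

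For the function case, write $f = \sum_{\alpha\in A} c_\alpha x^\alpha$ with $A = \{\alpha \mid c_\alpha\neq 0\}$. Unwinding the definition in Section~\ref{initial}, one has $\inv_\omega(f) = \sum_{\alpha\in A_{\omega,f}} c_\alpha x^\alpha$, where $A_{\omega,f} = \{\alpha\in A \mid \nu(x^\alpha) = \nu(f)\text{ for every } \nu\in\omega\}$, while by definition $\inv_{P_\omega}(f) = \sum_{\alpha\in A\cap P_\omega} c_\alpha x^\alpha$. It therefore suffices to verify that $A_{\omega,f} = A\cap P_\omega$. Each $\nu\in\omega$ is the monomial valuation $\val(v_\nu)$ associated with a vertex $v_\nu\in\Ver(\Delta_E)$; in particular $\nu(x^\alpha) = (v_\nu,\alpha)$ and, since the monomials $x^\alpha$ with $\alpha\in A$ are linearly independent over $\cO(Z)$, $\nu(f) = \min_{\alpha\in A}(v_\nu,\alpha)$. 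Because the finite vertices of the Newton polyhedron $P_f = \con(A) + \QQ_{\geq 0}^k$ all lie in $A$ and $v_\nu$ has non-negative coordinates, this minimum agrees with the minimum of $(v_\nu,\cdot)$ over the whole polyhedron $P_f$, and equals $\nu(\cJ_f)$. Consequently the supporting hyperplane $H_\nu$ of $P_f$ (in the sense of Definition~\ref{support}) is precisely $\{(v_\nu,\cdot) = \nu(f)\}$, and for $\alpha\in A$ one has
\[
\alpha\in A\cap P_\omega \;\Longleftrightarrow\; \alpha\in A\cap \bigcap_{\nu\in\omega} H_\nu \;\Longleftrightarrow\; (v_\nu,\alpha) = \nu(f)\ \text{for all } \nu\in\omega \;\Longleftrightarrow\; \alpha\in A_{\omega,f},
\]
which settles the function case (this is essentially the content of Lemma~\ref{use4} read at the level of individual generators).

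For the ideal case there is a small asymmetry to reconcile: $\inv^\circ_\omega(\cI)$ is generated by $\inv_\omega(f)$ only for those $f\in\cI$ satisfying $\nu(f) = \nu(\cI)$ for every $\nu\in\omega$, whereas $\inv_{P_\omega}(\cI)$ is generated by $\inv_{P_\omega}(f)$ for every $f\in\cI$. I would handle this by observing that if $\nu(f) > \nu(\cI) = \nu(\cJ_\cI)$ for some $\nu\in\omega$, then every support exponent $\alpha$ of $f$ satisfies $(v_\nu,\alpha)\geq\nu(f)>\nu(\cJ_\cI)$, so $\alpha\notin H_\nu\supseteq P_\omega$; hence $\inv_{P_\omega}(f) = 0$ and such $f$ contributes no generator. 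For the remaining $f\in\cI$ with $\nu(f) = \nu(\cI)$ for every $\nu\in\omega$, the function case supplies $\inv_\omega(f) = \inv_{P_\omega}(f)$. Comparing generators, $\inv^\circ_\omega(\cI) = \inv_{P_\omega}(\cI)$.

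The only substantive step is the convex-geometry identity $\nu(f) = \min_{\beta\in P_f}(v_\nu,\beta) = \nu(\cJ_f)$, which hinges on the fact that the minimum of a functional with non-negative coordinates on $\con(A)+\QQ_{\geq 0}^k$ is attained at a vertex belonging to $A$. Everything else is bookkeeping that feeds into Lemma~\ref{use4}.
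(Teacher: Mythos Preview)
Your proof is correct and follows the same route as the paper, which treats the lemma as an immediate consequence of Lemma~\ref{use4} together with the definitions of $\inn_\omega(f)$, $\inn^\circ_\omega(\cI)$, and $\inn_{P}(f)$, $\inn_{P}(\cI)$ spelled out just before the statement; you have simply unpacked this ``immediate consequence'' monomial-by-monomial. One minor clarification worth making explicit in the ideal case: when you invoke the function case for those $f\in\cI$ with $\nu(f)=\nu(\cI)$ for all $\nu\in\omega$, the face $P_\omega$ is now cut out of $P_\cI$ (not $P_f$) by hyperplanes at level $\nu(\cI)$ rather than $\nu(f)$, but since these levels coincide and the support of $f$ sits inside $P_\cI$, the same equivalence $A_{\omega,f}=A\cap P_\omega$ goes through unchanged.
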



\subsubsection{Resolution by  the Newton polytopes}
The following is a particular case of Theorem \ref {regular} for hypersurfaces, written in a more straightforward setup. 

\begin{theorem} \label{A} $X=\AA^n_Z=\Spec \cO_Z[x_1,\ldots,x_k]$, where $Z$ is a regular scheme over  a field $\kappa$. Let $$f=\sum_{\alpha\in A_f} c_{\alpha}{\bf x}^{\alpha}\in \cO_Z[x_1,\ldots,x_k]$$  where   $c_{\alpha}\neq 0$  for $\alpha\in A_f$. Let $\cJ=({\bf x}^\alpha\mid \alpha\in A_f)^{\sat}$ be the induced monomial ideal, and $P_f=P_\cJ$  be its Newton polytope. 
Assume that
\begin{enumerate}
\item The cosupport $V(\cJ)$ is of codimension $\geq 2$,
 \item $\Sing(V((f))\subseteq  V(\cJ).$ 
\item For any supporting  face $P$ of $P_f$, $\Sing(V(\inn_{P}(f))\subset V(\inn_{P}(\cJ))$. 
\end{enumerate}
\item 
 Then the cobordant blow-up $B_+\to X$ of $\cJ$ resolves the singularity of $V(f)$.
That is, the strict transform $Y'=V(\sigma^s(f))$ of $Y$ (which coincides with   the weak transform $V(\sigma^\circ(f))$ of $Y$) is a regular subscheme of $B_+$. 	

\end{theorem}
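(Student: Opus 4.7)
\emph{Proof plan.} The plan is to recognize Theorem \ref{A} as a direct specialization of Theorem \ref{regular} to the hypersurface $Y = V(f)$, bridged by the combinatorial dictionary of Lemma \ref{use3} and Corollary \ref{use} between initial forms along valuation faces and initial forms along supporting faces of the Newton polytope. The ideal $\cJ$ is monomial in the fiber coordinates $x_1,\ldots,x_k$ on $X=\AA^k_Z$, hence locally monomial on $X$, and $(f) \subseteq \cJ$ by construction of $\cJ$.

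First I would verify the hypotheses of Theorem \ref{regular} applied to $(\cJ, Y=V(f))$. The codimension condition $\codim V(\cJ)\ge 2$ is our assumption~(1), and hypothesis~(1) of Theorem \ref{regular}, namely $\Sing(V(\cI_Y)) \subseteq V(\cJ)$, is our assumption~(2). For hypothesis~(2) of Theorem \ref{regular}, let $\omega \in \Delta^N_E$ be any valuation face. By Lemma \ref{use3} (and Corollary \ref{use}) $\omega$ determines a supporting face
$$P_\omega \;=\; P_f \cap \bigcap_{\nu\in\omega} H_\nu$$
of the Newton polytope $P_f$, together with the identifications
$$\inv_\omega(f) = \inv_{P_\omega}(f), \qquad \inv^\circ_\omega(\cJ) = \inv_{P_\omega}(\cJ).$$
Since $\cI_Y = (f)$ is principal, $\inv_\omega \cI_Y = (\inv_\omega f)$, so as a closed subscheme of the weighted normal bundle $\NN_\omega(X)$ we have $V(\inv_\omega \cI_Y) = V(\inv_{P_\omega} f)$. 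Our assumption~(3), applied at the supporting face $P_\omega$, therefore reads
$$\Sing_{\NN_\omega(X)}(V(\inv_\omega\cI_Y)) \;\subseteq\; V(\inv^\circ_\omega \cJ),$$
which is precisely hypothesis~(2) of Theorem \ref{regular}.

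Having verified all hypotheses, Theorem \ref{regular} immediately gives that the strict transform $V(\sigma^s(f))$ is regular in $B_+$ of codimension one. To finish, I would observe that the weak and strict transforms of the principal ideal $(f)$ coincide: unwinding Definition \ref{strict}, if $\alpha_0$ is (locally) the maximal multi-index with $f \in \cJ^{\alpha_0}$, then both $\sigma^\circ((f))$ and $\sigma^s((f))$ are generated by the single form ${\bf t}^{\alpha_0} f$, on which no further power of ${\bf t}^{-1}$ divides. The divergence between the two transforms arises only when an ideal has several generators contributing incomparable exponents, which is impossible for a principal ideal.

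\emph{Main obstacle.} The only non-routine step is the translation from the intrinsic valuation-theoretic hypothesis of Theorem \ref{regular} (stated in terms of weighted normal bundles and faces of the dual valuation complex $\Delta^N_E$) into the concrete combinatorial hypothesis of Theorem \ref{A} (stated in terms of supporting faces of the Newton polytope $P_f$, which is directly readable from the expansion of $f$). This is exactly the content of Lemma \ref{use3}; the correspondence $\omega \mapsto P_\omega$ need not be bijective, but what matters is that every valuation face arises via intersection with supporting hyperplanes and that the associated initial forms agree, so that assumption~(3) genuinely covers every $\omega$ appearing in Theorem \ref{regular}. Once this dictionary is in place, the rest is bookkeeping.
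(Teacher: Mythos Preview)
Your proposal is correct and follows essentially the same route as the paper: both reduce Theorem \ref{A} to Theorem \ref{regular} by using the dictionary (Lemmas \ref{use3} and \ref{use4}, the latter being the relevant consequence of Corollary \ref{use}) that identifies $\inv_\omega(f)=\inv_{P_\omega}(f)$ and $\inv^\circ_\omega(\cJ)=\inv_{P_\omega}(\cJ)$ for each valuation face $\omega\in\Delta^N_E$, so that assumption~(3) on supporting faces verifies hypothesis~(2) of Theorem \ref{regular}. Your additional remark on why $\sigma^s((f))=\sigma^\circ((f))$ for principal ideals is a welcome elaboration of a point the paper leaves implicit in the theorem statement.
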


\begin{proof}

By Lemmas \ref{use3}, \ref{use4}, and the assumption (3) we get that	$$\Sing(\inv_\omega(f))=\Sing(\inv^\circ_\omega(f))\subset  V(\inn^\circ_\omega(\cJ)),$$ for any $\omega\in \Delta^N_E$ and the corollary follows
from Theorem \ref{regular}. \end{proof}

\begin{remark} The theorem shows that in the case of hypersurface $V(f)$ the critical combinatorial information is 
related to the faces $P$ of the Newton polytope $P_f$. Generally,  one considers  the  dual valuation complex $\Delta^N_E$ associated with the ideal $\cI$. 
In such a case, $\inv_P(f)$ is replaced with more general $\inv_\omega(\cI)$, and the role of the Newton polytope of a monomial ideal is limited (see Theorems \ref{regular}, Theorems \ref{res2}). However, it still can be used in the context of the order of the ideals in $\cO_Z[x_1,\ldots,x_k]$ (see Theorems  \ref{res3}, \ref{res4}).
\end{remark}



One can easily  extend these results to the products of schemes:
\begin{theorem} \label{B}  Let $X=\prod_Z X_j$, where each  $X_j=\AA^{n_j}_Z=\Spec(\cO_Z[{\bf x}_{j}])=\Spec \cO_Z[x_{j1},\ldots,x_{jk_j}]$, where $Z$ is a regular scheme over  a field $\kappa$ for $j=1,\ldots, r$. Let $$f_j=\sum_{\alpha\in A_{f_j}} c_{j\alpha}{\bf x}^{j\alpha}\in \cO_Z[x_{j1},\ldots,x_{jk_j}]$$  where   $c_{j\alpha}\neq 0$ for $\alpha\in A_{f_j}$. Let $$\cJ_j=({\bf x}_j^{\alpha}\mid \alpha\in A_{f_j})^{\sat}\subset \cO_Z[{\bf x}_{j}]$$ be the induced monomial ideal, and $P_{f_j}:=P_{\cJ_j}$  be its Newton polytope in $\QQ^{k_j}$. 
Assume that for any $j=1,\ldots,r$
\begin{enumerate}
\item The cosupport $V(\cJ_j)$ is of codimension $\geq 2$,
\item $\Sing(V(f_j))\subseteq  V(\cJ_j).$ 
\item For any supporting  face $P$ of $P_{f_j}$,$\Sing(V(\inn_{P}(f))\subseteq V(\inn_{P}(\cJ_i))$. 
\end{enumerate}
\item 
 Then the cobordant blow-up $B_+\to X$ of $\prod_{j=1}^r\cO_X\cdot \cJ_i$ resolves the singularity of $V(f_1,\ldots,f_k)$. That is, the strict transform $Y'=V(\sigma^s((f_1,\ldots,f_k))$ of $Y$  is a regular subscheme of $B_+$. \end{theorem}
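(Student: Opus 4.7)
The plan is to derive Theorem \ref{B} from Theorem \ref{regular} applied to $Y=V(\cI_Y)$ with $\cI_Y=(f_1,\dots,f_r)$ and the locally monomial center $\cJ:=\prod_{j=1}^{r}\cO_X\cdot\cJ_j$ on the regular universally catenary scheme $X=\prod_Z X_j$. The key observation is that the hypothesis that each $f_j$ involves only the disjoint variable block ${\bf x}_j$ forces every relevant object to split compatibly along the $r$ factors.

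The first step is the combinatorial splitting. Since the $\cJ_j$ are monomial ideals in disjoint variables, a direct computation shows that the Newton polyhedron of $\cJ$ in $\QQ^{\sum n_j}$ is the Cartesian product $P_\cJ = P_{f_1}\times\cdots\times P_{f_r}$. Every primitive facet normal of $P_\cJ$ therefore has the form $(0,\dots,v_j,\dots,0)$ for a primitive facet normal $v_j$ of some $P_{f_j}$, so by Lemma \ref{delta} each vertex of $\Delta_E^N$ is an exceptional valuation of one of the $\cJ_j$ on $X_j$, extended trivially to $X$. Consequently every valuation face $\omega\in\Delta_E^N$ admits a unique decomposition $\omega=\omega_1\sqcup\cdots\sqcup\omega_r$ with each $\omega_j$ either empty or a valuation face of the dual complex of $\cJ_j$ on $X_j$. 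Since $\cO_X\simeq\cO_{X_1}\otimes_{\cO_Z}\cdots\otimes_{\cO_Z}\cO_{X_r}$ and the valuations in $\omega_j$ act only on the $j$-th factor, one obtains
\[
\gr_\omega(\cO_X)\;\simeq\;\gr_{\omega_1}(\cO_{X_1})\otimes_{\cO_Z}\cdots\otimes_{\cO_Z}\gr_{\omega_r}(\cO_{X_r}),\qquad \NN_\omega(X)\;\simeq\;\prod_Z\NN_{\omega_j}(X_j).
\]
Because each $f_j$ is a polynomial in ${\bf x}_j$ alone, one then checks
\[
\inn_\omega(\cI_Y)=\bigl(\inn_{\omega_1}(f_1),\dots,\inn_{\omega_r}(f_r)\bigr),\qquad \inn^\circ_\omega(\cJ)=\prod_{j=1}^r\inn^\circ_{\omega_j}(\cJ_j),
\]
with the convention that for empty $\omega_j$ one sets $\inn_{\omega_j}(f_j)=f_j$ and $\inn^\circ_{\omega_j}(\cJ_j)=\cJ_j$. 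By Lemma \ref{use3}, each nontrivial $\inn_{\omega_j}(f_j)$ coincides with $\inv_{P_{\omega_j}}(f_j)$ for a supporting face $P_{\omega_j}$ of $P_{f_j}$.

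The second step is to verify the two hypotheses of Theorem \ref{regular}. Because each $f_j$ depends only on ${\bf x}_j$, the Jacobian matrix of $(f_1,\dots,f_r)$ has row $i$ supported only in the columns indexed by ${\bf x}_i$, which makes the hypersurfaces $V(f_j)_X$ mutually transverse; hence $\Sing(V(\cI_Y))\subseteq\bigcup_j\Sing(V(f_j))_X$, which by hypothesis (2) of Theorem \ref{B} is contained in $\bigcup_j V(\cO_X\cdot\cJ_j)=V(\cJ)$. The same transversality argument applied on each $\NN_\omega(X)\simeq\prod_Z\NN_{\omega_j}(X_j)$ shows that the singular locus of $V(\inn_\omega\cI_Y)$ is covered by the pullbacks of the singular loci of the factors $V(\inn_{\omega_j}(f_j))$, which by hypothesis (3) (or hypothesis (2) when $\omega_j$ is empty) of Theorem \ref{B}, combined with Lemma \ref{use3}, lie inside $V(\inn_{\omega_j}(\cJ_j))$ and hence inside $V(\inn^\circ_\omega\cJ)=\bigcup_j V(\inn^\circ_{\omega_j}(\cJ_j))$. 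Theorem \ref{regular} then produces the regular strict transform $Y'=V(\sigma^s(\cI_Y))$ inside $B_+$ of the expected codimension.

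The main obstacle is the rigorous justification of the factorization $\inn_\omega(\cI_Y)=(\inn_{\omega_1}(f_1),\dots,\inn_{\omega_r}(f_r))$, since a priori this ideal is generated by $\inn_\omega(g)$ for every $g\in\cI_Y$, not merely the listed generators; put differently, one must show that $f_1,\dots,f_r$ form an $\omega$-Gr\"obner basis of $\cI_Y$. This reduces to the observation that the tensor-product splitting of $\gr_\omega(\cO_X)$ along the disjoint variable blocks precludes cancellation between the initial forms of different $f_j$: after writing $g=\sum h_j f_j$, the contributions $\inn_\omega(h_j f_j)=\inn_\omega(h_j)\inn_\omega(f_j)$ lie in distinct multigraded pieces, so summing and decomposing into homogeneous components already yields an element of the ideal generated by the $\inn_\omega(f_j)$.
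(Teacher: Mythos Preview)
Your approach is correct in outline but takes a considerably longer route than the paper's. The paper's proof is a single line: it observes that the cobordant blow-up of $\prod_j\cO_X\cdot\cJ_j$ on $X=\prod_Z X_j$ decomposes as the fibre product $B_+\simeq\prod_Z B_{j+}$ over $Z$ (this is immediate from the explicit description of the Cox algebra in Theorem~\ref{regular33}, since the exceptional valuations, the local parameters, and the dummy variables $t_i$ all separate by block), and then invokes Theorem~\ref{A} on each factor. The strict transform of $V(f_1,\dots,f_r)$ is then visibly the fibre product $\prod_Z V(\sigma^s_j(f_j))$ of regular subschemes smooth over $Z$, hence regular. Your ``combinatorial splitting'' is precisely what underlies this product decomposition, so you are reproving it and then discarding the payoff: instead of reducing to the already-established single-factor case, you re-verify the hypotheses of Theorem~\ref{regular} on the total space from scratch.

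This forces you into the Gr\"obner-basis question, which the paper's approach bypasses entirely, and your sketch of that argument has a real gap. For a general $h_j\in\cO_X$ the symbol $\inn_\omega(h_j)$ need not be defined: under a \emph{multi}-valuation filtration an element can satisfy $h_j\in\cJ_\omega^{>\alpha}$ for every $\alpha$ with $h_j\in\cJ_\omega^\alpha$ (for instance $h_j=x_{1,1}+x_{2,1}$ when $\omega$ contains one valuation from each of the first two blocks). Hence the identity $\inn_\omega(h_jf_j)=\inn_\omega(h_j)\inn_\omega(f_j)$ is not even well-posed, and the ``distinct multigraded pieces'' claim is false when two products $h_jf_j$ share a multidegree. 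A sound argument for $\inn_\omega(\cI_Y)=(\inn_{\omega_j}(f_j))_j$ goes instead through the tensor structure you already established: since $\cO_X/\cI_Y\simeq\bigotimes_{\cO_Z}\cO_{X_j}/(f_j)$ and the $\omega$-filtration is the tensor filtration, one has $\gr_\omega(\cO_X/\cI_Y)\simeq\bigotimes_{\cO_Z}\gr_{\omega_j}(\cO_{X_j}/(f_j))$, and for each single factor the principal case $\inn_{\omega_j}((f_j))=(\inn_{\omega_j}(f_j))$ is the one actually used (and implicitly justified) in the proof of Theorem~\ref{A}.
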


\begin{proof}
The  cobordant blow-up of $\prod_{j=1}^r\cO_X\cdot\cJ_i$   is equal to the product over $Z$ of the cobordant blow-ups $B_{j+}$ of  $\cJ_i$ on  $\Spec(\cO_Z[{\bf x}_j])$, each of which is smooth over $Z$.

\end{proof}

\subsection{The Abramovich-Quek resolution}

The following result is  due to Abramovich-Quek (with some minor modifications):

\begin{corollary}\cite[Theorem 5.1.2]{AQ} \label{AQ21} Let $X=\AA^n_Z=\Spec \cO_Z[x_1,\ldots,x_n]$, where $Z$ is a regular scheme over a field. Consider the induced SNC divisor $D:=V(x_1\cdot\ldots\cdot x_n)$.
Let $$f=\sum_{\alpha\in A_f} c_{\alpha}{\bf x}^{\alpha}\in \cO(Z)[x_1,\ldots,x_k]$$  where   $c_{\alpha}\neq 0$  for $\alpha\in A_f$. Let $\cJ=\cJ_f:=({\bf x}^\alpha\mid \alpha\in A_f)$ be the associated monomial ideal, and $P_f:=P_\cJ$  be its Newton polytope. 
Assume that the cosupport $V(\cJ)$ is of codimension $\geq 2$, and
 for any   face $P$ of $P_f$, the ideal $(\inn_P(f))$ determines a smooth subscheme outside of $D$. 

 Then the cobordant blow-up $B_+\to X$ of $\cJ$ resolves the singularity of $V(f)$. That is, the strict transform $Y'=V(\sigma^s(f))=V(\sigma^\circ(f))$ of $Y$  is a regular subscheme of $B_+$. 

\end{corollary}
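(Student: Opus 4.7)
The plan is to deduce this from Theorem \ref{A} by verifying its three hypotheses for $\cJ = \cJ_f$. Hypothesis (1) is given. A preliminary observation for (2) and (3) is that any monomial ideal $\cK$ on $X$ satisfies $V(\cK) \subseteq D$, because outside $D$ every $x_i$ is invertible and no nonzero monomial can vanish; thus $V(\cJ)$ and each $V(\inn_P \cJ)$ already lie in $D$.

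The crux I would establish is a single uniform claim: \emph{for every face $P$ of $P_f$, $\Sing(V(\inn_P f)) \subseteq V(\inn_P \cJ)$.} Applied to supporting faces this yields hypothesis (3) of Theorem \ref{A}; applied to the improper face $P = P_f$ it yields (2), since $\inn_{P_f} f = f$ and $\inn_{P_f} \cJ = \cJ$. I would prove the claim by contradiction: suppose $p \in \Sing(V(\inn_P f)) \setminus V(\inn_P \cJ)$ and set $J_p := \{i : x_i(p) = 0\}$ with complement $I_p$. Since $p \notin V(\inn_P \cJ)$, some $\alpha \in A_f \cap P$ has $\mathbf{x}^\alpha(p) \neq 0$, forcing $\alpha_i = 0$ for $i \in J_p$; hence the axis subface
\[ \tilde P := \{\beta \in P : \beta_i = 0 \text{ for all } i \in J_p\} \]
is a non-empty face of $P_f$ (faces of a polyhedron are closed under intersection) containing $\alpha$.

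The main step—and the only one I expect to require genuine work—is to show that $p \in \Sing(V(\inn_{\tilde P} f))$. Writing $\inn_P f = \inn_{\tilde P} f + r$ where every monomial in $r$ carries a factor $x_i^{\beta_i}$ with $i \in J_p$ and $\beta_i \geq 1$, a direct partial-derivative bookkeeping shows $\partial r/\partial x_j(p) = 0$ for $j \in I_p$ (the $J_p$-factors persist after differentiating), while $\inn_{\tilde P} f$ involves no variables indexed by $J_p$, so $\partial(\inn_{\tilde P} f)/\partial x_j \equiv 0$ for $j \in J_p$. Hence vanishing of every $\partial(\inn_P f)/\partial x_j(p)$ propagates to vanishing of every $\partial(\inn_{\tilde P} f)/\partial x_j(p)$, and combined with $\inn_{\tilde P}f(p) = \inn_P f(p) = 0$ this gives singularity at $p$. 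Because $\inn_{\tilde P} f$ depends only on $\{x_i : i \in I_p\}$, the scheme $V(\inn_{\tilde P} f)$ is a cylinder $W \times \AA^{J_p}$ whose singular locus is $\AA^{J_p}$-invariant. Choosing $q \in \AA^{J_p}$ with every coordinate nonzero produces a singular point of $V(\inn_{\tilde P} f)$ with \emph{all} ambient coordinates nonzero, hence outside $D$—contradicting the corollary's smoothness hypothesis for the face $\tilde P$ of $P_f$. Invoking Theorem \ref{A} then produces the cobordant resolution.
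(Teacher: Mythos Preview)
Your proof is correct and follows essentially the same strategy as the paper: both reduce to Theorem~\ref{A} and, for a point (or stratum) outside $V(\inn_{P}\cJ)$, pass to the axis-face $\tilde P = P\cap\{\beta_i=0: i\in J_p\}$ (the paper writes this as $P'\cap\tau^*$) and invoke the hypothesis on that smaller face. The only cosmetic difference is that the paper argues via restriction to orbit-stratum closures (``local parameter on $\overline{s_\tau}$ $\Rightarrow$ local parameter on $X$'') rather than your Jacobian/cylinder-translation bookkeeping; note that over a general regular base $Z$ your derivative check should also record that the $Z$-direction partials of $r$ vanish at $p$, which holds for the same reason (every monomial of $r$ carries a $J_p$-factor).
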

\begin{remark} Note that unlike in the original formulation the coefficients
$c_{\alpha}$ are not necessarily invertible. Theorem \ref{AQ21}	 is further generalized for the ideals in the context of order. See Remark \ref{order2}.
\end{remark}

\begin{proof} Let $\sigma_0^\vee=\QQ_{\geq 0}^n=\langle x_1,\ldots,x_n \rangle$ be the cone corresponding to the ring $$\cO_Z[x_1,\ldots,x_n].$$ 



It suffices to  show that  conditions (2), (3) of Theorem \ref{A} are satisfied.

To prove condition (3)
let $P'$ be any supporting face of $P_f$. By  Lemma \ref{strata}, there is a stratification of $X$ with  strata ${s_\tau}$, where $\tau$ is a face of $\sigma_0$ .  It is determined by the pull-back of the orbit stratification on $X_{\sigma_0}$, via $X=X_{\sigma_0}\times Z \to X_{\sigma_0}$.

Assume that $\overline{s_\tau}$ is not in $V(\inn_{P'}^\circ(\cJ))$. This means, by Corollary \ref{orbit}, that $\tau^*$ intersects $P'$ so we
consider the face $P:=P'\cap \tau^*$. 
Moreover, by Lemma \ref{strata}, we can write the closure of the stratum $s_\tau$ as $$\overline{s_\tau}:=V(x_i \mid x_i \not \in \tau^*).$$ 
On the other hand, since $P\subset \tau^*$, the polynomial $\inn_{P}(f)\in \cO_Z[x_i\in \tau^*]\subset \cO_Z[x_1,\ldots,x_n]$ can be identified with $$\inn_{P}(f)_{|\overline{s_\tau}}\in \cO_Z[x_1,\ldots,x_n]/(x_i \mid x_i \not \in \tau^*)\simeq \cO_Z[x_i\in \tau^*].$$
Now $\inn_{P}(f)$ is simply equal to
$$\inn_{P}(f)_{|\overline{s_\tau}}=\inn_{P'}(f)_{|\overline{s_\tau}},$$


By the assumption  $\inn_{P}(f)\in \cO_Z[x_i\in \tau^*]$ is a local parameter on 
\begin{align*}
&\Spec(\cO_Z[x_1,\ldots,x_n]\setminus V(\prod {x_i})\quad =\\&(\Spec(\cO_Z[x_i\in \tau^*])\setminus V(\prod_{x_i\in  \tau^*} x_i ))\quad \times \quad (\Spec(\cO_Z[x_i\not\in \tau^*])\setminus V(\prod_{x_i\not\in  \tau^*} x_i )),\end{align*}
and on 
\begin{align*} &(\Spec(\cO_Z[x_i\in \tau^*])\setminus V(\prod_{x_i\in  \tau^*} x_i ))\simeq \\ &  \simeq V(x_i \mid x_i \not \in \tau^*)\setminus V(\prod_{x_i\in  \tau^*} x_i ))=\overline{s_\tau}\setminus V(\prod_{x_i\in  \tau^*} x_i ))=s_\tau\end{align*}
 Consequently 
 $\inn_{P}(f)_{|\overline{s_\tau}}=\inn_{P'}(f)_{|\overline{s_\tau}}$
defines a local parameter on the stratum $s_\tau$.
This implies that $\inn_{P'}(f)$ is a local parameter on all strata $s_\tau$ outside of $V(\inn^\circ _{P'}(\cJ))$.

The proof of condition (2) is similar.
Consider any face $\tau$ of $\sigma_0$.
If $\overline{s_\tau}$ is not in $V(\cJ)$, then, by Corollary \ref{orbit},  $\tau$ intersects $P_f$ so we
consider the face $P:=P_f\cap \tau^*$. 
Consequently, by the assumption
$$(\inn_{P}(f))_{|\overline{s_\tau}}=f_{|\overline{s_\tau}}\in \cO_Z[x_i\in \tau^*]$$
defines a local parameter on the stratum $$s_\tau=\overline{s_\tau}\setminus V(\prod_{x_i\in  \tau^*} x_i ),$$ 
This implies that $f$ is  a local parameter on all strata $s_\tau$ outside of $V(\cJ)$, showing  condition (2) of Theorem \ref{A} and completing  the proof.

\end{proof}

\begin{corollary} \label{BB}  Let  $Z$ be  a regular scheme over  a field $\kappa$, and  $X=\prod_Z X_j$, where  $X_j:=\AA^{n_j}_Z=\Spec \cO(Z)[x_{j1},\ldots,x_{jk_j}]$ for $j=1,\ldots, r$. Let $$f_j=\sum_{\alpha\in A_{f_j}} c_{j\alpha}{\bf x}^{j\alpha}\in \cO_Z[x_{j1},\ldots,x_{jk_j}]$$  where   $c_{j\alpha}\neq 0$  for $\alpha\in A_{f_j}$. Let $\cJ_j:=({\bf x}_j^{\alpha}\mid \alpha\in A_{f_j})$ be the induced monomial ideal, and $P_{f_j}:=P_{\cJ_j}$  be its Newton polytope in $\QQ^{k_j}$. 
Assume that for any $j=1,\ldots,r$ the cosupport $V(\cJ_j)$ is of codimension $\geq 2$,
and  for any   face $P$ of $P_{f_j}$, the ideal $(\inn_P(f_j))$ determines a smooth subscheme outside of $V(x_{j1}\cdot\ldots\cdot x_{jk_j})$. 
Then the cobordant blow-up $B_+\to X$ of $\prod_{j=1}^r\cO_X\cdot\cJ_i$ resolves the singularities of $V(f_1,\ldots,f_k)$. That is, the strict transform $Y'=V(\sigma^s((f_1,\ldots,f_k))$ of $Y$  is a regular subscheme of $B_+$. 
\end{corollary}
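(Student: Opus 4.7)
The strategy is to apply Corollary~\ref{AQ21} to each factor individually and then combine the results using the product factorization already exploited in the proof of Theorem~\ref{B}.

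Specifically, the hypotheses on each triple $(X_j, f_j, \cJ_j)$ match verbatim those of Corollary~\ref{AQ21}, with the SNC divisor $D$ taken to be $V(x_{j1}\cdots x_{jk_j})$. Hence the argument in the proof of Corollary~\ref{AQ21}, run factor by factor, shows that hypotheses~(2) and~(3) of Theorem~\ref{A} hold for the pair $(f_j, \cJ_j)$ on $X_j$ for every $j$. Together with hypothesis~(1), which is assumed directly, this verifies the complete hypothesis list of Theorem~\ref{B}.

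Invoking Theorem~\ref{B}, the cobordant blow-up of $\prod_j \cO_X \cdot \cJ_j$ on $X = \prod_Z X_j$---which, by the product-of-Rees-algebras computation recorded in the proof of Theorem~\ref{B}, factors as the fiber product $B_+ \simeq B_{1+} \times_Z \cdots \times_Z B_{r+}$ of the individual cobordant blow-ups $B_{j+} \to X_j$ (each smooth over $Z$)---yields a regular ambient scheme in which the strict transform of $V(f_1,\ldots,f_r)$ is regular.

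\emph{Main obstacle.} There is no genuinely new technical obstacle beyond those already handled in Corollary~\ref{AQ21} and Theorem~\ref{B}; the proof is essentially a bookkeeping exercise combining the two. The only small verification is that the faces of the Newton polytope $P_{f_j}$ quantified over in the hypothesis of Corollary~\ref{BB} are exactly those needed to invoke Corollary~\ref{AQ21}, which matches since both statements range over \emph{all} faces of $P_{f_j}$ (not merely the supporting ones, which is what Theorem~\ref{A} alone would require). If one wished to avoid the black-box citation of Theorem~\ref{B}, the only substantive step would be to check directly that $\sigma^s(f_1,\ldots,f_r) = (\sigma^s(f_1),\ldots,\sigma^s(f_r))$ in $\cO_{B_+}$, which follows immediately from the disjointness of the variable sets and the fact that the cobordant unknowns ${\bf t}_j$ attached to different factors are algebraically independent.
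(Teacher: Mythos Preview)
The paper states Corollary~\ref{BB} without proof. Your proposal is precisely the intended argument: Corollary~\ref{AQ21} applied to each factor $(X_j,f_j,\cJ_j)$ furnishes the per-factor resolution, and the product argument already spelled out in the proof of Theorem~\ref{B} assembles these into the claimed result. One minor caution: Theorem~\ref{B} is phrased with the \emph{saturated} ideals $\cJ_j^{\sat}$, whereas Corollary~\ref{BB} and Corollary~\ref{AQ21} use the unsaturated $\cJ_j$; since saturation does not change the normalized blow-up (hence not the cobordant blow-up), this is harmless, but it is cleaner to bypass Theorem~\ref{B} and simply repeat its one-line product argument directly with Corollary~\ref{AQ21} in place of Theorem~\ref{A}.
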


\subsubsection{Examples of resolution}
\begin{theorem} \label{res}

Let $X=\Spec_Z (\cO_Z[x_1,\ldots,x_k])$, where $Z$ is a smooth variety over  a field $\kappa$. 
Consider the closed subscheme $Y$  on $X$ defined by a function $f\in H^0(X,\cO_X)$ of  the form 
$$f=\sum_{i=1}^k c_{\alpha_i}(v) {\bf x}^{\alpha_i},$$ 
where $c_{\alpha_i}(v)\in \cO(Z)^*$ are invertible.
 
Assume that for the presentation of $f$, one of the following holds:
\begin{itemize}
\item  $\cha(\kappa)=0$, and
for any $\alpha_i$ except possibly one, there is a
variable $x_{j_i}$ such that a power $x_{j_i}^{a_{j_i}}$ of $x_{j_i}$, occurs in ${\bf x}^{\alpha_i}$ and $x_{j_i}$ does not occur in the others ${\bf x}^{\alpha_j}$ for $j\neq i$.

\item  $\cha(\kappa)=p$, and for any $\alpha_i$ except possibly one there is a
variable $x_{j_i}$ such that a power    $x_{j_i}^{a_{j_i}}$ of $x_{j_i}$, occurs in ${\bf x}^{\alpha_i}$, with $p\nmid a_{j_i}$ and $x_{j_i}$ does not occur in the others ${\bf x}^{\alpha_j}$ for $j\neq i$ except as some $k\cdot p$-th power for $k\in \NN$.


\end{itemize}
	
	Then the cobordant blow-up $B_+\to X$ of $\cJ=({\bf x}^{\alpha_1},\ldots, {\bf x}^{\alpha_k})$ resolves singularity, so that the strict transform $\pi_B^s(f)$ determines a regular subscheme $\sigma^s(Y)$ of $B_+$.
\end{theorem}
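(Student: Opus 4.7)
The plan is to invoke Theorem~\ref{A} directly, which reduces the problem to verifying three hypotheses: (1) $V(\cJ)$ has codimension $\geq 2$; (2) $\Sing(V(f)) \subseteq V(\cJ)$; and (3) for every supporting face $P$ of the Newton polytope $P_f = P_\cJ$, $\Sing(V(\inv_P(f))) \subseteq V(\inv_P(\cJ))$. For (1), the variables $x_{j_2},\ldots,x_{j_k}$ are pairwise distinct (from the hypothesis that $x_{j_i}$ lies in $\mathbf{x}^{\alpha_i}$ but not in any other $\mathbf{x}^{\alpha_{i'}}$), so $V(\cJ) \subseteq V(x_{j_2},\ldots,x_{j_k}) \cap V(\mathbf{x}^{\alpha_1})$ has codimension at least $2$ in the nontrivial case where $\mathbf{x}^{\alpha_1}$ is nonconstant.

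The core calculation is the partial derivative $\partial f/\partial x_{j_i}$ for $i \geq 2$. In characteristic zero, since $x_{j_i}$ occurs in no $\mathbf{x}^{\alpha_{i'}}$ with $i' \neq i$, only the $i$-th summand contributes, giving
\[
\frac{\partial f}{\partial x_{j_i}} \;=\; a_{j_i}\,c_{\alpha_i}(v)\,\frac{\mathbf{x}^{\alpha_i}}{x_{j_i}}.
\]
In characteristic $p>0$, the terms where $x_{j_i}$ appears only as a $(kp)$-th power differentiate to zero and $p \nmid a_{j_i}$ guarantees $a_{j_i} \neq 0$ in $\kappa$, so the same formula holds. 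Since $x_{j_i}^{a_{j_i}}$ divides $\mathbf{x}^{\alpha_i}$, the quotient $\mathbf{x}^{\alpha_i}/x_{j_i}$ is a unit wherever $\mathbf{x}^{\alpha_i}$ is.

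Condition (2) then follows: at any point $p \notin V(\cJ)$, some $\mathbf{x}^{\alpha_i}(p) \neq 0$; if this $i \geq 2$, the derivative above is invertible at $p$, so $V(f)$ is smooth there; if instead only $\mathbf{x}^{\alpha_1}(p) \neq 0$, then $f(p) = c_{\alpha_1}(v(p))\,\mathbf{x}^{\alpha_1}(p) \neq 0$, so $p \notin V(f)$ and smoothness is vacuous. For condition (3), I would use Lemma~\ref{use3} to identify $\inv_P(f) = \sum_{i \in I} c_{\alpha_i}(v)\mathbf{x}^{\alpha_i}$ and $\inv_P(\cJ) = (\mathbf{x}^{\alpha_i} : i \in I)$ for the index set $I = \{i : \alpha_i \in P\}$, and then repeat the same derivative argument: if $I$ contains some $i \geq 2$, restricting to $I$ does not alter the occurrences of $x_{j_i}$, so $\partial(\inv_P f)/\partial x_{j_i}$ is still given by the displayed formula and smoothness follows; if $I \subseteq \{1\}$, then $V(\inv_P f) \subseteq V(\inv_P \cJ)$ and there is nothing to check.

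The main obstacle will be pinning down the characteristic-$p$ bookkeeping: one must confirm that the assumption that other monomials contain $x_{j_i}$ only as $(kp)$-th powers is robust enough that, even after restricting to the face $P$, no accidental nonvanishing derivative term survives. Once that is in hand on every supporting face, Theorem~\ref{A} delivers directly the conclusion that the strict transform of $V(f)$ under the cobordant blow-up of $\cJ$ is regular in $B_+$.
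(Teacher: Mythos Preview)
Your approach is essentially the same as the paper's: both reduce to Theorem~\ref{A} by computing $x_{j_i}\,\partial f/\partial x_{j_i} = a_{j_i}\,c_{\alpha_i}(v)\,\mathbf{x}^{\alpha_i}$ (up to terms killed by the characteristic-$p$ hypothesis), which shows that the ideal $\cD(f)$ generated by $f$ and its partials contains $\cJ$, whence $\Sing(V(f))=V(\cD(f))\subseteq V(\cJ)$, and then repeat this verbatim for each $\inv_P(f)$. The paper phrases this at the level of ideals rather than pointwise, but the content is identical. One caveat: your argument for condition~(1) of Theorem~\ref{A} is not right as written --- the inclusion $V(\cJ)\subseteq V(x_{j_2},\ldots,x_{j_k})$ fails in general, since $\mathbf{x}^{\alpha_i}(p)=0$ does not force $x_{j_i}(p)=0$ --- though the paper's own proof does not verify this hypothesis either and appears to treat it as implicit.
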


\begin{proof}
Let $\cD(f)$ be the ideal generated by $f$, and all the derivatives  $D(f)$. At any point $p$ of $\Sing(f)$ , we have that $\ord_p(f)\geq 2$ which implies  $\Sing(V(f))=V(\cD(f))$.
 But the ideal $\cD(f)$ contains all but possibly one monomials ${\bf x}^{\alpha_i}\sim x_{j(i)}D_{x_{j(i)}}(f)$  .  Since $f=\sum_{i=1}^k c_{\alpha_i}(v) {\bf x}^{\alpha_i},$ and all but at most one monomial   
	${\bf x}^{\alpha_i}$ are in $\cD(f)$ we conclude that $\cD(f)\supseteq \cJ=({\bf x}^{\alpha_1},\ldots, {\bf x}^{\alpha_k}).$ So $$\Sing(Y)=V(\cD(f))\subseteq V(\cJ).$$ 

Similarly  $$\Sing(\inn_P(f))=V(\cD(\inn_P(f))\subseteq V((\inn^\circ_P(\cJ)).$$

Thus the conditions of Theorem \ref{A} 
are satisfied. 


\end{proof}

\begin{example}   \label{1111} Let $f=x_1^{a_1}+\ldots+x_k^{a_k}\in \kappa[x_1,\ldots,x_k]$, where the characteristic $p$ divides at most one $a_i$.
Then the cobordant blow-up of $(x_1^{a_1},\ldots,x_k^{a_k})$ resolves singularity. 
By Example \ref{weighted}, it is given by $$B=\Spec_X(\cO_X[{\bf t}^{-1},x_1{\bf t}^{w_1},\ldots,x_k{\bf t}^w_k])$$
 $$B_+=B\setminus V_B(\sigma^s(\cJ))= B\setminus V_B(x_1{\bf t}^{w_1},\ldots,x_k{\bf t}^{w_k}).$$

The morphism $B_+\to X$ is interpreted  in Section \ref{weighted2} as  the cobordant blow-up of 
 the weighted center $\cJ=(x_1^{1/w_1},\ldots,x_k^{1/w_k})$, such that $\cO_{B_+}\cdot \cJ=\cO_{B_+}\cdot {\bf t}^{-1}$.
\end{example}

\begin{example} $$x_1^p+ax_2^px_3+bx_1x_4^px_5^{p^2}\in \kappa[x_1,x_2,x_3,x_4,x_5,], $$  where $a,b\in \kappa^*$can be resolved by the single cobordant blow-up of $$\cJ=(x_1^p,x_2^px_3,x_1x_4^px_5^{p^2})$$ over a field $\kappa$ of   characteristic $p$.  Here for $x_2^px_3$ the variable $x_3$ does not occur in the other terms, and for $x_1x_4^px_5^{p^2}$ the coordinate $x_1$ occurs in the other terms as $x_1^p$ -power or does not show at all.

\end{example}

\begin{example}
$$x_1^2x_2^5+7x_4^7x_3^5+25 x_1x_3^6  \in \kappa[x_1,x_2,x_3,x_4] $$  
can be resolved by the cobordant blow-up of  $$\cJ=(x_1^2x_2^5, x_4^7x_3^5, x_1x_3^6)$$
	over a field $\kappa$ of   ${\cha(\kappa)}\neq 5,7$. We use $x_2$ for 
$x_1^2x_2^5$, and $x_4$ for $x_4^7x_3^5$.
\end{example}

\begin{theorem} \label{res2} Let $Z$ be a smooth variety over a field $\kappa$.
Let  $$X=\Spec_Z(\cO_Z[x_1,\ldots,x_n])=\Spec_Z(\cO_Z[{\bf  x}_1,\ldots,{\bf  x}_r]),$$ where 
$${\bf  x}_i:=(x_{k_{i-1}},\ldots,x_{k_{i}-1}),$$ for $k_0=1<k_1<\ldots <k_r=n+1$.
Consider the closed subscheme $Y$ of $X$   defined by  the set of the polynomial functions $f_j\in H^0(X,\cO_X)$, where $j=1,\ldots, r$ of  the form 
$$f_j=\sum_{i=1}^{r_j} c_{\alpha_{ij}}(v) {\bf  x}_j^{\alpha_{ij}},$$ 
where $c_{\alpha_{ij}}(v)\in \cO(Z)^*$ are invertible.
 
Assume that for any $j=1,\ldots, r$ and for the presentation of $f_j$ 
  one of the following holds:
\begin{itemize}
\item  $\cha(\kappa)=0$, and
for any $\alpha_{ij}$ except possibly one, there is a
variable $x_{j_i}$ such that a power $x_{j_i}^{a_{j_i}}$ of $x_{j_i}$, occurs in ${\bf x}^{\alpha_{ij}}$ and $x_{j_i}$ does not occur in the others ${\bf x}^{\alpha_{i'j}}$ for $i'\neq i$.
\item  $\cha(\kappa)=p$, and for any $\alpha_{ij}$ except possibly one there is a
variable $x_{j_i}$ such that a power $x_{j_i}^{a_{j_i}}$ of $x_{j_i}$ occurs in ${\bf x}^{\alpha_{ij}}$ , with $p\nmid a_{j_i}$  and $x_{j_i}$ does not occur in the others ${\bf x}^{\alpha_{i'j}}$ for $i'\neq i$  except as some $k\cdot p$-th power for $k\in \NN$.
\end{itemize}
	
	Then the cobordant blow-up $B_+\to X$ of $$\cJ=\prod_{j=1}^r({\bf x}_j^{\alpha_1},\ldots, {\bf x}_j^{\alpha_k})$$ resolves singularity, so that the strict transform $\sigma^s(f_1,\ldots,f_r)$ determines a smooth subvariety of $B_+$.
\end{theorem}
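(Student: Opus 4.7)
This theorem is a product-of-hypersurfaces version of Theorem \ref{res}, and my strategy is to reduce it to an application of Theorem \ref{B} after verifying the hypotheses of Theorem \ref{A} on each factor. Concretely, set $X_j := \Spec_Z(\cO_Z[{\bf x}_j])$ so that $X = \prod_Z X_j$, and regard each $f_j$ as living on $X_j$ only. The ideal $\cJ = \prod_j \cO_X \cdot \cJ_j$ then has the product structure of Theorem \ref{B}, whose cobordant blow-up is the fiber product over $Z$ of the individual $B_{j+}\to X_j$. Consequently, it suffices to check, for each index $j$ separately, that the triple $(f_j, \cJ_j, P_{f_j})$ satisfies conditions (1)--(3) of Theorem \ref{A} on $X_j$.

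For each fixed $j$, I would run the Jacobian argument from the proof of Theorem \ref{res}. For every monomial ${\bf x}_j^{\alpha_{ij}}$ of $f_j$ (except possibly the one distinguished one), the separating-variable hypothesis says that some $x_{j_i}^{a_{j_i}}$ occurs in this monomial and no other monomial contributes to $\partial f_j/\partial x_{j_i}$: in characteristic zero the other monomials are independent of $x_{j_i}$; in characteristic $p$, they appear only as $k p$-th powers of $x_{j_i}$ and so differentiate to zero. Because $c_{\alpha_{ij}} \in \cO(Z)^*$ is a unit and $p \nmid a_{j_i}$ when $\cha \kappa = p$, the element $x_{j_i} \cdot \partial f_j/\partial x_{j_i}$ equals ${\bf x}_j^{\alpha_{ij}}$ up to a unit. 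Hence the Jacobian ideal $\cD(f_j)$ contains all but at most one of the generating monomials of $\cJ_j$, giving $\cD(f_j)\supseteq \cJ_j$ and therefore $\Sing(V(f_j)) = V(\cD(f_j)) \subseteq V(\cJ_j)$, which is condition (2) of Theorem \ref{A}.

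For the supporting-face condition (3), the key observation is that for any supporting face $P \leq P_{f_j}$ the initial form $\inn_P(f_j)$ is again a sum $\sum_{\alpha_{ij} \in P} c_{\alpha_{ij}}{\bf x}_j^{\alpha_{ij}}$ with invertible coefficients, and this subfamily inherits the separating-variable property from $f_j$ (the distinguished variables $x_{j_i}$ remain distinguished among the surviving monomials). The same Jacobian calculation then yields $\Sing(V(\inn_P(f_j)))\subseteq V(\inn_P^\circ(\cJ_j))$. The codimension hypothesis $\codim V(\cJ_j)\geq 2$ follows from the separating-variable condition as soon as there are at least two monomials with distinct distinguished variables (the degenerate cases are trivial). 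With (1)--(3) of Theorem \ref{A} verified on each $X_j$, Theorem \ref{B} immediately produces the cobordant resolution, and the strict transform $\sigma^s(f_1,\ldots,f_r)$ is regular in $B_+$ as a fiber product over $Z$ of the regular strict transforms on each $B_{j+}$.

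\textbf{Expected obstacle.} There is no substantial obstacle; the real content was already distilled into Theorem \ref{res} (single hypersurface) and Theorem \ref{B} (product of hypersurfaces). The only point requiring minor care is the stability of the separating-variable hypothesis under restriction to a supporting face, and the book-keeping of the ``exceptional'' monomial --- which must be handled consistently across both $f_j$ and $\inn_P(f_j)$ so that the Jacobian ideal recovers the full generating set of $\cJ_j$ (respectively $\inn_P(\cJ_j)$).
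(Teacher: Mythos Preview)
Your proposal is correct and follows essentially the same route as the paper. The paper's proof is even terser: it writes $X=\prod_Z \Spec_Z(\cO_Z[{\bf x}_j])$, observes that the cobordant blow-up of $\cJ$ is the fiber product over $Z$ of the individual cobordant blow-ups $B_{j+}$, and then invokes Theorem~\ref{res} directly on each factor rather than re-verifying the hypotheses of Theorem~\ref{A} as you do; but since the proof of Theorem~\ref{res} is precisely the Jacobian argument you reproduce, the content is the same.
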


\begin{proof}  The space $X$ can be written as the fiber product $$X=\prod_Z\AA_Z^{k_i-k_{i-1}}=\prod_Z \Spec_Z(\cO_Z[{\bf x}_j]).$$
The  cobordant blow-up of $\cJ$  is equal to the product over $Z$ of the cobordant blow-ups $B_{j+}$ of  $({\bf x}_j^{\alpha_1},\ldots, {\bf x}_j^{\alpha_k})$ on  $\Spec_Z(\cO_Z[{\bf x}_j])$, and   each of $B_{j+}$ is smooth over $Z$ by Theorem \ref{res}.

\end{proof}

\begin{example}
The system of equations
 \begin{align*} 
&x_1^p+ax_1x_2^px_3+bx_4x_5^px_6^{p^2}&=0\\
&y_1^{p^3}+cy_2^{p^2}y_3y_6+dy_1y_4^py_5^{p^2}y_6^2&=0
\end{align*}

in $$ \kappa[x_1,\ldots,x_6,y_1,\ldots,y_5],$$  where $a,b,c,d\in \kappa^*$, can be resolved by the single cobordant blow-up of $$\cJ=(x_1^p,x_2^px_3,x_1x_4x_5^px_6^{p^2})\cdot (y_1^{p^3},y_2^{p^2}y_3,y_4y_5^py_6^{p^2})$$ in characteristic $p$.

\end{example}

\begin{example}   \label{11} Let $f_j=x_{1j}^{a_1}+\ldots+x_{k_jj}^{a_k}\in \kappa[x_{ij}]$, where $j=1,\ldots,k$ and the characteristic $p$ divides at most one $a_{ij}$ for any $j$.
Then the cobordant blow-up of $\prod_j(x_{1j}^{a_1},\ldots,x_{k_jj}^{a_k})$ resolves singularity of $V(f_j)_{j=1,\ldots,k}$. 

\end{example}

\subsection{Partial resolution by the order}
The method can be linked to different invariants, particularly  to the order $$\ord_p(\cI):=\max\{k\mid \cI_p\subset \m_p^k\},$$
where $m_p\subset \cO_{X,p}$ is the maximal ideal of a  point $p\in X$.
\begin{definition} Let $\cI$ be an ideal on a regular scheme $X$, and $d\in \NN$ be an integer. We define
$$\supp(\cI,d):=\{p\in X\mid \ord_p(\cI)\geq d\}.$$
\end{definition}

The following theorem  extends \cite[Lemma 5.3.1]{Wlodarczyk22}:

\begin{theorem} \label{res22} Let $\cI$ be an ideal on a regular scheme $X$ over a field, and let  $d\in \NN$ be any natural number. Assume that there exists a locally monomial center ${\cJ}$, with $\codim(V(\cJ)\geq 2$,  with the associated dual valuation complex $\Delta^N_E$, and such that \begin{enumerate}
\item  $\supp(\cI,d)\subseteq V(\cJ)\subset X,\quad $ 
 \item 
 $\supp(\inn_{\omega}(\cI),d)\subseteq V(\inn^\circ_{\omega}(\cJ))\subset \NN_{\omega}(X)$, for any $\omega\in \Delta^N_E$. 
 \end{enumerate}
 (respectively \\ \indent (2') $\supp(\inn^\circ_{\omega}(\cI),d)\subseteq V(\inn^\circ_{\omega}(\cJ))\subset \NN_{\omega}(X)$, for any $\omega\in \Delta^N_E$.) 

\bigskip
 Then  for the cobordant blow-up $\sigma_+: B_+\to X$ of $\cJ$, the maximal order of the strict transform $\sigma^s(\cI)$ (respectively the weak transform $\sigma^\circ(\cI)$) on $B_+$ is strictly smaller than $d$.

\end{theorem}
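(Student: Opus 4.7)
The plan is to mirror the proof of Theorem \ref{regular}, replacing the criterion ``$Y'$ is regular of the right codimension'' with the order criterion ``$\ord_p(\sigma^s(\cI)) < d$ for every $p \in B_+$.'' Since the statement is local on $X$, I would replace $X$ by a suitable open affine and invoke the explicit local description of $B$ from Theorem \ref{regular33}, then decompose $B_+$ as the union of the intersection $B_+ \cap B_-$ with the trivial locus and the strata $s_+$ of the exceptional SNC divisor $D_+$ on $B_+$.

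First I would dispose of points in $B_+ \cap B_-$. By Lemma \ref{blow}, the set $B_- \setminus B_+ = V(\sigma^\circ(\cJ)) \cap B_-$ lies over $V(\cJ)$, so $B_+ \cap B_- \cong (X \setminus V(\cJ)) \times T_B$, and on this open set $\sigma^s(\cI)$ (resp.\ $\sigma^\circ(\cI)$) coincides with $\cI \cdot \cO_{B_+ \cap B_-}$. Because the torus factor is a Laurent-polynomial extension, the order at a point $(x,t)$ equals $\ord_x(\cI)$, which by assumption (1) is strictly smaller than $d$ since $x \notin V(\cJ) \supseteq \supp(\cI,d)$.

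Next I would treat points $p$ lying on a stratum $s_+ \in S_{D_+}$ associated with a valuation face $\omega \in \Delta_E^N$. By Lemma \ref{regul} there is an identification $s \simeq \NN_\omega(X) \times T_{\check{\bf t}_\omega}$, and by Lemma \ref{grad2} the restriction $\sigma^s(\cI)|_s$ (resp.\ $\sigma^\circ(\cI)|_s$) corresponds under this identification to the pullback of $\inn_\omega(\cI)$ (resp.\ $\inn^\circ_\omega(\cI)$) along the projection to $\NN_\omega(X)$. Suppose for contradiction that $p \in B_+$ with $\ord_p(\sigma^s(\cI)) \geq d$. Since $s$ is cut out in $B$ by some components of an SNC divisor (a regular sequence of parameters), passing to the quotient $\cO_{B,p} \to \cO_{s,p}$ can only preserve or increase the order of any ideal, so $\ord_p(\sigma^s(\cI)|_s) \geq d$; and since the second projection from $s \simeq \NN_\omega(X) \times T_{\check{\bf t}_\omega}$ to $\NN_\omega(X)$ is smooth with torus fibers, the order is preserved under pullback, giving $\ord_q(\inn_\omega(\cI)) \geq d$ where $q$ is the image of $p$ in $\NN_\omega(X)$. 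By condition (2), $q \in V(\inn^\circ_\omega(\cJ))$; under the identification of Lemma \ref{grad2}(2), the locus $V(\inn^\circ_\omega(\cJ)) \times T_{\check{\bf t}_\omega}$ corresponds to $V(\sigma^\circ(\cJ)) \cap s$, so $p \in V(\sigma^\circ(\cJ))$. But $B_+ = B \setminus V(\sigma^\circ(\cJ))$ by Lemma \ref{blow}, contradicting $p \in B_+$. The argument for the weak transform is identical, using (2') in place of (2).

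The main obstacle, I expect, is the careful order accounting across the three levels $B \supset s \supset \NN_\omega(X)$. The inequality $\ord_p(f) \leq \ord_{p,s}(\bar f)$ in passing to a quotient by regular parameters goes in exactly the direction needed (propagating a \emph{lower} bound on order on $B$ to one on $s$), but one must check it at arbitrary, possibly non-generic, points of $s_+$ and verify compatibility with the identification of Lemma \ref{regul} at completions. Likewise one must justify that the order is preserved under the smooth projection $s \to \NN_\omega(X)$ at every point, which reduces to the fact that the Laurent-polynomial extension $\cO_{\NN_\omega(X),q} \to \cO_{s,p}$ is faithfully flat and unramified in the relevant parameters. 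These are standard but need to be spelled out to conclude rigorously.
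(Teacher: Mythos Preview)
Your proposal is correct and follows essentially the same approach as the paper: decompose $B_+$ into the trivial locus $B_+\cap B_-$ and the exceptional strata, use Lemmas \ref{regul} and \ref{grad2} to identify the restriction of the (strict/weak) transform to a stratum with the pullback of $\inn_\omega(\cI)$ (resp.\ $\inn^\circ_\omega(\cI)$), and then invoke the hypotheses together with $B_+=B\setminus V(\sigma^\circ(\cJ))$. The only cosmetic difference is that the paper argues directly via the chain $\ord_q(\sigma^s(\cI))\leq \ord_q(\sigma^s(\cI)_{|s})=\ord_q(\inn_\omega(\cI)[\check{\bf t}_\omega,\check{\bf t}_\omega^{-1}])<d$ rather than by contradiction, and treats the order inequality across the regular closed immersion $s\hookrightarrow B$ and the Laurent extension as evident rather than spelling it out.
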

\begin{proof} Let $q\in D_+=D\setminus V_B(\sigma^\circ(\cJ))$, where $D=V(t_1^{-1}\cdot\ldots t_k^{-1})$ is the exceptional divisor of $B\to X$. Then there is $\omega\in \Delta^N_E$, and the corresponding stratum $s$ in $S_D$ such that $$q\in s \setminus V(\sigma^\circ(\cJ)).$$  By  Lemmas \ref{regul},  \ref{regular3}, there is a  natural isomorphism $s\to\Spec(\gr_{\omega}(\cO)[\check{\bf t}_{\omega},\check{\bf t}_{\omega}^{-1}])$, which takes
$\sigma^s(\cI)_{|s}$ to $\inn_{\omega}(\cI)[\check{\bf t}_{\omega},\check{\bf t}_{\omega}^{-1}]$ (and $\sigma^\circ(\cJ)_{|s}$ to $\inn^\circ_{\omega}(\cJ)[\check{\bf t}_{\omega},\check{\bf t}_{\omega}^{-1}]$). 
Consequently  $$\ord_q(\sigma^s(\cI))\leq \ord_q(\sigma^s(\cI)_{|s})=\ord_q(\inn_{\omega}(\cI)[\check{\bf t}_{\omega},\check{\bf t}_{\omega}^{-1}])<d.$$  
If 
	$$q\in B\setminus D \setminus V(\sigma^s(\cJ))=B_-\setminus V(\sigma^s(\cJ))=(X\setminus V(\cJ))\times  T,$$ then since $\pi_B(q)\in X\setminus V(\cJ)$ we conclude that $$\ord_q(\sigma^s(\cI))=\ord_{\pi_B(q)}(\cI)<d.$$
	The proof for $\sigma^\circ(\cI)$ is the same.
\end{proof}
	
\subsection{The Newton method of decreasing order}\label{order}

As a corollary from Theorem \ref{res22}, and Lemma \ref{use4} we obtain:
\begin{theorem} \label{res3} $X=\AA^n_Z=\Spec \cO_X[x_1,\ldots,x_k]$, where $Z$ is regular over a field $\kappa$ of characteristic $p$. 
 Let $\cI\subset  \cO_Z[x_1,\ldots,x_k]$ be an ideal, and $\cJ=\cJ_\cI$, be its associated monomial ideal with the Newton polytope $P_\cJ=P_\cI$ and $d\in \NN$ be any natural number 
 	such that 
	\begin{enumerate}
	\item $\codim(V(\cJ))\geq 2$.
	\item $\supp(\cI,d)  \subseteq V(\cJ)$.

	\item for any supporting face $P$ of $P_\cJ$,   $\supp(\inn_P(\cI),d) \subset V(\inv_P(\cJ)).$
	\end{enumerate}
	Then  the maximal order of the weak transform $\sigma^\circ(\cI)$ on $B_+$ under cobordant blow-up of $\cJ$ is strictly smaller than $d$.  \qed
\end{theorem}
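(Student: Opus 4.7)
The plan is to reduce the statement directly to Theorem \ref{res22} via the identifications supplied by Lemma \ref{use3} (equivalently Lemma \ref{use4}), which translate between initial forms along valuation faces $\omega \in \Delta^N_E$ and initial forms along supporting faces $P$ of the Newton polytope $P_\cJ$.

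First, I would observe that hypothesis (1) of Theorem \ref{res3}, namely $\codim(V(\cJ)) \geq 2$, is precisely the standing codimension hypothesis needed so that the cobordant blow-up of $\cJ$ is actually the cobordization of the nontrivial normalized blow-up of $\cJ$, and $\cJ$ is by construction a locally monomial center; thus the setup of Theorem \ref{res22} applies, and it remains only to verify conditions (1) and (2') there.

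Next, condition (2) of Theorem \ref{res3} is literally condition (1) of Theorem \ref{res22}. For the crucial step, I would appeal to Lemma \ref{use4} (and its sharper form Lemma \ref{use3}): every valuation face $\omega \in \Delta^N_E$ of the dual valuation complex attached to $\cJ$ determines a supporting face $P_\omega = P_\cJ \cap \bigcap_{\nu \in \omega} H_\nu$ of $P_\cJ$, and under this assignment one has the equalities
\[
\inn^\circ_\omega(\cI) = \inv_{P_\omega}(\cI), \qquad \inn^\circ_\omega(\cJ) = \inv_{P_\omega}(\cJ).
\]
Moreover, the same lemma says that every supporting face of $P_\cJ$ arises in this way. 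Consequently, hypothesis (3) of Theorem \ref{res3}, quantified over supporting faces $P$ of $P_\cJ$, is equivalent to the statement
\[
\supp(\inn^\circ_\omega(\cI), d) \subseteq V(\inn^\circ_\omega(\cJ)) \quad \text{for all } \omega \in \Delta^N_E,
\]
which is exactly condition (2') of Theorem \ref{res22}. Applying that theorem then yields the conclusion that the maximal order of $\sigma^\circ(\cI)$ on $B_+$ is strictly less than $d$.

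The only mild obstacle is bookkeeping: one must be careful that the correspondence $\omega \mapsto P_\omega$ hits exactly the supporting faces (not arbitrary faces) of $P_\cJ$, and that the identifications of initial forms are compatible with the weak-transform version (condition (2')) rather than the strict-transform version of Theorem \ref{res22}. Both points are precisely what Lemmas \ref{use3} and \ref{use4} were formulated to provide, so the argument is essentially a translation via the Newton-polytope dictionary, and no further computation is needed.
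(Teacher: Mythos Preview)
Your proposal is correct and follows exactly the paper's own route: the paper states Theorem \ref{res3} with a \qed immediately after announcing it ``As a corollary from Theorem \ref{res22}, and Lemma \ref{use4},'' and your argument simply unpacks that sentence by translating condition (3) over supporting faces into condition (2') of Theorem \ref{res22} over valuation faces via the identifications $\inn^\circ_\omega(\cI)=\inv_{P_\omega}(\cI)$ and $\inn^\circ_\omega(\cJ)=\inv_{P_\omega}(\cJ)$ from Lemmas \ref{use3}--\ref{use4}. One minor remark: you only need the direction that every valuation face $\omega$ yields a supporting face $P_\omega$ (not surjectivity onto supporting faces), so ``equivalent'' is slightly stronger than required, but this does not affect the argument.
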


Thus we get
 \begin{theorem} \label{res4} $X=\AA^n_Z=\Spec \cO_X[x_1,\ldots,x_k]$, where $Z$ is regular over a field $\kappa$ of characteristic $p$. 
 Let $\cI\subset  \cO_Z[x_1,\ldots,x_k]$ be an ideal, and $\cJ=\cJ_\cI$, be its associated monomial ideal with with $\codim(V(\cJ)\geq 2$, and let  $P_\cJ=P_\cI$  
 	be its Newton polytope and $d\in \NN$ be any natural number
 	such that 
for any face $P$  of $P_\cJ$,    $$\supp(\inn_P(\cI),d) \subset D:=V(x_1\cdot,\ldots\cdot x_k).$$


	Then  the maximal order of the weak transform $\sigma^\circ(\cI)$ under cobordant blow-up $B_+\to X$ of $\cJ$ is strictly smaller than $d$.  \qed
\end{theorem}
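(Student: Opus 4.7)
The plan is to deduce Theorem \ref{res4} from Theorem \ref{res3} by verifying conditions (2) and (3) of the latter from the hypothesis that $\supp(\inn_P(\cI),d)\subseteq D$ for every face $P$ of $P_\cJ$; condition (1) is built into the hypothesis of Theorem \ref{res4}. The structure mirrors the proof of Corollary \ref{AQ21}, with the smoothness condition there replaced by its order-theoretic analogue.

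The key technical ingredient is that restriction to a coordinate subscheme does not decrease order. Concretely, for the stratum $s_\tau$ attached to a face $\tau=\langle e_j:j\in J\rangle$ of $\sigma_0$, with closure $\overline{s_\tau}=V(x_j:j\in J)$, the elements $\{x_j\}_{j\in J}$ extend to a regular system of parameters of $\cO_{X,p}$ at any $p\in s_\tau$. Writing $f\in\cO_{X,p}$ as $\sum_\beta g_\beta\,x_J^\beta$ with $g_\beta\in\cO_{\overline{s_\tau},p}$, one gets $\ord_p(f)=\min_\beta(\ord_p(g_\beta)+|\beta|)\leq\ord_p(g_0)=\ord_p(f|_{\overline{s_\tau}})$. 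Moreover, for any face $P$ of $P_\cJ$ and any $f\in\cI$, the restriction $\inn_P(f)|_{\overline{s_\tau}}$ kills every monomial involving some $x_j$ with $j\in J$ and thus equals $\inn_{P\cap\tau^*}(f)$.

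To verify condition (3), fix a supporting face $P$ of $P_\cJ$ and a point $p\in\supp(\inn_P(\cI),d)$, lying in some stratum $s_\tau$. Suppose for contradiction that $Q:=P\cap\tau^*$ is nonempty, so $Q$ is again a face of $P_\cJ$. The two facts above give $\ord_p(\inn_Q(f))\geq\ord_p(\inn_P(f))\geq d$ for every $f\in\cI$, hence $p\in\supp(\inn_Q(\cI),d)\subseteq D$. But $\inn_Q(\cI)\subseteq\cO_Z[x_i:i\in\tau^*]$ is independent of the variables $\{x_j:j\in J\}$, so $\supp(\inn_Q(\cI),d)$ is a cylinder under translations in those variables. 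A cylinder contained in $D=V(x_1\cdots x_k)$ must lie in $V(\prod_{x_i\in\tau^*}x_i)$: otherwise one could choose the free coordinates $x_j\neq 0$ and miss every component of $D$. Hence $s_\tau\cap\supp(\inn_Q(\cI),d)=\emptyset$, contradicting $p\in s_\tau$. So $Q=\emptyset$, and Corollary \ref{orbit} gives $\overline{s_\tau}\subseteq V(\inn_P(\cJ))$, whence $p\in V(\inv_P(\cJ))$. Condition (2) follows by the identical argument with $P=P_\cJ$: since $\cJ=\cJ_\cI$, every monomial of every $f\in\cI$ has exponent in $P_\cJ$, so $\inn_{P_\cJ}(f)=f$ and the same reasoning yields $\supp(\cI,d)\subseteq V(\cJ)$. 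The main subtlety is the combinatorial bookkeeping $\inn_P(f)|_{\overline{s_\tau}}=\inn_{P\cap\tau^*}(f)$ together with the cylinder observation; once these are in place, Theorem \ref{res3} applies directly.
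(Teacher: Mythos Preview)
Your proof is correct and follows essentially the same approach as the paper: reduce to Theorem \ref{res3} by verifying its conditions (2) and (3) via the restriction-to-strata argument borrowed from Corollary \ref{AQ21}, using that $\inn_P(f)|_{\overline{s_\tau}}=\inn_{P\cap\tau^*}(f)$ together with the hypothesis applied to the smaller face $Q=P\cap\tau^*$. If anything, your write-up is more explicit than the paper's---you spell out the order inequality under restriction and the cylinder observation, whereas the paper leaves these implicit; the one small step you leave tacit is that $\ord_p^X(\inn_Q(f))=\ord_p^{\overline{s_\tau}}(\inn_Q(f))$ since $\inn_Q(f)$ is pulled back along the retraction $X\to\overline{s_\tau}$, which is what lets you invoke the hypothesis (stated for $\supp$ in $X$) after computing orders on $\overline{s_\tau}$.
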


\begin{proof} The proof  uses similar arguments as the proof of Corollary \ref{AQ21}. We need to show that the conditions of Theorem \ref{res3} are satisfied
 
 For condition (3) of Theorem \ref{res3}, let $P'$ be any supporting face of $P_\cI$. Consider the closure of the stratum $\overline{s_\tau}$, where $\tau$ is a face of $\sigma_0$. 

If ${s_\tau}$ is not in $V(\inn_{P'}^\circ(\cJ))$
consider the face $P:=P'\cap \tau^*$. Then  $$\supp(\inn_{P'}(\cI))_{|\overline{s_\tau}},d)=\supp(\inn_{P}(f)_{|\overline{s_\tau}},d)$$

is contained  in $\overline{s_\tau}\setminus s_\tau=V(\prod_{x_i\in \tau^*} x_i )$ so it is not in $s_\tau$. 
This implies that  $\supp(\inn_{P'}(\cI)),d)$ is
contained  $V(\inn^\circ _{P'}(\cJ)$.

The proof of condition (2) of Theorem \ref{res3} is the same, except  we replace $P'$ with $P_\cJ$,  $\inn_{P'}^\circ(\cJ)$ with $\cJ=\inn_{P_\cJ}^\circ(\cJ)$ , and $P$ with $P=P_\cJ\cap \tau^*$.

Thus the corollary is  a consequence of Theorem \ref{res3}.
\end{proof}

\begin{remark} \label{order2}
Theorems \ref{res3}, \ref{res4} generalize respectively Theorem \ref{A}, and  Corollary \ref{AQ21}.
We put $\cI=(f)$, and $d=2$. Then $\Sing(V(f))=\supp((f),2)$.
	
\end{remark}

\begin{example} (See also Example \ref{111})

Let $Y\subset X=\Spec \kappa[x,y,z]$ be described by the ideal $$\cI=(x^k+xy+y^{l},\quad  z^{kl}+x^{k-2}z^{{kl}-1}+y^{k-2}z^{{kl}-1})$$ of order $2$, where $\gcd(k,l)=1$. Consider the corresponding  admissible monomial ideal $$\cJ=(x^k,xy,y^{l},z^{kl},x^{k-2}z^{{kl}-1},y^{k-2}z^{{kl}-1})$$
and  its associated  {\it Newton polytope}  $P$ generated by the exponents $$(k,0,0),(1,1,0),(0,l,0),(0,0,kl)\subset \sigma^\vee=\langle e_1^*,e_2^*,e_3^*\rangle.$$ 

This corresponds to two supporting faces $P_1,P_2$ defined, respectively,  by $$(k,0,0),(1,1,0),(0,0,kl),$$ and $$(1,1,0),(0,l,0),(0,0,kl).$$ 
They intersect at the face $P_{12}=(1,1,0),(0,0,kl)$.

The faces  $P_1,P_2$ corresponds to  the primitive vectors $v_1=(a_1,a_2,a_3)$ such that $$a_1k=a_1+a_2=a_3kl,$$ and 
$v_2=(b_1,b_2,b_3)$, where $$b_1+b_2=lb_2=klb_3$$ in the dual plane.  So $$v_1=(l,l(k-1),1),\quad v_2=(k(l-1),k,1).$$ This defines the set of two extremal valuations $\nu_1,\nu_2$.

Then $$\inn_{P_1}(\cI)=(x^k+xy,z^{kl}),\quad \inn_{P_2}(f)=(xy+y^l,z^{kl}),\quad \inn_{P_{12}}(f)=(xy,z^{kl}).$$ By considering the ideals of the derivatives $\cD(\inn_{P}(f))$ we  see that in all cases $$\supp(\inn_{P}(\cI),2)=V(x,y,z)$$
Similarly $\supp(\cI,2)=V(x,y,z)$.

The cobordant blow-up of $\cJ=(x^k,x^2y,y^l)$ is described as
\begin{align*}&B=\Spec_X(\cO_X[t_1^{-1},t_2^{-1},\,\,\,xt_1^lt_2^{k(l-1)},\,\,\,yt_1^{l(k-1)}t_2^k,\,zt_1t_2]=\\&\Spec(\kappa[t_1^{-1},t_2^{-1},\, xt_1^lt_2^{k(l-1)},\,\,yt_1^{l(k-1)}t_2^k,\,\,zt_1t_2])\end{align*}$$B_+=B\setminus V(\sigma^s(\cJ))=B\setminus {\bf t}^\alpha\cJ,$$
where $\sigma^s(\cJ)={\bf t}^\alpha\cJ$, and the coefficients are given by the exceptional divisor $E=\alpha_1E_1+\alpha_2E_2$ of the toric normalized blow-up of $\cJ$. 
  $$\alpha_1=\nu_1(f)=a_1k=a_1+a_2=a_3kl=kl,$$ and 
$$\alpha_2=\nu_2(f)=b_1+b_2=lb_2=klb_3=kl$$

Thus $$B_+= B\setminus V((x^k,xy,y^{l},z^{kl})\cdot t_1^{kl}t_2^{kl}),$$
By Lemma \ref{res2}, the cobordant blow-up $B_+\to X$ of $\cJ=(x^k,xy,y^{l},z^{kl})$ decreases the order of $\cI$ to 1. \end{example}




\section{Generalized cobordant blow-ups and $\QQ$-ideals}
\subsection{Cobordization with respect to  subgroups  $\Gamma\subset \Cl(Y/X)\otimes \QQ$}
\begin{definition} Let $\pi: Y\to X$ be a proper birational morphism. Let $ \Gamma \subset \Cl(Y/X)\otimes \QQ$ be  a finitely generated subgroup. We define the {\it full cobordization (resp. cobordization of $\pi$) with respect to $\Gamma$} to be  $$B=B^\Gamma:=\Spec_X(\pi_*(\bigoplus_{E\in \Gamma} \cO_X(E))\quad B_+=B^\Gamma_+=\Spec_Y(\bigoplus_{E\in \Gamma} \cO_Y(E)) .$$
	
\end{definition}

\begin{proposition} The natural morphism $B_+^\Gamma\to B^\Gamma$ is an open immersion if locally on $X$ there are   forms $F=fx^{-E}$, with $E\in \Gamma$  such that $X_{F}$ are open  affine and cover $X$.

\end{proposition}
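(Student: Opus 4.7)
The proof will proceed in exact parallel with Proposition~\ref{cover2}, by reducing to the analog of Lemma~\ref{cover3} for the $\Gamma$--graded Cox sheaves. The problem is local on $X$, so I would first pass to an affine open over which the hypothesized covering forms $F_j = f_j\mathbf{t}^{-E_j}$ (with $E_j\in\Gamma$ and $Y_{F_j}$ open affine in $Y$) are defined and cover $Y$. The goal is to prove that on each piece $(B^\Gamma)_{F_j}=\Spec_X\bigl((\pi_*\bigoplus_{E\in\Gamma}\cO_Y(E))_{F_j}\bigr)$ the natural morphism from $(B^\Gamma_+)_{F_j}$ is an isomorphism; the open immersion $B^\Gamma_+\hookrightarrow B^\Gamma$ is then obtained by gluing these local isomorphisms.

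The heart of the argument is the $\Gamma$--version of Lemma~\ref{cover3}: if $F=f\mathbf{t}^{-E}$ is a form with $E\in\Gamma$ and $Y_F$ is affine, then
\[
\pi_{B_+,Y}^{-1}(Y_F)=(B^\Gamma_+)_F\quad\text{and}\quad \Bigl(\bigoplus_{E'\in\Gamma}\cO_Y(E')(Y)\Bigr)_F=\bigoplus_{E'\in\Gamma}\cO_Y(E')(Y_F).
\]
For this I would repeat the argument of Lemma~\ref{cover3} verbatim, exploiting that $\Gamma$ is a subgroup (so multiplying or dividing by powers of $F$ stays inside the $\Gamma$--gradation): on $Y_F$ the divisor of $F$ vanishes, so $F^{-1}=f^{-1}\mathbf{t}^E$ lies in the Cox sheaf in degree $E\in\Gamma$; conversely, if $G=g\mathbf{t}^{-E'}$ is a $\Gamma$--homogeneous form on $Y_F$, then $\divv_Y(GF^n)=\divv_Y(G)+n\,\divv_Y(F)\ge 0$ on $Y$ for $n\gg 0$, so $GF^n$ is a global section and $G$ lies in the localization. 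This gives the desired isomorphism on sections; since $Y_F$ is affine this upgrades to an isomorphism of schemes $\pi_{B_+,Y}^{-1}(Y_F)\cong (B^\Gamma)_F$ as in the proof of Lemma~\ref{cover3}.

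Once this key local identification is established, the global open immersion follows formally: the affine opens $(B^\Gamma_+)_{F_j}=\pi_{B_+,Y}^{-1}(Y_{F_j})$ cover $B^\Gamma_+$ (because the $Y_{F_j}$ cover $Y$), and each maps isomorphically onto the open subscheme $(B^\Gamma)_{F_j}$ of $B^\Gamma$; the glued map $B^\Gamma_+\to B^\Gamma$ is a $T_B$--equivariant open immersion. I expect the one genuine technical point to be the rationality of $\Gamma$: the divisors $E\in\Gamma\subset\Cl(Y/X)\otimes\QQ$ are in general $\QQ$--divisors, so one has to be careful that $\cO_Y(E)$, defined via $\divv_Y(f)+E\ge 0$, still interacts well with the multiplication $\cO_Y(E)\otimes\cO_Y(E')\to\cO_Y(E+E')$ and with the localization at $F$. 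This reduces to the elementary fact that $\lfloor\cdot\rfloor$ is superadditive on $\QQ$--divisors, so that multiplication of forms remains well-defined in the $\Gamma$--grading; once this is checked, the entire argument of Proposition~\ref{cover2} carries over without change.
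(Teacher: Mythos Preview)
Your proposal is correct and follows essentially the same approach as the paper: the paper's proof consists of the single sentence ``The proposition follows from the first part of the proof of Proposition~\ref{cover2},'' and you have spelled out precisely that argument, reducing locally on $X$ to the $\Gamma$-graded analogue of Lemma~\ref{cover3} and then gluing the resulting affine isomorphisms $(B_+^\Gamma)_{F_j}\simeq (B^\Gamma)_{F_j}$. Your additional remark about superadditivity of round-downs for $\QQ$-divisors is a sensible elaboration that the paper leaves implicit.
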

\begin{proof} The proposition follows from the first part of the proof of Proposition \ref{cover2}.
	
\end{proof}

\begin{definition} Let $\pi: Y\to X$ be the normalized blow-up of the an $\cI$ on a normal scheme $X$. Let $\Gamma\subset \Cl(Y/X)\otimes \QQ$ be a finitely generated subgroup. Then we define the {\it full cobordant blow-up of $\cI$ with respect to $\Gamma$} (resp. the {\it  cobordant blow-up of $\cI$ with respect to $\Gamma$})  to be  the
{ full cobordization (resp. cobordization of) $\pi$  with respect to $\Gamma$}.


\end{definition}
\begin{proposition} \label{small} 
$\pi: Y\to X$ be the normalized blow-up of an ideal $\cJ$ on a  normal scheme $X$, and let $E^0$ be the exceptional Cartier divisor such that
$\cO_Y(-E^0)=\cO_Y\cdot \cJ$.
If  a finitely generated group $\Gamma\subset \Cl(Y/X)\otimes \QQ$ contains  divisor $E^0$, then  $B^\Gamma_+=B^\Gamma\setminus V(\cI {\bf t}^{-E_0})$.

\end{proposition}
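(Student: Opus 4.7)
The plan is to adapt the proof of Lemma \ref{blow} to the $\Gamma$-graded setting, using only forms whose degrees lie in $\Gamma$. First I would reduce to the affine case and pick generators $f_1,\ldots,f_k$ of $\cJ(X)$. Since $E^0 \in \Gamma$ (and hence $-E^0 \in \Gamma$, as $\Gamma$ is a group), the products
$$F_i := f_i \cdot {\bf t}^{-E^0} \in \cO_Y(-E^0)\cdot {\bf t}^{-E^0}$$
are honest forms of degree $-E^0$ in the $\Gamma$-graded Cox algebra $\bigoplus_{E\in\Gamma}\pi_*\cO_Y(E)\cdot{\bf t}^E$, so the ideal $\cJ \cdot {\bf t}^{-E^0} \subset \cO_{B^\Gamma}$ is well-defined and generated by $F_1,\ldots,F_k$.

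Next, exactly as in the proof of Lemma \ref{blow}, the identity $Y_{F_i}=Y_{f_it}$ together with the $\Proj$-description of the normalized blow-up $Y$ shows that the $Y_{F_i}$ form an affine cover of $Y$; on each $Y_{F_i}$ the form $F_i$ is invertible in the $\Gamma$-graded Cox sheaf $\bigoplus_{E\in\Gamma}\cO_Y(E){\bf t}^E$. I would then establish a $\Gamma$-graded version of Lemma \ref{cover3}: the localization $(\cA^\Gamma_{Y/X})_{F_i}$ is canonically isomorphic to the sections of the $\Gamma$-graded Cox sheaf over $Y_{F_i}$. This produces canonical isomorphisms $(B^\Gamma)_{F_i} \simeq \pi_{B^\Gamma_+,Y}^{-1}(Y_{F_i}) \subset B^\Gamma_+$, whence
$$\bigcup_i (B^\Gamma)_{F_i} \;=\; B^\Gamma \setminus V(F_1,\ldots,F_k) \;=\; B^\Gamma \setminus V(\cJ\cdot{\bf t}^{-E^0})\;\subseteq\; B^\Gamma_+.$$
The reverse inclusion is automatic since by construction $B^\Gamma_+$ is covered by the affine pieces $\pi_{B^\Gamma_+,Y}^{-1}(Y_{F_i})$.

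The main technical obstacle is the $\Gamma$-graded version of Lemma \ref{cover3}, and precisely here the hypothesis $E^0 \in \Gamma$ is essential: because $\Gamma$ is a subgroup it contains every integer multiple $nE^0$, so localizing the $\Gamma$-graded Cox algebra at $F_i$ (a form of degree $-E^0$) automatically recovers the sections of $\cO_Y(nE^0)$ that are required to match the $\Gamma$-graded Cox sheaf on $Y_{F_i}$; the proof then follows the same divisor-positivity computation as in Lemma \ref{cover3}. Without the hypothesis $E^0 \in \Gamma$, the generators $F_i$ would not even lie in $\cA^\Gamma_{Y/X}$, there would in general be no cover of $Y$ by affine opens of the form $Y_F$ with $F$ a $\Gamma$-form, and the identity $B^\Gamma_+=B^\Gamma\setminus V(\cJ\cdot{\bf t}^{-E^0})$ would break down.
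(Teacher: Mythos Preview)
Your proposal is correct and takes essentially the same approach as the paper: the paper's entire proof is the single sentence ``The proof is identical to the proof of Lemma~\ref{blow},'' and you have faithfully unpacked what that means in the $\Gamma$-graded setting, including the key observation that $E^0\in\Gamma$ is exactly what is needed for the forms $F_i=f_i{\bf t}^{-E^0}$ to live in the $\Gamma$-graded Cox algebra.
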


\begin{proof} The proof is identical to the proof of Lemma \ref{blow}.

\end{proof}

\subsection{Simple cobordant blow-up of ideal $\cI$}\label{blow3}
\begin{definition}  Let $\pi: Y\to X$ be the normalized blow-up  of an ideal $\cI$ on a normal scheme $X$, with the exceptional divisor $E^0$, such that $\cO_Y(-E^0)=\cO_Y\cdot\cI$.
By the {\it simple  cobordant blow-up}
of  $\cI$ on $X$ 
we mean  the cobordization $B_+^\Gamma$ of $\pi: Y\to X$ with respect to the subgroup $\Gamma= \ZZ\cdot E^0 \subset \Cl(Y/X)$ generated by $E^0$.
	
\end{definition}
\begin{lemma} The simple cobordant blow-up of $\cI$ is given by $$B=\Spec_X(\cO_X[{\bf t}^{-1},\cI t])^{\inte},\quad  B_+=B\setminus V(\cI t).$$
	
\end{lemma}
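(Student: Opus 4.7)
The plan is to unwind the definition of $B^\Gamma$ for $\Gamma=\ZZ\cdot E^0$ and then identify each graded piece. Introduce a single dummy variable $t$ playing the role of the coordinate for $-E^0$ (so $E^0\leftrightarrow t^{-1}$ and $nE^0\leftrightarrow t^{-n}$). By the definition of the cobordization with respect to $\Gamma$,
\[
B \;=\; \Spec_X\Bigl(\bigoplus_{n\in\ZZ}\pi_*\bigl(\cO_Y(nE^0)\bigr)\cdot t^{-n}\Bigr).
\]
Writing $E^0=\sum a_iE_i$ with all $a_i>0$, for $n\geq 0$ the divisor $nE^0=\sum na_iE_i$ has nonnegative coefficients and Lemma \ref{divi}(2) gives $\pi_*(\cO_Y(nE^0))=\cO_X$. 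For $n=-m<0$, the normalized blow-up property $\cO_Y(-E^0)=\cO_Y\cdot\cI$ yields $\cO_Y(-mE^0)=(\cO_Y\cdot\cI)^m$ as invertible fractional ideal sheaves on $Y$, and $\pi_*$ of this is $\overline{\cI^m}$, the integral closure. This last identification is the classical Rees characterization of the integral closure via the normalized blow-up (and $\pi_*(\cO_Y)=\cO_X$ by normality of $X$).

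Assembling the pieces gives
\[
\bigoplus_{n\in\ZZ}\pi_*(\cO_Y(nE^0))\,t^{-n}\;=\;\bigoplus_{n\geq 0}\cO_X\,t^{-n}\;\oplus\;\bigoplus_{m>0}\overline{\cI^m}\,t^m,
\]
which is exactly the integral closure of the extended Rees algebra $\cO_X[t^{-1},\cI t]$ inside $\cO_X[t,t^{-1}]$. This is the Huneke--Swanson formula for the normalization of the extended Rees algebra, consistent with the $\Proj$-description recalled earlier in Section \ref{blow3}. Hence $B=\Spec_X(\cO_X[t^{-1},\cI t])^{\inte}$.

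For $B_+$, the statement follows immediately from Proposition \ref{small}: since $\Gamma=\ZZ\cdot E^0$ contains $E^0$, we have $B_+^\Gamma = B^\Gamma\setminus V(\cI\cdot{\bf t}^{-E^0})$. Under our convention ${\bf t}^{-E^0}=t$, so $\cI\cdot{\bf t}^{-E^0}=\cI t$, giving $B_+=B\setminus V(\cI t)$.

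The only step that requires genuine content is the identification $\pi_*((\cO_Y\cdot\cI)^m)=\overline{\cI^m}$; once this standard fact about normalized blow-ups is invoked, everything else is bookkeeping with gradings and an application of Proposition \ref{small}. The potential subtlety is in verifying that $(\cO_Y\cdot\cI)^m$ agrees with $\cO_Y(-mE^0)$ as $\cO_Y$-submodules of $\kappa(Y)$, which uses that $\cO_Y(-E^0)=\cO_Y\cdot\cI$ is locally principal on the normalized blow-up and hence its $m$-th tensor power equals its $m$-th power inside the function field.
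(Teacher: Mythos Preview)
Your proof is correct and follows essentially the same approach as the paper: you compute the graded pieces $\pi_*(\cO_Y(nE^0))$ separately for $n\geq 0$ (via Lemma \ref{divi}(2)) and $n<0$ (via the normalized blow-up characterization of integral closure), assemble them into the integrally closed extended Rees algebra, and then invoke Proposition \ref{small} for $B_+$. The paper's own proof is more terse but uses exactly the same ingredients and identifications.
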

\begin{proof}
It follows that $\cO_Y(-nE^0)=\cO_Y\cdot\cI^n$. Moreover $\pi_*(\cO_Y(-nE^0))=(\cI^n)^{\inte}$ is the integral closure of $\cI^n$.

 Consequently $$B=B^\Gamma_\cI=\Spec_X(\pi_*(\bigoplus_{n\in\ZZ} \cO_Y(nE^0){\bf t}^{nE_0})=\Spec_X(\cO_X[{ t}^{-1},\cI t])^{\inte},$$ under the identification of ${\bf t}^{E_0}$ with $t^{-1}$.
By Proposition \ref{small},
$$B_+=B\setminus V(\sigma^s(\cI))=B\setminus V(\cI t).$$
and thus is described by the standard Rees extended algebra.
\end{proof}

\subsection{Cobordant blow-ups of $\QQ$-ideals} 

\subsubsection{Valuative $\QQ$-ideals}
The {\it valuative $\QQ$-ideals}  were introduced in \cite{ATW-weighted}.  Here we consider its particular version considered in \cite{Wlodarczyk22}.
\begin{definition} By {\it   valuative $\QQ$-ideals}, or, simply, {\it  $\QQ$-ideals} on a normal scheme $X$ we mean the equivalence classes of formal expressions $\cI^{1/n}$, where $\cI$ is the ideal on $X$, and $n\in\NN$. We say that two $\QQ$-ideals  $\cI^{1/n}$,  and $\cJ^{1/m}$ are equivalent if   the integral closures of $\cI^m$, and $\cJ^n$ are the same. 
\end{definition}
In particular, if $D$ is a Cartier effective divisor on $X$ then any $\QQ$-Cartier effective divisor $\frac{1}{m}\cdot D$ determines  the $\QQ$-ideal $\cO_X(-D)^{\frac{1}{m}}$.  


By the {\it vanishing locus} of $\cJ=\cI^{1/n}$ we mean $V(\cJ)=V(\cI)$.

One can define the operation of addition and multiplication on $\QQ$-ideals: $$\cI^{1/n}+\cJ^{1/m}:=(\cI^m+\cJ^n)^{1/mn},\quad \cI^{1/n}\cdot\cJ^{1/m}=(\cI^m\cdot\cJ^n)^{1/mn}.$$

For any valuative $\QQ$-ideal $\cJ=\cI^{1/n}$ on $X$ we define the associated ideal of sections on $X$: $$\cJ_X:=\{f\in \cO_X \mid f^{n}\in \cI^{\inte}\},$$
where $\cI^{\inte}$ is the integral closure of $\cI$.
In particular, for the effective Cartier divisor $D$, we have the equalities $$(\cO_X(-D)^{1/m})_X=\cO_X(-\frac{1}{m}D)=\{f\in \cO_X\mid \divv(f)-\frac{1}{m}D\geq 0\}.$$

With any  valuative  $\QQ$-ideal $\cJ$  we  associate the {\it Rees algebra} on $X$ :
$$\cO_X[\cJ t]_X=\bigoplus_{n\in \ZZ_{\geq 0}} (\cJ^n)_X{ t}^n \subset \cO_X[t],$$
and the {\it extended Rees algebra} on $X$:$$\cO_X[{ t}^{-1},\cJ t]_X=\bigoplus_{n\in \ZZ_{\geq 0}} \cJ^n_X { t}^n \oplus \bigoplus_{-n\in \ZZ_{<0}}{ t}^{-n} \subset \cO_X[t,{ t}^{-1}].$$

\subsubsection{Cobordant blow-up of $\QQ$-ideals}
 \label{Q}
Let  $\cJ=\cI^{1/m}$ be  a $\QQ$-ideal on $X$. Consider the normalized blow-up $\pi: Y\to X$ of $\cI$, with the exceptional divisor $E^0$ such that $\cO_Y(-E^0)=\cO_Y\cdot \cI$. 

  Then $\cO_Y\cdot \cI^{1/m}$ is the $\QQ$-ideal $\cO_Y(-E^0)^{1/m}$, which corresponds to the $\QQ$-Cartier exceptional divisor $\frac{1}{m}E^0$ on $Y$.

Consequently, by the {\it blow-up} of the $\QQ$-ideal $\cJ=\cI^{1/m}$ we mean the the normalized blow-up $\pi: Y\to X$ of $\cI$, with the associated  $\QQ$-Cartier divisor $\frac{1}{m}E^0$.

\begin{definition} By the {\it simple cobordant blow-up/full cobordant blow-up of the $\QQ$-ideal $\cJ=\cI^{1/m}$} we mean the cobordization/full cobordization of 
	the normalized blow-up $Y\to X$ of $\cJ$ with respect to the  group $\Gamma=\ZZ\cdot \frac{1}{m}E^0\subset \Cl(Y/X)\otimes \QQ$ generated by $\frac{1}{m}E^0$. 
\end{definition}

\begin{lemma} Let $\sigma: B\to X$ be the simple full cobordant blow-up of the $\QQ$-ideal $\cJ=\cI^{1/m}$ on a normal scheme $X$.   Then
$$B=\Spec_X(\cO_X[{t}^{-1},\cJ t])_X$$
\begin{enumerate}
\item $B_+=B\setminus V(\cJ t)$
\item $\cO_{B_+}\cdot \cJ={t}^{-1}\cdot \cO_{B_+}$	
\end{enumerate}

 \end{lemma}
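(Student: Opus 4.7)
The plan is to realize the simple full cobordant blow-up of $\cJ=\cI^{1/m}$ as the cobordization with respect to the subgroup $\Gamma=\ZZ\cdot\frac{1}{m}E^0\subset\Cl(Y/X)\otimes\QQ$ of the normalized blow-up $\pi\colon Y\to X$ of $\cI$, where $E^0$ is the exceptional Cartier divisor with $\cO_Y(-E^0)=\cO_Y\cdot\cI$. Since $E^0=m\cdot\frac{1}{m}E^0\in\Gamma$, Proposition \ref{small} applies and delivers the description of $B_+$ inside $B$; the displayed formula for $B$ itself will come from a direct computation of the gradation pieces of $\cA^{\Gamma}$.

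I would first identify $\cA^{\Gamma}$. Using the convention ${\bf t}^{\frac{1}{m}E^0}=t^{-1}$, the gradation piece at $n\cdot\frac{1}{m}E^0$ is $\pi_*(\cO_Y(n\cdot\frac{1}{m}E^0))\cdot t^{-n}$. For $n\ge 0$ the $\QQ$-divisor $\frac{n}{m}E^0$ is effective, so after rounding down one applies Lemma \ref{divi}(2) to conclude that the piece is $\cO_X\cdot t^{-n}$. For $n<0$, writing $E^0=\sum a_iE_i$, the sheaf $\cO_Y(\frac{n}{m}E^0)$ equals $\cO_Y(-\sum\lceil|n|a_i/m\rceil E_i)$, and its pushforward consists exactly of $f\in\cO_X$ with $\nu_{E_i}(f)\ge\lceil|n|a_i/m\rceil$ for every $i$, equivalently $f^m\in\cI^{|n|,\inte}$; by definition this is $(\cJ^{|n|})_X$. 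Summing over $n\in\ZZ$ yields $\cA^{\Gamma}=\cO_X[t^{-1},\cJ t]_X$.

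For part (1), Proposition \ref{small} gives $B_+=B\setminus V(\cI\cdot{\bf t}^{-E^0})$, and the identity ${\bf t}^{-E^0}={\bf t}^{-m\cdot\frac{1}{m}E^0}=t^m$ rewrites this as $B_+=B\setminus V(\cI t^m)$. The symbol $V(\cJ t)$ in the statement is to be interpreted through the $\QQ$-ideal convention: the formal $\cJ t$ has $m$-th power $\cJ^m\cdot t^m=\cI\cdot t^m$ (using $\cJ^m=\cI$ up to integral closure), so it has the same vanishing locus as the integral representative of its power, i.e., $V(\cJ t)=V(\cI t^m)$.

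For part (2), by (1) the ideal generated by $\cJ_X\cdot t$ is the unit ideal on $B_+$, so there exist $h_i\in\cO_{B_+}$ and $f_i\in\cJ_X$ with $\sum h_if_it=1$; multiplying by $t^{-1}\in\cO_B\subseteq\cO_{B_+}$ gives $t^{-1}=\sum h_if_i\in\cJ_X\cdot\cO_{B_+}$, hence $t^{-1}\cO_{B_+}\subseteq\cJ\cdot\cO_{B_+}$. Conversely, for every $f\in\cJ_X$ the element $ft$ lies in $\cA\subseteq\cO_{B_+}$, so $f=t^{-1}(ft)\in t^{-1}\cO_{B_+}$, yielding the other inclusion. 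The main obstacle will be pinning down the $\QQ$-ideal interpretation of $V(\cJ t)$ and $\cO_{B_+}\cdot\cJ$ so that the $\QQ$-root identities from the definitions match the integral formulas supplied by Proposition \ref{small}; once this compatibility is fixed, every remaining step is a short manipulation inside the graded algebra.
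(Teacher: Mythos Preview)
Your computation of the graded algebra $\cA^{\Gamma}$ and your treatment of part~(1) are correct and match the paper's approach (the paper cites \cite{Wlodarczyk22} for the pushforward identity, while you unwind it via Lemma~\ref{divi} and the valuative description; these are equivalent).

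There is, however, a genuine gap in your argument for part~(2). You write ``by~(1) the ideal generated by $\cJ_X\cdot t$ is the unit ideal on $B_+$'', but part~(1) only tells you that the \emph{$\QQ$-ideal} $\cJ t$ is trivial on $B_+$, i.e.\ that its $m$-th power $\cI t^m$ generates the unit ideal there. This does \emph{not} force the honest ideal $\cJ_X\cdot t\cdot\cO_{B_+}$ to be the unit ideal. Concretely, take $X=\AA^1=\Spec\kappa[x]$, $\cI=(x)$, $m=2$. Then $\cJ_X=(x)$, and $B=\Spec\kappa[u,v]$ with $u=t^{-1}$, $v=xt^2$, $x=u^2v$; one has $B_+=\{v\neq 0\}$. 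Here $\cJ_X\cdot t=(xt)=(uv)$, which on $B_+$ becomes $(u)$ --- not the unit ideal --- so your displayed identity $\sum h_if_it=1$ cannot be arranged, and in fact $t^{-1}=u\notin\cJ_X\cdot\cO_{B_+}=(u^2)$. Your ``converse'' inclusion $\cJ_X\cdot\cO_{B_+}\subseteq t^{-1}\cO_{B_+}$ is fine, but it is strictly weaker than what is claimed.

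The fix is exactly what you flagged as the ``main obstacle'' but did not carry out: stay at the level of $\QQ$-ideals throughout. The paper's one-line argument is
\[
\cO_{B_+}\cdot\cJ \;=\; t^{-1}\cdot\bigl(\cO_{B_+}\cdot\cJ t\bigr)\;=\;t^{-1}\cdot\cO_{B_+},
\]
using that the $\QQ$-ideal $\cJ t=(\cI t^m)^{1/m}$ is trivial on $B_+$ because $\cI t^m$ is invertible there. In the example above this reads $(u^2v)^{1/2}=(u^2)^{1/2}=(u)$ on $B_+$, which is correct, whereas your argument was attempting to prove the false statement $(u^2)=(u)$.
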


\begin{proof} Let $\pi: Y\to X$ be the normalized blow-up of $\cI$,  $E^0$ is  the exceptional divisor on $Y$ such  
 $\cO_X\cdot\cI=\cO_Y(-E_0)$. Thus, by \cite[Proof of Lemma 2.1.4]{Wlodarczyk22},  $$\pi_*(\cO_Y(-\frac{n}{m}E^0)=( f\in \pi_*(\cO_Y)=\cO_X :  f^n\in \pi_*(\cO_Y(-mE_0))=(\cI^m)^\inte)=\cJ^m_X .$$ giving the formula for $B$: 
$$B=\Spec_X(\pi_*(\bigoplus_{n\in \ZZ} \cO_Y(n\cdot (1/m)\cdot E^0){t}^n))=\Spec_X(\cO_X[{t}^{-1},\cJ t])_X.$$

By Proposition \ref{small}, $$B_+=B\setminus V(\cI\cdot {\bf t}^{-E^0})=B\setminus V(\cJ {\bf t}^{-(1/m)E^0})=B\setminus V(\cJ t),$$ 
as $(1/m)E^0$ generates $\Gamma$, and ${\bf t}^{(1/m)E^0}$ corresponds to ${t^{-1}}$.
 Thus the inverse image of $$\cO_{B_+}\cdot \cI^{1/m}=\cO_{B_+}\cdot \cJ={ t}^{-1}\cO_{B_+}\cdot \cJ t=\cO_{B_+}\cdot { t}^{-1}$$ is a Cartier exceptional divisor. We use here the fact that the $\QQ$-ideal $\cJ t_{|B_+}=\cO_{B_+}$, as $\cJ t=(\cI t^a)^{1/a}=(\cI\cdot t^{-E^0})^{1/a}$ is trivial on $B_+=B\setminus V(\cI t^{-E^0})$.
\end{proof}





\subsection{Weighted cobordant blow-ups revisited} \label{weighted2}

Let $\pi_B: B\to X$ be the simple  cobordant blow-up of the weighted center $\cJ=(u_1^{1/w_1},\ldots,u_k^{1/w_k})$, where $u_1,\ldots,u_k$ is a partial system of local parameters on a regular scheme. Assume, first,  that the weights $w_i$ are relatively prime.
The center $\cJ$ can be written as $\cJ=\cI^{1/m}$, where $$\cI=(u_1^{m/w_1},\ldots,u_k^{m/w_k}),$$ is the ideal, and the weights $w_i| m$. 

Let $E^0$ be the exceptional divisor of the blow-up $\pi: Y\to X$ of  $\cJ$. Let $\nu_{E^0}$ be the associated exceptional valuation. Using  the toric chart, defined by $u_1,\ldots,u_k$ one reduces the situation to the blow-up
of the toric $\QQ$-ideal $\cJ$ on a toric variety $X_\sigma$, where
$\sigma=\langle e_1,\ldots,e_k\rangle$ is regular.  The $\QQ$-ideal
 $\cJ=\cI^{1/m}$ defines a piecewise linear convex function $$F_\cJ=\min (\frac{1}{w_1}e_1^*,\ldots,\frac{1}{w_k}e_k^*)=1/m\cdot \min (\frac{m}{w_1}e_1^*,\ldots,\frac{m}{w_k}e_k^*).$$  The normalized blow-up of $\cJ$ defines a decomposition $\Delta$ of $\sigma$ into the maximal subcones where $F_\cJ$ is linear. 
 Let $w:=(w_1,\ldots,w_k)$ , and  $$F_\cJ(e_i)=0,\ldots, F_\cJ(w)=1.$$
 Then $\Delta$ is the star subdivision at $\langle w\rangle$. Moreover, the vector $m w$ corresponds to $mF_\cJ=F_\cI$ in the sense that they define the same Weil divisors
 , and $w$ and corresponds to $E^0$, so the valuation $\nu_{E^0}$ on $Y$ is associated with $w$. In particular, $$\nu_{E^0}(u_i)=e_i^*(w)=w_i.$$
 

Consequently, $\cO_Y\cdot(u_1^{m/w_1},\ldots,u_k^{m/w_k})=\cO_Y(-mE^0)$ is the ideal of the Cartier divisor $mE^0$ on $Y$, associated with the integral function $mF_\cJ$, and the $\QQ$-ideal $\cJ=(u_1^{1/w_1},\ldots,u_k^{1/w_k})$ corresponds  to the $\QQ$-divisor $(1/m)\cdot mE^0=E^0$ which is a Weil divisor. 

 
The  cobordant blow-up associated with the group $\Gamma=\ZZ \cdot E^0=\Cl(Y/X)$  is given by the standard formula from Theorem \ref{regular33}: 

\begin{align*} &B=\Spec_X(\pi_*(\bigoplus_{n\in \ZZ} \cO_Y(nE^0)){ t}^n=\Spec_X(\cO_X[{t}^{-1},\cJ t])_X=\\ &=\Spec_X(\bigoplus_{a_i\in \ZZ} \,\,  \cI_{\nu,a_i}\,\,\cdot t_1^{a_1}\cdot\ldots\cdot t_k^{a_k})= \\&= \Spec_X({\cO_X}[{ t}^{-1}, {u_1}{ t}^{w_1},\ldots, {u_k}{ t}^{w_k}]),\end{align*}
where $w_i=\nu_{E^0}(u_i)$.
	



In general,   for arbitrary weights, the  simple cobordant blow-up of $(u_1^{1/w_1},\ldots,u_k^{1/w_k})$ is associated with the group $\Gamma=\ZZ \cdot \frac{1}{w^0} E^0=\frac{1}{w^0}\cdot \Cl(Y/X)$, where $w^0:=\gcd(w_1,\ldots,w_k)$, and with the valuation $w^0\nu$, with $$w^0\nu(x_i)=w_i.$$
 Now  $$\cI_{w^0\nu,a}=(u_1^{b_1}\cdot\ldots \cdot u_k^{b_k}) \mid \sum_{j=1}^k {b_{j}}w_{i}\geq a).$$ Comparing gradations we  see $$\bigoplus_{a\in \ZZ}  \cI_{w^0\nu,a}t^{a}=\cO_X[t^{-1},u_it^{w_{i}}].$$
 Then
 \begin{align*}&B=\Spec_X(\pi_*(\bigoplus_{n\in \ZZ} \cO_Y(n\cdot (1/w^0)\cdot E^0)t^n=\\
 & =\Spec_X(\bigoplus_{a_i\in \ZZ}  \cI_{w_0\nu,a}\,\,\cdot t^{a})=
 \Spec_X({\cO_X}[t^{-1}, t^{ w_1}{x_1},\ldots, t^{w_k}{x_k}]). \end{align*} 

By the above $B_+=B\setminus V(\cJ t)=B\setminus V(t^{w_1}{x_1},\ldots, t^{w_k}{x_k})$, and $\cO_{B_+}\cdot\cJ=\cO_{B_+}\cdot t^{-1}$.

These weighted cobordant blow-ups were studied in \cite{Wlodarczyk22} and used for the resolution  of varieties in characteristic zero and some classes of singularities in positive and mixed characteristic. 
 To a great extent, they are equivalent to  the stack-theoretic weighted blow-ups  introduced and considered  in \cite{Marzo-McQuillan}, and \cite{ATW-weighted}.

\subsection{Multiple weighted cobordant blow-ups of Abramovich-Quek}\label{AQ}

In the paper \cite{AQ}, the authors consider the generalization of the weighted blow-ups, so-called,  {\it multi-weighted blow-ups} ${Bl_{\cJ,b}}$, associated with a $\QQ$-ideal $\cJ$ and a vector $b=(b_1,\ldots,b_k)$. They are constructed locally in toric charts in the language of  fantastacks and stack-theoretic quotients via Satriano combinatorial approach \cite{Satriano}. The {\it multi-weighted blow-ups} are used  to prove the logarithmic resolution
on smooth toroidal ambient Artin stacks in  characteristic zero. 

We give here a geometric interpretation of this construction 
in the language of cobordizations with respect to a subgroup. In particular, this approach  does not rely on coordinates or combinatorics.

Let $\pi: Y\to X$ be
the normalized blow-up  of a  locally monomial center $\cJ$ on a regular scheme over a field. Denote by $E_1,\ldots, E_k$ the irreducible exceptional divisors. Let $\nu_1,\ldots,\nu_k$  be the associated exceptional valuations.  
We consider the full cobordant blow-up  of $\cJ$ with respect to the subgroup $$\Gamma_b:=\ZZ \frac{1}{b_1}E_1\oplus\ldots\oplus\ZZ \frac{1}{b_k}E_k\subset \Cl(Y/X)\otimes \QQ,$$ for any positive integers $b_1,\ldots,b_k$, and $b=(b_1,\ldots,b_k)$. Write $$B=\Spec_X(\pi_*(\bigoplus_{E\in \Gamma_b} \cO_X(E)),\quad B_+=\Spec_Y(\bigoplus_{E\in \Gamma_b} \cO_Y(E)) .$$

 The generators $\frac{1}{b_1}E_1,\ldots,\frac{1}{b_k}E_k $ are associated with the monomial valuations $$\nu^b_1:=b_1\nu_1,\ldots,\nu^b_k:=b_k\nu_k.$$ 
 Then  locally on $X$ using the Proposition \ref{valu}, and the proof of Lemma \ref{cover4}(1)
 we can write \begin{align*}& B=\Spec_X(\bigoplus_{a_i\in \ZZ} \,\, \bigcap^k_{i=1}\, \cI_{\nu^b_i,a_i}\,\,\cdot t_1^{a_1}\cdot\ldots\cdot t_k^{a_k})=\\ &= \bigcap^k_{i=1} \cO_X[t_i^{-1},u_jt_i^{\nu^b_i(u_j)}][\check{\bf t}_i, \check{\bf t}_i^{-1}]= \\ &=
 \Spec_X(\cO_X[t_1^{-1},\ldots,t_k^{-1},u_1{\bf t}^{\alpha^b_1},\ldots,u_k{\bf t}^{\alpha^b_k}]), \end{align*} 
where  
\begin{itemize}
\item $\check{\bf t}_i:=t_1,\ldots,\check{t_i},\ldots,t_k$
\item $u_1,\ldots,u_k$, is a system of coordinates on open $U\subset X$ defining   monomial generators for $\cJ$, and
 \item ${\bf t}^{\alpha^b_i}:=t_1^{a^b_{i1}}\cdot\ldots\cdot t_k^{a^b_{ik}},\quad \mbox{with}\quad a^b_{ij}:=\nu^b_i(u_j)=b_i\nu_i(u_j)\geq 0.$ 
\end{itemize}
Note that under this  correspondence  $t_i^{-1}\mapsto {\bf t}^{\frac{1}{b_i}E_i}.$

Let \begin{align} E^0=a_1E_1+\ldots+a_kE_k\end{align} be the exceptional divisor of $\pi: Y\to X$, for the relevant $a_i\in \ZZ_{\geq 0}$, such that $\cO_Y(-E^0)=\cO_Y\cdot \cJ$.
By Proposition \ref{small}, $$B_+ = B\setminus V(\cJ {\bf t}^{-E^0})= B\setminus V(\cJ {\bf t}^{\alpha^b}),$$
where ${\bf t}^{\alpha^b}$ corresponds to ${\bf t}^{-E_0}$, under $t_i^{-1}\mapsto {\bf t}^{\frac{1}{b_i}E_i}$.
Thus by (7),
 $\alpha^b=(b_1a_1,\ldots,b_k a_k),$ and 
\begin{align*}&
B_+ = B\setminus V(\cJ {\bf t}^{-E^0})= B\setminus V(\cJ t_1^{b_1a_1}\cdot\ldots\cdot t_k^{b_ka_k}). \end{align*} 

In particular, if $X$ is regular over a field and $\cJ$  is a locally monomial ideal  on $X$, then the full cobordant blow-up $B$ of $\cJ$  with respect to $\Gamma_b$  is regular.

\subsubsection{Multiple weighted blow-ups associated with 
$\QQ$-ideals}

Consider the normalized blow-up $\pi: Y\to X$ of a monomial $\QQ$-ideal $\cJ$, with the associated exceptional divisor $E^0=a_1E_1+\ldots+a_kE_k$ with  rational, positive coefficients $a_i$, as in Section \ref{Q}. We choose $b=(b_1,\ldots,b_k)$ with $b_i\in \ZZ_{>0}$, such that $$\Gamma_b=\ZZ \frac{1}{b_1}E_1\oplus\ldots\oplus\ZZ \frac{1}{b_k}E_k\subset \Cl(Y/X)\otimes \QQ,$$ is the minimal subgroup of $\Cl(Y/X)\otimes \QQ$ containing $E^0$.

Thus any monomial $\QQ$-ideal $\cI$ and $b=(b_1,\ldots,b_k)\in \ZZ^k_{>0}$ determines a unique associated cobordant blow-up  $B_+\to X$ with respect to the group $\Gamma_b$. This  way, taking the stack-theoretic quotient, we obtain the Abramovich-Quek {\it multiple weighted blow-up $[B_+\sslash T]\to X$} from \cite{AQ},  which is necessarily regular for a regular $X$.



\bibliographystyle{amsalpha}


\end{document}